\def\definetac{\newif\iftac}    % Can't define a \newif inside another \if!
\else\usepackage{amsthm}\fi
\definecolor{darkgreen}{rgb}{0,0.45,0} 
\let\ea\expandafter
\def\mdef#1#2{\ea\ea\ea\gdef\ea\ea\noexpand#1\ea{\ea\ensuremath\ea{#2}\xspace}}
\def\alwaysmath#1{\ea\ea\ea\global\ea\ea\ea\let\ea\ea\csname your@#1\endcsname\csname #1\endcsname
  \ea\def\csname #1\endcsname{\ensuremath{\csname your@#1\endcsname}\xspace}}
\DeclareRobustCommand\widecheck[1]{{\mathpalette\@widecheck{#1}}}
\def\@widecheck#1#2{%
    \setbox\z@\hbox{\m@th$#1#2$}%
    \setbox\tw@\hbox{\m@th$#1%
       \widehat{%
          \vrule\@width\z@\@height\ht\z@
          \vrule\@height\z@\@width\wd\z@}$}%
    \dp\tw@-\ht\z@
    \@tempdima\ht\z@ \advance\@tempdima2\ht\tw@ \divide\@tempdima\thr@@
    \setbox\tw@\hbox{%
       \raise\@tempdima\hbox{\scalebox{1}[-1]{\lower\@tempdima\box
\tw@}}}%
    {\ooalign{\box\tw@ \cr \box\z@}}}
\def\foreachletter#1#2#3{\foreachcount=#1
  \ea\loop\ea\ea\ea#3\@alph\foreachcount
  \advance\foreachcount by 1
  \ifnum\foreachcount<#2\repeat}
\def\foreachLetter#1#2#3{\foreachcount=#1
  \ea\loop\ea\ea\ea#3\@Alph\foreachcount
  \advance\foreachcount by 1
  \ifnum\foreachcount<#2\repeat}
\def\definescr#1{\ea\gdef\csname s#1\endcsname{\ensuremath{\mathscr{#1}}\xspace}}
\def\definecal#1{\ea\gdef\csname c#1\endcsname{\ensuremath{\mathcal{#1}}\xspace}}
\def\definebold#1{\ea\gdef\csname b#1\endcsname{\ensuremath{\mathbf{#1}}\xspace}}
\def\definebb#1{\ea\gdef\csname l#1\endcsname{\ensuremath{\mathbb{#1}}\xspace}}
\def\definefrak#1{\ea\gdef\csname f#1\endcsname{\ensuremath{\mathfrak{#1}}\xspace}}
\def\definebar#1{\ea\gdef\csname #1bar\endcsname{\ensuremath{\overline{#1}}\xspace}}
\def\definetil#1{\ea\gdef\csname #1til\endcsname{\ensuremath{\widetilde{#1}}\xspace}}
\def\definehat#1{\ea\gdef\csname #1hat\endcsname{\ensuremath{\widehat{#1}}\xspace}}
\def\definechk#1{\ea\gdef\csname #1chk\endcsname{\ensuremath{\widecheck{#1}}\xspace}}
\def\defineul#1{\ea\gdef\csname u#1\endcsname{\ensuremath{\underline{#1}}\xspace}}
\def\autofmt@n#1\autofmt@end{\mathrm{#1}}
\def\autofmt@b#1\autofmt@end{\mathbf{#1}}
\def\autofmt@l#1#2\autofmt@end{\mathbb{#1}\mathsf{#2}}
\def\autofmt@c#1#2\autofmt@end{\mathcal{#1}\mathit{#2}}
\def\autofmt@s#1#2\autofmt@end{\mathscr{#1}\mathit{#2}}
\def\autofmt@f#1\autofmt@end{\mathsf{#1}}
\def\autofmt@u#1\autofmt@end{\underline{\smash{\mathsf{#1}}}}
\def\autofmt@U#1\autofmt@end{\underline{\underline{\smash{\mathsf{#1}}}}}
\def\autofmt@h#1\autofmt@end{\widehat{#1}}
\def\autofmt@r#1\autofmt@end{\overline{#1}}
\def\autofmt@t#1\autofmt@end{\widetilde{#1}}
\def\autofmt@k#1\autofmt@end{\check{#1}}
\def\auto@drop#1{}
\def\autodef#1{\ea\ea\ea\@autodef\ea\ea\ea#1\ea\auto@drop\string#1\autodef@end}
\def\@autodef#1#2#3\autodef@end{%
  \ea\def\ea#1\ea{\ea\ensuremath\ea{\csname autofmt@#2\endcsname#3\autofmt@end}\xspace}}
\def\autodefs@end{blarg!}
\def\autodefs#1{\@autodefs#1\autodefs@end}
\def\@autodefs#1{\ifx#1\autodefs@end%
  \def\autodefs@next{}%
  \else%
  \def\autodefs@next{\autodef#1\@autodefs}%
  \fi\autodefs@next}
\DeclareSymbolFont{bbold}{U}{bbold}{m}{n}
\DeclareSymbolFontAlphabet{\mathbbb}{bbold}
\mdef\delbar{\overline{\partial}}
\mdef\hf{\textstyle\frac12 }
\mdef\thrd{\textstyle\frac13 }
\mdef\qtr{\textstyle\frac14 }
\newcommand{\op}{^{\mathrm{op}}}
\newcommand{\coop}{^{\mathrm{coop}}}
\newcommand{\pushoutcorner}[1][dr]{\save*!/#1+1.2pc/#1:(1,-1)@^{|-}\restore}
\newcommand{\pullbackcorner}[1][dr]{\save*!/#1-1.2pc/#1:(-1,1)@^{|-}\restore}
\mdef\Id{\mathrm{Id}}
\mdef\id{\mathrm{id}}
\def\frc#1/#2.{\frac{#1}{#2}}   % \frc x^2+1 / x^2-1 .
\mdef\ten{\mathrel{\otimes}}
\mdef\sqten{\mathrel{\boxtimes}}
\DeclareRobustCommand\widecheck[1]{{\mathpalette\@widecheck{#1}}}
\def\@widecheck#1#2{%
    \setbox\z@\hbox{\m@th$#1#2$}%
    \setbox\tw@\hbox{\m@th$#1%
       \widehat{%
          \vrule\@width\z@\@height\ht\z@
          \vrule\@height\z@\@width\wd\z@}$}%
    \dp\tw@-\ht\z@
    \@tempdima\ht\z@ \advance\@tempdima2\ht\tw@ \divide\@tempdima\thr@@
    \setbox\tw@\hbox{%
       \raise\@tempdima\hbox{\scalebox{1}[-1]{\lower\@tempdima\box
\tw@}}}%
    {\ooalign{\box\tw@ \cr \box\z@}}}
\DeclareMathOperator\colim{colim}
\DeclareMathOperator\Ho{Ho}
\DeclareMathOperator\Hom{Hom}
\newcommand{\ot}{\ensuremath{\leftarrow}}
\mdef\we{\overset{\sim}{\longrightarrow}}
\mdef\leftwe{\overset{\sim}{\longleftarrow}}
\let\xto\xrightarrow
\def\rightarrowtailfill@{\arrowfill@{\Yright\joinrel\relbar}\relbar\rightarrow}
\newcommand\xrightarrowtail[2][]{\ext@arrow 0055{\rightarrowtailfill@}{#1}{#2}}
\def\twoheadrightarrowfill@{\arrowfill@{\relbar\joinrel\relbar}\relbar\twoheadrightarrow}
\newcommand\xtwoheadrightarrow[2][]{\ext@arrow 0055{\twoheadrightarrowfill@}{#1}{#2}}
\def\slashedarrowfill@#1#2#3#4#5{%
  $\m@th\thickmuskip0mu\medmuskip\thickmuskip\thinmuskip\thickmuskip
   \relax#5#1\mkern-7mu%
   \cleaders\hbox{$#5\mkern-2mu#2\mkern-2mu$}\hfill
   \mathclap{#3}\mathclap{#2}%
   \cleaders\hbox{$#5\mkern-2mu#2\mkern-2mu$}\hfill
   \mkern-7mu#4$%
}
\def\rightslashedarrowfill@{%
  \slashedarrowfill@\relbar\relbar\mapstochar\rightarrow}
\newcommand\xslashedrightarrow[2][]{%
  \ext@arrow 0055{\rightslashedarrowfill@}{#1}{#2}}
\mdef\hto{\xslashedrightarrow{}}
\mdef\htoo{\xslashedrightarrow{\quad}}
\def\toiso{\xto{\smash{\raisebox{-.5mm}{$\scriptstyle\sim$}}}}
\long\def\my@drawfill#1#2;{%
\@skipfalse
\fill[#1,draw=none] #2;
\@skiptrue
\draw[#1,fill=none] #2;
}
\newif\if@skip
\newcommand{\skipit}[1]{\if@skip\else#1\fi}
\newcommand{\drawfill}[1][]{\my@drawfill{#1}}
\newif\ifhyperref
  \let\your@state\state
  \def\state#1{\gdef\currthmtype{#1}\your@state{#1}}
  \let\your@staterm\staterm
  \def\staterm#1{\gdef\currthmtype{#1}\your@staterm{#1}}
  \let\defthm\newtheorem
  \def\currthmtype{}
    \def\autoref#1{\ref*{label@name@#1}~\ref{#1}}
    \def\autoref#1{\ref{label@name@#1}~\ref{#1}}
    \let\old@label\label%
    \def\label#1{%
      {\let\your@currentlabel\@currentlabel%
        \edef\@currentlabel{\currthmtype}%
        \old@label{label@name@#1}}%
      \old@label{#1}}
    \def\defthm#1#2{%
      %% All types of theorems are number inside sections
      \newtheorem{#1}{#2}[section]%
      %% This command tells hyperref's \autoref what to call things
      \expandafter\def\csname #1autorefname\endcsname{#2}%
      %% This makes all the theorem counters actually the same counter
      \expandafter\let\csname c@#1\endcsname\c@thm}
    \def\defthm#1#2{\newtheorem{#1}[thm]{#2}}
\let\SK@label\label\fi
    \let\old@label\label
    \let\your@thm\@thm
    \def\@thm#1#2#3{\gdef\currthmtype{#3}\your@thm{#1}{#2}{#3}}
    \def\currthmtype{}
    \def\label#1{{\let\your@currentlabel\@currentlabel\def\@currentlabel%
        {\currthmtype~\your@currentlabel}%
        \SK@label{#1@}}\old@label{#1}}
    \def\autoref#1{\ref{#1@}}
\newtheorem{thm}{Theorem}[section]
\iftac\theoremstyle{plain}\else\theoremstyle{definition}\fi
\iftac\theoremstyle{plain}\else\theoremstyle{remark}\fi
\def\thmqedhere{\expandafter\csname\csname @currenvir\endcsname @qed\endcsname}
  \let\c@equation\c@subsection
  \let\c@equation\c@thm
\numberwithin{equation}{section}
\mdef\ep{\varepsilon}
\mdef\ph{\varphi}
\tikzset{lab/.style={auto,font=\scriptsize}} % arrow labels
\definecolor{fxnote}{rgb}{1.0000,0.0000,0.0000}
\colorlet{fxnotebg}{yellow}
\newcommand{\tw}{\ensuremath{\operatorname{tw}}}
\newcommand{\D}{\sD}
\newcommand{\E}{\sE}
\newcommand{\V}{\sV}
\newcommand{\W}{\sW}
\def\ho{\mathscr{H}\!\mathit{o}\xspace}
\let\oldboxtimes\boxtimes
\def\boxtimes{\mathrel{\oldboxtimes}}
\newcommand{\fib}{\mathsf{fib}}
\newcommand{\cof}{\mathsf{cof}}
\newcommand{\cube}[1]{\square^{#1}}
\def\ccsub{_{\mathrm{cc}}}
\def\pdh(#1,#2){\llbracket #1,#2\rrbracket}
\def\ldh(#1,#2){\llbracket #1,#2\rrbracket\ccsub}
\def\pend(#1){\pdh(#1,#1)}
\def\lend(#1){\ldh(#1,#1)}
\def\DTl#1#2#3#4#5#6#7{%
  \xymatrix@C=3pc{{#1} \ar[r]^-{#2} &
    {#3} \ar[r]^-{#4} &
    {#5} \ar[r]^-{#6} &
    {#7}
  }}
\newsavebox{\tvabox}
\savebox\tvabox{\hspace{1mm}\begin{tikzpicture}[>=latex',baseline={(0,-.18)}]
  \draw[->] (0,.1) -- +(1,0);
  \node at (.5,0) {$\scriptscriptstyle\bot$};
  \draw[->] (1,-.1) -- +(-1,0);
  \draw[->] (1,-.2) -- +(-1,0);
\end{tikzpicture}\hspace{1mm}}
\renewcommand{\ex}{\mathrm{ex}}
\newcommand{\Ch}{\mathrm{Ch}}
\newcommand{\Mod}[1]{\mathrm{Mod}({#1})}
\newcommand{\sse}{\stackrel{\mathrm{s}}{\sim}}
\newcommand{\cok}{\mathrm{cok}}
\newcommand{\A}[1]{{\Vec{A}_{#1}}}
\newtheorem*{thm*}{\textbf{Theorem}}
\title{Higher symmetries in abstract stable homotopy theories}
\author{Moritz Groth and Moritz Rahn}
\address{Rheinische Friedrich-Wilhelms-Universit{\"a}t Bonn, Mathematisches Institut, Ende-nicher Allee 60, 53115 Bonn, Germany, and Johannes Gutenberg-Universit{\"a}t Mainz, Institut f{\"u}r Mathematik, Staudingerweg 9, 55128 Mainz, Germany}
\email{mgroth@math.uni-bonn.de, morgroth@uni-mainz.de}
\date{\today}
\begin{document}

%\linenumbers
%\modulolinenumbers[5]

\begin{abstract}
This survey offers an overview of an on-going project on uniform symmetries in abstract stable homotopy theories. This project has calculational, foundational, and representation-theoretic aspects, and key features of this emerging field on abstract representation theory include the following. First, generalizing the classical focus on representations over fields, it is concerned with the study of representations over rings, differential-graded algebras, ring spectra, and in more general abstract stable homotopy theories. Second, restricting attention to specific shapes, it offers an explanation of the axioms of triangulated categories, higher triangulations, and monoidal triangulations. This has led to fairly general results concerning additivity of traces. Third, along similar lines of thought it suggests the development of abstract cubical homotopy theory as an additional calculational toolkit. An interesting symmetry in this case is given by a global form of Serre duality. Fourth, abstract tilting equivalences give rise to non-trivial elements in spectral Picard groupoids and hence contribute to their calculation. And, finally, it stimulates a deeper digression of the notion of stability itself, leading to various characterizations and relative versions of stability.
\end{abstract}

\maketitle

\tableofcontents

\section{Introduction}
\label{sec:intro}

A good part of modern mathematics is centered around the study of symmetries.\footnote{Always begin an introduction by drawing a big picture.} Symmetries arise in various parts of mathematics, and in many specific situations good control over the available symmetries is helpful in (if not crucial to) applications. Here we are interested in global symmetries which are common to all abstract stable homotopy theories. In one form or another (see further below), abstract stable homotopy theories are in the background of many areas of pure mathematics. This certainly includes algebra and representation theory, homotopy theory and algebraic topology as well as algebraic geometry. The overall goal of this project is a detailed study of uniform symmetries which arise in these fields. The credo is that some of these symmetries provide a convenient calculational toolkit that is common to all these situations. Other symmetries are more interesting from an abstract representation-theoretic perspective.

In order to fill this first paragraph with more life, for the time being we focus on \emph{triangulated categories} as one of the classical approaches to an ``abstract stable homotopy theory''. Triangulated categories originated in algebraic geometry in the study of derived categories $D(X)$ of schemes \cite{verdier:thesis,verdier:derived} and in homotopy theory in the study of \emph{the} stable homotopy category of spectra $\mathcal{SHC}$ \cite{boardman:thesis,puppe:stabil,vogt:boardman}. In both of these situations, the classical triangulations on $D(X)$ and $\mathcal{SHC}$ encode aspects of the calculus of cones or, equivalently, derived cokernels -- as it is visible to the respective category alone. And symmetries show up very prominently in the definition of a triangulated category \cite{neeman:triangulated,hjr:triangulated}. First, a triangulated category $\cT$ is by definition endowed with a suspension functor $\Sigma\colon\cT\toiso\cT$ which is an equivalence of categories. Second, the rotation axiom allows us to rotate distinguished triangles forward and backwards. Triangulated categories also encode information about pairs of composable morphisms, but here the classical axioms miss some of the existing symmetries. 

Correspondingly, our first goal is to give a different explanation of the axioms of a triangulated category. In order to achieve this in a fairly elementary way, in~\S\ref{sec:rep-thy-tria} we stick to the specific situation of chain complexes of modules over a ring $R$. The main goal in that section is to give a somewhat unorthodox construction of the classical Verdier triangulation on the derived category $D(R)$. In fact, we take a representation-theoretic perspective and study representations of the $A_n$-quivers
\[
\A{1}=1, \quad \A{2}=(1\to 2),\quad \text{and} \quad \A{3}=(1\to 2\to 3)
\]
with values in chain complexes. Suitable combinations of derived cokernels and more general derived pushouts reveal interesting symmetries in the representation theories of these $A_n$-quivers. For instance, a representation
\begin{equation}\label{eq:A3-intro}
x\stackrel{f}{\to} y\stackrel{g}{\to} z
\end{equation}
of $\A{3}$ in chain complexes can be equivalently encoded by a diagram of chain complexes as in \autoref{fig:octa-intro}. In that diagram all squares are derived pushouts (or, equivalently, derived pullbacks) and the diagram vanishes on the boundary stripes. 

A suitable restriction of \autoref{fig:octa-intro} gives rise to the octahedral diagram of the representation \eqref{eq:A3-intro}. For instance, the key distinguished triangle from the octahedral 

\begin{figure}[h]
\centering
\[
\xymatrix{
\ar@{}[dr]|{\ddots}&\ar@{}[dr]|{\ddots}&\ar@{}[dr]|{\ddots}&\ar@{}[dr]|{\ddots}&\ar@{}[dr]|{\ddots}&&&&&\\
&0\ar[r]&\tilde z\ar[d]\ar[r]&\tilde v\ar[r]\ar[d]&\tilde w\ar[d]\ar[r]&0\ar[d]&&&&\\
&&0\ar[r]&x\ar[r]^-f\ar[d]&y\ar[r]^-g\ar[d]&z\ar[d]\ar[r]&0\ar[d]&&&\\
&&&0\ar[r]&u\ar[r]\ar[d]\ar@{}[dr]|{\fbox{1}}&v\ar[r]\ar[d]&x'\ar[d]\ar[r]&0\ar[d]&&\\
&&&&0\ar[r]\ar@{}[dr]|{\ddots}&w\ar[r]\ar@{}[dr]|{\ddots}&y'\ar[r]\ar@{}[dr]|{\ddots}&u'\ar[r]\ar@{}[dr]|{\ddots}&0\ar@{}[dr]|{\ddots}&\\
&&&&& & & & &
}
\]
\caption{A symmetric presentation of representations of $\A{3}$.}
\label{fig:octa-intro}
\end{figure}
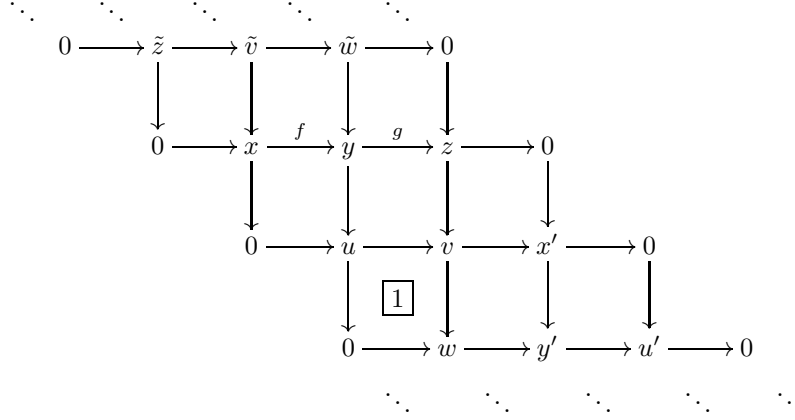
\noindent
axiom (``the cone of cones is a cone'') is induced by the square $\fbox{1}$ which exhibits the cone $C(g)$ as the cone of $C(f)\to C(gf)$ (see \autoref{rmk:octa} for more details). But the diagrams in \autoref{fig:octa-intro} have the advantage that there are two obvious symmetry operations. For instance, there is the symmetry which shifts such diagrams along the diagonal. And this symmetry results in a relation between the octahedral diagram of \eqref{eq:A3-intro} and the one of the induced representation
\[
C(f)\to C(gf)\to\Sigma x. 
\]
Also representations of $\A{2}$ and $\A{1}$ can be encoded by similar symmetric diagrams (see \autoref{fig:BP} and \autoref{fig:spectra}). And in that case the symmetries induce the familiar rotation symmetry and suspension at the level of $D(R)$.

Following this line of thought, we are led to a construction of the classical Verdier triangulation on $D(R)$. The way we present this construction suggests that these arguments do not rely in an essential way on the specifics of chain complexes in $R$-modules. Instead, only formal aspects of the calculus of chain complexes were invoked, namely the existence of a well-behaved calculus of derived limits and colimits with the following additional key properties.
\begin{enumerate}
\item There is a zero object.
\item A square is a derived pushout square if and only if it is a derived pullback square.
\end{enumerate}
These two properties are the typical defining features of an ``abstract stable homotopy theory''. There is an entire zoo of models all of which make such a notion precise. This zoo includes stable cofibration categories \cite{schwede:p-order,lenz:derivators}, stable model categories \cite{hovey:model}, stable $\infty$-categories \cite{HTT,HA}, and stable derivators \cite{heller:stable,franke:adams,maltsiniotis:seminar} (see \autoref{discl:higher-cats} for more references). For many of our purposes the precise choice of the model is not too relevant. In fact, many of the proofs are quite formal (and that is one of the points of the project) so that they are essentially model independent. Let us hasten to emphasize that we believe in diversity of technology. 

These abstract stable homotopy theories are enhancements of triangulated categories. In fact, in the above cases one can use the defining exactness properties of \emph{stability} in order to construct canonical triangulations. More interesting than this result itself are the techniques leading to its proof. These are developed in \S\ref{sec:crash} in the framework of stable derivators (but see \autoref{discl:higher-cats}), and these techniques are crucial to our later studies of symmetries. One convenient feature of stable derivators (which is, for instance, also enjoyed by stable $\infty$-categories \cite[Prop.~1.1.3.1]{HA} but not by triangulated categories) is the following. Given a stable derivator \D and a small category $A$ there is the stable derivator $\D^A$ of representations of shape $A$ with values in $\D$ (exponentials exist in stable derivators). And this opens the door for the study of symmetries of stable homotopy theories of representations of more complicated shapes than the $A_n$-quivers for $n=1,2,3$ which show up implicitly in the axioms of triangulated categories.

In \S\ref{sec:ART} we finally get to the heart of this project, and this is where we take the above exponentials serious. Given a small category $A$ we associate to it the $2$-functor
\begin{equation}\label{eq:exponentials-intro}
(-)^A\colon\cDER_{\mathrm{St,ex}}\to\cDER\colon\D\mapsto\D^A
\end{equation}
which sends a stable derivator \D to the stable derivator of $A$-shaped representations in \D. By specializing to specific \D, this yields the homotopy theory of representations over fields, over rings, in quasi-coherent modules over schemes, of differential-graded or of spectral representations (see \autoref{egs:stable-2} for details). We think of $(-)^A$ as a gadget that neatly encodes the abstract representation theory of the shape $A$. Correspondingly, given two small categories $A$ and $B$ (which could be the same), a \emph{strong stable equivalence} $\Phi\colon A\sse B$ is a family of equivalences
\[
\Phi_\D\colon\D^A\simeq\D^B,\quad\D \text{ a stable derivator,}
\]
which is pseudo-natural with respect to exact morphisms in \D. As a slogan, such a strong stable equivalence shows that $A$ and $B$ have the same abstract representation theory (see \S\ref{subsec:sse} for a more detailed discussion).

A priori, any strong stable equivalence $\Phi\colon A\sse B$ reveals a potentially interesting symmetry in abstract stable homotopy theories. However, we also believe in the following slogan.
\begin{center}
``Some shapes are more important than others.''
\end{center}
In fact, some shapes are more foundational than others, and strong stable equivalences between such shapes buy us more. We illustrate this by the following three shapes which are closely related to triangulated categories, higher triangulated categories and the Waldhausen $S_\bullet$-construction, and monoidal triangulated categories and Goodwillie calculus.
\begin{enumerate}
\item The abstract representation theories of the Dynkin quivers $\A{1}, \A{2},$ and $\A{3}$ are intimately related with the axioms of triangulated categories. This relation is provided through the calculus of derived (co)kernels of morphisms and pairs of composable morphisms \cite{franke:adams,maltsiniotis:seminar,groth:ptstab,groth:revisit}. Similarly, longer chains of composable morphisms are the same as abstract representations of Dynkin quivers
\[
\A{n}=(1\to2\to\ldots\to n).
\]
And their abstract representation theory is directly connected with higher octahedral diagrams and $\infty$-triangulated categories \cite{beilinson:perverse,maltsiniotis:higher,balmer:separability,gst:Dynkin-A}. Interesting symmetries are made visible by diagrams as in Walhausen's $S_\bullet$-construction in algebraic $K$-theory \cite{waldhausen:k-theory,gst:Dynkin-A}. Higher-dimensional versions of these diagrams have recently been studied in \cite{poguntke:higher-segal,beckert:thesis,dycker-jasso-walde:higher-AR}. 
\item Similarly, the abstract representation theory of the commutative square
\[
\square=\A{2}\times\A{2}
\]
is closely related to refined axioms for monoidal, triangulated or tensor triangulated categories. Of course, representations of the square arise in monoidal contexts as pointwise products of pairs of morphisms. A prominent operation in this context is the pushout product operation (\autoref{eg:pushout-product}). Shadows of this operation together with various compatibilities with rotations of the two morphisms, symmetry, and dualization have been axiomatized by May \cite{may:additivity}. The square is strongly stably equivalent to the trivalent source and this offers a more representation-theoretic perspective on May's axioms \cite{keller-neeman:D4,gst:basic}.   

This calculus leads to applications in the study of dualizability phenomena. More specifically, part of Grothendieck's original motivation to look for enhancements of triangulated categories was the following observation: traces of endomorphisms do not satisfy additivity at the level of triangulated categories (see \autoref{rmk:monoidal-triang} for a precise formulation). In \cite{gps:additivity} a result concerning additivity of traces is established for stable, monoidal derivators (following May's proof \cite{may:additivity} for model categories). And, again, possibly more interesting than the additivity result are the techniques developed in that paper. (For further developments of this additivity result see \cite{ps:linearity,ps:linearity-fp,gallauer:traces,jin-yang}.) 
\item Finally, also the abstract representation theory of the $n$-cube
\[
\square^n=\A{2}\times\ldots\times\A{2}
\]
and some of its subposets is fairly foundational. It leads to a detailed understanding of the calculus of iterated partial cofiber constructions, total cofibers, and of an interpolation between cocartesian and strongly cocartesian $n$-cubes \cite{bg:cubical}. Such shapes show up naturally in monoidal contexts and also in Goodwillie calculus \cite{goodwillie:II,munson-volic}. Moreover, there is a global form of an abstract Serre duality on representations of these posets \cite[\S12]{bg:cubical}, which, conjecturally, can be extended to arbitrary homotopy finite shapes. Jointly with Falk Beckert, we intend to come back to this elsewhere.
\end{enumerate}

Besides these more foundational cases, there are various shapes that have been studied very systematically in representation theory. In representation theory the focus is often on representations of quivers, posets, groups, or more general small categories with values in vector spaces over a field. (Of course a good deal of representation theory is concerned with more general algebras, but here we focus on those aspects that can be captured by \eqref{eq:exponentials-intro}.) Tilting theory is a derived version of Morita theory, and it offers many tools to study the corresponding derived categories and the calculus of derived equivalences (see for example \cite{apr:tilting,brenner-butler:tilting,happel-ringel:tilted-algebras,rickard:derived-fun,keller:deriving-dg} or the survey articles in \cite{angeleri-happel-krause:handbook}). It turns out that some aspects of tilting theory have interesting counterparts in the context of abstract stable homotopy theories.

First, this is the case for the classical BGP reflection functors. Let us recall that Bern{\v{s}}te{\u\i}n, Gel$'$fand, and Ponomarev \cite{bernstein-gelfand-ponomarev:Coxeter} introduced reflection functors and used them to give an elegant proof of Gabriel's classification result of connected hereditary representation-finite algebras over an algebraically closed field \cite{gabriel:unzerlegbare}. For every quiver~$Q$ and every source or sink $q\in Q$, the reflected quiver $Q'=\sigma_qQ$ of $Q$ is obtained by reversing the orientations of all edges adjacent to~$q$. Associated with these reflections at the level of quivers are reflection functors
\[
\mathrm{Mod}(kQ)\to\mathrm{Mod}(kQ')
\]
between the module categories of the corresponding path algebras. While these reflection functors fail to be equivalences, Happel \cite{happel:fd-algebra} showed that for acyclic quivers derived reflection functors are exact equivalences 
\[
D(kQ)\stackrel{\Delta}{\simeq} D(kQ').
\]
Later Ladkani~\cite{ladkani:posets} established a similar equivalence $D(\cA^Q) \simeq D(\cA^{Q'})$ for arbitrary abelian categories. It was shown in \cite{gst:tree,gst:acyclic} that Happel's theorem can be strengthened further by showing that reflection functors yield a strong stable equivalence
\[
Q\sse Q'.
\]
And this also holds for not necessarily finite or acyclic quivers. By purely combinatorial arguments one concludes from this that if $T$ is a finite oriented tree and if $T'$ is an arbitrary reorientation of $T$, then there is a strong stable equivalence
\[
T\sse T'.
\]

Using the more flexible framework of stable $\infty$-categories further generalizations were recently constructed by Dyckerhoff, Jasso, and Walde \cite{dycker-jasso-walde:BGP}. In \emph{loc.~cit.} certain procedures to glue stable $\infty$-categories are developed, and this gluing works neither for triangulated categories nor for stable derivators. These techniques are then applied to offer a fairly general construction of reflection functors for stable $\infty$-categories. Their result specializes both to the reflection functors in \cite{gst:tree,gst:acyclic} and also to additional examples considered by Ladkani \cite{ladkani:posets,ladkani:posets-tilting,ladkani:posets-cluster-tilting}.

A second aspect from tilting theory that generalizes nicely is the calculus of tilting complexes. Recall that these are chain complexes of bimodules such that the corresponding derived tensor and hom functors are derived equivalences. To obtain a counterpart of this for abstract stable homotopy theories, one invokes the universality of the homotopy theory of spectra: it is the free stable homotopy theory generated by the sphere spectrum. In the framework of derivators references related to this result include \cite{heller:htpythies,heller:stable,franke:adams,cisinski:derived-kan,tabuada:universal-invariants,cisinski-tabuada:non-connective,cisinski-tabuada:non-commutative} (but see also \cite{dugger:universal,lenhardt:frames} and \cite{HTT,HA,ggn:infinite} for closely related results in model categories and $\infty$-categories, respectively). As a consequence of this universality and Brown representability, every stable derivator is enriched over spectra. And this allows us to define tensor and hom functors associated to spectral bimodules (aka.~weighted colimits and weighted limits \cite{kelly:enriched}). 

It turns out that the strong stable equivalences mentioned above are induced by \emph{universal tilting modules}. These are certain spectral bimodules and the terminology is justified by the following two facts. First, these bimodules realize strong stable equivalences in arbitrary stable derivators and, second, they are spectral refinements of the classical tilting complexes. In fact, the classical tilting complexes are recovered from the universal tilting modules by smashing with the Eilenberg--MacLane spectrum. One convenient feature of these spectral bimodules is that they can be calculated quite explicitly. For instance, there are explicit spectral bimodules that govern the calculus of homotopy finite limits and colimits. We illustrate this in \S\ref{subsec:modules} by some examples related to the calculus of cofibers such as the universal constructor for cofiber sequences.

Universal tilting modules realize strong stable \emph{equivalences} and they are consequently \emph{invertible} spectral bimodules. Considered from this perspective, the construction of strong stable equivalences contributes to the calculation of spectral Picard groupoids. To the best of the knowledge of the author, the only shape for which the spectral Picard group is known is the trivial shape $A=\ast$. In that case there is an isomorphism $\lZ\cong\mathrm{Pic}_{\cSp}(\ast)$
and a generator is given by $\lS^1=\Sigma\lS$ (\cite{HMS:Picard} or \cite[Thm.~2.2]{strickland:interpolation}). One of the current goals of this project is to obtain a calculation of spectral Picard groups at least for some shapes, but for the time being only first steps have been achieved (\autoref{rmk:Picard}).

We refrain from giving a more detailed description of the content. Instead we refer the reader to the respective introductions of the individual sections \S\ref{sec:rep-thy-tria}, \S\ref{sec:crash}, and \S\ref{sec:ART}. 

\textbf{Philosophy of this writing.} It was our main goal to write an account which is readable to representation theorists, to triangulated category theorists, and also to (abstract) homotopy theorists. This explains the high level of detail which is offered at various places, and we want to apologize for this to these different communities. Still our feeling is that this is a reasonable compromise. As additional guiding principles, we tried to stress the main ideas in the project, we refer to the original literature for most of the proofs, we advertise a few loose ends which are to be pursued further, and we include many references to indicate the various connections to other parts of mathematics. 

The level of sophistication increases from section to section. In \S\ref{sec:rep-thy-tria} we deliberately only invoke elementary techniques from homological algebra and some category theory. In \S\ref{sec:crash} we give a survey on derivators which offers one of the many approaches to abstract homotopy theory (but see \autoref{discl:higher-cats}). We just develop the basics which are needed for the applications in the final section (and here we gloss over the more technical aspects of the theory despite the fact that the author actually likes them). Finally, in \S\ref{sec:ART} we freely use the language of derivators (but see \autoref{discl:higher-cats}) in order to summarize and explain the main results of this project on higher symmetries. 

\textbf{Introduction of new coauthor.} It is very tempting to make various jokes such as ``the first author thanks the second author for having joined the project while the second author in turn thanks the first author for fruitful discussions''. But to cut this short, the author recently got married and changed his last name from Groth to Rahn. All future publications will appear under this new last name.

\textbf{Acknowledgments.} It is a great pleasure to begin by thanking my Ph.D. supervisor Stefan Schwede for having suggested to me to think about abstract homotopy theory in the first place. My understanding of the subject was sharpened through various more or less directly related cooperations, hence special thanks go to my coauthors Falk Beckert \cite{bg:cubical}, David Gepner and Thomas Nikolaus \cite{ggn:infinite}, Kate Ponto and Mike Shulman \cite{gps:mayer,gps:additivity}, again Mike Shulman \cite{gs:generalized} and, last but not least (watch the alphabetic order!), Jan {\v S}{\v t}ov{\'\i}{\v c}ek \cite{gst:basic,gst:tree,gst:Dynkin-A,gst:acyclic}. During the last years many colleagues and friends gave me (willingly or not) the opportunity to discuss this project and/or to have a good time. It is a great pleasure to thank Dimitri Ara, Peter Arndt, Denis-Charles Cisinski, Ivo Dell'Ambrogio, David Gepner, Drew Heard, Gustavo Jasso, Andr{\'e} Joyal, Philipp Jung, Bernhard Keller, Steffen K{\"o}nig, Henning Krause, Rosie Laking, Georges Maltsiniotis, Thomas Nikolaus, Justin No{\"e}l, Eric Peterson, Mona Rahn, George Raptis, Ulrich Schlickewei, Timo Sch{\"u}rg, Stefan Schwede, Mike Shulman, Johan Steen, Greg Stevenson, and Jan {\v S}{\v t}ov{\'\i}{\v c}ek.

The author also thanks the organizers of the International Conference on Representation theory of Algebras in Prague in August 2018 (ICRA2018). On occasion of that conference the author offered a series of three talks aiming for an overview of this project. The sections of this account are in obvious bijection with the talks given at ICRA2018, but here we intend to draw a slightly more complete picture.

%formal study of stability; relative stabilization; version of rep thy with values therein; alg vs top triangulated cats.
 %
%\begin{enumerate}
%\item \emph{Formal study of stability: }
%\item \emph{Generalization of respresentation theory: }
%\end{enumerate}
%
%Here we chose\fxnote{Check mails of Shulman and pointer to arxiv} to work in stable land; variants (abstract stabilization; alternatively over other stable monoidal derivators; for instance if work over the integers instead of working over the sphere spectrum algebraic versus triangulated categories) (motivic, parametrized, equivariant twist?)

%These are slightly expanded lecture notes for a series of three talks given by the author at ICRA~2018 in Prague. The overall goal of these talks was to give an introduction to the project on \emph{abstract representation theory} which is to a large extent joint with Jan Stovicek \cite{,,,} and, more recently, also with Falk Beckert \cite{bg:cubical}.  

\section{A representation-theoretic perspective on triangulated categories}
\label{sec:rep-thy-tria}

\emph{Triangulated categories} were introduced in the 1960's, originally motivated by situations in algebraic geometry \cite{verdier:thesis,verdier:derived} and topology \cite{boardman:thesis,puppe:stabil,vogt:boardman}. Since their invention triangulated categories have become a valuable tool in many areas of pure mathematics and this ubiquity is one of the features of triangulated categories. In algebraic geometry triangulated categories arise as derived categories of schemes (\cite{verdier:thesis,verdier:derived} or \cite{huybrechts:fourier}), in representation theory they come up as derived categories of algebras (see \cite{happel:triangulated} or \cite{angeleri-happel-krause:handbook}), in modular representation theory as stable module categories \cite{benson-rickard-carlson:thick-stmod}, and in homotopy theory as homotopy categories of spectra or related (stable model) categories (\cite{vogt:boardman} or \cite{hovey:model}). Nice surveys on this ubiquity of triangulated categories can be found in \cite{hjr:triangulated}, in \cite{balmer:TTG}, and from a different perspective in \cite{schwede-shipley:morita}.

In this first section we want to contribute to the understanding of the axioms of triangulated categories. The main idea we intend to transport is that the axioms of triangulated categories are closely related to (abstract) representation theory of $A_1$-quivers (!), $A_2$-quivers, and $A_3$-quivers. To keep things simple and very explicit, in most of this section we specialize to chain complexes of modules over a ring~$R$. Assuming only elementary techniques from homological algebra, we revisit the classical Verdier triangulation on the derived category $D(R)$ of a ring and offer a somewhat unorthodox construction of it. In \S\ref{sec:crash} we will see that the arguments invoked here easily extend to more general situations.

In \S\ref{subsec:cones} we revisit the classical derived cokernel and derived pushout construction in abelian categories. In \S\ref{subsec:der-limit} we specialize to the case of a module category and collect key aspects of the calculus of more general derived limits and Kan extensions. As an illustration we express derived cokernels in terms of derived Kan extensions. In \S\ref{subsec:BP} we iterate the construction of cofibers. This shows that morphisms can be equivalently encoded by cofiber sequences and also by Barratt--Puppe sequences. This latter reformulation showcases the existing derived symmetries of $\A{2}$-representations. In \S\ref{subsec:octa} we consider derived representations of $\A{3}$ and encode these in terms of refined ocatahedral diagrams. We discuss at some length in which sense these diagrams are more symmetric versions of the diagrams in the classical octahedral axiom.

\subsection{Functorial cones}
\label{subsec:cones}

To set the stage, let us begin by establishing some basic notation.

\begin{notn}
Throughout this section, $\cA$ denotes an abelian category. From \S\ref{subsec:der-limit} on we focus on the case $\cA=\Mod{R}$, the category of (not necessarily finitely generated) left modules over a (not necessarily commutative) ring $R$.
\end{notn}

We recall that abelian categories essentially axiomatize the usual calculus of finite direct sums of modules and kernels and cokernels of homomorphisms of modules. In classical homological algebra, the focus is on additive functors between abelian categories and on an investigation of their behavior with respect to short exact sequences. In particular, various techniques are developed in order to approximate a given functor by exact functors in a universal way. To make such a statement more specific, we collect the following lemma.

\begin{notn}
For every abelian category $\cA$, we denote by $\nCh(\cA)$ the category of unbounded chain complexes in $\cA$.
\end{notn} 

\begin{lem}
Let $\cA$ and $\cB$ be abelian categories and let $F\colon\cA\to\cB$ be an additive functor. The following are equivalent.
\begin{enumerate}
\item The functor $F\colon\cA\to\cB$ is exact.
\item The induced functor $F\colon\nCh(\cA)\to\nCh(\cB)$ preserves quasi-isomorphisms.
\end{enumerate}
\end{lem}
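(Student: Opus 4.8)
The plan is to prove the two implications separately; essentially all the content sits in $(ii)\Rightarrow(i)$.

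For $(i)\Rightarrow(ii)$ I would use that an exact functor commutes with the formation of homology. Since $F$ is exact it preserves kernels and cokernels, hence images, hence also the canonical monomorphism $\im(d_{n+1})\hookrightarrow\ker(d_n)$ attached to any chain complex $C$ with differentials $d_\bullet$; as $H_n(C)$ is the cokernel of that monomorphism, this yields a natural isomorphism $F(H_n(C))\cong H_n(F(C))$ for all $n$. Consequently, if $f$ is a chain map with every $H_n(f)$ invertible, then every $H_n(F(f))\cong F(H_n(f))$ is invertible, i.e.\ $F(f)$ is again a quasi-isomorphism. (Alternatively: $F$ is additive, so it takes mapping cones to mapping cones; $F$ is exact, so it takes acyclic complexes to acyclic complexes, since a complex is acyclic iff each length-three piece $C_{n+1}\to C_n\to C_{n-1}$ is exact; and $f$ is a quasi-isomorphism iff $\operatorname{cone}(f)$ is acyclic.)

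For $(ii)\Rightarrow(i)$ I would use that an additive functor between abelian categories is exact iff it carries short exact sequences to short exact sequences, and I would manufacture the needed short exact sequence by feeding $(ii)$ a cleverly chosen quasi-isomorphism. Given a short exact sequence $0\to A'\xrightarrow{i}A\xrightarrow{p}A''\to 0$ in $\cA$, let $C$ be the chain complex with $C_1=A'$, $C_0=A$, differential $i$, and zero in all other degrees. Because $i$ is a monomorphism with cokernel $A''$ (via $p$), one has $H_1(C)=\ker(i)=0$ and the induced map $H_0(C)=\operatorname{coker}(i)\to A''$ is an isomorphism; hence the chain map $\varphi\colon C\to A''$ that equals $p$ in degree $0$ (with $A''$ placed in degree $0$) is a quasi-isomorphism. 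By $(ii)$ the map $F(\varphi)\colon F(C)\to F(A'')$ is again a quasi-isomorphism, and reading off its effect on $H_1$ and $H_0$ gives $\ker F(i)=H_1(F(C))=0$ together with an isomorphism $\operatorname{coker}F(i)=H_0(F(C))\to F(A'')$ induced by $F(p)$. The first statement says $F(i)$ is a monomorphism; the second says $F(p)$ is an epimorphism (an epimorphism onto $\operatorname{coker}F(i)$ followed by an isomorphism) with $\ker F(p)=\im F(i)$. Thus $0\to F(A')\to F(A)\to F(A'')\to 0$ is short exact, so $F$ is exact.

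The only genuine idea is the choice of the two-term complex $C$ in $(ii)\Rightarrow(i)$, which repackages an arbitrary short exact sequence as a quasi-isomorphism; once that is in place, unwinding the homology of $F(\varphi)$ back into exactness of the three-term sequence is routine bookkeeping. The $(i)\Rightarrow(ii)$ direction is soft, the only point worth stating carefully being that homology is built from kernels and cokernels, which exact functors preserve.
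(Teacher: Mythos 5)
Your proof is correct. The paper itself only offers the one-line remark that ``the passage to homology objects is obtained by the formation of subquotients, and the proof is hence straightforward,'' which addresses the $(i)\Rightarrow(ii)$ direction you give (exact functors preserve kernels, cokernels, hence images and homology) and leaves the rest to the reader; your argument fills in the standard details, and the two-term-complex trick for $(ii)\Rightarrow(i)$ is the expected way to do so.
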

\begin{proof}
The passage to homology objects is obtained by the formation of subquotients, and the proof is hence straightforward.
\end{proof}

This suggests that the above-mentioned `exact approximations' of more general additive functors are defined at the level of derived categories.

\begin{con}
For every abelian category $\cA$, we denote by $W_\cA$ the class of quasi-isomorphisms in $\nCh(\cA)$. Up to set-theoretic considerations, there is the localization functor
\begin{equation}\label{eq:loc}
\vcenter{
\xymatrix{
\nCh(\cA)\ar[rrrrr]^-\gamma&&&&&D(\cA)=\nCh(\cA)[(W_\cA)^{-1}],
}
}
\end{equation}
which universally inverts the quasi-isomorphisms $W_\cA$ \cite{gabriel-zisman:calculus}. In the case of a module category $\cA=\Mod{R}$ we use the standard notation $\nCh(R)\to D(R)$.
\end{con}

It is common to think of the derived category $D(\cA)$ as a rather refined invariant of the abelian category $\cA$. A good deal of information about homological invariants is encoded by derived categories, and many such invariants are invariant under derived equivalences (see, for instance, \cite{keller:der-cat-and-tilt} and the many references therein). 

Here we want to take a different, but related perspective and focus on formal properties of derived categories. The reason that we drew the morphism in \eqref{eq:loc} in such a long way is that we want to stress that the categories $\nCh(\cA)$ and $D(\cA)$ live in rather different worlds. The category $\nCh(\cA)$ is an abelian category and hence enjoys rather nice properties. In particular, it has all finite limits and finite colimits. In contrast to this, the category $D(\cA)$ is rather ill-behaved (the 1-categorical localization procedure is an act of violence, and it destroys these nice properties). There are two relevant ways to make this more precise.
\begin{enumerate}
\item First, derived categories $D(\cA)$ tend to not have many limits or colimits. In fact, the only typical (co)limits which exist are finite biproducts and in some cases also infinite (co)products.  
\item More importantly, the calculus of \emph{derived (co)limits} is not visible to the category $D(\cA)$ \emph{alone}.
\end{enumerate} 

There are various ways to deal with these observations (see \autoref{discl:higher-cats} and references there), and here we begin by following Verdier and Grothendieck. The following classical theorem offers one way to react upon the above issues, and the remaining goal of this first section is to explain that result from the perspective of symmetries.

\begin{thm}[Verdier \cite{verdier:thesis,verdier:derived}]\label{thm:verdier}
The derived category $D(\cA)$ ``is'' a triangulated category.
\end{thm}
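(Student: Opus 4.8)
The plan is to factor the localization \eqref{eq:loc} through the homotopy category $K(\cA)$ of chain complexes (same objects as $\nCh(\cA)$, but morphisms taken up to chain homotopy) and to build the triangulated structure there before transporting it down to $D(\cA)$. First I would equip $K(\cA)$ with the shift functor $\Sigma\colon K(\cA)\to K(\cA)$ which sends a complex $X$ to the complex with $(\Sigma X)_n=X_{n-1}$ and differential $-d_X$; this is visibly an automorphism, and it descends to $D(\cA)$. Next, to every chain map $f\colon X\to Y$ one assigns its mapping cone $C(f)$, with $C(f)_n=X_{n-1}\oplus Y_n$ and the upper-triangular differential assembled from $-d_X$, $d_Y$ and $f$, together with the tautological strict maps $Y\to C(f)\to\Sigma X$. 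One then calls a sequence of three composable maps in $K(\cA)$ a \emph{distinguished triangle} if it is isomorphic, in $K(\cA)$, to one of the shape $X\xrightarrow{f}Y\to C(f)\to\Sigma X$.

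The substance of the argument is the verification of the Verdier axioms (TR1)--(TR4) for $K(\cA)$ with this data. Axioms (TR1) and (TR3) are essentially formal: $C(\id_X)$ is contractible, and any commuting square of chain maps induces a map on mapping cones, giving the required fill-in. The rotation axiom (TR2) rests on the standard computation that the evident comparison $C(i_f)\to\Sigma X$, where $i_f\colon Y\to C(f)$, is a chain homotopy equivalence, so that the rotation of a cone triangle is again (isomorphic to) a cone triangle. The octahedral axiom (TR4) is the genuinely delicate step: given composable $f\colon X\to Y$ and $g\colon Y\to Z$ one must exhibit a distinguished triangle tying together $C(f)$, $C(gf)$ and $C(g)$ -- concretely, the square $\fbox{1}$ of \autoref{fig:octa-intro} presents $C(g)$ as the cone of the induced map $C(f)\to C(gf)$ -- and then check that all the requisite squares and triangles in the resulting diagram commute up to the prescribed homotopies. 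This is an explicit but lengthy piece of bicomplex bookkeeping, and it is the step I expect to be the main obstacle.

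It then remains to descend from $K(\cA)$ to $D(\cA)$. The structural input is the identification of $W_\cA$ with precisely the morphisms of $K(\cA)$ whose mapping cone is acyclic, together with the observation that the full subcategory of acyclic complexes is a \emph{null system}: it is closed under $\Sigma^{\pm 1}$ and isomorphism, and enjoys the two-out-of-three property along distinguished triangles. Verdier's localization theorem then applies: $W_\cA$ admits a calculus of fractions compatible with the triangulation, $D(\cA)\simeq K(\cA)[W_\cA^{-1}]$ inherits $\Sigma$ and the images of the cone triangles as its triangulated structure, and the localization $K(\cA)\to D(\cA)$ is exact. Two caveats are worth stating. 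The first is set-theoretic -- this is the content of the cautious phrasing around \eqref{eq:loc} -- and for $\cA=\Mod{R}$ it is harmless, e.g.\ because every complex in $\nCh(R)$ is homotopy equivalent to a K-projective one, or because $\nCh(R)$ carries a model structure presenting $D(R)$. The second is that this route hard-wires the choice of cone; the alternative developed in the remainder of this section instead reads the same triangulated structure off the abstract representation theory of $\A{1}$, $\A{2}$ and $\A{3}$, where the symmetries of diagrams such as \autoref{fig:octa-intro} make (TR2) and especially (TR4) conceptually transparent rather than a computation.
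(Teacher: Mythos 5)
Your proposal is correct, but it is not the paper's route, and the paper is explicit about taking an unorthodox one. You prove (TR1)--(TR4) directly on the homotopy category $K(\cA)$ with the explicit mapping-cone formula, identify $W_\cA$ as the morphisms with acyclic cone, and pass to the Verdier quotient by the null system of acyclics. The paper deliberately bypasses $K(\cA)$ and the cone formula. Instead it works with the $2$-functor $B\mapsto D(\Mod{R}^B)$ of \autoref{thm:grothendieck}: incoherent morphisms and composable pairs in $D(R)$ are lifted along the full, essentially surjective underlying-diagram functors $\ndia_{[1]}$, $\ndia_{[2]}$ (\autoref{prop:D_R-strong}); their coherent lifts are extended by fully faithful derived Kan extensions to Barratt--Puppe sequences (\autoref{thm:BP}) and refined octahedral diagrams (\autoref{thm:octa}) over the posets $M_2$, $M_3$; and the axioms are read off the symmetries of those diagrams, with the sign in (TR2) coming from the swap symmetry of loop squares (\autoref{rmk:add}, made precise in the proof of \autoref{thm:stable-tria}). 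Your route is more self-contained --- classical homological algebra plus Verdier localization, with no need for \autoref{thm:grothendieck} or the Kan-extension calculus --- and it gives the strict two-out-of-three bookkeeping of null systems for free. The paper's route, by contrast, turns (TR2) and the octahedral axiom into formal shadows of poset symmetries rather than a bicomplex computation, never privileges a cone model, gets weak functoriality of cones and octahedra directly from fullness of $\ndia_{[n]}$, and --- the real point for the rest of the paper --- only uses that there is a zero object and that derived pushouts coincide with derived pullbacks, so the identical argument establishes \autoref{thm:stable-tria} for arbitrary strong stable derivators. You flag this trade-off yourself in your final caveat, which is exactly the right thing to notice.
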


In the formulation of this theorem, we were rather picky and put the verb into quotation marks. This is in order to stress that a triangulation amounts to the specification of additional \emph{structure} on $D(\cA)$ (in contrast to asking for mere \emph{properties} of $D(\cA)$). In particular, this means that there are 
\begin{enumerate}
\item an auto-equivalence $\Sigma\colon D(\cA)\toiso D(\cA)$ and
\item a class of distinguished triangles $x\stackrel{f}\to y\to z\to \Sigma x$ in $D(\cA)$,
\end{enumerate}
and these are subject to certain axioms \cite{neeman:triangulated,hjr:triangulated}. Given such a distinguished triangle, the object $z$ is referred to as ``the'' cone or cofiber of $f$. This time, the quotation marks are meant to allude to the following well-known fact. Approached this way, cones are only unique up to non-canonical isomorphism and only weakly functorial. 

The following proposition describes an alternative approach to the cone construction. It exhibits the cone as the total left derived cokernel functor, thereby guaranteeing that the cone is a canonical and functorial construction.

\begin{prop}\label{prop:C-cok}
Let $\cA$ be an abelian category, let $[1]$ be the poset $(0<1)$, let $\cA^{[1]}$ be the category of morphisms in $\cA$, and let $\cok\colon\cA^{[1]}\to\cA$ be the cokernel functor.
\begin{enumerate}
\item The cokernel functor is right exact.
\item The cokernel functor is exact on monomorphisms.
\item The cokernel functor has a total left derived functor given by the cone construction,
\[
C\cong L\cok\colon D(\cA^{[1]})\to D(\cA).
\]
\end{enumerate}
\end{prop}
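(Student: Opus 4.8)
The first two parts are short. For (i), the plan is to exhibit $\cok$ as a left adjoint: the functor $\cA\to\cA^{[1]}$ sending an object $Z$ to the morphism $(0\to Z)$ is right adjoint to it, because a morphism $(f\colon A\to B)\to(0\to Z)$ in $\cA^{[1]}$ is nothing but a morphism $B\to Z$ annihilating $f$, that is, a morphism $\cok f\to Z$, naturally in $f$ and $Z$. Since $\cok$ is moreover visibly additive, being a left adjoint it is cocontinuous, hence right exact. For (ii), the content is that $\cok$ carries a short exact sequence $0\to\varphi'\to\varphi\to\varphi''\to 0$ in $\cA^{[1]}$, all three of whose terms $\varphi',\varphi,\varphi''$ are monomorphisms in $\cA$, to a short exact sequence $0\to\cok\varphi'\to\cok\varphi\to\cok\varphi''\to 0$ in $\cA$. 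Unwinding the definition of $\cA^{[1]}$, such a sequence is exactly a commutative diagram in $\cA$ with two exact rows --- the sources, respectively the targets, of the three morphisms --- and with $\varphi',\varphi,\varphi''$ as its three vertical columns; since these columns are monomorphisms, all three kernel terms in the associated snake sequence vanish, and the snake sequence collapses to precisely the asserted short exact sequence of cokernels. (The same computation incidentally shows that the middle term $\varphi$ is automatically a monomorphism, so that this is a genuine exactness statement on the full subcategory of monomorphisms.)

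The substantive part is (iii), which I would prove by exhibiting a \emph{left deformation} of $\cok$: a functor $Q\colon\nCh(\cA^{[1]})\to\nCh(\cA^{[1]})$ together with a natural transformation $q\colon Q\Rightarrow\id$ which is pointwise a quasi-isomorphism, such that $\cok$ carries quasi-isomorphisms between objects in the image of $Q$ to quasi-isomorphisms. Given such data, the standard theory of derived functors via deformations produces the total left derived functor $L\cok\colon D(\cA^{[1]})\to D(\cA)$, computed by $f_\bullet\mapsto\cok(Qf_\bullet)$. Here one uses that a chain complex in $\cA^{[1]}$ is the same datum as a chain map $f_\bullet\colon X_\bullet\to Y_\bullet$ in $\nCh(\cA)$, that $\cok$ carries it to the complex $n\mapsto\cok(f_n)$, and that a quasi-isomorphism in $\nCh(\cA^{[1]})=\nCh(\cA)^{[1]}$ is a commutative square which is componentwise a quasi-isomorphism. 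For $Q$ I would take, functorially in $f_\bullet$, the inclusion $\iota_f\colon X_\bullet\hookrightarrow\mathrm{Cyl}(f_\bullet)$ of $X_\bullet$ into the mapping cylinder of $f_\bullet$, viewed as an object of $\nCh(\cA^{[1]})$; then $\iota_f$ is a degreewise split monomorphism, and the canonical retraction $\mathrm{Cyl}(f_\bullet)\we Y_\bullet$ assembles with $\id_{X_\bullet}$ into the required natural quasi-isomorphism $\iota_f\we f_\bullet$. That $\cok$ behaves well on the image of $Q$ follows by a five-lemma argument: a quasi-isomorphism between degreewise monomorphisms $g_\bullet\colon A_\bullet\hookrightarrow B_\bullet$ and $g'_\bullet\colon A'_\bullet\hookrightarrow B'_\bullet$ induces a morphism of degreewise short exact sequences of complexes $0\to A_\bullet\to B_\bullet\to\cok(g_\bullet)\to 0$, and comparing the two associated long exact homology sequences through the five lemma shows that $\cok(g_\bullet)\to\cok(g'_\bullet)$ is again a quasi-isomorphism. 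Consequently $L\cok$ exists and $L\cok(f_\bullet)=\cok(\iota_f)=\mathrm{Cyl}(f_\bullet)/X_\bullet$; a degreewise inspection identifies this quotient with the mapping cone $C(f_\bullet)$, which yields $L\cok\cong C$. (The same five-lemma argument also shows that the mapping cone inverts quasi-isomorphisms, so the functor $C\colon D(\cA^{[1]})\to D(\cA)$ in the statement is well-defined to begin with.)

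I expect the main obstacle to be the bookkeeping in part (iii): pinning down the right class of $\cok$-acyclic objects (the degreewise split monomorphisms), checking that the mapping cylinder realizes it as a left deformation, and verifying via the five lemma that $\cok$ is homotopically well behaved on it. Once that is in place, the identification of $L\cok$ with the mapping cone is a formal degreewise computation, and parts (i)--(ii) are immediate. An alternative, bypassing the deformation formalism, would be to define $C$ directly on $D(\cA^{[1]})$, to construct a comparison $2$-cell $C\circ\gamma\Rightarrow\gamma'\circ\cok$ out of the cylinder retraction --- where $\gamma\colon\nCh(\cA^{[1]})\to D(\cA^{[1]})$ and $\gamma'\colon\nCh(\cA)\to D(\cA)$ are the localizations --- and to check directly that it has the universal property of a total left derived functor; this comes down to the same quasi-isomorphism estimates.
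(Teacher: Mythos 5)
Your proof is correct and follows the same overall strategy as the paper's: parts (i) and (ii) are elementary, and for (iii) both arguments approximate a chain map $f\colon x\to y$ (viewed as an object of $\nCh(\cA^{[1]})$) by a degreewise split monomorphism quasi-isomorphic to it, identify the resulting levelwise cokernel with the mapping cone, and use part (ii) in the guise of a five-lemma argument to verify that $\cok$ carries quasi-isomorphisms between such monomorphisms to quasi-isomorphisms.

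The cosmetic differences are worth recording. For (i) you recognize $\cok$ as the left adjoint of $Z\mapsto(0\to Z)$, whereas the paper reads off right exactness from the snake lemma; both work, and the paper itself uses the adjoint argument one proposition later for the derived pushout. For (iii) your monomorphic replacement is the cylinder inclusion $\iota_f\colon x\to\mathrm{Cyl}(f)$, while the paper's is the induced map $x\to Cx\oplus y$ where $Cx$ is the cone on the complex $x$; these are different complexes (same underlying graded object, but the cylinder twists the differential through $f$) yet both are degreewise split monomorphisms with the same cokernel $\mathrm{Cone}(f)$, and both natural maps down to $f$ kill an acyclic summand, so they serve interchangeably. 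Finally, you organize the construction via the left-deformation formalism, whereas the paper argues directly that the endofunctor $f\mapsto Cf$ of $\nCh(\cA^{[1]})\to\nCh(\cA)$ preserves all quasi-isomorphisms (by precomposing with the monomorphic replacement and appealing to (ii)) and hence descends to derived categories, then remarks that ``standard techniques'' identify the resulting pair $(C,\varepsilon)$ as $L\cok$ --- which is exactly the fall-back universal-property route you sketch at the end. No gap.
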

\begin{proof}
A short exact sequence in $\cA^{[1]}$ is simply a morphism of short exact sequences in $\cA$,
\[
\xymatrix{
0\ar[r]&x'\ar[r]\ar[d]_-{f'}&x\ar[r]\ar[d]^-f&x''\ar[r]\ar[d]^-{f''}&0\\
0\ar[r]&y'\ar[r]&y\ar[r]&y''\ar[r]&0.
}
\]
\begin{enumerate}
\item This is an immediate consequence of the snake lemma, which yields a 6-term exact sequence
\[
0\to \ker(f')\to\ker(f)\to\ker(f'')\to\cok(f')\to\cok(f)\to\cok(f'')\to 0.
\]
\item In the case of monomorphisms, the above sequence reduces to a short exact sequence, so also this statement is immediate from the snake lemma.
\item While the third statement does not follow as directly from the snake lemma, the snake lemma suggests what we are supposed to do. Namely, we should approximate an arbitrary chain map up to quasi-isomorphism by a monomorphic one and apply the cokernel to it instead. The details of this are as follows. 

Given a chain map $f\colon x\to y$, we note that $\cok(f)$ is the pushout in the quadrilateral on the right in 
\begin{equation}\label{eq:C-cok}
\vcenter{
\xymatrix{
x\ar[r]^-f\ar[d]_-i&y\ar@{-->}[rdd]^-{\cof(f)}& &&& x\ar[r]^-f\ar[d]&y\ar@{-->}[rdd]\\
Cx\ar@{-->}[rrd]&& &&& 0\ar@{-->}[rrd]&&\\
&&Cf\ar@{..>}[rrrrr] &&& && \cok(f).
}
}
\end{equation}
The intended monomorphic approximation up to quasi-isomorphism is displayed in the remaining quadrilateral. Therein, $x\mapsto Cx$ denotes the classical functor which sends a \emph{chain complex} to its cone (see, for instance, \cite{gelfand-manin:homological,weibel:homological}). The relevant key facts about this construction are that, first, the cone $Cx$ is an acyclic complex and, second, it comes with a natural monomorphic chain map $i\colon x\to Cx$. Now, the cone $Cf$ of the \emph{chain map}~$f$ is defined by the pushout square on the left in \eqref{eq:C-cok}. Note that, since the induced map $x\to Cx\oplus y$ is a monomorphism, by the first two parts the resulting functor $C\colon\nCh(\cA^{[1]})\to\nCh(\cA)\colon f\mapsto Cf$ preserves quasi-isomorphisms. Consequently, it descends to a cone functor at the level of derived categories
\[
C\colon D(\cA^{[1]})\to D(\cA).
\]
In order to exhibit this as the total left derived functor of the cokernel, it only remains to construct a corresponding natural transformation. Since the cone $Cx$ is acyclic, the unique chain map $Cx\to 0$ together with the identities $\id_x$ and $\id_y$ induces a levelwise quasi-isomorphism 
\[
(Cx\ot x\to y)\to (0\ot x\to y),
\]
i.e.,~the intended monomorphic resolution. At the level of pushouts this yields a natural chain map $Cf\to\cok (f)$ (the dotted arrow in \eqref{eq:C-cok}), which in turn gives rise to the intended natural transformation in
\[
\xymatrix{
\nCh(\cA^{[1]})\ar[r]^-C\ar[d]_-\gamma&\nCh(\cA)\ar[d]^-\gamma\\
D(\cA^{[1]})\ar[r]_-C&D(\cA).\ultwocell\omit{\varepsilon}
}
\]
Standard techniques imply that $(C,\varepsilon)$ is a model for $L\cok.$ 
\end{enumerate}
(For more details and a more systematic explanation we refer to \cite[\S3]{groth:thy-of-der}.)
\end{proof}

We want to briefly discuss this result.

\begin{rmk}\label{rmk:der-mor}
For every abelian category $\cA$ the category of morphisms $\cA^{[1]}$ is again abelian. It is important to distinguish the two categories
\[
D(\cA^{[1]})\qquad\text{and}\qquad D(\cA)^{[1]},
\]
the \emph{derived category of the morphism category} and the \emph{morphism category of the derived category}. A relation between these categories is provided by a forgetful functor
\[
\ndia_{[1]}\colon D(\cA^{[1]})\to D(\cA)^{[1]}
\]
which is constructed as follows. An object $X\in D(\cA^{[1]})$ is a chain complex of morphisms which we can hence rewrite as a functor $X\colon[1]\to\nCh(\cA)$. Written this way, we can compose $X$ with the localization functor $\gamma\colon\nCh(\cA)\to D(\cA)$, and we make the definition
\[
\ndia_{[1]}(X)=\gamma\circ X\colon [1]\to D(\cA).
\] 
The crucial observation now is that this functor discards relevant information. In particular, the functorial cone from \autoref{prop:C-cok} does not factor through this underlying diagram functor,
\[
\xymatrix{
D(\cA^{[1]})\ar[r]^-C\ar[d]_-{\ndia_{[1]}}&D(\cA)\\
D(\cA)^{[1]}\ar[ru]_-{\nexists C}.&
}
\]
(This fails already for vector spaces over a field as is detailed in \cite[\S5]{groth:thy-of-der}.)
\end{rmk}

Thus, to put this into plain English, there is a functorial cone construction on the \emph{derived category of the morphism category} but not on the \emph{morphism category of the derived category}. In order to develop more intuition for these underlying diagram functors, we collect the following straightforward result \cite[Cor.~3.9]{bg:cubical}.

\begin{eg}\label{eg:dia-H}
Let $R$ be a semi-simple ring, let $B$ be a category with finitely many objects, and let $RB$ be the $R$-linear category algebra. There is a similar underlying diagram functor
\[
\ndia_B\colon D(\Mod{R}^B)\to D(R)^B\colon X\mapsto\gamma\circ X,
\]
which is equivalent to the $\lZ$-graded homology functor $H_\ast\colon D(RB)\to \Mod{RB}^\lZ$.
\end{eg}

This example illustrates very nicely that underlying diagram functors discard relevant information. It turns out that not only these underlying diagram functors tend to fail to be equivalences, but also that the categories
\[
D(\cA^B)\qquad\text{and}\qquad D(\cA)^B
\]
are not equivalent (see \cite[\S5]{groth:thy-of-der} for an explicit example). Moreover, there is no simple recipe which allows us to reconstruct derived categories of diagram categories (categories of the form $D(\cA^B)$) from the derived category $D(\cA)$. If we want to study derived (co)limits at the level of derived categories only (in contrast to the approaches in \autoref{discl:higher-cats}), then we have to keep track of the categories $D(\cA^B)$ themselves. 

Before we state a general theorem about the existence of derived (co)limits (\autoref{thm:grothendieck}), we give an elementary proof in the following central example. We denote by $\ulcorner$ the full subposet of the cube looking like:
\begin{equation}\label{eq:span}
\vcenter{
\[
\xymatrix{
(0,0)\ar[r]\ar[d]&(0,1)\\
(1,0)&
}
\]
}
\end{equation}
Correspondingly, the diagram category $\cA^\ulcorner$ is the category of spans in $\cA$ and the colimit functor
\begin{equation}\label{eq:pushout-ab}
\colim_{\ulcorner}\colon\cA^\ulcorner\to\cA
\end{equation}
simply forms pushouts.

\begin{prop}\label{prop:der-push}
Let $\cA$ be an abelian category. The pushout functor \eqref{eq:pushout-ab} has a total left derived functor
\[
L\colim_\ulcorner\colon D(\cA^\ulcorner)\to D(\cA).
\]
\end{prop}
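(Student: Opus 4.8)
The plan is to mimic the argument of \autoref{prop:C-cok}: we exhibit a point-set-level functor $\nCh(\cA^\ulcorner)\to\nCh(\cA)$ computing pushouts, check it descends to derived categories, and equip it with the structure of a total left derived functor. First I would recall that the $1$-categorical pushout functor $\colim_\ulcorner\colon\cA^\ulcorner\to\cA$ is right exact (it is a colimit) and, more importantly, that it is exact when restricted to spans $(y\ot x\mono z)$ in which the map $x\to z$ is a monomorphism; this is the analogue of parts (i)--(ii) of \autoref{prop:C-cok} and again follows from a snake-lemma computation, since for a monomorphic leg the pushout is computed by a short exact sequence $0\to x\to y\oplus z\to \colim\to 0$ that is preserved by passing to a morphism of such sequences.

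Next, as in the proof of \autoref{prop:C-cok}, I would produce the monomorphic replacement at the level of chain complexes. Given a span $S=(y\ot x\to z)$ in $\nCh(\cA)$, replace it by $S'=(Cx\oplus y\ot x\to z)$, where $x\mapsto Cx$ is the classical mapping-cone-of-the-identity functor on complexes, $Cx$ is acyclic, and $i\colon x\to Cx$ is a natural monomorphism; the composite $x\to Cx\oplus y\oplus z$ (hence also $x\to Cx\oplus y$) is then a monomorphism levelwise. The assignment $S\mapsto \colim_\ulcorner(S')$ defines a functor $P\colon\nCh(\cA^\ulcorner)\to\nCh(\cA)$, and by the exactness-on-monomorphisms statement together with right exactness it preserves quasi-isomorphisms (a quasi-isomorphism of spans induces a quasi-isomorphism of the replaced spans, and $\colim_\ulcorner$ applied to a short exact sequence of spans with monomorphic legs gives a short exact sequence of complexes, whence a long exact sequence forcing the induced map on homology to be an isomorphism). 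Therefore $P$ descends to a functor $L\colim_\ulcorner\colon D(\cA^\ulcorner)\to D(\cA)$.

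Finally I would supply the comparison $2$-cell exhibiting $(L\colim_\ulcorner,\varepsilon)$ as a model for the total left derived functor of $\colim_\ulcorner$. Since $Cx$ is acyclic, the maps $Cx\to 0$, $\id_y$, $\id_z$ assemble into a levelwise quasi-isomorphism of spans $S'\to S$, and applying $\colim_\ulcorner$ gives a natural chain map $P(S)=\colim_\ulcorner(S')\to\colim_\ulcorner(S)$; postcomposing with $\gamma$ yields the $2$-cell $\varepsilon\colon \gamma\circ\colim_\ulcorner\Rightarrow L\colim_\ulcorner\circ\,\gamma$ in the square
\[
\xymatrix{
\nCh(\cA^\ulcorner)\ar[r]^-{\colim_\ulcorner}\ar[d]_-\gamma&\nCh(\cA)\ar[d]^-\gamma\\
D(\cA^\ulcorner)\ar[r]_-{L\colim_\ulcorner}&D(\cA).\ultwocell\omit{\varepsilon}
}
\]
That this pair satisfies the universal property of the total left derived functor follows by the same standard argument as in \autoref{prop:C-cok}: any span admits a quasi-isomorphism from one with monomorphic leg (e.g.\ the above replacement), on which $\colim_\ulcorner$ is already homotopically meaningful. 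The only genuine subtlety, as in the cone case, is the verification that $P$ really does preserve quasi-isomorphisms --- i.e.\ that the monomorphic replacement is compatible enough with the snake-lemma computation of part (i)/(ii) --- so I expect that step to be where the actual work lies; everything else is bookkeeping, and for a fully systematic treatment one can appeal to \cite[\S3]{groth:thy-of-der}.
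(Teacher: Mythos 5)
Your proposal follows essentially the same approach as the paper: both mimic the proof of \autoref{prop:C-cok}, observing that the pushout is right exact, becomes exact on spans for which the induced map $x\to y\oplus z$ is a monomorphism, and is functorially resolved by splicing in the (acyclic) cone $Cx$ on one leg; the paper records three equivalent resolutions and picks $(Cx\oplus z\ot x\to y)$, whereas you pick $(Cx\oplus y\ot x\to z)$, but these are interchangeable. Your stated exactness hypothesis (one leg a monomorphism) is a special case of the paper's hypothesis ($x\to y\oplus z$ a monomorphism), and since the replaced span does have a monomorphic leg this suffices for the argument to close.
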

\begin{proof}
The strategy of the proof is very similar to the one in \autoref{prop:C-cok}, and we are more sketchy. As a left adjoint the pushout functor is right exact, and it turns out to be exact on those spans
\[
\xymatrix{
x\ar[r]\ar[d]&y\\
z,&
}
\]
such that the induced map $x\to y\oplus z$ is a monomorphism. Functorial resolutions adapted to the pushout are hence given by any of the spans
\[
\xymatrix{
x\ar[r]\ar[d]&y && x\ar[r]\ar[d]&Cx\oplus y && x\ar[r]\ar[d]&Cx\oplus y\\
Cx\oplus z,& && z,& && Cx\oplus z,& 
}
\]
together with the obvious maps collapsing the cones. Choosing for instance the resolution of the left, the derived pushout functor
\[
L\colim_\ulcorner\colon D(\cA^\ulcorner)\to D(\cA)
\]
is induced by applying $\colim_\ulcorner$ to these resolution. The canonical natural transformation
\[
\xymatrix{
\nCh(\cA^\ulcorner)\ar[r]^-{\colim_\ulcorner}\ar[d]_-\gamma&\nCh(\cA)\ar[d]^-\gamma\\
D(\cA^\ulcorner)\ar[r]_-{L\colim_\ulcorner}&D(\cA)\ultwocell\omit{\varepsilon}
}
\]
again comes from the chain maps which collapse the cones. (For a more systematic approach see \cite[\S7]{groth:thy-of-der}.)
\end{proof}

\subsection{Derived limits and derived Kan extensions}
\label{subsec:der-limit}

In this short subsection we collect key formal properties of the calculus of derived limits in categories of modules. More generally, restriction functors between derived diagram categories have adjoints on both sides. These derived Kan extension functors can be calculated in terms of derived (co)limits. It was the philosophy of Grothendieck \cite[pp.~196-200]{grothendieck:stacks} that the ``entire triangulated information'' of a derived category is encoded by this system of derived diagram categories and derived limit functors.

\begin{notn}\label{notn:diagonal}
For every small category $B$, we denote the derived category of the diagram category $\Mod{R}^B$ by
\[
\D_R(B)=D(\Mod{R}^B).
\]
As a special case $\D_R(\ast)$ is simply the derived category $D(R)$. The diagonal functor $\Delta_B\colon\Mod{R}\to\Mod{R}^B$ is exact and hence induces a similar diagonal functor
\[
\Delta_B\colon\D_R(\ast)\to\D_R(B).
\] 
\end{notn}

Recall that (co)limits are simply adjoints to diagonal functors \cite[\S~IV.2]{maclane}. Generalizing \autoref{prop:der-push} there is the following derived version of this.

\begin{thm}\label{thm:der-(co)lim}
For every small category $B$ there are adjunctions
\[
(L\colim_B,\Delta_B)\colon\D_R(B)\rightleftarrows\D_R(\ast)\quad\text{and}\quad
(\Delta_B,R\mathrm{lim}_B)\colon\D_R(\ast)\rightleftarrows\D_R(B).
\]
\end{thm}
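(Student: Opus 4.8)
The plan is to deduce both derived adjunctions from two observations. First, the underived diagonal $\Delta_B\colon\Mod{R}\to\Mod{R}^B$ is exact, so it preserves quasi-isomorphisms and the induced functor $\Delta_B\colon\D_R(\ast)\to\D_R(B)$ \emph{is} its own total derived functor, both on the left and on the right. Second, the underived adjunctions $(\colim_B,\Delta_B)$ and $(\Delta_B,\lim_B)$ on $\Mod{R}^B$ pass levelwise to chain complexes, and there the right-exact functor $\colim_B$ admits a total left derived functor while the left-exact functor $\lim_B$ admits a total right derived functor. Granting this, the two derived adjunctions are formal: a total left (resp.\ right) derived functor of a left (resp.\ right) adjoint, paired with an exact functor on the other side, descends to an adjunction of derived categories --- equivalently, $\Delta_B$ is right Quillen for the projective model structures and left Quillen for the injective ones, and a Quillen adjunction induces an adjunction on homotopy categories. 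So the real content is the construction of $L\colim_B$ and $R\lim_B$.

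For $L\colim_B$ I would use the deformation recipe already underlying \autoref{prop:C-cok} and \autoref{prop:der-push}: replace an arbitrary complex functorially by one on which $\colim_B$ is exact, apply $\colim_B$, and record the collapse maps as the comparison natural transformation exhibiting the result as the total left derived functor. The adapted class here is supplied by complexes built degreewise from \emph{induced} diagrams $b_!(P)$ with $P$ a projective $R$-module --- in place of the explicit cone construction available in the pushout case --- using that $\Mod{R}^B$ has enough projectives (one has $\Hom_{\Mod{R}^B}(b_!(P),X)\cong\Hom_R(P,X(b))$, exact in $X$ for $P$ projective, and every diagram is a quotient of a direct sum of such). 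Dually, $\Mod{R}^B$ is a Grothendieck category --- a functor category into one --- hence has enough injectives, the coinduced diagrams $b_*(I)$ for $I$ injective, and a functorial resolution by complexes of those turns $\lim_B$ into $R\lim_B\colon\D_R(\ast)\to\D_R(B)$. Verifying the unit and counit then reduces to the underived adjunctions $\colim_B\dashv\Delta_B\dashv\lim_B$ together with the fact that $\Delta_B$ requires no resolution, which is routine.

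The one substantive point --- and the step I expect to be the main obstacle --- is the passage to \emph{unbounded} complexes. Levelwise projective resolutions compute $L\colim_B$ only for bounded-above complexes and levelwise injective resolutions compute $R\lim_B$ only for bounded-below ones; in general one needs \emph{functorial} $K$-projective and $K$-injective resolutions in $\nCh(\Mod{R}^B)$, equivalently the projective and injective model structures on unbounded complexes over this functor category. This is Spaltenstein's theorem on the projective side and its Grothendieck-category counterpart on the injective side (as in the work of Hovey and Gillespie), and it is the only ingredient beyond bookkeeping. Once it is available, the same technique applied to a restriction functor $u^\ast\colon\D_R(A)\to\D_R(B)$ along an arbitrary functor $u\colon B\to A$ yields the derived Kan extensions $u_!\dashv u^\ast\dashv u_\ast$, with the usual pointwise formulas as homotopy (co)limits over comma categories; the theorem above is the special case $A=\ast$.
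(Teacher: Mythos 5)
Your proposal is correct, but the route is genuinely different from the paper's. In the paper, \autoref{thm:der-(co)lim} is derived in one line from \autoref{thm:grothendieck}, and \autoref{thm:grothendieck} is itself obtained not by direct homological constructions but by citing a general theorem on derivators: since $\Mod{R}$ is Grothendieck abelian, $\nCh(R)$ carries a combinatorial Quillen model structure, and one then invokes the fact (from \cite[\S1.3]{groth:ptstab}) that homotopy derivators of combinatorial model categories satisfy (Der1)--(Der4). Your argument is instead the ``elementary'' proof that the paper explicitly says \emph{does not seem to exist in the literature} and would be nice to have: you expand the deformation technique of \autoref{prop:C-cok} and \autoref{prop:der-push} to arbitrary shapes $B$, using that $\Mod{R}^B$ has a set of compact projective generators $b_!(R)$ and, dually, is Grothendieck abelian so has enough injectives, and building the deformations from induced and coinduced diagrams. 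You also correctly reorganize the two adjunctions as Quillen adjunctions --- $\Delta_B$ right Quillen for the projective structures, left Quillen for the injective ones --- so that deriving them is formal once the model structures are in place.

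The one place to be candid with yourself is exactly where you flag it: the explicit cone-based deformations of \autoref{prop:C-cok} and \autoref{prop:der-push} are genuinely finite in character, and for unbounded complexes over a possibly infinite $B$ the passage from ``degreewise projective/injective'' to the right adapted class is Spaltenstein/Hovey--Gillespie, not bookkeeping. So in the end your argument and the paper's both rest on the same foundational input --- well-behaved model structures on $\nCh$ of a Grothendieck abelian functor category --- but you arrive at it from below, by naming the resolutions, rather than from above, by quoting the derivator theorem. What your approach buys is transparency (it is visibly of the same species as \autoref{prop:C-cok} and \autoref{prop:der-push}, and it yields the Kan extensions $u_!\dashv u^\ast\dashv u_\ast$ by exactly the same mechanism, answering the paper's stated wish); what the paper's citation buys is brevity and the bundling of (Der1)--(Der4) into a single known result.
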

\begin{proof}
This is part of the statement of \autoref{thm:grothendieck}.
\end{proof}

In order to build towards a further generalization, we recall the following definition. Given a functor $u\colon A\to B$ between small categories, there is the \emph{restriction} or \emph{precomposition functor}
\begin{equation}\label{eq:restriction-underived}
u^\ast\colon\Mod{R}^B\to\Mod{R}^A\colon X\mapsto X\circ u.
\end{equation}
In particular, for an object $b\in B$, we obtain the \emph{evaluation functor} 
\[
b^\ast\colon\Mod{R}^B\to\Mod{R}.
\]
In this case we simplify notation by writing $f_b\colon X_b\to Y_b$ for the value of $f\colon X\to Y$ under the evaluation functor.

\emph{Kan extensions} are some kind of relative versions of (co)limits. To explain what we mean by this it suffices to identify diagonal functors as restriction functors. In more detail, for every small category $B$ let $\pi_B\colon B\to\ast$ be the unique functor to the terminal category. Note that under the isomorphism $\Mod{R}^\ast\cong\Mod{R}$ the functors $\Delta_B$ and $(\pi_B)^\ast$ are identified. Thus, to put this differently, the (co)limit functors are adjoints to restriction along these particular functors. It turns out that the existence of limits and colimits in $\Mod{R}$ also guarantees the existence of adjoints to arbitrary restriction functors \eqref{eq:restriction-underived}. Given a functor $u\colon A\to B$ between small categories, there are adjunctions
\[
(u_!,u^\ast)\colon\Mod{R}^A\rightleftarrows\Mod{R}^B\quad\text{and}\quad(u^\ast,u_\ast)\colon\Mod{R}^B\rightleftarrows\Mod{R}^A.
\]
The functor $u_!$ is the \textbf{left Kan extension} functor along $u$, and $u_\ast$ is referred to as the \textbf{right Kan extension} along $u$. For the basic theory of these Kan extension functors see \cite[\S X]{maclane} or \cite[\S3.7]{borceux:2}. For a discussed geared towards our current situation we also refer to \cite[\S 6]{groth:thy-of-der}. 

The restriction functors \eqref{eq:restriction-underived} are again exact. Hence, for trivial reasons we obtain derived restriction functors
\[
u^\ast\colon\D_R(B)\to\D_R(A).
\]
As a relative version of \autoref{thm:der-(co)lim}, also the Kan extension adjunctions have derived versions.

\begin{thm}\label{thm:der-Kan}
For every functor $u\colon A\to B$ between small categories there are adjunctions
\[
(u_!,u^\ast)\colon\D_R(A)\rightleftarrows\D_R(B)\quad\text{and}\quad
(u^\ast,u_\ast)\colon\D_R(B)\rightleftarrows\D_R(A).
\]
\end{thm}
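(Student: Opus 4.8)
The plan is to promote each of the three underived adjunctions along $u$ to the derived level by the mechanism already used in \autoref{prop:C-cok} and \autoref{prop:der-push}: resolve by complexes on which the relevant functor is homotopically exact, apply the underived functor there, and transport the unit and counit along the resolutions. The restriction functor $u^\ast\colon\Mod{R}^B\to\Mod{R}^A$ of \eqref{eq:restriction-underived} is exact, hence preserves quasi-isomorphisms and descends unchanged to $u^\ast\colon\D_R(B)\to\D_R(A)$; so only the two Kan extensions, together with the adjunction bijections, need an argument.

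\emph{Left Kan extension.} The category $\Mod{R}^A$ is equivalent to the category of modules over the ringoid $RA$, so it is abelian with enough projectives, a generating family being the representable diagrams $a_!R$ for $a\in A$. As a left adjoint, $u_!$ is right exact, and since its right adjoint $u^\ast$ is exact, $u_!$ carries projectives to projectives --- concretely $u_!(a_!R)\cong(ua)_!R$, because left Kan extensions compose. For \emph{unbounded} complexes one resolves not by bounded-above complexes of projectives but by homotopically projective (K-projective) ones: every $X\in\nCh(\Mod{R}^A)$ admits a functorial quasi-isomorphism $PX\to X$ with $PX$ K-projective. From the underived adjunction $\Hom(u_!P,Z)\cong\Hom(P,u^\ast Z)$ at the level of chain homotopy categories, together with the exactness of $u^\ast$, one sees that $u_!$ preserves K-projectivity; being additive it also preserves chain-homotopy equivalences, hence quasi-isomorphisms between K-projective complexes. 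Thus $Lu_!:=u_!\circ P$ is a well-defined functor $\D_R(A)\to\D_R(B)$, and for $X$ K-projective and $Y$ arbitrary the chain of isomorphisms
\[
\Hom_{\D_R(B)}(u_!X,Y)\cong\Hom_{K(\Mod{R}^B)}(u_!X,Y)\cong\Hom_{K(\Mod{R}^A)}(X,u^\ast Y)\cong\Hom_{\D_R(A)}(X,u^\ast Y)
\]
holds, where $K(-)$ denotes the chain homotopy category, the outer isomorphisms use K-projectivity of $u_!X$ and of $X$, and the middle one is the underived adjunction; replacing a general object by $PX\to X$ and checking naturality gives the adjunction $(Lu_!,u^\ast)\colon\D_R(A)\rightleftarrows\D_R(B)$.

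\emph{Right Kan extension.} This is formally dual: $\Mod{R}^B$ is a Grothendieck abelian category, hence has enough injectives and functorial homotopically injective (K-injective) resolutions $Y\to IY$; the left-exact functor $u_\ast$ preserves injectives and K-injectives because its left adjoint $u^\ast$ is exact. So $Ru_\ast:=u_\ast\circ I$ is well defined on $\D_R(B)$, and the computation above --- now using that $IY$ is K-injective --- yields $(u^\ast,Ru_\ast)\colon\D_R(B)\rightleftarrows\D_R(A)$. Writing $u_!$ and $u_\ast$ for $Lu_!$ and $Ru_\ast$ gives the statement. Equivalently, one may deduce the theorem from \autoref{thm:der-(co)lim} via the pointwise formulas $(u_!X)_b\cong\colim_{u/b}X$ and $(u_\ast X)_b\cong\lim_{b/u}X$ over the comma categories, or simply observe that $u^\ast$ is right Quillen for the projective and left Quillen for the injective model structure on chain complexes of diagrams; all three routes agree.

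The one step that is not pure formality is the treatment of unbounded complexes: there is no functorial projective resolution by bounded-above complexes, nor injective resolution by bounded-below complexes, so one genuinely needs the (functorial) existence over the ringoid $RA$ of K-projective and K-injective resolutions --- equivalently, the projective and injective model structures on $\nCh(\Mod{R}^A)$. Granting this, the rest is just transporting an additive adjunction along a localization, exactly as in \autoref{prop:C-cok} and \autoref{prop:der-push}.
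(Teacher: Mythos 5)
Your proof is correct, but it takes a genuinely different route from the paper. The paper's proof of \autoref{thm:der-Kan} is a one-line deferral to \autoref{thm:grothendieck}, whose own proof then cites the machinery of combinatorial Quillen model structures on $\nCh(\cA)$ for a Grothendieck abelian category $\cA$, referring out to the literature for both the existence of those model structures and for the derivator-level statements. By contrast, you build the two adjunctions by hand: you observe that $u^\ast$ is exact and hence needs no deriving, resolve on the left by K-projective complexes over the ringoid $RA$, check that $u_!$ preserves K-projectivity via the adjunction and exactness of $u^\ast$, and transport the adjunction through the localization; the right Kan extension case is dual using K-injective resolutions in the Grothendieck abelian category $\Mod{R}^B$. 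The paper explicitly remarks that ``an elementary proof building on classical homological algebra only does not seem to exist in the literature'' and that it would be nice to expand the techniques of \autoref{prop:C-cok} and \autoref{prop:der-push} to get one; your argument is essentially that elementary proof, granting Spaltenstein-style K-projective and K-injective resolutions as input rather than the full strength of the projective/injective model structures. What your route buys is a self-contained homological argument avoiding the model-categorical apparatus; what it costs is that the existence of functorial K-projective/K-injective resolutions for unbounded complexes is itself not trivial (though considerably lighter than the general combinatorial model structure result). Your closing observation that the model-category route gives the same answer closes the circle with what the paper actually cites.
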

\begin{proof}
This is part of the statement of \autoref{thm:grothendieck}.
\end{proof}

For every functor $u\colon A\to B$ we hence have a derived left Kan extension functors and a derived right Kan extension functor
\[
u_!\colon\D_R(A)\to\D_R(B)\quad\text{and}\quad u_\ast\colon\D_R(A)\to\D_R(B).
\]
To be of any use, it is important to be able to calculate these derived Kan extension functors. More specifically, given $X\in\D_R(A)$ and $b\in B$ we would like to be able to express the values
\[
u_!(X)_b, u_\ast(X)_b\in\D_R(\ast)\cong D(R)
\]
of the derived Kan extensions $u_!(X),u_\ast(X)\in\D_R(B)$ in terms of $X$, $u$, and $b$ only. Before dealing with this derived situation, let us recall the corresponding fact for module categories.

\begin{con}\label{con:ptws-kan}
Let $u\colon A\to B$ be a functor between small categories and let $X\colon A\to\Mod{R}$ be a diagram of modules. In order to construct the left Kan extension $u_!(X)\colon B\to\Mod{R}$ we recall the definition of \textbf{slice categories}. For every object $b\in B$ the slice category $(u/b)$ of objects $u$-over $b$ has the following description. Objects in $(u/b)$ are pairs $(a,f\colon ua\to b)$ consisting of an object $a\in A$ and a morphism $f\colon ua\to b$ in $B$. A morphism $g\colon(a,f)\to(a',f')$ in $(u/b)$ is a morphism $g\colon a\to a'$ in $A$ such that $f'\circ u(g)=f$. Note that there is forgetful functor $p\colon (u/b)\to A\colon (a,f)\mapsto a$, and to construct $u_!(X)_b$ it suffices to form a colimit over this slice category. More precisely, for every $b\in B$ we define
\[
u_!(X)_b=\colim_{(u/b)} X\circ p \in\Mod{R}.
\]
One checks that this defines a diagram $u_!(X)\colon B\to\Mod{R}$, the left Kan extension of $X$ along $u$. 

Dually, for every $b\in B$ there is the slice category $(b/u)$ of objects $u$-under~$b$. And also this category comes with a forgetful functor $q\colon (b/u)\to A\colon (a,f\colon b\to ua)\mapsto a$. It turns out that also the right Kan extension $u_\ast(X)$ can be constructed pointwise by the formula
\[
u_\ast(X)_b=\mathrm{lim}_{(b/u)} X\circ q\in\Mod{R}.
\]
\end{con} 

This concludes the recap of the pointwise formulas for Kan extensions of diagrams of modules (which, of course, generalizes to arbitrary complete and cocomplete categories). The following theorem shows that similar formulas extend to the derived Kan extensions in \autoref{thm:der-Kan}. For convenience, we also summarize some additional key properties of the calculus of derived limits, derived colimits, and derived Kan extensions over a ring. 

\begin{thm}[Grothendieck]\label{thm:grothendieck}
Let $R$ be a ring. The formation of derived categories of diagram categories defines a $2$-functor
\[
\D_R\colon B\mapsto\D_R(B)= D(\Mod{R}^B)
\]
from small categories to not necessarily small categories. The $2$-functor $\D_R$ enjoys the following properties.
\begin{enumerate}
\item[(Der1)] The canonical inclusion functors $B_j\to\coprod_{i\in I} B_i, j\in I,$ induce an equivalence of categories
  \[
  \D_R(\coprod_{i\in I} B_i)\toiso\prod_{i\in I}\D_R(B_i).
  \]
\item[(Der2)] A morphism $f\colon X\to Y$ in $\D_R(B)$ is an isomorphism if and only if the morphisms $f_b\colon X_b\to Y_b, b\in B,$ are isomorphisms in $\D_R(\ast)=D(R).$
\item [(Der3)] For every functor $u\colon A\to B$, the restriction functor $u^\ast\colon\D_R(B)\to\D_R(A)$ has a left adjoint $u_!$ and a right adjoint $u_\ast$,
\begin{equation}\label{eq:der-kan-adj}
(u_!,u^\ast)\colon\D_R(A)\rightleftarrows\D_R(B),\qquad (u^\ast,u_\ast)\colon\D_R(B)\rightleftarrows\D_R(A).
\end{equation}
\item[(Der4)] For every functor $u\colon A\to B$, the functors $u_!,u_\ast\colon\D_R(A)\to\D_R(B)$ can be calculated pointwise. More specifically, for $X\in\D_R(A)$ and $b\in B$ there are canonical isomorphisms
\[
L\colim_{(u/b)}p^\ast (X)\toiso u_!(X)_b\quad\text{and}\quad u_\ast(X)_b\toiso R\mathrm{lim}_{(b/u)}q^\ast (X).
\]
\end{enumerate}
\end{thm}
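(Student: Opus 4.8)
The plan is to realize $\D_R$ as the homotopy $2$-functor of a model category and to extract (Der1)--(Der4) by standard model-categorical bookkeeping; this is exactly the argument that \S\ref{sec:crash} will carry out in greater generality (``the homotopy $2$-functor of a combinatorial model category is a derivator''). First one observes that kernels, cokernels and hence homology in a functor category $\Mod{R}^B$ are formed objectwise, so a chain map in $\nCh(\Mod{R}^B)$ is a quasi-isomorphism precisely when it is one after evaluation at each $b\in B$; thus $W_{\Mod{R}^B}$ consists of the objectwise quasi-isomorphisms. Since $\Mod{R}^B\cong\Mod{RB}$ for the $R$-linear category algebra $RB$, this is a Grothendieck category with a projective set of generators (the representables $b_!R$), so $\nCh(\Mod{R}^B)$ carries the projective model structure --- weak equivalences the objectwise quasi-isomorphisms, fibrations the degreewise epimorphisms --- presenting $\D_R(B)=D(\Mod{R}^B)$, and also the injective model structure (cofibrations the monomorphisms) with the same weak equivalences. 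What is actually being used here is the Spaltenstein-style theory of dg-projective and dg-injective resolutions that makes \emph{unbounded} complexes tractable. With this in place, the claim that $\D_R$ is a strict $2$-functor and that the restriction functors $u^\ast$ of (Der3) exist is free of charge: underived restriction is exact and strictly functorial, hence descends verbatim to the localizations.

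For (Der1) one uses that a functor out of a coproduct of categories is a tuple of functors, so $\Mod{R}^{\coprod_i B_i}\cong\prod_i\Mod{R}^{B_i}$ and hence $\nCh(\Mod{R}^{\coprod_i B_i})\cong\prod_i\nCh(\Mod{R}^{B_i})$ compatibly with the objectwise quasi-isomorphisms; since localizing a product of categories at the product of the weak-equivalence classes yields the product of the localizations (check the universal property, or pass to the product model structure), the canonical inclusions induce the asserted equivalence. For (Der2) the crux is that the evaluation functor $b^\ast\colon\Mod{R}^B\to\Mod{R}$ is \emph{both left and right Quillen} for the projective model structures: it preserves degreewise epimorphisms and objectwise quasi-isomorphisms, and --- being a left adjoint whose right adjoint $b_\ast$ (a pointwise product) is exact --- it preserves projectives and therefore cofibrant objects. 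Hence a morphism $f\colon X\to Y$ in $\D_R(B)$ is represented by an honest chain map $\tilde f$ between cofibrant objects of $\nCh(\Mod{R}^B)$ (all objects are fibrant), and each $f_b$ is represented by $\tilde f_b$ between cofibrant objects of $\nCh(R)$; if every $f_b$ is invertible in $D(R)$ then every $\tilde f_b$ is a quasi-isomorphism, so $\tilde f$ is an objectwise quasi-isomorphism, i.e.\ a weak equivalence, so $f$ is invertible. The converse is immediate, since $b^\ast$ descends to a functor $\D_R(B)\to D(R)$.

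For (Der3) it remains to build the adjoints to $u^\ast$. Deriving $(u_!,u^\ast)$ with respect to the \emph{projective} model structures, $u^\ast$ is right Quillen, so there is a total left derived functor $\mathbb{L}u_!$ with $\mathbb{L}u_!\dashv u^\ast$ on homotopy categories (no replacement is needed on the $u^\ast$-side, as $u^\ast$ is homotopical). Deriving $(u^\ast,u_\ast)$ with respect to the \emph{injective} model structures, $u^\ast$ now preserves monomorphisms and weak equivalences, hence is left Quillen, yielding a total right derived functor $\mathbb{R}u_\ast$ with $u^\ast\dashv\mathbb{R}u_\ast$. As the projective and injective model structures share their weak equivalences, both derived adjunctions live on the single system $\D_R$, which is (Der3); writing $u_!:=\mathbb{L}u_!$ and $u_\ast:=\mathbb{R}u_\ast$, the cases $A=\ast$ and $B=\ast$ recover \autoref{thm:der-(co)lim} and the general case recovers \autoref{thm:der-Kan}.

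Finally, for (Der4) I would derive the underived pointwise formula of \autoref{con:ptws-kan}: with $\pi_{(u/b)}\colon(u/b)\to\ast$ one has $b^\ast\circ u_!\cong(\pi_{(u/b)})_!\circ p^\ast$ as functors $\Mod{R}^A\to\Mod{R}$. Applying this to a projective-cofibrant replacement $QX\to X$ and using that $u_!$ is left Quillen while $b^\ast$ is homotopical, the left side computes $b^\ast\mathbb{L}u_!(X)=\colim_{(u/b)}\big(p^\ast QX\big)$; since $p^\ast QX$ is built (via colimits and retracts preserved by $p^\ast$) from the ``free'' diagrams $p^\ast(b'_!P)$, and for each such building block the associated category of elements over $(u/b)$ has an initial object in every connected component --- so that the naive colimit already computes the homotopy colimit --- one gets $\colim_{(u/b)}(p^\ast QX)\simeq L\colim_{(u/b)}(p^\ast X)$, i.e.\ $u_!(X)_b\cong L\colim_{(u/b)}p^\ast(X)$; dualizing with injective-fibrant replacements gives $u_\ast(X)_b\cong R\mathrm{lim}_{(b/u)}q^\ast(X)$. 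The two places where real work hides are: maintaining the Spaltenstein-style resolution theory for \emph{unbounded} complexes (this is what distinguishes the statement from the classical bounded story and legitimises all the ``$\mathbb{L}$/$\mathbb{R}$ exist'' clauses), and the cofinality input in (Der4) --- the homotopy-discreteness of those categories of elements --- which is the only genuinely homotopy-theoretic ingredient; granting these, every remaining step is formal. As noted, from the vantage point of \S\ref{sec:crash} the whole theorem is subsumed by the general fact that a combinatorial model category, here $\nCh(R)$, has a homotopy derivator.
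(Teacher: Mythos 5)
Your proof follows essentially the same route as the paper, which is to present $\nCh(\Mod{R}^B)$ by a (combinatorial) model structure and then extract (Der1)--(Der4) from model-categorical considerations; the paper's own proof is simply a pointer to this general model-category-to-derivator theorem via \cite[\S1.3]{groth:ptstab}, whose content your argument unpacks (projective/injective structures for the two Kan extensions, evaluation being both-sided Quillen for (Der2), and the cofinality input for (Der4)). The only caveat is that your (Der4) step (the ``initial object in every connected component of the category of elements'' claim, which is what makes $p^\ast$ send generating projective cofibrations to coproducts of generating projective cofibrations) is stated very tersely, but the idea is correct and is indeed the crux of the pointwise formula.
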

\begin{proof}
While an elementary proof building on classical homological algebra only does not seem to exist in the literature, we think that it would be nice to expand on the techniques behind \autoref{prop:C-cok} and \autoref{prop:der-push} in order to obtain such a proof. Currently, this result is a consequence of more general theorems. The category $\Mod{R}$ is Grothendieck abelian and $\nCh(R)$ can hence be endowed with a combinatorial Quillen model structure (see \cite[Prop.~3.13]{beke:sheafifiable}, \cite[\S2]{hovey:sheaves}, or \cite[\S\S2-3]{cisinski-deglise:local-stable}). For combinatorial Quillen model categories a reasonably simple proof of a variant of this theorem can be found in \cite[\S1.3]{groth:ptstab}. 
\end{proof}

\begin{rmk}
We want to stress that, by definition, $\D_R$ sends a small category $B$ to the derived category $D(\Mod{R}^B)$. In this assignment the derived category is considered as a plain category only and hence not as a triangulated category. In fact, the main goal of \S\ref{subsec:BP} and \S\ref{subsec:octa} now consists of showing how to construct the Verdier triangulation on $D(R)$ (\autoref{thm:verdier}) using certain nice properties of this $2$-functor $\D_R$ only. Those arguments will indicate the above-mentioned intimate relation between the triangulation and symmetries.

Formally, in the remainder of this section we focus on modules over a ring. We want to stress, however, that variants of \autoref{thm:grothendieck} arise in many other situations as well. Moreover, many of the arguments which are sketched further below apply almost verbatim to those more general situations (see \S\ref{sec:crash}).
\end{rmk}

The following key property will be used over and over again. 

\begin{prop}\label{prop:Kan-ff-D_R}
Let $R$ be a ring and let $u\colon A\to B$ a fully faithful functor. The Kan extension functors $u_!,u_\ast\colon\D_R(A)\to\D_R(B)$ are fully faithful.
\end{prop}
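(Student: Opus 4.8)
The plan is to reduce the statement to a pointwise computation by means of the standard criterion for full faithfulness in terms of units and counits. Recall that a functor with a right adjoint is fully faithful if and only if the unit of the adjunction is a natural isomorphism, and dually that a functor with a left adjoint is fully faithful if and only if the counit is a natural isomorphism. By (Der3) we have the adjunctions $(u_!,u^\ast)$ and $(u^\ast,u_\ast)$, so it suffices to show that the unit $\eta\colon\id_{\D_R(A)}\to u^\ast u_!$ and the counit $\varepsilon\colon u^\ast u_\ast\to\id_{\D_R(A)}$ are isomorphisms. By (Der2) this may be tested objectwise: I must show that for every $X\in\D_R(A)$ and every $a\in A$ the maps $\eta_{X,a}\colon X_a\to (u_!X)_{ua}$ and $\varepsilon_{X,a}\colon (u_\ast X)_{ua}\to X_a$ are isomorphisms in $D(R)$.

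This is exactly where full faithfulness of $u$ is used. Applying the pointwise formulas of (Der4) with $b=ua$, we have $(u_!X)_{ua}\cong L\colim_{(u/ua)}p^\ast X$ and $(u_\ast X)_{ua}\cong R\mathrm{lim}_{(ua/u)}q^\ast X$. Now the slice category $(u/ua)$ has a terminal object, namely $t=(a,\id_{ua})$: given any object $(a',f\colon ua'\to ua)$, full faithfulness of $u$ provides a unique morphism $g\colon a'\to a$ in $A$ with $u(g)=f$, and such a $g$ is precisely a (necessarily unique) morphism $(a',f)\to(a,\id_{ua})$ in $(u/ua)$. Dually, $(a,\id_{ua})$ is an initial object of $(ua/u)$. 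Next I would record the auxiliary fact that, in this derived setting, a colimit over a small category $J$ with terminal object $t$ is computed by evaluation at $t$, and dually a limit over a category with initial object $i$ is evaluation at $i$. This follows from the structure at hand: $t$ terminal means $\pi_J\colon J\to\ast$ is left adjoint to the inclusion $i_t\colon\ast\to J$, and the $2$-functor $\D_R$ of \autoref{thm:grothendieck} sends this adjunction to $i_t^\ast\dashv\pi_J^\ast$; comparing with the adjunction $\pi_{J,!}\dashv\pi_J^\ast$ from (Der3) and using uniqueness of adjoints gives $L\colim_J=\pi_{J,!}\cong i_t^\ast$, and dually $R\mathrm{lim}_J\cong i_i^\ast$. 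Applying this with $J=(u/ua)$, $t=(a,\id_{ua})$ and the diagram $p^\ast X$ (whose value at $t$ is $X_a$, since $p(t)=a$) identifies $(u_!X)_{ua}$ with $X_a$, and dually $(u_\ast X)_{ua}$ with $X_a$.

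It remains to check that the isomorphisms just produced are the components of the unit $\eta$ and the counit $\varepsilon$, rather than merely abstract isomorphisms; this is the only step that requires care, and I expect it to be the main obstacle. Unwinding the comparison isomorphism of (Der4), the component $\eta_{X,a}$ is identified with the canonical coprojection of the diagram $p^\ast X$ at the object $t=(a,\id_{ua})$ into $L\colim_{(u/ua)}p^\ast X$, and this coprojection is an isomorphism precisely because $t$ is terminal (equivalently, the relevant canonical mate is invertible); the argument for $\varepsilon_{X,a}$, via the projection out of $R\mathrm{lim}_{(ua/u)}q^\ast X$ at the initial object, is dual. Once the pointwise statements are in place, (Der2) promotes $\eta$ and $\varepsilon$ to natural isomorphisms, and the adjunction criterion yields that $u_!$ and $u_\ast$ are fully faithful. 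Thus no new idea is needed beyond the bookkeeping in this last identification; everything else is a formal consequence of (Der2), (Der3), (Der4) together with the $2$-functoriality of $\D_R$.
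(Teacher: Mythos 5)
Your argument is correct, and it is a genuinely different route from the one the paper takes. The paper simply cites the general derivator statement \autoref{prop:Kan-ff}, whose own proof is deferred to \cite[Prop.~1.20]{groth:ptstab} and phrased there in terms of homotopy exactness of the square with $\id_A$ on two sides and $u$ on the other two. You instead work directly from the axioms of \autoref{thm:grothendieck}: reduce full faithfulness of $u_!$ (resp.~$u_\ast$) to invertibility of the unit (resp.~counit), use (Der2) to test pointwise, invoke the (Der4) pointwise formula at $b=ua$, and observe that full faithfulness of $u$ makes $(u/ua)$ have the terminal object $(a,\id_{ua})$ (and $(ua/u)$ the initial object $(a,\id_{ua})$), whence the homotopy (co)limit collapses to evaluation. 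Your derivation of that collapse — $\pi_J\dashv i_t$ in $\cCat$, $2$-functoriality turning it into $i_t^\ast\dashv\pi_J^\ast$, and uniqueness of left adjoints forcing $L\colim_J\cong i_t^\ast$ — is exactly right. The step you rightly single out as the delicate one, namely that the resulting abstract isomorphism $X_a\cong(u_!X)_{ua}$ really is the unit component rather than some other isomorphism, is precisely the content that the homotopy-exact-square formalism packages away: the unit is the mate of an identity, and the (Der4) comparison is the mate of the comma-square $2$-cell, so their compatibility is a pasting identity for mates. Your unrolled argument essentially reproves the relevant homotopy exactness at each $b=ua$, which is more self-contained but has to confront this bookkeeping explicitly; the paper's route is shorter once the mate calculus is set up, at the cost of deferring the entire proof to an external reference.
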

\begin{proof}
This is a special case of \autoref{prop:Kan-ff}.
\end{proof}

This result is of central importance in many constructions, and to explain its relevance we collect the following reformulation (based on standard results on adjunctions): For every fully faithful $u\colon A\to B$ and every $X\in\D_R(A)$ the adjunction units and counits
\[
\eta\colon X\toiso u^\ast u_! X\qquad\text{and}\qquad \varepsilon\colon u^\ast u_\ast X\toiso X
\]
are invertible. Thus, this result allows us to extend diagrams defined on smaller categories to larger ones without affecting the diagram on the smaller category, and in this sense \emph{Kan extensions} are proper ``extensions''.

As a first example of the derived Kan extension functors from \autoref{thm:grothendieck}, we reformulate the proof of \autoref{prop:C-cok} in terms of these functors. This needs some preparation.

\begin{notn}
We denote by $\square$ the category $[1]\times[1]$,
\[
\xymatrix{
(0,0)\ar[r]\ar[d]&(0,1)\ar[d]\\
(1,0)\ar[r]&(1,1).
}
\]
For every ring $R$, a \textbf{square} in $\D_R$ is an object $X\in\D_R(\square)$. Such a square is hence a commutative square of chain complexes
\begin{equation}\label{eq:square}
\vcenter{
\xymatrix{
x\ar[r]\ar[d]&y\ar[d]\\
z\ar[r]&w
}
}
\end{equation}
considered as an object in the derived category $D(\Mod{R}^\square).$ Associated to every square, there are the following canonical maps.
\begin{enumerate}
\item Ignoring the chain complex $w$ for a moment, there is the corresponding left derived pushout $(L\colim_{\ulcorner})(X\!\!\mid_{\ulcorner})$ (\autoref{prop:der-push}). This chain complex comes with a canonical comparison map
\begin{equation}\label{eq:cocart}
(L\colim_{\ulcorner})(X\!\!\mid_{\ulcorner})\to w.
\end{equation}
\item Dually, ignoring the chain complex $x$, there is the right derived pullback $R\mathrm{lim}_\lrcorner(X\!\!\mid_{\lrcorner})$, and this chain complex comes with a canonical comparison map
\begin{equation}\label{eq:cart}
x\to (R\mathrm{lim}_{\lrcorner})(X\!\!\mid_{\lrcorner}).
\end{equation}
\end{enumerate}
\end{notn}

\begin{defn}\label{defn:cocart-D_R}
Let $R$ be a ring and let $X\in\D_R(\square)$.
\begin{enumerate}
\item The square $X$ is \textbf{cocartesian} if \eqref{eq:cocart} is an isomorphism in $\D_R(\ast)=D(R)$.
\item The square $X$ is \textbf{cartesian} if \eqref{eq:cart} is an isomorphism in $\D_R(\ast)=D(R)$.
\end{enumerate}
\end{defn}
 
The following two properties of $\D_R$ are crucial to all later constructions.
 
\begin{prop}[Stability of $\D_R$]\label{prop:DR-stable}
Let $R$ be a ring.
\begin{enumerate}
\item The category $\D_R(\ast)=D(R)$ has a zero object.
\item A square $X\in\D_R(\square)$ is cocartesian if and only if it is cartesian. We refer to these squares as \textbf{bicartesian squares}.
\end{enumerate}
\end{prop}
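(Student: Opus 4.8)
The first statement is immediate: the zero module is a zero object in $\Mod{R}$, hence the zero complex is a zero object in $\nCh(R)$, and since the localization functor $\gamma\colon\nCh(R)\to D(R)$ preserves it (it is both initial and terminal, and these are detected by the universal property of the localization), $0$ is a zero object in $D(R)$. The substance of the proposition is entirely in part (ii), the equivalence of cocartesian and cartesian squares.

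The plan is to reduce everything to the classical homological algebra fact that, for a square of chain complexes, being a homotopy pushout is detected on the level of the total complex/mapping cone, and likewise for homotopy pullbacks, and that these two conditions coincide because $\nCh(R)$ is a stable model category (equivalently: because chain complexes are a pointed setting where finite homotopy limits and colimits agree). Concretely, I would first use \autoref{prop:der-push} together with its evident dual (the total right derived pullback functor $R\lim_\lrcorner$) to identify the comparison maps \eqref{eq:cocart} and \eqref{eq:cart} at the point-set level: after replacing the span $x\leftarrow z\rightarrow$ \dots no — after resolving $X\!\!\mid_\ulcorner$ by the explicit cone-resolution of \autoref{prop:der-push}, the derived pushout is computed by an honest pushout along a monomorphism, which is the mapping cylinder construction, so $(L\colim_\ulcorner)(X\!\!\mid_\ulcorner)$ is represented by the double mapping cylinder of $y\leftarrow x\rightarrow z$. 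Dually, $(R\lim_\lrcorner)(X\!\!\mid_\lrcorner)$ is represented by the mapping path object of $y\rightarrow w\leftarrow z$. Then a square $X$ is cocartesian iff the canonical map from the double mapping cylinder of $(y\leftarrow x\rightarrow z)$ to $w$ is a quasi-isomorphism, and cartesian iff the canonical map from $x$ to the double mapping path object of $(y\rightarrow w\leftarrow z)$ is a quasi-isomorphism.

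The heart of the argument is then the classical computation that both of these conditions are equivalent to acyclicity of the total complex
\[
\Tot\bigl(x\to y\oplus z\to w\bigr)
\]
(the complex with $x$ in degree $2$, $y\oplus z$ in degree $1$, $w$ in degree $0$, with the evident differential), i.e.\ to the statement that the mapping cone of $x\to y\oplus z$ is quasi-isomorphic via the canonical map to $w$ — and symmetrically for the fiber. This is where one invokes the standard long exact Mayer--Vietoris sequence of a square of complexes: the cofiber of $\bigl((L\colim_\ulcorner)(X\!\!\mid_\ulcorner)\to w\bigr)$ and the fiber of $\bigl(x\to (R\lim_\lrcorner)(X\!\!\mid_\lrcorner)\bigr)$ are, up to a shift, the same complex, namely $\Tot(x\to y\oplus z\to w)$; hence one vanishes (in $D(R)$) iff the other does. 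This last identification is the only genuinely non-formal input and is proved by writing out the two iterated-cone complexes explicitly and exhibiting a quasi-isomorphism (in fact an isomorphism of complexes after choosing the standard cone resolutions), exactly as in the proofs of \autoref{prop:C-cok} and \autoref{prop:der-push}; one could alternatively cite that $\nCh(R)$ with the quasi-isomorphisms is a stable model category and that in any stable model category a square is a homotopy pushout iff it is a homotopy pullback \cite{hovey:model}.

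The main obstacle I anticipate is purely bookkeeping: keeping the cone/cylinder resolutions of \autoref{prop:der-push} and its dual straight, so that the comparison maps \eqref{eq:cocart} and \eqref{eq:cart} are literally the maps whose cofiber/fiber one is computing, rather than merely maps that agree with them up to natural isomorphism. Once the resolutions are fixed, the identification of the two three-term total complexes is a direct (if slightly tedious) diagram chase, and the Mayer--Vietoris long exact sequence then closes the argument with no further effort. For a fully model-independent account one would instead deduce part (ii) from the corresponding statement for the combinatorial model structure on $\nCh(R)$ used in the proof of \autoref{thm:grothendieck}, where homotopy pushouts and homotopy pullbacks of squares are known to coincide by stability; I would mention this as the conceptual reason and present the explicit chain-level computation as the elementary proof in keeping with the spirit of this section.
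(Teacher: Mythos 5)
Your proposal is correct, and it takes a genuinely different route from the paper's own argument. The paper's proof of (ii) is the indirect one: it invokes the fact that the shift $\Sigma\colon D(R)\to D(R)$ is an equivalence, combines this with \autoref{thm:grothendieck} (that $\D_R$ is a derivator) and \autoref{thm:stable} (for a pointed derivator, $\Sigma$ an equivalence is equivalent to cocartesian $=$ cartesian), and leaves the direct proof to the reader. Your proposal is exactly that direct proof: resolving the span by the explicit cone construction of \autoref{prop:der-push} and its dual, you identify the derived pushout with the double mapping cylinder and the derived pullback with the double mapping path object, and you observe that each of the two comparison maps \eqref{eq:cocart} and \eqref{eq:cart} is a quasi-isomorphism precisely when the three-term total complex $\Tot(x\to y\oplus z\to w)$ is acyclic — which is the symmetric statement that makes the equivalence evident, via the Mayer--Vietoris long exact sequence. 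What the paper's route buys is brevity at the cost of a forward reference to the derivator machinery of \S\ref{sec:crash}; what your route buys is a self-contained argument strictly in the elementary homological-algebra spirit that \S\ref{sec:rep-thy-tria} announces for itself, and it lays bare the concrete reason (a single total complex detects both properties) rather than deferring it to an abstract stability theorem. One small quibble on (i): the assertion that the zero complex remains a zero object after localization ``because these are detected by the universal property of the localization'' is not quite right as stated — initial and terminal objects are not preserved by arbitrary localizations; the clean reason, and the one the paper uses, is simply that $D(R)$ is additive, hence has a zero object.
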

\begin{proof}
The first statement is obvious since $D(R)$ is an additive category. We invite the reader to come up with a direct proof of the second statement. That result also follows more indirectly from the fact that $\Sigma\colon D(R)\to D(R)$ is an equivalence combined with \autoref{thm:grothendieck} and \autoref{thm:stable}.
\end{proof}

Whenever we want to stress that a given square is bicartesian, we decorate it by a square in the middle such as in
\[
\xymatrix{
x\ar[r]\ar[d]\ar@{}[rd]|{\square}&y\ar[d]\\
z\ar[r]&w.
}
\]

\begin{rmk}
On a first view, it seems to be a bit strange, that in \autoref{defn:cocart-D_R} we introduce the notions of cocartesian and cartesian squares in $\D_R$ for every ring~$R$, in order to then observe in \autoref{prop:DR-stable} that these two classes coincide. The point is, of course, that these notions make sense in many other situations as well (see \S\ref{sec:crash}), and that often these classes do not agree. For instance, in a complete and cocomplete category $\cC$ with a zero object, the classes of pushout and pullback squares agree if and only if $\cC$ is equivalent to the terminal category (which is equivalent to $\cC$ being a contractible groupoid). In this precise sense the conjunction of the two properties in \autoref{prop:DR-stable} are invisible to ordinary category theory and they require more refined techniques (see also \autoref{rmk:stable-invisible}).
\end{rmk}

With these basic notions in place, we can now revisit \autoref{prop:C-cok}.
 
\begin{defn}\label{defn:cof-sq-D_R}
Let $R$ be a ring. A \textbf{cofiber square} in $\D_R$ is a cocartesian square $X\in\D_R(\square)$ which vanishes at the lower left corner,
\begin{equation}\label{eq:cof-sq}
\vcenter{
\xymatrix{
x\ar[r]^-f\ar[d]\ar@{}[rd]|{\square}&y\ar[d]\\
0\ar[r]&z.
}
}
\end{equation}
We denote by $\D_R(\square)^\cof\subseteq\D_R(\square)$ the full subcategory spanned by the cofiber squares.\end{defn}

We want to stress that the lower left corner is a zero object in $D(R)$, so it really could be any acyclic chain complex. The intuition is that a cofiber square is essentially determined by the upper horizontal morphism $
(f\colon x\to y)$. This morphism is obtained formally from the cofiber square by restriction along
\begin{equation}\label{eq:hor-mor}
k\colon[1]\to\square\colon i\mapsto (0,i).
\end{equation}

\begin{prop}\label{prop:cof-sq-D_R}
For every ring $R$ restriction along \eqref{eq:hor-mor} induces an equivalence of categories
\[
k^\ast\colon\D_R(\square)^\cof\toiso\D_R([1]).
\]
\end{prop}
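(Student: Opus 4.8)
The plan is to factor $k$ through the span poset $\ulcorner$ of \eqref{eq:span} and to realize $k^\ast$ as a composite of two restriction functors, each of which is an equivalence onto a suitable full subcategory. Concretely, write $k=\iota\circ v$, where $\iota\colon\ulcorner\hookrightarrow\square$ is the full subposet inclusion and $v\colon[1]\to\ulcorner$ is the top edge $i\mapsto(0,i)$, so that $\iota\circ v$ agrees with \eqref{eq:hor-mor}. I would then establish two facts: first, that $\iota^\ast$ restricts to an equivalence $\D_R(\square)^{\mathrm{cocart}}\toiso\D_R(\ulcorner)$ from the full subcategory of cocartesian squares onto all spans; and second, that $v^\ast$ restricts to an equivalence $\D_R(\ulcorner)_0\toiso\D_R([1])$, where $\D_R(\ulcorner)_0\subseteq\D_R(\ulcorner)$ is the full subcategory of spans whose value at $(1,0)$ is a zero object. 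Since a cocartesian square lies in $\D_R(\square)^\cof$ exactly when its restriction to $\ulcorner$ lies in $\D_R(\ulcorner)_0$ (\autoref{defn:cof-sq-D_R}), composing these two equivalences yields $k^\ast\colon\D_R(\square)^\cof\toiso\D_R([1])$.

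For the first fact, I would use that $\iota$ is fully faithful, so by \autoref{prop:Kan-ff-D_R} the left Kan extension $\iota_!\colon\D_R(\ulcorner)\to\D_R(\square)$ is fully faithful and $\iota^\ast$ is a retraction of it; it then suffices to determine for which $X\in\D_R(\square)$ the counit $\iota_!\iota^\ast X\to X$ is an isomorphism. By the pointwise formula in \autoref{thm:grothendieck} (Der4), over the three objects of $\ulcorner$ the relevant slice categories have terminal objects, so $\iota_!\iota^\ast X$ agrees there with $X$; at $(1,1)$ one computes $(\iota/(1,1))\cong\ulcorner$, whence $(\iota_!\iota^\ast X)_{(1,1)}\iso(L\colim_\ulcorner)(X\!\!\mid_\ulcorner)$ with the counit identified with the comparison map \eqref{eq:cocart}. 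Thus the counit is invertible iff $X$ is cocartesian (\autoref{defn:cocart-D_R}), which is precisely the first fact.

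For the second fact, the key point is that $\{(0,0),(0,1)\}$ is a sieve in $\ulcorner$, so the comma category $((1,0)/v)$ is empty; by \autoref{thm:grothendieck} (Der4) and the existence of a zero object in $D(R)$ (part (i) of \autoref{prop:DR-stable}) this forces $(v_\ast Z)_{(1,0)}\iso R\mathrm{lim}_{\emptyset}\iso 0$, so the right Kan extension $v_\ast\colon\D_R([1])\to\D_R(\ulcorner)$ lands in $\D_R(\ulcorner)_0$. Since $v$ is fully faithful, \autoref{prop:Kan-ff-D_R} and the ensuing remark on units and counits give $v^\ast v_\ast\iso\id$ and a unit $Y\to v_\ast v^\ast Y$ that is invertible over $\{(0,0),(0,1)\}$; at $(1,0)$ the unit is $Y_{(1,0)}\to 0$, invertible exactly when $Y\in\D_R(\ulcorner)_0$. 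Hence $v^\ast$ and $v_\ast$ are mutually inverse equivalences between $\D_R(\ulcorner)_0$ and $\D_R([1])$, and composing with the first fact finishes the argument.

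I expect the only genuinely non-formal inputs to be the pointwise Kan extension formula and the bare existence of a zero object; in particular, no appeal to the cartesian/cocartesian coincidence (part (ii) of \autoref{prop:DR-stable}) should be needed. The step that requires the most care is the identification of the two essential images through the slice-category computations — concretely, verifying the cofinality $(\iota/(1,1))\cong\ulcorner$ that turns the counit at $(1,1)$ into the comparison map \eqref{eq:cocart}, and verifying the emptiness of $((1,0)/v)$ that forces the lower left corner of $v_\ast Z$ to vanish.
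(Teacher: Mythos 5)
Your proposal matches the paper's proof: both factor $k$ through the span poset $\ulcorner$, both exploit the full faithfulness of the derived Kan extensions along fully faithful functors (your $v_\ast$ and $\iota_!$ are the paper's $i_\ast$ and $j_!$), and both identify the essential image of the composition with the cofiber squares. The only cosmetic difference is that the paper exhibits $j_!\circ i_\ast$ as the inverse and determines its essential image directly (partly by explicit chain-level resolutions), whereas you phrase the argument via the restriction functors $\iota^\ast$ and $v^\ast$ being equivalences on the relevant full subcategories, computing the slice categories $(\iota/(1,1))\cong\ulcorner$ and $((1,0)/v)=\emptyset$ to pin down where the units and counits are invertible; this is the same argument expressed through (Der4) rather than resolutions.
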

\begin{proof}
We sketch the proof and begin by noting that the functor \eqref{eq:hor-mor} factors as
\[
k=j\circ i\colon[1]\to\ulcorner\to\square.
\]
Here, $\ulcorner$ is again suggestive notation for the poset \eqref{eq:span} which corepresents spans. By \autoref{thm:grothendieck} there are derived Kan extension adjunctions
\[
(i^\ast,i_\ast)\colon\D_R(\ulcorner)\rightleftarrows\D_R([1])\qquad\text{and}\qquad (j_!,j^\ast)\colon\D_R(\ulcorner)\rightleftarrows\D_R(\square).
\]
And in this case the construction of $i_\ast$ and $j_!$ can be given very explicitly.
\begin{enumerate}
\item Before passing to derived categories, it is straightforward to check that the functor $i_\ast$ which sends a chain map $(x\to y)$ to the span $(0\ot x\to y)$ is right adjoint to $i^\ast$ (alternatively, this also follows from the pointwise formulas in \autoref{con:ptws-kan}). Since both functors are exact, this immediately yields the first adjunction. 
\item In the case of the second adjunction a direct argument can be sketched as follows. While pushouts fail to be exact on all spans, they are exact on spans such that at least one of the maps is a monomorphism. Hence, as in the proof of \autoref{prop:der-push}, we can resolve an arbitrary span by adding inclusions into cones on at least one of the edges, and the corresponding pushout squares are models for the left derived pushout square functor
\[
j_!\colon\D_R(\ulcorner)\to\D_R(\square).
\]
\end{enumerate}

With these adjunctions at our disposal we can now consider the composition
\[
j_!\circ i_\ast\colon\D_R([1])\to\D_R(\ulcorner)\to\D_R(\square),
\]
and this yields the desired equivalence. In more detail, the functors $i_\ast$ and $j_!$ are both fully faithful (\autoref{prop:Kan-ff-D_R}), and one checks that the essential image of their composition consists precisely of the cofiber squares. 
\end{proof}

The above sketch has shown that the left quadrilateral in \eqref{eq:C-cok} can be constructed in terms of derived Kan extension functors. This justifies the terminology \emph{cofiber square} since for every such square \eqref{eq:cof-sq} there is a canonical isomorphism between $z$ and the cone or cofiber of $f$,
\[
Cf\toiso z.
\]

There is the following important special case of cofiber squares.

\begin{defn}\label{defn:susp-sq-D_R}
Let $R$ be a ring. A \textbf{suspension square} in $\D_R$ is a cocartesian square which vanishes at the lower left and the upper right corner,
\begin{equation}\label{eq:susp-sq}
\vcenter{
\xymatrix{
x\ar[r]\ar[d]\ar@{}[rd]|{\square}&0\ar[d]\\
0\ar[r]&x'.
}
}
\end{equation}
\end{defn}

\begin{rmk}\label{rmk:susp-sq-D_R}
For every ring $R$ we denote by $\D_R(\square)^\Sigma\subseteq\D_R(\square)$ the full subcategory spanned by the suspension squares. Similar to the case of \autoref{prop:cof-sq-D_R}, evaluation at $(0,0)\in\square$ induces an equivalence of categories
\[
(0,0)^\ast\colon \D_R(\square)^\Sigma\toiso \D_R(\ast)=D(R).
\]
We invite the reader to adapt the arguments of the previous case in order to sketch a proof of this result. An inverse of this equivalence is given by forming the cofiber square associated to $(x\to 0)$. Hence, the proof of \autoref{prop:C-cok} shows that we simply form the cokernel of the inclusion $x\to Cx$, which is clearly isomorphic to the usual suspension $\Sigma x$ of the chain complex $x$. The terminology \emph{suspension squares} is hence justified by the fact that every such square \eqref{eq:susp-sq} induces a canonical isomorphism
\[
\Sigma x\toiso x'.
\]
\end{rmk}

\subsection{$A_2$-quivers and Barratt--Puppe sequences}
\label{subsec:BP}

The point of the constructions at the end of the previous subsection was the following. A morphism in $\D_R$ can be equivalently encoded by a cofiber square (\autoref{prop:cof-sq-D_R}) and, similarly, an object in $\D_R$ is as good as a suspension square (\autoref{rmk:susp-sq-D_R}). In this subsection we want to expand a bit on this observation for the case of morphisms. Representations of $\A{2}$ in $\D_R$ turn out to be equivalent to \emph{Barratt--Puppe sequences}. Moreover, the symmetry of these Barratt--Puppe sequence corresponds to the rotation at the level of triangulated categories. 

As a warmup we consider \emph{cofiber sequences} which are essentially obtained by forming two cofiber squares. To carry this out in detail, we introduce the following notation.

\begin{notn}
For every natural number $n$, we denote by $[n]$ the finite linear order $(0<1<\ldots <n)$. The product $\boxbar=[1]\times[2]$ hence agrees with the shape of two adjacent commuting squares
\[
\xymatrix{
(0,0)\ar[r]\ar[d]&(0,1)\ar[r]\ar[d]&(0,2)\ar[d]\\
(1,0)\ar[r]&(1,1)\ar[r]&(1,2).
}
\]
\end{notn}

In representation theoretic parlance, the poset $[n]$ is the linearly oriented $A_{n+1}$-quiver
\[
\A{n+1}=(1\to\ldots\to n+1).
\]
Note that the labeling conventions only match up to a shift by one. There is a unique isomorphism between the two shapes which will always be used implicitly, hopefully not arising in a confusion concerning the various labels.

\begin{defn}\label{defn:cof-seq}
Let $R$ be a ring. A \textbf{cofiber sequence} in $\D_R$ is an object $X\in\D_R(\boxbar)$ such that the square on the left and the square on the right are cocartesian and such that $X$ vanishes at $(1,0)$ and $(0,2)$,
\begin{equation}\label{eq:cof-seq}
\vcenter{
\xymatrix{
x\ar[r]^-f\ar[d]\ar@{}[rd]|{\square}&y\ar[d]_-g\ar[r]\ar@{}[rd]|{\square}&0\ar[d]\\
0\ar[r]&z\ar[r]&x'.
}
}
\end{equation}
We denote by $\D_R(\boxbar)^\cof\subseteq\D_R(\square)$ the full subcategory spanned by the cofiber squares.
\end{defn}

Morally, a cofiber sequence is as good as a morphism only. In fact, given a cofiber sequence \eqref{eq:cof-seq}, the morphism $z\to x'$ can be reconstructed from $g\colon y\to z$ by \autoref{prop:cof-sq-D_R}. Similarly, by that same result also the morphism $g\colon y\to z$ is determined by $f\colon x\to y$. This makes the following result plausible in which we denote the inclusion of the upper left morphism by
\begin{equation}\label{eq:hor-mor-cof-seq}
k\colon[1]\to\boxbar\colon i\mapsto (0,i).
\end{equation}

\begin{prop}\label{prop:cof-seq}
For every ring $R$ restriction along \eqref{eq:hor-mor-cof-seq} induces an equivalence of categories
\[
k^\ast\colon\D_R(\boxbar)^\cof\toiso\D_R([1]).
\]
\end{prop}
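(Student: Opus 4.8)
The plan is to mimic the proof of \autoref{prop:cof-sq-D_R} almost verbatim, with $\square$ replaced by $\boxbar=[1]\times[2]$ and with two Kan extensions performed in succession instead of one. First I would factor the inclusion \eqref{eq:hor-mor-cof-seq} as a composite of fully faithful functors
\[
[1]\xrightarrow{\;i\;}\ulcorner\xrightarrow{\;j_1\;}\boxbar,
\]
where $\ulcorner\to\boxbar$ is the inclusion of the subposet consisting of $(0,0),(1,0),(0,1)$ — or, more efficiently, a chain of three inclusions: adjoin $(1,0)$ as a zero object (right Kan extension along $[1]\to\ulcorner$ as in the proof of \autoref{prop:cof-sq-D_R}), then left Kan extend along $\ulcorner\to\square$ to build the first cocartesian square, then adjoin the upper right object $(0,2)=0$ by a further right Kan extension, and finally left Kan extend along the inclusion of the resulting L-shaped poset into $\boxbar$ to build the second cocartesian square. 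Each of these is a Kan extension along a fully faithful functor, so by \autoref{prop:Kan-ff-D_R} every one of them is fully faithful, and hence so is the total composite
\[
F\colon\D_R([1])\to\D_R(\boxbar).
\]

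Next I would identify the essential image of $F$ with $\D_R(\boxbar)^\cof$. In one direction: since $F$ is built entirely from Kan extensions extending along full subposets, the underlying diagram is unchanged on the smaller categories at each stage (the units/counits are invertible), so $F(X)$ vanishes at $(1,0)$ and $(0,2)$, and the two relevant squares are cocartesian by construction (left Kan extension from a span $\ulcorner$ into $\square$ produces a cocartesian square — this is exactly the content used in \autoref{prop:der-push} and \autoref{prop:cof-sq-D_R}). In the other direction, given any $X\in\D_R(\boxbar)^\cof$, one restricts back along the inclusions and checks using the universal properties of the Kan extensions that $X$ is recovered from $k^\ast X$, i.e.\ the counit $F k^\ast X\to X$ is an isomorphism; here one uses (Der2) together with the fact that, after restricting to each intermediate subposet, the square being cocartesian (resp.\ the corner being zero) is exactly the condition that forces the relevant adjunction counit to be invertible — this is the standard "cofiber squares are detected by Kan extension" argument, and it reduces pointwise to \autoref{prop:cof-sq-D_R} applied to the left square and then to the right square. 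Since $F$ is fully faithful with essential image $\D_R(\boxbar)^\cof$ and $k^\ast F\cong\id$, the functor $k^\ast$ restricted to $\D_R(\boxbar)^\cof$ is an equivalence with inverse $F$.

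The step I expect to be the main obstacle — or at least the one requiring the most care — is the verification that the essential image of $F$ is \emph{exactly} the cofiber sequences, and not something larger or smaller: one must check both that every square in $F(X)$ is genuinely cocartesian (for the right-hand square this uses that a left Kan extension along $\ulcorner\hookrightarrow\square$ followed by restriction–corestriction still yields a cocartesian square, a compatibility of Kan extensions along composable inclusions) and, conversely, that the cocartesianness of \emph{both} squares in an arbitrary $X\in\D_R(\boxbar)^\cof$ is enough to conclude the counit is invertible. The cleanest way to handle this is to invoke the general pasting principle for bicartesian squares and the characterization of Kan extensions via pointwise (co)limits from \autoref{thm:grothendieck}(Der4), reducing everything to the single-square case \autoref{prop:cof-sq-D_R} which is already established. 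Everything else is routine bookkeeping with adjunctions, so I would be "more sketchy," as the authors put it, and refer to \cite[\S7]{groth:thy-of-der} or the analogous argument in \autoref{prop:cof-sq-D_R} for the details.
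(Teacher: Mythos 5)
Your proposal is correct and follows the paper's strategy: express an inverse to $k^\ast$ as a composite of Kan extensions along fully faithful inclusions, then use (Der4) and \autoref{prop:Kan-ff-D_R} to identify the essential image with $\D_R(\boxbar)^\cof$. What differs is the factorization. The paper uses a single intermediate poset $B=\{(0,0),(0,1),(0,2),(1,0)\}$: the right Kan extension $i_\ast\colon\D_R([1])\to\D_R(B)$ adjoins both zero objects in one step (both relevant slice categories are empty), and the left Kan extension $j_!\colon\D_R(B)\to\D_R(\boxbar)$ adjoins both cocartesian squares. In that version the slice over $(1,2)$ for $j_!$ is all of $B$, whose colimit is the pushout of the outer span $(1,0)\leftarrow(0,0)\to(0,2)$ by a cofinality argument, and one then needs the pasting property \autoref{prop:2-out-of-3-D_R} to conclude that the \emph{right} inner square is also cocartesian. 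Your four-step refinement, alternating right and left Kan extensions, has the small advantage that the final $j_!$ produces the right square directly as a pushout over a span cofinal in the L-shaped poset, so no pasting is needed, at the cost of extra bookkeeping. One caution: the naive two-step factorization $[1]\to\ulcorner\to\boxbar$ that you mention first would not work as written, since left Kan extension along $\ulcorner\to\boxbar$ computes the value at $(0,2)$ as the colimit over the slice $(0,0)\to(0,1)$, i.e.\ $y$ rather than $0$; your four-step chain (or the paper's two-step chain through $B$) sidesteps this.
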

\begin{proof}
The strategy is very similar to the one in the earlier cases. The functor $k$ in \eqref{eq:hor-mor-cof-seq} factors as a composition
\[
j\circ i\colon [1]\to B\to[1]\times[2]
\]
of fully faithful inclusions where $B$ is given by
\[
\xymatrix{
(0,0)\ar[r]\ar[d]&(0,1)\ar[r]&(0,2)\\
(1,0).&&
}
\]
At the level of chain complexes it is straightforward to check that the restriction functor $i^\ast$ has a right adjoint $i_\ast$ which simply adds two zero chain complexes and the unique chain maps. Alternatively, this follows immediately from \autoref{con:ptws-kan} since the corresponding slice categories are empty. Thus, both $i^\ast$ and $i_\ast$ are exact, and the induced functor $i_\ast\colon\D_R([1])\to\D_R(B)$ is hence under control. It is fully faithful (\autoref{prop:Kan-ff-D_R}) and the essential image consists of all $X\in\D_R(B)$ which vanish at $(1,0)$ and $(0,2)$. Similarly, at the level of chain complexes a left adjoint $j_!$ to $j^\ast$ is given by the functor which adds two pushout squares. The corresponding left derived functor $j_!$ hence adds two derived pushout squares, and one checks that the essential image of this fully faithful functor can be characterized by this. As an upshot, both functors in 
\[
j_!\circ i_\ast\colon\D_R([1])\to\D_R(B)\to\D_R(\boxbar)
\]
are fully faithful and the essential image of their composition consists precisely of the cofiber sequences.
\end{proof}

In order to analyze cofiber sequences in more detail, we recall the following pasting and cancellation property of bicartesian squares in $\D_R$. There are three obvious inclusions of the square $\square$ in $\boxbar$. Let us denote by
\[
\iota_{01}\colon\square\to\boxbar,\quad \iota_{12}\colon\square\to\boxbar,\quad\text{and}\quad \iota_{02}\colon\square\to\boxbar
\]
the inclusion of the left square, the right square, and the composed square, respectively. 

\begin{prop}\label{prop:2-out-of-3-D_R}
Let $R$ be a ring and let $X\in\D_R(\boxbar)$. If two of the squares $\iota_{01}^\ast(X),$ $\iota_{12}^\ast(X),$ and $\iota_{02}^\ast(X)$ are bicartesian, then so is the third square.
\end{prop}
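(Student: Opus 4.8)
The plan is to reduce the pasting/cancellation statement for bicartesian squares to the corresponding statement for \emph{cocartesian} squares, which is the classical pasting lemma for pushouts expressed through left Kan extensions. By \autoref{prop:DR-stable}(2), a square in $\D_R$ is cocartesian if and only if it is cartesian, so ``bicartesian'' and ``cocartesian'' are interchangeable throughout; hence it suffices to prove: if two of $\iota_{01}^\ast(X)$, $\iota_{12}^\ast(X)$, $\iota_{02}^\ast(X)$ are cocartesian, then so is the third. This is the usual ``2-out-of-3'' property for pushout squares, and the point is to deduce it in the derived setting from the pointwise formula for left Kan extensions in \autoref{thm:grothendieck}(Der4).

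First I would fix notation for the subposets of $\boxbar=[1]\times[2]$ involved. Let $\ulcorner_{01}\subseteq[1]\times[1]\hookrightarrow\boxbar$ be the span $(1,0)\ot(0,0)\to(0,1)$, and similarly $\ulcorner_{12}$ the span sitting in the right square and $\ulcorner_{02}$ the span $(1,0)\ot(0,0)\to(0,2)$. Using the pointwise formula, a square $\iota_{ij}^\ast(X)$ is cocartesian precisely when the canonical comparison map from the derived pushout $L\colim_{\ulcorner}$ of its span to its terminal vertex $X_{(1,j)}$ is an isomorphism in $D(R)$ (this is \eqref{eq:cocart} combined with \autoref{defn:cocart-D_R}, and it is the content of \autoref{prop:der-push}). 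So the task is to compare the three derived pushouts $L\colim_{\ulcorner_{01}}$, $L\colim_{\ulcorner_{12}}$, and $L\colim_{\ulcorner_{02}}$ applied to the appropriate restrictions of $X$, together with the horizontal map $X_{(1,0)}\to X_{(1,1)}\to X_{(1,2)}$ along the bottom row.

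The key algebraic input is a ``pasting of Kan extensions'' identity at the derived level: writing $P\in\D_R(\square)$ for the left-square Kan extension $j_!$ of the span $X|_{\ulcorner_{01}}$ (as in the proof of \autoref{prop:cof-sq-D_R}), one has a canonical identification of the derived pushout of the outer span $\ulcorner_{02}$ with the derived pushout of the span $(1,0)\ot P_{(0,0)}\to X_{(0,2)}$ obtained by replacing the edge $(1,0)\ot(0,0)$ by $(1,0)\ot P_{(0,0)}$ — equivalently, $L\colim_{\ulcorner_{02}}$ factors as two successive derived pushouts. This is exactly the statement that $j_!$ over $\boxbar$ can be computed by first extending over the left square and then over the right, and it follows from functoriality of left Kan extensions ($(j\circ i)_! \cong j_!\circ i_!$ after decomposing $\boxbar$ appropriately, using \autoref{prop:Kan-ff-D_R} to see that the intermediate extensions are honest extensions), or directly from (Der4) by manipulating the relevant slice categories. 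Granting this, the argument is a diagram chase: in each of the three cases one of the comparison maps is assumed invertible, one rewrites the composite derived pushout for the outer square using the decomposition, and concludes that the third comparison map is invertible by two-out-of-three for isomorphisms in $D(R)$ (invoking (Der2) if one wants to check the isomorphism pointwise after further evaluation, though here everything already lives in $D(R)$).

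I expect the main obstacle to be making the ``pasting of derived Kan extensions'' identity precise and canonical — in an unenhanced derived category the composite $j_!\circ i_!\cong (ji)_!$ is not entirely formal because derived pushouts are not literal colimits, so one must either cite the analogous fact in the model-categorical or derivator framework behind \autoref{thm:grothendieck} (where it is a standard consequence of the pointwise formula and the fact that a composite of homotopy left Kan extensions is a homotopy left Kan extension), or reprove it by hand in the spirit of \autoref{prop:C-cok} and \autoref{prop:der-push} by choosing compatible monomorphic resolutions of the spans involved so that all three pushouts can be computed simultaneously by ordinary pushouts of chain complexes, where the classical pasting lemma applies verbatim. Once that identification is in place the three implications are symmetric and routine.
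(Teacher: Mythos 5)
Your reduction to the cocartesian case and the decision to argue via a pasting identity for derived pushouts are the right starting moves, and your description of that identity is essentially correct (modulo a notational slip: you write the decomposed outer pushout as the derived pushout of ``$(1,0)\ot P_{(0,0)}\to X_{(0,2)}$'', but $P_{(0,0)}\cong X_{(0,0)}$ by full faithfulness of the Kan extension, so that span is just the original outer span; what you mean is that $L\colim_{\ulcorner_{02}}$ is the derived pushout of $P_{(1,1)}\ot X_{(0,1)}\to X_{(0,2)}$, i.e.\ the right span with the actual derived pushout $P_{(1,1)}$ replacing $X_{(1,1)}$).

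The genuine gap is that you then assert this ``is the usual `2-out-of-3' property for pushout squares'' and that ``the three implications are symmetric and routine.'' There is no such two-sided 2-out-of-3 for pushouts. The classical pasting/cancellation lemma for cocartesian squares is one-sided: left + right cocartesian gives outer cocartesian, and left + outer gives right, but right + outer does \emph{not} give left. And indeed your own factorization argument only yields these two directions: writing the induced comparison map as $\alpha_{02}=\alpha_{12}\circ\beta$, where $\beta$ is obtained from $\alpha_{01}$ by pushing forward along $X_{(0,1)}\to X_{(0,2)}$, knowing $\alpha_{12}$ and $\alpha_{02}$ are invertible only tells you $\beta$ is invertible, which does not force $\alpha_{01}$ to be invertible. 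The third case is precisely where stability must do irreducible work: one switches to the cartesian description of the same squares (legitimate by \autoref{prop:DR-stable}) and applies the \emph{dual} pasting/cancellation lemma for derived pullbacks, which works on the opposite side (right + outer cartesian gives left cartesian). You set up the tools to do this at the start but then treat the three cases as if they follow from a single one-sided lemma, which they do not. Your chain-complex fallback would inherit the same asymmetry, since ordinary pushouts of chain complexes again only satisfy the one-sided lemma. For reference, the paper itself does not argue this directly at all — it simply cites the corresponding pasting/cancellation result for cocartesian and cartesian squares in the pointed/stable derivator framework, where both halves of the argument (one for each pasting direction, glued by stability) are carried out via homotopy exact squares.
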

\begin{proof}
This is a special case of \cite[Prop.~4.6]{groth:ptstab}.
\end{proof}

\begin{con}\label{con:dist-tria}
Let $R$ be a ring and let $X\in\D_R(\boxbar)^\cof$ be a cofiber sequence looking like \eqref{eq:cof-seq}. By \autoref{prop:2-out-of-3-D_R} the composed square $\iota_{02}^\ast(X)$ is again bicartesian and it takes the form
\[
\xymatrix{
x\ar[r]\ar[d]\ar@{}[rd]|{\square}&0\ar[d]\\
0\ar[r]&x'.
}
\]
Thus, $\iota_{02}^\ast(X)$ is a suspension square (\autoref{defn:susp-sq-D_R}), and by \autoref{rmk:susp-sq-D_R} we obtain a canonical isomorphism $\Sigma x\toiso x'$. It follows that associated to every cofiber sequence there is a triangle
\[
x\stackrel{f}{\to} y\stackrel{g}{\to} z\stackrel{h}{\to} \Sigma x.
\]
This is an ordinary diagram in $\D_R(\ast)=D(R)$, and the morphism $h$ is obtained from the unlabeled morphism $z\to x'$ in \eqref{eq:cof-seq} thanks to the isomorphism $\Sigma x\toiso x'$.
\end{con}

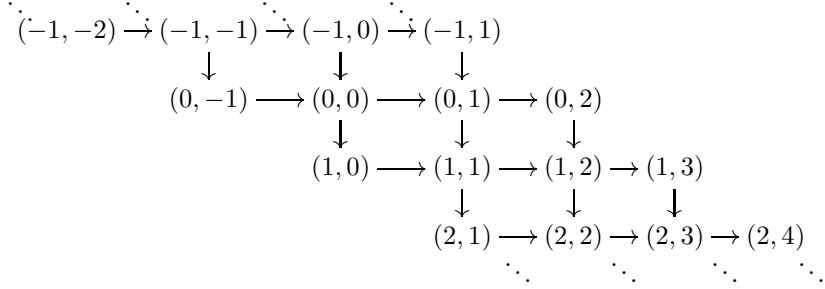
\begin{figure}
\centering
\[
\xymatrix@=1.0em{
\ar@{}[dr]|{\ddots}&\ar@{}[dr]|{\ddots}&\ar@{}[dr]|{\ddots}&\ar@{}[dr]|{\ddots}&&&&&&\\
&(-1,-2)\ar[r]&(-1,-1)\ar[d]\ar[r]&(-1,0)\ar[r]\ar[d]&(-1,1)\ar[d]&&&&&\\
&&(0,-1)\ar[r]&(0,0)\ar[r]\ar[d]&(0,1)\ar[r]\ar[d]&(0,2)\ar[d]&&&&\\
&&&(1,0)\ar[r]&(1,1)\ar[r]\ar[d]&(1,2)\ar[r]\ar[d]&(1,3)\ar[d]&&&\\
&&&&(2,1)\ar[r]\ar@{}[dr]|{\ddots}&(2,2)\ar[r]\ar@{}[dr]|{\ddots}&(2,3)\ar[r]\ar@{}[dr]|{\ddots}&(2,4)\ar@{}[dr]|{\ddots}&&\\
&&&&& & & & &
}
\]
\caption{The full subposet $M_2\subseteq\lZ\times\lZ$}
\label{fig:M2-shape}
\end{figure}

This already suggests that cofiber sequences are closely related to distinguished triangles, and we will come back to this at the end of \S\ref{subsec:octa}. Here, instead, we summarize some of the constructions carried out so far. For every ring~$R$, at the level of derived categories of diagram categories there are equivalences of categories
\[
\D_R(\boxbar)^\cof\toiso\D_R(\square)^\cof\toiso\D_R([1]).
\]
These equivalences are induced by suitable restriction functors, and inverse equivalences are obtained by extending a morphism to a cofiber square or by iterating this construction twice. We now simply iterate this construction countably many times in both directions. The combinatorial details are as follows.

\begin{notn}
Let $M_2\subseteq\lZ\times\lZ$ be the full subposet given by all $(i,j)\in\lZ\times\lZ$ such that $i-1\leq j\leq i+2$. 
\end{notn}

A part of this poset is displayed in \autoref{fig:M2-shape}. The reason that we consider this poset is that, for every ring $R$, this shape allows us to simultaneously encode a morphism $(f\colon x\to y)\in\D_R([1])$, all its iterated derived cokernels, and also all its iterated derived kernels. This is achieved by forming \emph{Barratt--Puppe sequences} in the following sense.

\begin{defn}
Let $R$ be a ring. A diagram $X\in\D_R(M_2)$ is a \textbf{Barratt--Puppe sequence} in $\D_R$ if the diagram $X$ vanishes on the two boundary stripes and all squares in $X$ are bicartesian. We denote by $\D_R(M_2)^\ex\subseteq\D_R(M_2)$ the full subcategory spanned by all Barratt--Puppe sequences.
\end{defn}

The terminology is motivated by similar constructions with pointed topological spaces. In that case this notion yields the classical Barratt--Puppe sequences as in \cite{puppe:induzierten-I}, which provide the space-level origin of many long exact sequences in algebraic topology. The following result makes precise that Barratt--Puppe sequences are determined by their restrictions along the inclusion
\begin{equation}\label{eq:i-M2}
i\colon[1]\to M_2\colon k\mapsto (0,k).
\end{equation}
 
\begin{thm}\label{thm:BP}
For every ring $R$ restriction along \eqref{eq:i-M2} induces an equivalence of categories
\[
i^\ast\colon\D_R(M_2)^\ex\toiso\D_R([1]).
\]
\end{thm}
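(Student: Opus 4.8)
The plan is to imitate the proofs of \autoref{prop:cof-sq-D_R} and \autoref{prop:cof-seq}: one builds a quasi-inverse $\D_R([1])\to\D_R(M_2)^\ex$ out of derived Kan extensions, first gluing in the zeros forced on the two boundary stripes and then filling the remaining entries by derived pushout and derived pullback squares. The only genuinely new feature is that $M_2$ is infinite, so the construction must be run as a colimit over the finite truncations of $M_2$.

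\emph{Finite truncations.} For $n\ge 0$ let $M_2^{(n)}\subseteq M_2$ be the (finite, convex) full subposet on the objects $(i,j)$ with $-n\le i\le n$; thus $M_2^{(0)}$ is the $4$-chain $(0,-1)\to(0,0)\to(0,1)\to(0,2)$ whose outer entries lie on the stripes $j=i-1$ and $j=i+2$, and the inclusion \eqref{eq:i-M2} factors as $i_n\colon[1]\to M_2^{(n)}$. Call a diagram in $\D_R(M_2^{(n)})$ a Barratt--Puppe sequence if it vanishes on those two stripes and all its squares are bicartesian, and write $\D_R(M_2^{(n)})^\ex$ for the resulting full subcategory. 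Starting from a morphism at $\{(0,0),(0,1)\}$ one first glues in the two stripe zeros of $M_2^{(0)}$ by a left resp.\ a right Kan extension (the relevant slices over $[1]$ being empty), and then adds one further row at a time: extending downward a row is done exactly as the two squares of \autoref{prop:cof-seq} are produced --- a right Kan extension gluing in a zero stripe entry, whose slice over the subdiagram built so far is empty, followed by a left Kan extension forming two derived pushout squares --- and extending upward a row is the formal dual, a left Kan extension gluing in a zero followed by a right Kan extension forming two derived pullback squares. By \autoref{prop:DR-stable} all of these squares are bicartesian; by \autoref{prop:2-out-of-3-D_R} so are all remaining squares of the strip; and by \autoref{prop:Kan-ff-D_R} each of these Kan extensions is fully faithful. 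Composing the finitely many steps yields a fully faithful $F_n\colon\D_R([1])\to\D_R(M_2^{(n)})$ with essential image exactly $\D_R(M_2^{(n)})^\ex$ and quasi-inverse $i_n^\ast$. Because $F_{n+1}$ can be built as $F_n$ followed by two more such row extensions, restriction along $M_2^{(n)}\hookrightarrow M_2^{(n+1)}$ carries $F_{n+1}$ to $F_n$.

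\emph{Passage to $M_2=\bigcup_n M_2^{(n)}$.} Working in the underlying combinatorial stable model structure on $\nCh(R)$ from the proof of \autoref{thm:grothendieck} (with projective model structures on diagram categories) and choosing the homotopy Kan extensions functorially, realize $F_0(f)$ by a point-set diagram and extend it inductively to a point-set diagram on $M_2^{(n+1)}$ representing $F_{n+1}(f)$ and restricting to the previous one on $M_2^{(n)}$. Since every object and every morphism of $M_2$ lies in some $M_2^{(n)}$, these assemble into a point-set $M_2$-diagram, functorially in $f$, hence into a functor $F\colon\D_R([1])\to\D_R(M_2)$ with $\rho_n^\ast F\cong F_n$ for $\rho_n\colon M_2^{(n)}\hookrightarrow M_2$. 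Vanishing at a fixed object and bicartesianness of a fixed square are visible after restriction to a suitable $M_2^{(n)}$, so $F$ lands in $\D_R(M_2)^\ex$; and $i^\ast=i_n^\ast\rho_n^\ast$ gives $i^\ast F\cong\id$. Finally, for $X\in\D_R(M_2)^\ex$ the point-set construction of $F$ produces a canonical map of $M_2$-diagrams $F(i^\ast X)\to X$ that restricts on each $M_2^{(n)}$ to the counit of the equivalence $(i_n^\ast,F_n)$, hence is a weak equivalence at every object of $M_2$ and therefore an isomorphism in $\D_R(M_2)$. Thus $F$ is a two-sided quasi-inverse of $i^\ast$.

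\emph{Main obstacle.} The load-bearing part is the last step. Unlike in \autoref{prop:cof-seq}, there is no single functor along which one Kan-extends from $[1]$ to $M_2$ --- the forced zeros of $M_2$ lie both up- and downstream of the given morphism, and a naive Kan extension from $[1]$ produces the wrong values on the stripes --- so one must organize the $F_n$ into a strictly compatible tower and glue both the objects $F(f)$ and the comparison maps along $M_2=\colim_n M_2^{(n)}$. This is exactly the point at which the formal properties of $\D_R$ in \autoref{thm:grothendieck} do not quite suffice and one uses the explicit model; everything else reduces to iterating the finite constructions already established.
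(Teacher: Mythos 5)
Your finite-truncation strategy is plausible, but the closing claim — that ``the formal properties of $\D_R$ in \autoref{thm:grothendieck} do not quite suffice and one uses the explicit model'' — is mistaken, and this misconception is what pushes you into the most delicate part of your argument. The paper builds the quasi-inverse $F_{[1]}$ as a \emph{single} composite of four derived Kan extensions through \emph{infinite} intermediate full subposets of $M_2$:
\[
\D_R([1])\xrightarrow{(i_1)_\ast}\D_R(K_1)\xrightarrow{(i_2)_!}\D_R(K_2)\xrightarrow{(i_3)_!}\D_R(K_3)\xrightarrow{(i_4)_\ast}\D_R(M_2).
\]
You correctly observe that a single Kan extension directly along $i\colon[1]\to M_2$ populates the stripes incorrectly, since the forced zeros lie both upstream and downstream of the image of $i$; but the remedy is the \emph{four-step} factorization, not an escape to point-set models. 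Here $K_1$ adjoins only the boundary objects on the positive side (where the slice categories $(b/i_1)$ are empty, so $(i_1)_\ast$ inserts zeros via (Der4)), $K_2$ adjoins the remaining positive objects so $(i_2)_!$ inductively fills in cocartesian squares, $K_3$ adjoins the negative boundary objects, and $(i_4)_\ast$ fills in the cartesian squares on the negative side. Axiom (Der3) guarantees $u_!$ and $u_\ast$ for arbitrary small $u$ — infinite shapes included — \autoref{prop:Kan-ff-D_R} gives full faithfulness of each step, and the remaining input is a homotopy finality argument identifying the pointwise (Der4)-colimits with colimits over $\ulcorner$-shaped spans; this is standard derivator technology and never touches an underlying model structure.

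Beyond this, the model-category gluing in your second paragraph is under-specified: derived Kan extensions are only defined up to canonical isomorphism, and producing point-set diagrams representing $F_n(f)$ that \emph{strictly} restrict to one another along $M_2^{(n)}\hookrightarrow M_2^{(n+1)}$ — functorially in $f$, and simultaneously for the comparison maps $F(i^\ast X)\to X$ — calls for a strictification or Reedy-type argument that you have not supplied. Such an argument can probably be made to work, but it reconstructs by hand, and with extra effort, what the four-step factorization accomplishes purely formally. The lesson to internalize is that (Der3) and (Der4) are designed precisely so that infinite posets pose no obstruction to this kind of argument.
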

\begin{proof}
We only sketch the arguments and refer the reader to \cite[\S4]{gst:Dynkin-A} for a detailed proof of a more general result. The idea is to construct an inverse equivalence
\[
F_{[1]}\colon\D_R([1])\toiso\D_R(M_2)^\ex
\]
as a composition of suitable derived Kan extensions. To this end we observe that the inclusion \eqref{eq:i-M2} factors as
\[
i\colon [1]\stackrel{i_1}{\to}K_1\stackrel{i_2}{\to}K_2\stackrel{i_3}{\to}K_3\stackrel{i_4}{\to}M_2,
\]
where the $K_j$ are the following full subposets of $M_2$ and the $i_j$ are the obvious inclusions.
\begin{enumerate}
\item $K_1$ is obtained from the image of $i$ by adding those objects on the boundary stripe which sit under the image of $i$, i.e., the objects $(n,n+2), n\geq 0,$ and the objects $(n,n-1),n>0.$
\item $K_2$ contains besides the objects in $K_1$ also all remaining objects under the image of $i$, i.e., the objects $(n,n)$ and $(n,n+1)$ for $n>0.$
\item $K_3$ is obtained from $K_2$ by also adding the remaining objects on the boundary stripes, i.e., the objects $(n,n+2), n< 0,$ and the objects $(n,n-1),n\leq 0.$
\end{enumerate}
We know from \autoref{prop:Kan-ff-D_R} that the following four functors
\[
\D_R([1])\stackrel{(i_1)_\ast}{\to}\D_R(K_1)\stackrel{(i_2)_!}{\to}\D_R(K_2)\stackrel{(i_3)_!}{\to}\D_R(K_3)\stackrel{(i_4)_\ast}{\to}\D_R(M_2)
\]
are fully faithful. Invoking arguments similar to the previous cases in combination with homotopy (co)finality arguments, one checks that  
\begin{enumerate}
\item $(i_1)_\ast\colon \D_R([1])\to\D_R(K_1)$ simply adds zero objects at the new components,
\item $(i_2)_!\colon \D_R(K_1)\to\D_R(K_2)$ inductively adds bicartesian squares,
\item $(i_3)_!\colon \D_R(K_2)\to\D_R(K_3)$ again adds zero objects at the new components,
\item and that $(i_4)_\ast\colon \D_R(K_3)\to\D_R(M_2)$ constructs new bicartesian squares.
\end{enumerate}
Thus, the composition of these four functors indeed extends $(f\colon x\to y)\in\D_R([1])$ to a Barratt--Puppe sequence. Moreover, in each of these four steps the above describes precisely the corresponding essential image, and we hence conclude that
\[
F_{[1]}=(i_4)_\ast\circ (i_3)_!\circ (i_2)_!\circ (i_1)_\ast
\]
is the desired inverse equivalence of $i^\ast\colon\D_R(M_2)^\ex\to\D_R([1])$.
\end{proof}

\begin{figure}
\centering
\[
\xymatrix{
\ar@{}[dr]|{\ddots}&\ar@{}[dr]|{\ddots}&\ar@{}[dr]|{\ddots}&\ar@{}[dr]|{\ddots}&&&&&&\\
&0\ar[r]&\tilde y\ar[d]\ar[r]&\tilde z\ar[r]\ar[d]&0\ar[d]&&&&&\\
&&0\ar[r]&x\ar[r]^-f\ar[d]\ar@{}[dr]|{\fbox{1}}&y\ar[r]\ar[d]\ar@{}[dr]|{\fbox{2}}&0\ar[d]&&&&\\
&&&0\ar[r]&z\ar[r]\ar[d]\ar@{}[dr]|{\fbox{3}}&x'\ar[r]\ar[d]\ar@{}[dr]|{\fbox{4}}&0\ar[d]&&&\\
&&&&0\ar[r]\ar@{}[dr]|{\ddots}&y'\ar[r]\ar@{}[dr]|{\ddots}&z'\ar[r]\ar@{}[dr]|{\ddots}&0\ar@{}[dr]|{\ddots}&&\\
&&&&& & & & &
}
\]
\caption{Iterated derived (co)kernels of $(f\colon x\to y)\in\D_R([1])$}
\label{fig:BP}
\end{figure}

Let us discuss these Barratt--Puppe sequences to some extent.

\begin{rmk}\label{rmk:BP}
Let $R$ be a ring and let $X\in\D_R(M_2)^\ex$ be a Barratt--Puppe sequence looking like \autoref{fig:BP}. In that Barratt--Puppe sequence any two adjacent squares determine a cofiber sequence and hence give rise to a triangle in $\D_R(\ast)$ (\autoref{con:dist-tria}). In particular, the cofiber sequence determined by the squares $\fbox{1},\fbox{2}$ induces a triangle associated to $f\colon x\to y$. Similarly, the squares $\fbox{2},\fbox{3}$ determine a triangle associated to the morphism $g\colon y\to z$, and this is a rotated triangle of the previous one. As an upshot, the Barratt--Puppe sequence in \autoref{fig:BP} encodes all iterated rotations of triangles associated to $f\colon x\to y$. 

The defining exactness properties of Barratt--Puppe sequences (vanishing on boundary stripes and all squares are bicartesian) are invariant under the obvious symmetries of the shape $M_2$. Hence, as alluded to in the introduction, the rotation at the level of distinguished triangles is a shadow of certain symmetries on representations of $\A{2}=(1\to 2)\cong[1]$ with values in chain complexes. As a variant of this, let us consider the square $\fbox{1}$ in \autoref{fig:BP} as a ``triangular fundamental domain'' of the Barratt--Puppe sequence. The flip symmetry of the shape $M_2$ identifies the squares $\fbox{1}$ and $\fbox{4}$. At the level of diagrams this flip symmetry corresponds to the suspension, and \autoref{fig:BP} encodes the fundamental domain and all its iterated (de)suspensions. Of course, the suspension is simply the cube of the cofiber (\cite[Lem.~5.13]{gps:mayer}), and in that generality this can be interpreted as an example of the abstract fractionally Calabi--Yau property of $\A{n}$-quivers (\cite{keller:CY-triang},\cite[Cor.~5.20]{gst:Dynkin-A}).
\end{rmk}
 
\begin{rmk}\label{rmk:basic-techniques}
The strategy behind the constructions discussed so far is fairly typical and it relies on two ingredients. First, for every fully faithful functor $u\colon A\to B$ the derived Kan extension functors $u_!,u_\ast\colon\D_R(A)\to\D_R(B)$ are again fully faithful. In particular, they induce equivalences onto their essential images. Second, for suitable classes of fully faithful functors, the functors $u_!,u_\ast$ simply add zero chain complexes at the new objects. We refer the reader to \cite{groth:ptstab} for a more systematic discussion of this.
\end{rmk}

\subsection{$A_3$-quivers and refined octahedral diagrams}
\label{subsec:octa}
 
In this subsection, we conclude the unorthodox proof of \autoref{thm:verdier} and there are only two main steps left. First, we repeat to some extent the discussion in \S\ref{subsec:BP} but starting with representations of $\A{3}$ instead. Second, for suitable shapes the underlying diagram functors from \autoref{eg:dia-H} are full and essentially surjective, and this can be invoked to establish the existence of the Verdier triangulation. 
 
The octahedral axiom of triangulated categories asks for a version of the third Noether isomorphism for cones in triangulated categories. Hence, it is essentially a statement about the situation of two composable morphisms, which is to say of a representation of the linearly oriented $A_3$-quiver 
\[
\A{3}\cong[2]=(0<1<2).
\]
The following is a variant for $\A{3}$ of the above discussion of Barratt--Puppe sequences.

\begin{notn}
Let $M_3\subseteq\lZ\times\lZ$ be the full subposet given by all $(i,j)\in\lZ\times\lZ$ such that $i-1\leq j\leq i+3$. 
\end{notn}

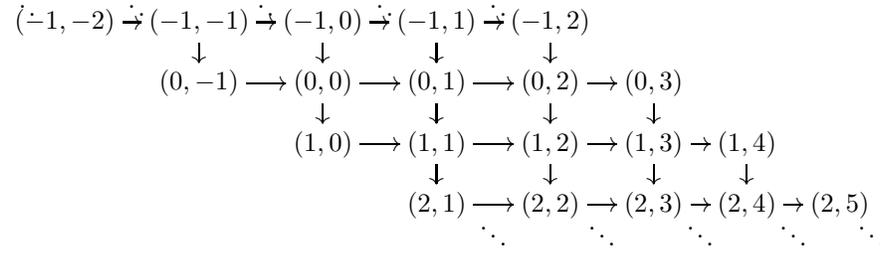
\begin{figure}
\centering
\[
\xymatrix@=0.7em{
\ar@{}[dr]|{\ddots}&\ar@{}[dr]|{\ddots}&\ar@{}[dr]|{\ddots}&\ar@{}[dr]|{\ddots}&\ar@{}[dr]|{\ddots}&&&&&\\
&(-1,-2)\ar[r]&(-1,-1)\ar[d]\ar[r]&(-1,0)\ar[r]\ar[d]&(-1,1)\ar[d]\ar[r]&(-1,2)\ar[d]&&&&\\
&&(0,-1)\ar[r]&(0,0)\ar[r]\ar[d]&(0,1)\ar[r]\ar[d]&(0,2)\ar[d]\ar[r]&(0,3)\ar[d]&&&\\
&&&(1,0)\ar[r]&(1,1)\ar[r]\ar[d]&(1,2)\ar[r]\ar[d]&(1,3)\ar[d]\ar[r]&(1,4)\ar[d]&&\\
&&&&(2,1)\ar[r]\ar@{}[dr]|{\ddots}&(2,2)\ar[r]\ar@{}[dr]|{\ddots}&(2,3)\ar[r]\ar@{}[dr]|{\ddots}&(2,4)\ar[r]\ar@{}[dr]|{\ddots}&(2,5)\ar@{}[dr]|{\ddots}&\\
&&&&& & & & &
}
\]
\caption{The full subposet $M_3\subseteq\lZ\times\lZ$}
\label{fig:M3-shape}
\end{figure}

A part of this poset is displayed in \autoref{fig:M3-shape}, and this poset is the shape of \emph{refined octahedral diagrams} in the following precise sense. 

\begin{defn}
Let $R$ be a ring. A diagram $X\in\D_R(M_3)$ is a \textbf{refined octahedral diagram} in $\D_R$ if the diagram $X$ vanishes on the two boundary stripes and all squares in $X$ are bicartesian. We denote by $\D_R(M_3)^\ex\subseteq\D_R(M_3)$ the full subcategory spanned by all refined octahedral diagrams.
\end{defn}

Similar to the case of Barratt--Puppe sequences, a refined octahedral diagram is determined by restriction along various embeddings of $\A{3}\cong[2]$ to $M_3$. To be specific, we consider the standard embedding
\begin{equation}\label{eq:i-M3}
i\colon[2]\to M_3\colon k\mapsto (0,k).
\end{equation}

\begin{figure}
\centering
\[
\xymatrix{
\ar@{}[dr]|{\ddots}&\ar@{}[dr]|{\ddots}&\ar@{}[dr]|{\ddots}&\ar@{}[dr]|{\ddots}&\ar@{}[dr]|{\ddots}&&&&&\\
&0\ar[r]&\tilde z\ar[d]\ar[r]&\tilde v\ar[r]\ar[d]&\tilde w\ar[d]\ar[r]&0\ar[d]&&&&\\
&&0\ar[r]&x\ar[r]^-f\ar[d]\ar@{}[dr]|{\fbox{1}}&y\ar[r]^-g\ar[d]\ar@{}[dr]|{\fbox{2}}&z\ar[d]\ar[r]\ar@{}[dr]|{\fbox{3}}&0\ar[d]&&&\\
&&&0\ar[r]&u\ar[r]\ar[d]\ar@{}[dr]|{\fbox{4}}&v\ar[r]\ar[d]\ar@{}[dr]|{\fbox{5}}&x'\ar[d]\ar[r]\ar@{}[dr]|{\fbox{6}}&0\ar[d]&&\\
&&&&0\ar[r]\ar@{}[dr]|{\ddots}&w\ar[r]\ar@{}[dr]|{\ddots}&y'\ar[r]\ar@{}[dr]|{\ddots}&u'\ar[r]\ar@{}[dr]|{\ddots}&0\ar@{}[dr]|{\ddots}&\\
&&&&& & & & &
}
\]
\caption{The refined octahedral diagram of $(x\stackrel{f}{\to} y\stackrel{g}{\to} z)\in\D_R([2])$}
\label{fig:octa}
\end{figure}
 
\begin{thm}\label{thm:octa}
For every ring $R$ restriction along \eqref{eq:i-M3} induces an equivalence of categories
\[
i^\ast\colon\D_R(M_3)^\ex\toiso\D_R([2]).
\]
\end{thm}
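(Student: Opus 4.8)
The plan is to mimic closely the proof of \autoref{thm:BP}, factoring the standard embedding $i\colon[2]\to M_3$ through a chain of fully faithful inclusions and building an inverse equivalence $F_{[2]}\colon\D_R([2])\toiso\D_R(M_3)^\ex$ as a composition of derived Kan extensions, each of which is either a ``padding by zero objects'' step or a ``filling in bicartesian squares'' step. The key point, as summarized in \autoref{rmk:basic-techniques}, is that for a fully faithful functor $u$ the derived Kan extensions $u_!,u_\ast$ are fully faithful (\autoref{prop:Kan-ff-D_R}), hence induce equivalences onto their essential images, so it suffices to identify the essential images at each stage.

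Concretely, I would factor \eqref{eq:i-M3} as
\[
i\colon[2]\stackrel{i_1}{\to}K_1\stackrel{i_2}{\to}K_2\stackrel{i_3}{\to}K_3\stackrel{i_4}{\to}M_3,
\]
with the $K_j$ chosen exactly as in the $M_2$-case but adapted to the wider strip $i-1\leq j\leq i+3$: $K_1$ adds to the image of $i$ the objects on the two boundary stripes lying under the image (the $(n,n+3)$ for $n\geq 0$ and the $(n,n-1)$ for $n>0$); $K_2$ adds the remaining interior objects under the image of $i$ (the $(n,n),(n,n+1),(n,n+2)$ for $n>0$); $K_3$ adds the remaining boundary-stripe objects (the $(n,n+3)$ for $n<0$ and the $(n,n-1)$ for $n\leq 0$); and $K_4=M_3$ adds the remaining interior objects above the image. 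One then argues, just as in \autoref{thm:BP}, that $(i_1)_\ast$ and $(i_3)_\ast$ add zero objects at the new components (the relevant slice categories being empty, or by a homotopy cofinality argument the relevant Kan extension is computed by a zero object), while $(i_2)_!$ and $(i_4)_!$ inductively add bicartesian squares — here one uses \autoref{prop:der-push} together with the pasting/cancellation property \autoref{prop:2-out-of-3-D_R} to see that the universal property of the derived pushout, applied cell-by-cell along the strip, produces precisely the bicartesian squares. Setting $F_{[2]}=(i_4)_!\circ(i_3)_\ast\circ(i_2)_!\circ(i_1)_\ast$, one checks that the composite lands in $\D_R(M_3)^\ex$ (the diagram vanishes on both boundary stripes and every square is bicartesian), that $i^\ast\circ F_{[2]}\iso\id$ (since each step is a Kan extension along a fully faithful functor, hence leaves the original $[2]$-shaped subdiagram unchanged up to canonical isomorphism), and, by tracking essential images, that $F_{[2]}\circ i^\ast\iso\id$ on $\D_R(M_3)^\ex$.

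The main obstacle is the bookkeeping in the ``filling in bicartesian squares'' steps $(i_2)_!$ and $(i_4)_!$: one must order the newly added objects so that at every stage the object being added sits at the terminal corner of a span whose other two legs are already present, and then verify — using \autoref{prop:2-out-of-3-D_R} — that the square so created is automatically bicartesian and that, conversely, every object of $\D_R(M_3)^\ex$ arises this way (i.e. that the exactness conditions defining $\D_R(M_3)^\ex$ force the diagram to be the left Kan extension of its restriction). This is where the argument is genuinely more involved than the $M_2$-case, since the strip is wider and more interior objects must be inserted; however, the combinatorics is entirely parallel, and I would simply refer to \cite[\S4]{gst:Dynkin-A}, where the general $M_n$ statement is proved in detail, rather than grind through it here. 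I would also remark that \autoref{fig:octa} records the output $F_{[2]}(x\xrightarrow{f}y\xrightarrow{g}z)$ explicitly, which makes the essential-image identifications transparent.

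\begin{proof}
This is a variant of the proof of \autoref{thm:BP}, and we only indicate the changes. We construct an inverse equivalence
\[
F_{[2]}\colon\D_R([2])\toiso\D_R(M_3)^\ex
\]
as a composition of derived Kan extensions. The inclusion \eqref{eq:i-M3} factors as
\[
i\colon[2]\stackrel{i_1}{\to}K_1\stackrel{i_2}{\to}K_2\stackrel{i_3}{\to}K_3\stackrel{i_4}{\to}M_3,
\]
where the $K_j$ are the following full subposets of $M_3$, chosen in analogy with the $M_2$-case but adapted to the wider strip $i-1\leq j\leq i+3$.
\begin{enumerate}
\item $K_1$ is obtained from the image of $i$ by adding those objects on the two boundary stripes which sit under the image of $i$, i.e.~the objects $(n,n+3), n\geq 0,$ and the objects $(n,n-1), n>0.$
\item $K_2$ contains besides the objects in $K_1$ also all remaining objects under the image of $i$, i.e.~the objects $(n,n), (n,n+1),$ and $(n,n+2)$ for $n>0.$
\item $K_3$ is obtained from $K_2$ by also adding the remaining objects on the boundary stripes, i.e.~the objects $(n,n+3), n<0,$ and the objects $(n,n-1), n\leq 0.$
\end{enumerate}
By \autoref{prop:Kan-ff-D_R} the four functors
\[
\D_R([2])\stackrel{(i_1)_\ast}{\to}\D_R(K_1)\stackrel{(i_2)_!}{\to}\D_R(K_2)\stackrel{(i_3)_\ast}{\to}\D_R(K_3)\stackrel{(i_4)_!}{\to}\D_R(M_3)
\]
are fully faithful. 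Invoking arguments similar to the previous cases together with homotopy (co)finality arguments, one checks that
\begin{enumerate}
\item $(i_1)_\ast\colon\D_R([2])\to\D_R(K_1)$ simply adds zero objects at the new components,
\item $(i_2)_!\colon\D_R(K_1)\to\D_R(K_2)$ inductively adds bicartesian squares,
\item $(i_3)_\ast\colon\D_R(K_2)\to\D_R(K_3)$ again adds zero objects at the new components,
\item and that $(i_4)_!\colon\D_R(K_3)\to\D_R(M_3)$ constructs new bicartesian squares.
\end{enumerate}
In the two ``filling'' steps one adds the new objects in an order such that each is the terminal corner of a span whose other two legs are already present; the universal property of the derived pushout (\autoref{prop:der-push}) then produces the value at the new corner, and \autoref{prop:2-out-of-3-D_R} shows that the resulting square, and all squares it is pasted into, are bicartesian. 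Thus the composition of these four functors extends $(x\stackrel{f}{\to}y\stackrel{g}{\to}z)\in\D_R([2])$ to the refined octahedral diagram depicted in \autoref{fig:octa}. Moreover, in each of the four steps the above describes precisely the corresponding essential image, so that
\[
F_{[2]}=(i_4)_!\circ(i_3)_\ast\circ(i_2)_!\circ(i_1)_\ast
\]
takes values in $\D_R(M_3)^\ex$ and is an inverse equivalence of $i^\ast\colon\D_R(M_3)^\ex\to\D_R([2])$. We refer the reader to \cite[\S4]{gst:Dynkin-A} for a detailed proof of a more general result.
\end{proof}
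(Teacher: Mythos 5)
Your factorization $i\colon[2]\to K_1\to K_2\to K_3\to M_3$ and the definitions of the subposets $K_j$ correctly mirror the proof of \autoref{thm:BP}, which is all the paper does here (it simply defers to that argument and to \cite[\S4]{gst:Dynkin-A}). However, you have swapped the Kan extension variances for the third and fourth steps: you write $(i_3)_\ast$ and $(i_4)_!$, whereas the correct chain — the one in the proof of \autoref{thm:BP}, and the one your own narrative description of ``padding by zeros on the boundary stripes above the image, then filling in bicartesian squares'' requires — is $(i_3)_!$ and $(i_4)_\ast$.

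Concretely: for $(i_3)_\ast$ at the new boundary object $(-1,2)\in K_3\setminus K_2$, the slice category $((-1,2)/i_3)$ is \emph{not} empty; it contains $(0,2)$ (which lies in the image of $i$) as its minimum. Hence $(i_3)_\ast(X)_{(-1,2)}\cong X_{(0,2)}$, not $0$, and the resulting diagram fails to vanish on the upper boundary stripe. It is the \emph{left} Kan extension whose slice $(i_3/(-1,2))$ is empty (since every object of $K_2$ has nonnegative first coordinate), so only $(i_3)_!$ produces the required zeros. Dually, for $(i_4)_!$ at an interior object such as $(-1,1)\in M_3\setminus K_3$, the slice $(i_4/(-1,1))$ consists entirely of boundary-stripe objects that have already been populated with zeros, so $(i_4)_!(X)_{(-1,1)}\cong 0$, rather than the required (generally nonzero) iterated fiber. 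It is $(i_4)_\ast$ whose slice $((-1,1)/i_4)$ is the cospan needed to form a pullback, yielding the bicartesian squares above the image. Once you replace $(i_3)_\ast,(i_4)_!$ by $(i_3)_!,(i_4)_\ast$ and set $F_{[2]}=(i_4)_\ast\circ(i_3)_!\circ(i_2)_!\circ(i_1)_\ast$, your argument agrees with the paper's.
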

\begin{proof}
It is straightforward to adapt the strategy of the proof of \autoref{thm:BP} in order to also cover this situation. For a detailed proof we refer to \cite[\S4]{gst:Dynkin-A}. For later reference we denote the inverse equivalence by $F_{[2]}\colon\D_R([2])\toiso\D_R(M_3)^\ex$.
\end{proof}

The connection to the octahedral axiom is as follows, and this justifyies the above terminology.

\begin{rmk}\label{rmk:octa}
Let $R$ be a ring and let $X\in\D_R(M_3)^\ex$ be a refined octahedral diagram looking like \autoref{fig:octa}. This diagram is determined by the restriction $i^\ast(X)\in\D_R([2])$, and this restriction in turn gives rise to the three morphisms
\[
f\colon x\to y,\quad g\colon y\to z,\quad\text{and}\quad g\circ f\colon x\to z.
\]
Given three such morphisms in $\D_R(\ast)$, let us recall that the octahedral axiom relates triangles of $f$, $g$, and $g\circ f$ by an additional triangle incorporating the various cones. These four triangles can be deduced from \autoref{fig:octa} as follows.
\begin{enumerate}
\item The square $\fbox{1}$ is a cofiber square, and there is hence a canonical isomorphism $Cf\toiso u$. Similarly, if we consider the square $\fbox{1}$ and the horizontal composition $\fbox{2}+\fbox{3}$, then we obtain by \autoref{prop:2-out-of-3-D_R} a cofiber sequence
\[
\xymatrix{
x\ar[r]^-f\ar[d]\ar@{}[rd]|{\square}&y\ar[d]_-g\ar[r]\ar@{}[rd]|{\square}&0\ar[d]\\
0\ar[r]&u\ar[r]&x'.
}
\]
This cofiber sequence of $f$ induces by \autoref{con:dist-tria} the first of the four triangles. 
\item Similarly, the vertical composition $\fbox{2}+\fbox{4}$ is by \autoref{prop:2-out-of-3-D_R} a cofiber square and we obtain the identification $Cg\toiso w$. Jointly with the vertical composition $\fbox{3}+\fbox{5}$ this yields the desired cofiber sequence of $g$.
\item As of the cofiber sequence of $g\circ f$, we note that the horizontal composition $\fbox{1}+\fbox{2}$ is a cofiber square for $g\circ f$, leading to $C(g\circ f)\toiso v$. In combination with the square $\fbox{3}$ we obtain the cofiber sequence of $g\circ f$.
\item Finally, the square $\fbox{4}$ and the horizontal composition $\fbox{5}+\fbox{6}$ define a cofiber sequence
\[
\xymatrix{
u\ar[r]\ar[d]\ar@{}[rd]|{\square}&v\ar[d]\ar[r]\ar@{}[rd]|{\square}&0\ar[d]\\
0\ar[r]&w\ar[r]&u'.
}
\]
Invoking the above identifications of $u,v,$ and $w$ as cones, this cofiber sequence induces the fourth triangle
\[
Cf\to C(g\circ f)\to Cg\to\Sigma Cf.
\]
\end{enumerate}
One checks directly that all the necessary compatibilities of the octahedral axiom are satisfied (see \cite[\S4]{groth:ptstab}).
\end{rmk}

For completeness, let us include a few comments on a variant of \autoref{thm:BP} and \autoref{thm:octa} for $\A{1}\cong\ast$.

\begin{rmk}
Let $R$ be a ring. We denote by $M_1\subseteq\lZ\times\lZ$ the full subposet given by all $(i,j)\in\lZ\times\lZ$ such that $i-1\leq j\leq i+1$.
\begin{figure}
\centering
\[
\xymatrix@=1.0em{
\ar@{}[dr]|{\ddots}&\ar@{}[dr]|{\ddots}&\ar@{}[dr]|{\ddots}& &&&&\\
&(-1,-2)\ar[r]&(-1,-1)\ar[d]\ar[r]&(-1,0)\ar[d]& &&&\\
&&(0,-1)\ar[r]&(0,0)\ar[r]\ar[d]&(0,1)\ar[d]& &&\\
&&&(1,0)\ar[r]&(1,1)\ar[r]\ar[d]&(1,2)\ar[d]& &\\
&&&&(2,1)\ar[r]\ar@{}[dr]|{\ddots}&(2,2)\ar[r]\ar@{}[dr]|{\ddots}&(2,3)\ar@{}[dr]|{\ddots}&\\
&&&&  & & &
}
\]
\caption{The full subposet $M_1\subseteq\lZ\times\lZ$}
\label{fig:M1-shape}
\end{figure}
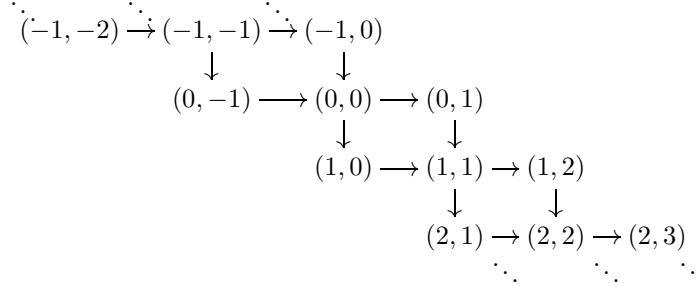
A part of this poset is displayed in \autoref{fig:M1-shape}. Moreover, let us write
\[
\D_R(M_1)^\ex\subseteq\D_R(M_1)
\]
for the full subcategory spanned by all diagrams which vanish on the boundary stripes and which make all squares bicartesian. Evaluation at $(0,0)\in M_1$ induces an equivalence of categories
\begin{equation}\label{eq:equiv-M1}
(0,0)^\ast\colon\D_R(M_1)^\ex\toiso D(R),
\end{equation}
as one checks by copying the proof of \autoref{thm:BP} (see \cite[\S4]{gst:Dynkin-A}). A typical diagram $X\in\D_R(M_1)^\ex$ is displayed in \autoref{fig:spectra}, and therein all squares are suspension squares (\autoref{rmk:susp-sq-D_R}).

The shape $M_1$ has two obvious generating symmetries, namely translation along the diagonal direction and the swap symmetry which keeps the diagonal fixed while it interchanges the objects $(n,n+1)$ and $(n+1,n)$ for all $n$. These symmetries of $M_1$ induce symmetries on $\D_R(M_1)^\ex$ since the defining exactness properties are clearly invariant under these symmetries. Interpreted at the level of $D(R)$ (by a conjugation with \eqref{eq:equiv-M1}), we obtain induced self-equivalences on $D(R)$. The first symmetry gives rise to the suspension equivalence $\Sigma\colon D(R)\toiso D(R)$, and the second symmetry can be identified with $-\id\colon D(R)\toiso D(R)$. One can make precise in which sense this latter symmetry is responsible for the sign in the rotation axiom and we will come back to this in \S\ref{subsec:stable} (see \autoref{rmk:loop-sign} and \autoref{rmk:add}).
\end{rmk}

\begin{figure}
\centering
\[
\xymatrix{
\ar@{}[dr]|{\ddots}&\ar@{}[dr]|{\ddots}&\ar@{}[dr]|{\ddots}&&&&&\\
&0\ar[r]&\tilde x\ar[d]\ar[r]\ar@{}[rd]|{\square}&0\ar[d]&&&&\\
&&0\ar[r]&x\ar[r]\ar[d]\ar@{}[rd]|{\square}&0\ar[d]&&&\\
&&&0\ar[r]&x'\ar[r]\ar[d]\ar@{}[rd]|{\square}&0\ar[d]&&\\
&&&&0\ar[r]\ar@{}[dr]|{\ddots}&x''\ar[r]\ar@{}[dr]|{\ddots}&0\ar@{}[dr]|{\ddots}&\\
&&&&&  & &
}
\]
\caption{A diagram $X\in\D_R(M_1)^\ex$}
\label{fig:spectra}
\end{figure}

In order to turn towards the alternative proof of \autoref{thm:verdier}, there is only one ingredient missing. Note that both in \autoref{thm:BP} and in \autoref{thm:octa} we gave alternative presentations of the derived categories of diagram categories
\[
\D_R([1])=D(\Mod{R}^{[1]})\qquad\text{and}\qquad \D_R([2])=D(\Mod{R}^{[2]}).
\]
In contrast to this, in the classical Verdier triangulation we consider morphisms or pairs of composable morphisms in $D(R)$, which is to say objects in the categories
\[
D(R)^{[1]}\qquad\text{and}\qquad D(R)^{[2]}.
\]
In order to connect these two perspectives, the following result proves crucial.

\begin{prop}\label{prop:D_R-strong}
For every ring $R$ and every $n\geq 0$ the underlying diagram functor
\[
\ndia_{[n]}\colon\D_R([n]))=D(\Mod{R}^{[n]})\to D(R)^{[n]}
\]
is essentially surjective and full.
\end{prop}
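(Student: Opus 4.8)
The plan is to build an explicit lift of an arbitrary object of $D(R)^{[n]}$ to $\D_R([n])$ by choosing, at each point of the linear order $[n]$, a projective (or, in the unbounded setting, a K-projective) resolution, and to make these resolutions compatible with the structure maps. First I would recall that for any chain complex $X$ over $R$ there is a K-projective resolution $PX\to X$ which is a quasi-isomorphism with $PX$ a complex of projectives; moreover one may take $PX$ to depend functorially on $X$ up to chain homotopy (e.g.\ via the bar resolution or via a functorial cofibrant replacement in the projective model structure on $\nCh(R)$ invoked in the proof of \autoref{thm:grothendieck}). Given a diagram $\gamma\circ X\colon[n]\to D(R)$, I lift it one object at a time: pick any complexes $X_0,\ldots,X_n$ representing the vertices and, for each arrow, any chain map representing the morphism in $D(R)$; the composites need not agree on the nose, only up to quasi-isomorphism, but since $[n]$ is a linear order there are no commutativity relations to satisfy beyond those coming from composition, and one can rectify strictly by replacing each $X_k$ with the mapping telescope / iterated composite so that the structure maps become honest chain maps whose composites are the chosen ones. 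Applying a functorial K-projective replacement termwise then produces an object $\tilde X\in\nCh(\Mod{R}^{[n]})$ with $\gamma(\tilde X)\cong \gamma\circ X$ in $D(R)^{[n]}$, which proves essential surjectivity.

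For fullness, the key point is that a morphism in $D(R)^{[n]}$ between objects in the image of $\ndia_{[n]}$ is a compatible family of roofs in $D(R)$, and I want to realize it by an actual map of diagrams of chain complexes (possibly after replacing the source by a quasi-isomorphic one). So suppose $X,Y\in\D_R([n])$ are given and we have $\phi\colon \ndia_{[n]}X\to\ndia_{[n]}Y$ in $D(R)^{[n]}$, i.e.\ morphisms $\phi_k\colon \gamma X_k\to\gamma Y_k$ in $D(R)$ that commute with the structure maps. After replacing each $X_k$ by a K-projective complex (using essential surjectivity-type arguments as above, and the fact that $\ndia_{[n]}$ is unchanged up to iso under such replacement), the maps $\phi_k$ are represented by honest chain maps $f_k\colon X_k\to Y_k$; the obstruction is that the squares
\[
\xymatrix{
X_k\ar[r]\ar[d]_-{f_k}&X_{k+1}\ar[d]^-{f_{k+1}}\\
Y_k\ar[r]&Y_{k+1}
}
\]
commute only up to chain homotopy rather than strictly. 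Here one uses that $X_k$ is K-projective (so that chain-homotopy classes of maps out of it compute $\Hom_{D(R)}$) together with the linearity of the shape $[n]$: starting from $f_0$ and working up the chain, one successively corrects $f_{k+1}$ within its homotopy class by the chosen homotopy so that the $(k,k{+}1)$-square commutes strictly, and the linear (tree-like) structure of $[n]$ guarantees that fixing one square never disturbs an earlier one. The resulting strictly commuting family $\{f_k\}$ is a morphism in $\nCh(\Mod{R}^{[n]})$ whose image under $\gamma$, and hence under $\ndia_{[n]}$, is $\phi$. This establishes fullness.

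The step I expect to be the main obstacle is the \emph{coherent rectification} in both halves: turning a diagram that commutes only up to quasi-isomorphism/chain homotopy into one that commutes on the nose, without changing the underlying object of $D(R)^{[n]}$. For general shapes $B$ this is exactly the kind of obstruction theory that can fail, and is why the statement is restricted to the linear orders $[n]$ — there the poset is a finite rooted tree, so the rectification can be performed inductively with no higher coherences to check. I would organize this cleanly by working in the projective model structure on $\nCh(R)$ and on $\nCh(\Mod{R}^{[n]})$ (available by \autoref{thm:grothendieck} and the references therein): cofibrant objects of the diagram category are, by the freeness of $[n]$, precisely the pointwise-cofibrant diagrams with cofibrant structure maps, so a cofibrant replacement of any $\tilde X$ is already a strict lift, and the homotopy mapping spaces of $\nCh(\Mod{R}^{[n]})$ surject onto the morphisms in $D(R)^{[n]}$ for degree reasons specific to linear shapes. (A more hands-on alternative, as in \cite{groth:ptstab}, is to note that $\ndia_{[n]}$ factors through $\D_R(M_n)^\ex$ via the equivalences of \autoref{thm:BP} and \autoref{thm:octa} and their analogue for general $n$, where the needed fullness/essential surjectivity can be read off the explicit Kan-extension description.) Injectivity on objects is of course false — this is the content of \autoref{rmk:der-mor} — so no attempt is made to prove the functor is an equivalence.
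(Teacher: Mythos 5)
The paper's ``proof'' of this proposition is only a pointer --- it invites the reader to ``check directly'' and cites \cite{cisinski:derivable} and \cite{RB:ABC} for more general results covering all free shapes. Your argument is therefore playing the role of the invited direct check, and as such it has the right shape but contains some loose ends worth flagging.

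For essential surjectivity the argument is correct but the mapping-telescope remark is misplaced: a telescope cannot turn roofs in $D(R)$ into chain maps, and there is nothing for it to rectify. The clean version is: pick a K-projective (cofibrant) complex $\tilde Y_k$ representing each vertex; since maps in $D(R)$ out of a K-projective are represented by honest chain maps, each generating arrow $k\to k+1$ is realized as a chain map, and since $[n]$ is \emph{free} on these generators the chain maps assemble without further rectification into an object of $\nCh(\Mod{R}^{[n]})$. The cofibrant replacement of the vertices, not the telescope, is what makes the arrows strictifiable.

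For fullness the inductive scheme is right in outline, but the phrase ``one successively corrects $f_{k+1}$ within its homotopy class by the chosen homotopy'' skips exactly the step that makes it work. The homotopy $H_k\colon X_k\to Y_{k+1}[1]$ witnessing $h_kf_k\simeq f_{k+1}g_k$ is a map out of $X_k$, not out of $X_{k+1}$, so by itself it supplies no candidate replacement for $f_{k+1}$. One must first pass to a Reedy-cofibrant model of $X$ (so that each $g_k\colon X_k\to X_{k+1}$ is a cofibration), \emph{extend} $H_k$ along $g_k$ to $\tilde H_k\colon X_{k+1}\to Y_{k+1}[1]$ --- possible because cofibrations of chain complexes are degreewise split --- and then set $f_{k+1}' := f_{k+1}-(\partial\tilde H_k+\tilde H_k\partial)$, which satisfies $f_{k+1}'g_k=h_kf_k$ and $f_{k+1}'\simeq f_{k+1}$. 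This extension step is precisely where the ``degree reasons specific to linear shapes'' that you allude to actually enter: free categories have a $1$-dimensional nerve, so homotopies always extend along cofibrations with no higher obstruction, whereas a $2$-dimensional shape such as $\square$ produces a genuine obstruction (compare \cite[Example~3.17]{bg:cubical}). You do invoke the projective/Reedy model structure at the end, so the ingredient is in the write-up, but the correction as you initially state it is not yet a proof.

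Finally, the suggested ``hands-on alternative'' via $\D_R(M_n)^\ex$ does not work: \autoref{thm:BP} and \autoref{thm:octa} compare $\D_R(M_n)^\ex$ to the \emph{coherent} category $\D_R([n-1])$, and no incoherent diagram category $D(R)^{[n]}$ appears in those results. The present proposition is precisely about crossing the coherent/incoherent divide, and those theorems stay entirely on the coherent side.
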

\begin{proof}
We invite the reader to check this directly. Alternatively, see for instance \cite[Prop.~2.15]{cisinski:derivable} or \cite[Thm.~10.3.3]{RB:ABC} for much more general statements.
\end{proof}

\begin{rmk}
Let $R$ be a ring.
\begin{enumerate}
\item The functor $\ndia_{[0]}$ is, of course, an equivalence of categories, but already in the case of $n=1$ the functor $\ndia_{[1]}$ is not faithful. For instance, in the case of a field $R=k$, one can invoke Auslander--Reiten theory to make precise that the extension group $\mathrm{Ext}^1_{k\A{2}}((k\to 0),(0\to k))\cong k$ is precisely the reason for this failure (see also \cite[\S5]{groth:thy-of-der}).
\item One can show, that, more generally for every free category $F$ the underlying diagram functors $\ndia_F$ is essentially surjective and full (\cite[Prop.~2.15]{cisinski:derivable} or \cite[Thm.~10.3.3]{RB:ABC}). (Let us recall that a free category is the path category of an oriented graph.)
\item In contrast to this, already in the case of a field $R=k$ and the non-free category $\square$, the underlying diagram functor $\ndia_\square$ does not preserve isomorphism types (see \cite[Example~3.17]{bg:cubical}) and is hence not full and essentially surjective.
\end{enumerate}
\end{rmk}

We now sketch the slightly unorthodox proof of \autoref{thm:verdier}. 

\begin{proof}\emph{(of \autoref{thm:verdier})}
We carry out the sketch proof in the case of $\Mod{R}$, but the arguments also apply to arbitrary abelian categories (see \autoref{rmk:unorthodox}). Moreover, we focus on those aspects which allow for nice ``representation theoretic explanations''. Let us take for granted that the derived category $D(R)$ is additive (see also \autoref{rmk:add}). The suspension functor $\Sigma\colon D(R)\to D(R)$ is simply the shift functor (\autoref{rmk:susp-sq-D_R}), and it clearly is an equivalence of categories.

We now define the class of distinguished triangles in $D(R)$. To this end, for every $(f\colon x\to y)\in \D_R([1])$ we can consider by \autoref{prop:cof-seq} the corresponding cofiber sequence $X\in\D_R([1]\times[2])^\cof$ as in \eqref{eq:cof-seq}. As detailed in \autoref{con:dist-tria} we obtain an associated triangle
\begin{equation}\label{eq:stand-tria}
x\stackrel{f}{\to} y\to z\to \Sigma x.
\end{equation}
This is defined to be the \emph{standard triangle} of $f\in\D_R([1])$, and a triangle in $D(R)$ is distinguished when it is isomorphic to a standard triangle. It is worthwhile to summarize the construction by the following diagram
\[
\xymatrix{
\D_R([1])\ar[r]_-\sim^-{F_{[1]}}\ar[d]_-{\ndia_{[1]}}\ar[rd]_-{\mathrm{tria}}&\D_R(M_2)^\ex\ar[d]\\
D(R)^{[1]}&D(R)^{[3]}.
}
\]
The horizontal functor is the equivalence constructed in the proof of \autoref{thm:BP}. The unlabeled vertical functor takes a Barratt--Puppe sequence, restricts it to the appropriate cofiber sequence and then passes to the underlying distinguished triangle. Note that the vertical functors and hence also the diagonal functor amount to a loss of information since in all three cases underlying diagram functors are involved.

In order to show that every morphism $f\colon x\to y$ in $D(R)$ extends to a distinguished triangle we invoke that $\ndia_{[1]}$ is essentially surjective (\autoref{prop:D_R-strong}). Thus, we can find $Y\in\D_R([1])$ and an isomorphism $\alpha\colon \ndia_{[1]}(Y)\toiso f$, and a combination of the standard triangle $\mathrm{tria}(Y)$ of $Y$ and the isomorphism $\alpha$ yields the intended distinguished triangle. Similarly, the weak functoriality of distinguished triangles is a consequence of $\ndia_{[1]}$ being essentially surjective and full. Up to the sign issue, the rotation axiom is essentially a consequence of the symmetries of Barratt--Puppe sequences (see also \autoref{rmk:BP}). The sign issue will be taken up again \S\ref{sec:crash} (see the discussion of additivity in \autoref{rmk:add}).

Instead, we content ourselves by taking care of the octahedral axiom. Let $B$ be the shape of the diagrams showing up in the octahedral axiom. For every
\[
X=(x\to y\to z)\in\D_R([2])
\]
we can consider the corresponding refined octahedral diagram $F_{[2]}(X)\in\D_R(M_3)^\ex$ (\autoref{thm:octa}). The above construction of standard triangles and \autoref{rmk:octa} imply that from $F_{[2]}(X)$ we can construct the intended octahedral diagram
\[
\mathrm{octa}(X)\colon B\to D(R).
\]
Similarly to the previous case, the situation can be summarized by the diagram
\begin{equation}\label{eq:dia-octa}
\vcenter{
\xymatrix{
\D_R([2])\ar[r]_-\sim^-{F_{[2]}}\ar[d]_-{\ndia_{[2]}}\ar[rd]_-{\mathrm{octa}}&\D_R(M_3)^\ex\ar[d]\\
D(R)^{[2]}&D(R)^B.
}
}
\end{equation}
Here, $F_{[2]}$ is the equivalence from \autoref{thm:octa} and the remaining functors amount to a loss of information. Since also the functor $\ndia_{[2]}$ is essentially surjective (\autoref{prop:D_R-strong}), this establishes the octahedral axiom.
\end{proof}
 
Let us collect some of the benefits which we obtain from this alternative construction of the classical Verdier triangulation on $D(R)$. 
 
\begin{rmk}\label{rmk:unorthodox}
Let $R$ be a ring, let $(x\stackrel{f}{\to}y\stackrel{g}{\to}z)\in\D_R([2])$, and let $F_{[2]}(X)$ be the associated refined actahedral diagram.
\begin{enumerate}
\item The shape $M_3$ (\autoref{fig:M3-shape}) has two obvious generating symmetries (translation and flip symmetry). By definition refined octahedral diagrams are preserved by restriction along these symmetries. This implies, for instance, that $X$ also encodes the refined octahedral diagram associated to the canonical representation of $\A{3}$ looking like
\[
(Cf\to C(g\circ f)\to \Sigma x).
\]
And this leads to a relation between the respective octahedral diagrams.
\item The refined octahedral diagram $F_{[2]}(X)$ encodes additional distinguished triangles teaching us something about the cone $C(g\circ f)$ of the composition. In fact, let us recall that every bicartesian square 
\[
\xymatrix{
x\ar[r]\ar[d]\ar@{}[rd]|{\square}&y\ar[d]\\
z\ar[r]&w
}
\]
in $\D_R$ gives rise to a cofiber square
\[
\xymatrix{
x\ar[r]\ar[d]\ar@{}[rd]|{\square}&y\oplus z\ar[d]\\
0\ar[r]&w,
}
\]
and hence to a distinguished triangle
\[
x\to y\oplus z\to w\to \Sigma x.
\]
See, for instance, \cite{gps:mayer} for a construction of these \textbf{Mayer--Vietoris sequences}. Let us specialize this to  bicartesian squares occurring in $F_{[2]}(X)$ (\autoref{fig:octa}). In this situation, the square $\fbox{2}$ gives rise to an additional distinguished triangle
\[
y\to z\oplus Cf\to C(g\circ f)\to \Sigma y.
\]
Similarly, the Mayer--Vietoris sequence associated to the square $\fbox{5}$ looks like
\[
C(g\circ f)\to \Sigma x\oplus Cg\to \Sigma y\to \Sigma C(g\circ f).
\]
\item The proof of the octahedral axiom (as summarized by \eqref{eq:dia-octa}) used only that $\ndia_{[2]}$ is essentially surjective. Since this functor is also full, we conclude that octahedral diagrams depend weakly functorially on pairs of composable morphisms in derived categories.
\item For simplicity, we presented the proof in the case of $\Mod{R}$ only. However, there are variants of the main ingredients (such as \autoref{thm:BP} and \autoref{thm:octa}) for arbitrary abelian categories $\cA$. In fact, by (co)finality arguments the constructions only rely on the existence of finite limits and colimits, and these exist in all abelian categories. Also the above remarks generalize to this more general situation.
\end{enumerate}
\end{rmk}

This sketch proof exhibits some of the axioms of triangulated categories as certain shadows of symmetries of Barratt--Puppe sequences and of refined octahedral diagrams. Put differently, this strategy makes precise the connection of these  axioms to abstract representations of the quivers $\A{2}$ and $\A{3}$. The quivers $\A{n}$ for $n\geq 4$ play a similar role for higher triangulations and we will study their abstract representations in more detail in \S\ref{subsec:Dynkin-A} (for the connection to higher triangulations see \autoref{rmk:higher-triangulations}).

\section{A crash course on derivators}
\label{sec:crash}

In this section we give a short introduction to derivators, which offer one of the many approaches to higher category theory, abstract homotopy theory, or homotopical algebra. The precise choice of this model is not too relevant for our later purposes (see \autoref{discl:higher-cats}), and here we essentially only present what is necessary for our later discussion of strong stable equivalences of quivers (or small categories). In fact, the main goal of this section is to provide enough background in order to turn the following pseudo-definition into an actual definition.

\setcounter{pseudodefn}{-1}

\begin{pseudodefn}
Two quivers $Q$ and $Q'$ are \textbf{strongly stably equivalent} if for every stable homotopy theory \D there is an equivalence between the homotopy theory $\D^Q$ of representations of shape $Q$ and the homotopy theory $\D^{Q'}$ of representations of shape $Q'$,
\[
\Phi_\D\colon\D^Q\toiso\D^{Q'},
\]
which is pseudo-natural with respect to exact morphisms $F\colon\D\to\E$ of stable homotopy theories.
\end{pseudodefn}

This goal pretty much dictates what we are supposed to do. In particular, as a first step we have to make precise what we mean by an ``abstract homotopy theory''. Here we choose to work with \emph{derivators} but see the following disclaimer.

\begin{discl}\label{discl:higher-cats}
By now there are various approaches to formalize an ``abstract homotopy theory''. All of these notions enhance certain defects of the more classical homotopy categories (which arise as 1-categorical localizations).
\begin{enumerate}
\item One of the most classical approaches is given by Quillen model categories \cite{quillen:ha,hovey:model,hirschhorn:model}. The homotopy theory is in this case encoded by a class of weak equivalences accompanied by classes of fibrations and cofibrations which are subject to certain axioms.
\item There is the zoo of different models for a theory of $(\infty,1)$-categories. One way to think of an individual $(\infty,1)$-category is as the result of a ``higher categorical localization'' of a category with weak equivalences. A very prominent model is given by $\infty$-categories, and this model was developed to an impressive extent most notably in \cite{joyal:barca} and \cite{HTT,HA}. With all these techniques at hand, this is currently the most flexible notion. Among the alternative approaches to such a theory are simplicial categories \cite{bergner:scat}, Segal categories \cite{hirschowitz-simpson}, and complete Segal spaces \cite{rezk:model}. Additional references and more details can for example be found in \cite{bergner:survey,simpson:higher,camerona:whirlwind,groth:scinfinity}. Moreover, a model-independent approach to higher category theory can be found in \cite{riehl-verity:2-cat-qcats} and the sequels.
\item Derivators \cite{grothendieck:derivators,heller:htpythies} axiomatize key properties of the calculus of homotopy limits and homotopy Kan extensions. In this language, these are characterized by ordinary universal properties making them accessible to elementary techniques.
\end{enumerate}

The notion of a derivator is a compromise: a derivator encodes more information than a mere homotopy category but somewhat less information than a full homotopy theory. Some of the defects of classical homotopy categories and triangulated categories are addressed successfully by the theory of derivators -- and this is done in a reasonably elementary way by means of (at most $2$-) categorical techniques only. For instance, derivators allow for a construction of exponentials and for a formal study of stability, thereby offering a first framework for this project.

At the same time there are obvious limitations to the theory of derivators (such as the absence of gluing of derivators or the  incoherence of morphisms and natural transformations), and for various purposes it is more convenient to use the more flexible theory of $\infty$-categories. Going even beyond this, for instance for the calculus of universal tilting modules (as in \S\ref{subsec:modules}), it would be desirable to have a more well-developed \emph{theory} of $(\infty,2)$-categories and monoidal such. 

In any case, the author believes in diversity of technology. Most of the results of this project were established in the language of derivators and this is hence also the model we choose in this account. However, it is very likely that sooner or later $\infty$-categories will enter the picture more prominently (see already \cite{dycker-jasso-walde:higher-AR,dycker-jasso-walde:BGP}).  
\end{discl}

The structure of this section is dictated by the above-mentioned main goal. In \S\ref{subsec:der} we introduce derivators and collect some examples. In \S\ref{subsec:stable} we turn to stable derivators and discuss their relation to triangulated categories. In \S\ref{subsec:exponentials} we construct exponentials for derivators which encode the calculus of parametrized limits and Kan extensions. Finally, in \S\ref{subsec:mor} we discuss morphisms of derivators, adjunctions, equivalences, and related notions.

\subsection{Derivators}
\label{subsec:der}

In this section we discuss the philosophy of derivators which offer one of the many approaches to abstract homotopy theory. Derivators were introduced independently by Grothendieck \cite{grothendieck:derivators} and Heller \cite{heller:htpythies}, and closely related notions were also considered by Franke \cite{franke:adams} and Keller \cite{keller:epivalent} (see \autoref{rmk:derivator-2} for some additional references). The main focus of derivators is on diagram categories and the related calculus of (derived or homotopy) limits, colimits, and Kan extensions. 

\begin{notn}
We denote by $\cCat$ the $2$-category of small categories, functors, and natural transformation, and similarly, by $\cCAT$ the $2$-category of not necessarily small categories.
\end{notn}

In this section we use very basic terminology related to $2$-categories. The discussions in \cite[\S XII]{maclane} and \cite[\S7]{borceux:1} suffice for our purposes (but see also \cite{kelly-street:review,lack:2-cat-companion}). 

\begin{defn}
A \textbf{prederivator} is a $2$-functor $\D\colon\cCat\op\to\cCAT$.
\end{defn}

Thus, a prederivator $\D$ associates to every small category $A\in\cCat$ a category $\D(A)$, the category of \textbf{coherent diagrams of shape $A$} in $\D$. For the trivial shape $\ast$, we refer to $\D(\ast)$ as the \textbf{underlying category} of $\D$. Moreover, every functor $u\colon A\to B$ induces a \textbf{restriction functor} or \textbf{precomposition functor} $u^\ast\colon\D(B)\to\D(A)$. Similarly, every natural transformation $\alpha\colon u\to v$ of functors $u,v\colon A\to B$ induces a natural transformation $\alpha^\ast\colon u^\ast\to v^\ast$, but in these notes we on purpose decide to not be too explicit about these $2$-dimensional aspects.

\begin{warn}
The above is only terminology. Given a prederivator $\D$ and $A\in\cCat$, an object $X\in\D(A)$ is just an abstract object, and not a diagram in any precise sense. However, as we will see, every such $X$ gives rise to a usual diagram $A\to\D(\ast)$, and it is crucial to distinguish between these two.
\end{warn}

\begin{eg}
Let $\cC$ be a category and $A\in\cCat$. We denote by $\cC^A$ the category of functors $X\colon A\to\cC$ and natural transformation between them. The formation of these diagram categories defines a $2$-functor
\[
y_\cC\colon\cCat\op\to\cCAT\colon A\mapsto\cC^A,
\]
the \textbf{prederivator represented by $\cC$}. The underlying category is $\cC$ itself.
\end{eg}

More interesting examples arise from this one by \emph{localization}. The following example collects two key examples of a construction which applies to arbitrary relative categories (categories with a class of weak equivalences), and this relies on the fact that localizations are $2$-localizations.

\begin{egs}
\begin{enumerate}
\item Let $\cA$ be an abelian category. The prederivator of $\cA$ is the $2$-functor
\[
\D_\cA\colon\cCat\op\to\cCAT\colon B\mapsto D(\cA^B).
\]
The underlying category of $\D_\cA$ is the derived category $D(\cA)$.
\item Let $\cM$ be a Quillen model category with class of weak equivalences $W$. For every $B\in\cCat$ we denote by $W^B$ those natural transformations $\alpha\colon X\to Y$ between diagrams $X,Y\colon B\to \cM$ such that the components $\alpha_b\colon X_b\to Y_b$ are weak equivalences for all $b\in B$, i.e., $W^B$ is the class of levelwise weak equivalences. The homotopy prederivator of $\cM$ is the $2$-functor
\[
\ho_\cM\colon\cCat\op\to\cCAT\colon B\mapsto \Ho(\cM^B)=\cM^B[(W^B)^{-1}].
\]
The underlying category of $\ho_\cM$ is the homotopy category $\Ho(\cM)$. 
\end{enumerate}
\end{egs}

\begin{con}\label{con:dia}
Let \D be a prederivator, $B\in\cCat$, and $b\in B$. We can consider the functor $b\colon\ast\to B$ which sends the unique object to $b\in B$, and the corresponding restriction functor 
\[
b^\ast\colon\D(B)\to\D(\ast)
\]
is referred to as an \textbf{evaluation functor} (at $b$). For every $f\colon X\to Y$ in $\D(B)$ we write $f_b\colon X_b\to Y_b$ for its image under the evaluation functor. As an exercise in $2$-functoriality, we invite the reader to check that every $X\in\D(B)$ gives rise to an underlying diagram
\[
\ndia_B(X)\colon B\to\D(\ast)\colon b\mapsto X_b,
\] 
and that this defines an \textbf{underlying diagram functor}
\[
\ndia_B\colon\D(B)\to\D(\ast)^B.
\]
\end{con}

\begin{warn}\label{warn:incoh}
Let \D be a derivator and $B\in\cCat$. In general, the underlying diagram functor $\ndia_B$ fails to be an equivalence. In fact, in many cases the categories $\D(B)$ and $\D(\ast)^B$ are not equivalent. We refer to the category $\D(\ast)^B$ as the category of \textbf{incoherent diagrams of shape $B$} in $\D$.

However, the underlying diagram functors are useful in order to visualize abstract coherent diagrams, i.e., we often draw $\ndia_B(X)$ and say that $X\in\D(B)$ looks like $\ndia_B(X)$. To reissue this warning, it is important to distinguish between these two.
\end{warn}

In specific situations the underlying diagram functors take the following form.

\begin{eg}
Let $B\in\cCat$.
\begin{enumerate}
\item In the case of a represented prederivator $y_\cC$, the underlying diagram $\ndia_B$ is the isomorphism of categories
\[
\ndia_B\colon \cC^B\toiso(\cC^\ast)^B.
\]
\item For every abelian category $\cA$ and the corresponding prederivator $\D_\cA$, the underlying diagram functors 
\[
\ndia_B\colon D(\cA^B)\to D(\cA)^B
\]
were already considered in \S\ref{sec:rep-thy-tria} (see, in particular, \autoref{rmk:der-mor}, \autoref{eg:dia-H}, and their discussion). From that discussion we know that these functors often fail to be equivalences.
\item Similarly, for every Quillen model category $\cM$, the corresponding underlying diagram functor of $\ho_\cM$ takes the form
\[
\ndia_B\colon\Ho(\cM^B)\to\Ho(\cM)^B.
\]
In general, homotopy categories of diagram categories and diagram categories of homotopy categories are not equivalent, and these underlying diagram functors hence again fail to be equivalences.
\end{enumerate}
\end{eg}

The point is that coherent diagrams (such as objects in $D(\cA^B)$ and $\ho(\cM^B)$) carry more information than incoherent ones. This information is crucial when one wants to calculate their derived or homotopy (co)limits. In fact, as we have seen in \S\ref{sec:rep-thy-tria}, morphisms in derived categories do not suffice to canonically determine their derived (co)kernels, and similarly for derived pushouts. The key properties listed in \autoref{thm:grothendieck} are now turned into an abstract definition, thereby capturing a formal calculus of abstract limits and colimits. Axiom (Der4) will be made more precise in the discussion that follows the definition.

\begin{defn}\label{defn:derivator}
A prederivator $\D\colon\cCat\op\to\cCAT$ is a \textbf{derivator} if it enjoys the following properties.
\begin{enumerate}
\item[(Der1)] The canonical inclusion functors $B_j\to\coprod_{i\in I} B_i, j\in I,$ induce an equivalence of categories
  \[
  \D(\coprod_{i\in I} B_i)\toiso\prod_{i\in I}\D(B_i).
  \]
\item[(Der2)] A morphism $f\colon X\to Y$ in $\D(B)$ is an isomorphism if and only if the morphisms $f_b\colon X_b\to Y_b, b\in B,$ are isomorphisms in $\D(\ast).$
\item [(Der3)] For every functor $u\colon A\to B$, the restriction functor $u^\ast\colon\D(B)\to\D(A)$ has a left adjoint $u_!$ and a right adjoint $u_\ast$,
\begin{equation}\label{eq:kan-adj}
(u_!,u^\ast)\colon\D(A)\rightleftarrows\D(B),\qquad (u^\ast,u_\ast)\colon\D(B)\rightleftarrows\D(A).
\end{equation}
\item[(Der4)] For every functor $u\colon A\to B$, the functors $u_!,u_\ast\colon\D(A)\to\D(B)$ can be calculated pointwise.
\end{enumerate}
\end{defn}

\begin{rmk}\label{rmk:derivator}
Some discussion of this definition is in order.
\begin{enumerate}
\item Axiom (Der1) makes precise that that coherent diagrams of shape given by a disjoint union are determined by the canonical restrictions to the respective summands. Axiom (Der2) is motivated by the following two examples.
\begin{enumerate}
\item A natural transformation $\alpha\colon X\to Y$ in a diagram category $\cC^B$ is an isomorphism if and only if all components $\alpha_b\colon X_b\to Y_b$ are isomorphisms.
\item For every abelian category $\cA$ and $B\in\cCat$ there is an obvious isomorphism $\nCh(\cA^B)\toiso \nCh(\cA)^B$. Under this isomorphism the quasi-isomorphisms $W_{\cA^B}$ in $\cA^B$ correspond precisely to the levelwise quasi-isomorphisms $W_\cA^B$.
\end{enumerate}
\item Axioms (Der3) and (Der4) jointly encode, first,  a ``\emph{homotopical} completeness and cocompleteness property'', thereby guaranteeing that ``homotopical versions'' of limits, colimits, and Kan extensions exist, and, second, formulas which allow us to calculate the (homotopy) Kan extensions. Let us expand a bit on this.

As a special case, for every $A\in\cCat$, there is the unique functor $\pi_A\colon A\to\ast$. Correspondingly, by (Der3) the functor $\pi_A^\ast\colon\D(\ast)\to\D(A)$ has a left adjoint $(\pi_A)_!$ and a right adjoint $(\pi_A)_\ast$, which we respectively also denote by
\[
\colim_A=(\pi_A)_!\colon\D(A)\to\D(\ast)\quad\text{and}\quad\mathrm{lim}_A=(\pi_A)_\ast\colon\D(A)\to\D(\ast).
\]
These abstract \textbf{colimit} and \textbf{limit} functors generalize ordinary categorical (co)limits, derived (co)limits, and homotopy (co)limits (see \autoref{egs:der}).

More generally, the adjoints $u_!$ and $u_\ast$ are referred to as \textbf{left} and \textbf{right Kan extensions}, respectively. To be able to work with these adjoints, it is crucial that they can be calculated as in the classical case (\autoref{con:ptws-kan}). To formulate this abstractly let us assume that \D is a prederivator which satisfies (Der3). For every functor $u\colon A\to B$, for every $b\in B$, and every coherent diagram $X\in\D(A)$ there are canonical maps
\begin{equation}\label{eq:mates-in-Der4}
\colim_{(u/b)}p^\ast (X)\to u_!(X)_b\quad\text{and}\quad u_\ast(X)_b\to \mathrm{lim}_{(b/u)}q^\ast (X).
\end{equation}
(Here, $p$ and $q$ are the projection functors associated to slice categories as in \autoref{con:ptws-kan}.) Axiom (Der4) asks that these canonical maps are isomorphisms, and in this precise sense Kan extensions in derivators are pointwise.
\item The construction of the canonical maps in \eqref{eq:mates-in-Der4} is an instance of the calculus of canonical mates \cite{kelly-street:review}. While working with derivators one often runs into the situation that outputs of certain universal constructions ``obviously are isomorphic''. In many cases, the formalism of mates and related notion of \emph{homotopy exact squares} allows one to actually conclude this by, first, providing canonical maps between such gadgets and, second, guaranteeing that these maps are isomorphisms \cite{groth:ptstab,maltsiniotis:htpy-exact}. For a more detailed discussion of the formalism behind it we refer to \cite[\S8 and \S10]{groth:thy-of-der}. In many cases, establishing a rule to manipulate Kan extensions amounts to showing that a certain square is homotopy exact (see \cite{grothendieck:stacks,grothendieck:derivators} and \cite{cisinski:loc-min,cisinski:presheaves,maltsiniotis:grothendieck} for many more advanced examples).
\item All axioms of a derivator ask for certain \emph{properties} of the underlying prederivators, which is the only actual \emph{structure}. In contrast to this, in other approaches such as triangulated categories some \emph{non-canonical structure} is put on an underlying category. 
\end{enumerate}
\end{rmk}

\begin{egs}\label{egs:der}
Let us take up again the above examples of prederivators.
\begin{enumerate}
\item Let $\cC$ be an ordinary category. The category $\cC$ is complete and cocomplete if and only if $y_\cC\colon B\mapsto\cC^B$ is a derivator, the \textbf{derivator represented by $\cC$}. In this case the abstract (co)limits are the usual ones from ordinary category theory, and the more general adjoints $u_!,u_\ast$ are the classical Kan extensions. For an introduction to these Kan extensions we refer to \cite[\S6]{groth:thy-of-der} and for a detailed proof that $y_\cC$ indeed is a derivator to \cite[\S9.2]{groth:thy-of-der}.
\item For every Grothendieck abelian category $\cA$, $\D_\cA\colon B\mapsto D(\cA^B)$ is a derivator, the \textbf{derivator of (chain complexes in) $\cA$}. In this case abstract (co)limits are derived (co)limits, and similarly for Kan extensions. The fact that $\D_\cA$ is a derivator follows from the next example.

As special cases, there are hence the derivator $\D_k$ of a field $k$ and the derivator $\D_R$ of a ring $R$ (see \autoref{thm:grothendieck}). In contrast to the derived category $D(k)$ of a field, the derivator $\D_k$ of a field already is quite interesting as it encodes, for instance, derived categories of path algebras, incidence algebras, and group algebras. As an additional class of examples, associated to every scheme $X$ there is the derivator $\D_X$ of (chain complexes of) quasi-coherent $\mathcal{O}_X$-modules. In fact, by \cite[\S3]{enochs-estrada:relative} the category of quasi-coherent $\mathcal{O}_X$-modules is Grothendieck abelian for arbitrary schemes $X$. 
\item For every Quillen model category $\cM$, the prederivator $\ho_\cM\colon B\mapsto \Ho(\cM^B)$ is a derivator, the \textbf{homotopy derivator of $\cM$}. In full generality this was established by Cisinski as \cite[Thm.~6.11]{cisinski:direct}, and related to this see~\cite{chacholski-scherer:diagram}. For an even more general version we also refer to \cite[Thm.~2.21, Cor.~2.24, and Cor.~2.28]{cisinski:derivable}. For combinatorial Quillen model categories an alternative proof can be found in \cite[\S1.3]{groth:ptstab}. For homotopy derivators abstract (co)limits specialize to homotopy (co)limits and similarly for Kan extensions. Let us specialize this important class to some examples of central interest. 
\begin{enumerate}
\item For every Grothendieck abelian category $\cA$, the category $\nCh(\cA)$ admits a combinatorial model structure with quasi-isomorphisms as weak equivalences (see \cite[Prop.~3.13]{beke:sheafifiable}, \cite[\S2]{hovey:sheaves}, or \cite[\S\S2-3]{cisinski-deglise:local-stable}). Hence, the derivator $\D_\cA$ of $\cA$ can be described as homotopy derivator $\ho_{\nCh(\cA)}$.

\item As an universal example there is the \textbf{derivator of spaces}
\[
\cS=\ho_{\mathrm{Top}},
\]
which arises homotopy derivator of the classical Serre model structure \cite{quillen:ha}. For alternative approaches to $\cS$ we also refer to \cite{thomason:model-Cat,grothendieck:stacks,maltsiniotis:grothendieck,cisinski:presheaves}. In the case of $\cS$ abstract limits are the classical homotopy limits of topological spaces. Systematic classical accounts are in \cite{bousfield-kan:htpy-limits,boardman-vogt} and special instances can, for example, be found in \cite{mather:pullbacks}. Similarly, the \textbf{derivator of pointed spaces} is $\cS_\ast=\ho_{\mathrm{Top}_\ast}$.
\item An important role in this project is played by the derivator of spectra. This derivator is defined as 
\[
\cSp=\ho_{\mathrm{Sp}},
\]
the homotopy derivator with respect to the classical model structure on sequential spectra \cite{bousfield-friedlander}. The underlying category of $\cSp$ is \emph{the} stable homotopy category $\mathcal{SHC}$ \cite{boardman:thesis,puppe:stabil,vogt:boardman}. Alternative (and monoidal) approaches to this derivator are based on \cite{hss:symmetric,ekmm:rings,mmss:diagram}.
\end{enumerate}
\end{enumerate}
\end{egs}

There is also a variant of homotopy derivators of complete and cocomplete $\infty$-categories \cite{riehl-verity:Kan-ext,lenz:derivators}. Morally, the same should be true for all other approaches to a theory of complete and cocomplete $(\infty,1)$-categories, This moral is, for example, based on the model-independent approach to $(\infty,1)$-category theory of Riehl--Verity \cite{riehl-verity:2-cat-qcats,riehl-verity:fibration,riehl-verity:Kan-ext}.

\begin{rmk}\label{rmk:derivator-2}
A few more comments related to \autoref{defn:derivator} are in order.
\begin{enumerate}
\item There is some flexibility with respect to the choice of allowable shapes in the definition of a derivator. In this paper we always allow for arbitrary small shapes, but in some situations more restrictive assumptions are useful. For instance, if one uses bounded derived categories of diagram categories of suitably finite shapes only, then by a result of Keller \cite{keller:exact} for every exact category in the sense of Quillen \cite[\S2]{quillen:k-theory} there is a corresponding derivator.
\item Following the convention of Anderson \cite{anderson:axiomatic}, Heller \cite{heller:htpythies}, and Franke \cite{franke:adams}, the axioms in \autoref{defn:derivator} encode aspects of the calculus of \emph{diagrams} (covariant functors) in a fixed abstract homotopy theory. Consequently, the domain of definition is $\cCat\op$. Grothendieck \cite{grothendieck:derivators}, Cisinski \cite{cisinski:direct,cisinski:derived-kan}, Maltsiniotis \cite{maltsiniotis:intro,maltsiniotis:k-theory} consider \emph{presheaves} (contravariant functors) instead, and consequently their domain of definition is $\cCat\coop$, which is obtained from $\cCat$ by changing the orientation of functors and natural transformations. The resulting theories are equivalent.
\item Up to the fact that we give preference to diagrams and not presheaves, the precise form of \autoref{defn:derivator} is due to Grothendieck \cite[pp.~43-46]{grothendieck:derivators}. Similar axioms were also proposed by Anderson \cite[\S2]{anderson:axiomatic}, Heller \cite{heller:htpythies}, and Franke \cite{franke:adams}. Moreover, related notions were consider by Keller \cite{keller:epivalent} and Garkusha \cite{garkusha:I,garkusha:II}. For an additional introductory account we also refer to work of Maltsiniotis \cite{maltsiniotis:intro,maltsiniotis:seminar}.
\end{enumerate}
\end{rmk}

We conclude this section by one trivial example.

\begin{eg}
Let \D be a derivator. The \textbf{opposite derivator} $\D\op$ of \D is given by
\[
\D\op\colon\cCat\op\to\cCAT\colon B\mapsto \D(B\op)\op.
\]
\end{eg}

As in ordinary category theory, this example is important because of the resulting \emph{duality principle}. In many later statements we allow ourselves to focus on results on colimits and left Kan extensions. An application to opposite derivators yields the corresponding dual statement. The formation of opposites of derivators is compatible with the formation of opposites of complete and cocomplete categories, abelian categories and model categories.

\subsection{Stable derivators}
\label{subsec:stable}

Having introduced derivators as a compromise model for abstract homotopy theories in \S\ref{subsec:der}, in this subsection we briefly discuss stable derivators. The main goal is to develop some intuition for the notion and to discuss the relation to triangulated categories. It will turn out that many of the arguments in \S\ref{sec:rep-thy-tria} apply verbatim in this more general context. 

To build towards stable derivators, we begin by the following definition.

\begin{defn}\label{defn:pointed}
A derivator \D is \textbf{pointed} if the underlying category $\D(\ast)$ has a zero object.
\end{defn}

\begin{rmk}\label{rmk:pointed}
Let \D be a derivator.
\begin{enumerate}
\item As a consequence of (Der1), the underlying category $\D(\ast)$ has an initial object and a final object, and these objects are hence asked to be isomorphic. As usual, any such object is denoted by $0\in\D(\ast)$.
\item It follows from (Der3) that for every $B\in\cCat$ also $\D(B)$ has a zero object, and that all restriction and Kan extension functors preserve these zero objects.
\end{enumerate}
\end{rmk}

\begin{egs}\label{egs:pt-der}
Let us take up again \autoref{egs:der}.
\begin{enumerate}
\item Let $\cC$ be a complete and cocomplete category. The underlying category of the represented derivator $y_\cC$ is $\cC$, and $y_\cC$ is hence pointed if and only if $\cC$ is pointed (has a zero object).
\item Let $\cA$ be a Grothendieck abelian category. The underlying category of the derivator $\D_\cA$ is the derived category $D(\cA)$. As an additive category, $D(\cA)$ clearly has a zero object, and $\D_\cA$ is hence pointed.
\item Similarly, for a pointed Quillen model category $\cM$, the homotopy derivator $\ho_\cM$ is pointed. Among the specific examples $\cS$, $\cS_\ast$, and $\cSp$ only the last two are pointed.
\end{enumerate}
\end{egs}

\begin{defn}\label{defn:cocart}
Let  $\D$ be a derivator and let $X\in\D(\square)$.
\begin{enumerate}
\item The square $X$ is \textbf{cocartesian} if it lies in the essential image of the left Kan extension functor
\[
(i_\ulcorner)_!\colon\D(\ulcorner)\to\D(\square).
\] 
\item The square $X$ is \textbf{cartesian} if it lies in the essential image of the right Kan extension functor
\[
(i_\lrcorner)_\ast\colon\D(\lrcorner)\to\D(\square).
\] 
\end{enumerate}
\end{defn}

\begin{rmk}\label{rmk:cocart}
There are the following remarks related to \autoref{defn:cocart}.
\begin{enumerate}
\item Let \D be a derivator and let $X\in\D(\square)$ be a square in $\D$. It turns out that $X$ is cocartesian if and only if a certain canonical map
\[
\colim_\ulcorner (i_\ulcorner)^\ast X\to X_{1,1}
\]
is an isomorphism. The formalism behind these canonical maps (based on the calculus of canonical mates) will not be made explicit in this paper (but see \cite{kelly-street:review} or \cite{groth:ptstab}). In this paper, we will occasionally refer to certain maps in derivators as ``canonical'', and in such situations they always arise by means of this calculus. 
\item Let us take up again our standard examples (\autoref{egs:der}). A square in a represented derivator is cocartesian if and only if it is a pushout square. For every Grothendieck abelian category $\cA$, a square in $\D_\cA$ is cocartesian if and only if is a derived pushout square. In particular, in the derivator $\D_R$ of a ring we recover \autoref{defn:cocart-D_R}, which played a key role in \S\ref{sec:rep-thy-tria}. Finally, a square in the homotopy derivator of a Quillen model category is cocartesian if and only if it is a homotopy pushout square.
\end{enumerate}
\end{rmk}

\begin{prop}\label{prop:Kan-ff}
Let \D be a derivator and $u\colon A\to B$ a fully faithful functor. The Kan extension functors $u_!,u_\ast\colon\D(A)\to\D(B)$ are fully faithful.
\end{prop}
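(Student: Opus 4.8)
The plan is to show that the unit $\eta\colon\id_{\D(A)}\to u^*u_!$ of the adjunction $(u_!,u^*)$ supplied by (Der3) is an isomorphism; by a standard Yoneda argument this is equivalent to $u_!$ being fully faithful, since for $X,Y\in\D(A)$ the comparison map $\Hom_{\D(A)}(X,Y)\to\Hom_{\D(B)}(u_!X,u_!Y)$ becomes, under the adjunction isomorphism $\Hom_{\D(B)}(u_!X,u_!Y)\cong\Hom_{\D(A)}(X,u^*u_!Y)$, postcomposition with $\eta_Y$, and this is bijective for all $X$ precisely when $\eta_Y$ is an isomorphism. The statement for $u_*$ will then follow by passing to the opposite derivator.

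So fix $X\in\D(A)$. By (Der2) it suffices to show that for every $a\in A$ the component $\eta_a\colon X_a\to(u^*u_!X)_a\cong u_!(X)_{u(a)}$ is an isomorphism in $\D(\ast)$. By (Der4) — i.e.\ by the fact that the canonical map in \eqref{eq:mates-in-Der4} is invertible — there is a canonical isomorphism $\colim_{(u/u(a))}p^*X\toiso u_!(X)_{u(a)}$, where $p\colon(u/u(a))\to A$ is the projection from the slice category. This is the only place the hypothesis is used: since $u$ is fully faithful, each object $(a',f\colon u(a')\to u(a))$ of $(u/u(a))$ has $f=u(g)$ for a unique $g\colon a'\to a$, and morphisms correspond as well, so $(u/u(a))$ is isomorphic to the ordinary slice $A/a$ and in particular has a terminal object $t=(a,\id_{u(a)})$. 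A colimit over a category with a terminal object is computed by evaluation at that object — the inclusion of $t$ admits a left adjoint and is therefore homotopy cofinal, cf.\ \cite{groth:ptstab} — so there is a further canonical isomorphism $X_a=(p^*X)_t\toiso\colim_{(u/u(a))}p^*X$.

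It remains to identify the composite $X_a\toiso\colim_{(u/u(a))}p^*X\toiso u_!(X)_{u(a)}$ of these two canonical isomorphisms with the unit component $\eta_a$. This is the one genuinely delicate point of the argument: it amounts to comparing the two factorizations of the cocone leg out of the terminal vertex $t$, and is cleanest to handle with the calculus of canonical mates and homotopy exact squares (see \cite{groth:ptstab} and \cite{maltsiniotis:htpy-exact}), although it can also be checked directly on underlying diagrams using $2$-functoriality. Granting this identification, $\eta_a$ is an isomorphism for every $a\in A$, hence $\eta$ is an isomorphism by (Der2), and therefore $u_!$ is fully faithful. Finally, $u\op\colon A\op\to B\op$ is again fully faithful, and under the identification of the opposite derivator $\D\op$ the functor $u_*\colon\D(A)\to\D(B)$ corresponds to $(u\op)_!\colon\D\op(A\op)\to\D\op(B\op)$; applying what has just been proved to $\D\op$ shows that $u_*$ is fully faithful as well.
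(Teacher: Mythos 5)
Your proof is correct and unpacks the argument in the cited reference \cite[Prop.~1.20]{groth:ptstab}, to which the paper's own proof defers: reduce to showing the unit $\eta$ is a pointwise isomorphism via (Der2), then combine (Der4) with the observation that full faithfulness of $u$ forces $(u/ua)$ to have a terminal object, so the colimit there is computed by evaluation. The ``delicate point'' you flag — identifying the resulting composite with $\eta_a$ — is precisely what the calculus of canonical mates and homotopy exact squares handles in \emph{loc.~cit.}, and your dualization to obtain the $u_\ast$ statement is the standard move.
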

\begin{proof}
For a proof based on the calculus of canonical mates and the corresponding formalism of homotopy exact squares we refer to \cite[Prop.~1.20]{groth:ptstab}.
\end{proof}

This property applied to $\D_R$ (\autoref{prop:Kan-ff-D_R}) was central to many constructions in \S\ref{sec:rep-thy-tria}. In order to develop some intuition for derivators we generalize some of these constructions to pointed derivators. 

\begin{con}\label{con:basic-ptd}
Let \D be a pointed derivator.
\begin{enumerate}
\item We begin with a derivator version of \autoref{prop:cof-sq-D_R}. Let $f\in\D([1])$ be a morphism looking like $(f\colon x\to y)$, which is to say that $\ndia_{[1]}(f)\colon[1]\to\D(\ast)$ takes the form $(f\colon x\to y)$ (see \autoref{warn:incoh}). In order to extend $f$ to a cofiber square, we again consider the fully faithful inclusion functors
\[
i_\ulcorner\circ i\colon[1]\to\ulcorner\to\square
\]
and their corresponding Kan extension functors
\begin{equation}\label{eq:cof-sq-fun-der}
(i_\ulcorner)_!\circ i_\ast\colon\D([1])\to\D(\ulcorner)\to\D(\square).
\end{equation}
The image $(i_\ulcorner)_!\circ i_\ast(f)\in\D(\square)$ can be restricted along the inclusion $k\colon[1]\to\square$ pointing at the vertical morphism on the right. We define the \textbf{cone functor}
\[
\cof=k^\ast\circ (i_\ulcorner)_!\circ i_\ast\colon\D([1])\to\D([1]).
\]
The object obtained from $\cof(f)$ by an evaluation at $1\in[1]$ is also referred to as the cone of $f$, but the corresponding construction is distinguished notationally by
\[
C=1^\ast\circ\cof\colon\D([1])\to\D(\ast).
\] 
With this notation, the \textbf{cofiber square} $(i_\ulcorner)_!\circ i_\ast(f)$ associated to $f$ looks like
\begin{equation}\label{eq:cof-sq-der}
\vcenter{
\xymatrix{
x\ar[r]^-f\ar[d]&y\ar[d]^-{\cof(f)}\\
0\ar[r]&C(f).\pushoutcorner
}
}
\end{equation}

We invite the reader to dualize these construction in order to define \textbf{fiber functors}
\[
\fib\colon\D([1])\to\D([1])\qquad\text{and}\qquad F\colon\D([1])\to\D(\ast).
\]
\item As a minor variant of the previous case, we now construct suspension and loop functors in pointed derivators, thereby generalizing \autoref{rmk:susp-sq-D_R}. There are at least two ways of defining the suspension. First, for $x\in\D(\ast)$ we would like to make the definition 
\[
\Sigma x=C(x\to 0).
\]
To explain the right hand side in terms of Kan extensions, it suffices to consider the functor $0\colon\ast\to[1]$ pointing at zero and the corresponding right Kan extension functor $0_\ast\colon\D(\ast)\to\D([1])$. In fact, $0_\ast$ extends $x$ to a morphism with $0$ as target as follows from (Der4). Correspondingly, we define the \textbf{suspension functor} as
\[
\Sigma=C\circ 0_\ast\colon\D(\ast)\to\D(\ast).
\]
Second, a canonically isomorphic way of defining the suspension is by means of
\[
\Sigma=(1,1)^\ast\circ(i_\ulcorner)_!\circ (0,0)_\ast\colon\D(\ast)\to\D(\ulcorner)\to\D(\square)\to\D(\ast).
\]
In both cases the suspension $\Sigma x$ sits as lower right corner in the corresponding \textbf{suspension square}
\begin{equation}\label{eq:susp-sq-der}
\vcenter{
\xymatrix{
x\ar[r]\ar[d]&0\ar[d]\\
0\ar[r]&\Sigma x.\pushoutcorner
}
}
\end{equation}
It is straightforward to dualize these constructions in order to obtain a \textbf{loop functor}
\[
\Omega\colon\D(\ast)\to\D(\ast).
\]
\end{enumerate}
\end{con}

\begin{egs}\label{egs:basic-ptd}
Let us specialize these constructions in our examples of pointed derivators (\autoref{egs:pt-der}).
\begin{enumerate}
\item Let $\cC$ be a pointed, complete and cocomplete category and $y_\cC$ its represented derivator. The cofiber squares \eqref{eq:cof-sq-der} and the suspension squares \eqref{eq:susp-sq-der} in $y_\cC$ are ordinary pushout squares. Hence, \eqref{eq:cof-sq-der} implies that $\cof$ and $C$ in $y_\cC$ are the usual cokernel functors (once considered with the structure map and once without). As a special case of this, the suspension squares \eqref{eq:susp-sq-der} imply that $\Sigma x$ is isomorphic to a zero object, and $\Sigma$ is hence naturally isomorphic to the constant functor on the zero object,
\begin{equation}\label{eq:sigma-rep}
\Sigma\cong 0\colon \cC\to\cC.
\end{equation}
\item Let $\cA$ be a Grothendieck abelian category and $\D_\cA$ its derivator. In this case the cone functor $C\colon D(\cA^{[1]})\to D(\cA)$ reproduces the functorial cone construction from \autoref{prop:C-cok} (see also the proof of \autoref{prop:der-push}). As noted in \S\ref{sec:rep-thy-tria}, the functor
\[
\cof\colon D(\cA^{[1]})\to D(\cA^{[1]})
\]
is in the background of the rotation of distinguished triangles (as a shadow of the symmetries of Barratt--Puppe sequences in \autoref{fig:BP}). The suspension functor in $\D_\cA$ specializes to the usual shift functor $\Sigma\colon D(\cA)\to D(\cA)$ (\autoref{rmk:susp-sq-D_R}).
\item As a final example, we consider the homotopy derivator $\cS_\ast$ of pointed spaces which also provides the motivation for some of the terminology employed here. Given a morphism $(f\colon X\to Y)\in\Ho(\mathrm{Top}_\ast^{[1]})$ of pointed topological spaces, the cone $C(f)$ is the usual mapping cone construction. Moreover, the abstract suspension specializes to the reduced suspension functor
\[
\Sigma\colon\Ho(\mathrm{Top}_\ast)\to\Ho(\mathrm{Top}_\ast),
\]
and
\[
\Omega\colon\Ho(\mathrm{Top}_\ast)\to\Ho(\mathrm{Top}_\ast)
\]
is the usual loop space functor (see also \autoref{rmk:loop-sign}).
\end{enumerate}
\end{egs}

\begin{prop}
Let \D be a pointed derivator.
\begin{enumerate}
\item There is an adjunction $(\cof,\fib)\colon\D([1])\rightleftarrows\D([1]).$
\item There is an adjunction $(\Sigma,\Omega)\colon\D(\ast)\rightleftarrows\D(\ast).$
\end{enumerate}
\end{prop}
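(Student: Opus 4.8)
The plan is to deduce both adjunctions from the already-established behaviour of Kan extensions under the fully faithful inclusions used in \autoref{con:basic-ptd}, together with the standard fact that a composite of adjoints is an adjoint. For the first statement, recall that $\cof$ was defined as $k^\ast\circ (i_\ulcorner)_!\circ i_\ast\colon\D([1])\to\D(\square)\to\D([1])$, where $i\colon[1]\to\ulcorner$ and $i_\ulcorner\colon\ulcorner\to\square$ are fully faithful and $k\colon[1]\to\square$ picks out the right-hand vertical morphism. First I would observe that each of the three functors in this composite has an adjoint on the appropriate side: $i_\ast$ is right adjoint to $i^\ast$ by (Der3), so it is itself a left adjoint only if we instead read the chain in the dual order — so the cleaner route is to note that $\cof$ is a \emph{left} adjoint because $(i_\ulcorner)_!$ is a left adjoint (to $(i_\ulcorner)^\ast$) and both $i_\ast$ and $k^\ast$ are right adjoints, which at first glance obstructs composing.

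Resolving this apparent mismatch is the main obstacle, and the fix is to use the specific combinatorics of the shapes involved. The point is that $i_\ast\colon\D([1])\to\D(\ulcorner)$ is, by the pointwise formula (Der4) applied to the relevant slice categories, the functor that extends a morphism $(x\to y)$ by a zero object at the new vertex, and this particular functor is \emph{also} a left adjoint: its right adjoint is restriction to $[1]$ along the inclusion hitting the two non-initial vertices of $\ulcorner$ — concretely, one checks via (Der4) (exactly as in the proof of \autoref{prop:cof-sq-D_R}, where the analogous statement over $\D_R$ was verified by hand) that $i_\ast$ sits in an adjunction $(i_\ast, \text{(some restriction)})$. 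Dually, $k^\ast$ has a right adjoint given by a Kan extension $k_\ast$. Therefore $\cof = k^\ast\circ (i_\ulcorner)_!\circ i_\ast$ is a composite of three left adjoints, hence a left adjoint; and its right adjoint is, by uniqueness of adjoints and the definition of $\fib$ as the dual construction, precisely $\fib$. I would also remark that one can avoid the slice-category bookkeeping entirely by invoking the equivalence $k^\ast\colon\D(\square)^\cof\xrightarrow{\sim}\D([1])$ from (the derivator analogue of) \autoref{prop:cof-sq-D_R}: under this equivalence $\cof$ is conjugate to the endofunctor of $\D(\square)^\cof$ that rotates a cofiber square one step, and the adjointness of this rotation with its inverse rotation (the fiber construction) is then formal.

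For the second statement, the argument is of the same shape but simpler. By \autoref{con:basic-ptd} we have $\Sigma = C\circ 0_\ast\colon\D(\ast)\to\D([1])\to\D(\ast)$, where $0_\ast$ is right Kan extension along $0\colon\ast\to[1]$ and $C = 1^\ast\circ\cof$. Dually $\Omega = F\circ 1_!$ with $1_!$ left Kan extension along $1\colon\ast\to[1]$ and $F = 0^\ast\circ\fib$. The cleanest path is: $1^\ast$ has right adjoint $1_\ast$, but again we want $\Sigma$ as a left adjoint, so I would instead note that $\Sigma$ factors (up to the canonical isomorphism recorded in \autoref{con:basic-ptd}) as $(1,1)^\ast\circ (i_\ulcorner)_!\circ (0,0)_\ast$, i.e.\ as suspension-square formation followed by evaluation at the terminal corner; and that, by the same zero-object-extension observation as above (now for the inclusion $\ast\to\ulcorner$ at the initial vertex, whose slice categories make $(0,0)_\ast$ a left adjoint) together with $(i_\ulcorner)_!$ being a genuine left adjoint and $(1,1)^\ast$ having a right adjoint $(1,1)_\ast$, the whole composite is a left adjoint. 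Its right adjoint is assembled dually from $(0,0)^\ast$, $(i_\lrcorner)_\ast$, and the appropriate left Kan extension, and this is exactly the functor $\Omega$ of \autoref{con:basic-ptd}. Finally I would note that the adjunction $(\Sigma,\Omega)$ can be obtained directly from part (i): applying part (i) at the morphisms $(x\to 0)$ and $(0\to x)$, i.e.\ precomposing $\cof$ with $0_\ast$ and $\fib$ with $1_!$ — which are themselves adjoint as established — yields $(\Sigma,\Omega)$ by composition of adjunctions, so part (ii) is a formal corollary of part (i). The expected main obstacle throughout is purely the left-versus-right bookkeeping of which composite functor is an adjoint on which side; there is no homotopical content beyond (Der3), (Der4), and \autoref{prop:Kan-ff}.
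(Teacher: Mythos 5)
Your decomposition $\cof=k^\ast\circ(i_\ulcorner)_!\circ i_\ast$ and the plan to verify that each factor is a left adjoint is the same strategy as in \cite{groth:ptstab}, and you correctly identify the crux: $(i_\ulcorner)_!$ and $k^\ast$ are left adjoints for free, but $i_\ast$ is a priori only a \emph{right} adjoint of $i^\ast$, so the proposition hinges on $i_\ast$ \emph{also} admitting a right adjoint. This is exactly where pointedness enters, and it is the single non-formal input of the whole proof.

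However, your argument for this crux is incorrect. The two non-initial objects $(0,1)$ and $(1,0)$ of $\ulcorner$ are incomparable, so no functor $[1]\to\ulcorner$ ``hits the two non-initial vertices''; and, more seriously, the right adjoint of $i_\ast$ is not a restriction along \emph{any} functor $j\colon[1]\to\ulcorner$. Already in a represented derivator $y_\cC$ over a pointed complete and cocomplete category $\cC$, an elementary check shows that the right adjoint of $i_\ast$ sends a span $(y_{01}\leftarrow y_{00}\rightarrow y_{10})$ to the morphism $\ker(y_{00}\to y_{10})\to y_{01}$, and this fiber-type assignment is manifestly not a restriction. The statement you actually need is the lemma (and its dual) that for a pointed derivator and a sieve $u\colon A\to B$ the extension-by-zero functor $u_\ast$ admits a further right adjoint; this is precisely what is established in \cite[\S3]{groth:ptstab}, and it does \emph{not} follow formally from (Der3), (Der4), and \autoref{prop:Kan-ff}, contrary to your closing remark. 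Nor does the proof of \autoref{prop:cof-sq-D_R} verify it: what is checked there is the ordinary adjunction $i^\ast\dashv i_\ast$, not $i_\ast$ sitting on the left. The same gap reappears in your treatment of part~(ii), where $(0,0)_\ast$ is asserted to be a left adjoint ``by the same zero-object-extension observation'', and also in your alternative via the claimed equivalence $k^\ast\colon\D(\square)^{\cof}\toiso\D([1])$: with $k$ classifying the right vertical, the restriction of $k^\ast$ to cofiber squares corresponds under the genuine equivalence $(i_\ulcorner\circ i)^\ast$ to $\cof$ itself, which need not be an equivalence for a merely pointed $\D$.
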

\begin{proof}
For a proof we refer to \cite[\S3]{groth:ptstab}.
\end{proof}

In triangulated categories we are used to be able to rotate distinguished triangles without a loss of information and similarly that the suspension functor is an equivalence. With the few exceptions of exotic triangulated categories \cite{mss:no-models}, for most triangulated categories arising in nature these features are consequences of properties of \emph{stable} homotopy theories in the background.

\begin{defn}\label{defn:stable}
A pointed derivator \D is \textbf{stable} if every square in $\D$ is cocartesian if and only if it is cartesian, and these squares are then referred to as \textbf{bicartesian}.
\end{defn}

In order to find examples of stable derivators, it is useful to have simpler characterizations of stability.

\begin{thm}\label{thm:stable}
The following are equivalent for a pointed derivator \D.
\begin{enumerate}
\item The derivator $\D$ is stable.
\item The adjunction $(\cof,\fib)\colon\D([1])\rightleftarrows\D([1]$ is an equivalence.
\item The adjunction $(\Sigma,\Omega)\colon\D(\ast)\rightleftarrows\D(\ast)$ is an equivalence.
\end{enumerate}
\end{thm}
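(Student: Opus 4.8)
The plan is to run the cycle $(1)\Rightarrow(2)\Rightarrow(3)\Rightarrow(1)$, using throughout that a morphism in a category $\D(B)$ is invertible as soon as all its values in $\D(\ast)$ are (axiom (Der2)), and that fully faithful functors induce fully faithful Kan extensions (\autoref{prop:Kan-ff}). For $(1)\Rightarrow(2)$, assume $\D$ stable and show that the unit $\eta\colon\id\to\fib\circ\cof$ and counit $\varepsilon\colon\cof\circ\fib\to\id$ of the adjunction $(\cof,\fib)$ are invertible; by (Der2) for the shape $[1]$ this may be checked on the two objects of $[1]$. For $(f\colon x\to y)\in\D([1])$ the component of $\eta_f$ at $1$ is, by the construction of $\cof$ and $\fib$ in \autoref{con:basic-ptd}, the identity of $y$, while its component at $0$ is the canonical comparison map $x\to\fib(y\to Cf)$ detecting whether the cofiber square \eqref{eq:cof-sq-der} of $f$ is cartesian. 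That square is cocartesian by construction, hence cartesian by stability, so $\eta_f$ is invertible; dually $\varepsilon$ is invertible because fiber squares in a stable derivator are cocartesian. Thus $(\cof,\fib)$ is an adjoint equivalence.

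For $(2)\Rightarrow(3)$, restrict the equivalence $(\cof,\fib)$ to the full subcategory of $\D([1])$ of morphisms with zero target and to that of morphisms with zero source; by \autoref{prop:Kan-ff} these are the essential images of $0_\ast$ and $1_!$, and evaluation at the source, resp.\ the target, identifies each with $\D(\ast)$. A short computation as in \autoref{con:basic-ptd} gives $\cof(x\to 0)\iso(0\to\Sigma x)$ and $\fib(0\to z)\iso(\Omega z\to 0)$, so $\cof$ and $\fib$ carry these two subcategories into one another; since $\fib\circ\cof\iso\id$ and $\cof\circ\fib\iso\id$ on all of $\D([1])$, the restricted functors are mutually quasi-inverse equivalences. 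Transporting $\cof$ across the two identifications with $\D(\ast)$ returns precisely $\Sigma=C\circ 0_\ast$, so $\Sigma$ is an equivalence; its right adjoint $\Omega$ is then a quasi-inverse, and $(\Sigma,\Omega)$ is an adjoint equivalence.

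For $(3)\Rightarrow(1)$ I would first re-derive $(2)$. If $\Sigma\colon\D(\ast)\to\D(\ast)$ is an equivalence then, by (Der2) for the shape $[1]$, so is the pointwise suspension functor on $\D([1])$ (the suspension functor of the derivator $\D^{[1]}$): its unit and counit are pointwise those of $\Sigma$. By the fractionally Calabi--Yau identity for the $A_2$-quiver $[1]$ — namely that $\cof^3$ coincides, up to the sign automorphism, with this pointwise suspension (cf.\ \autoref{rmk:BP}; this is a formal consequence of gluing cofiber squares and is already valid for pointed derivators) — the functor $\cof^3$ is an equivalence. Now $\cof$ is faithful because $\cof^3$ is, and an elementary diagram chase shows that a faithful endofunctor $L$ with $L^3$ fully faithful is itself fully faithful: the bijection $\Hom(a,b)\toiso\Hom(L^3a,L^3b)$ factors through the three injections induced by $L$, forcing each of them to be a bijection. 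Since $\cof^3$ is also essentially surjective, so is $\cof$; hence $\cof$ is an equivalence and, being left adjoint to $\fib$, the pair $(\cof,\fib)$ is an adjoint equivalence, i.e.\ $(2)$ holds. It remains to deduce stability from $(2)$. Since $(2)$ makes $\eta$ invertible, the analysis in $(1)\Rightarrow(2)$ read in reverse shows that every cofiber square is cartesian (and dually every fiber square cocartesian). Given an arbitrary cocartesian square $Q$ with right-hand vertical $b$, glue $Q$ to the cofiber square of $b$ to obtain a coherent diagram on $[1]\times[2]$ whose two constituent squares are both cocartesian; by the pasting law for cocartesian squares its total square is cocartesian, and as it vanishes at its upper-right corner it is a transpose of a cofiber square (transposition, i.e.\ precomposition with the swap of $\square$, preserves both cocartesian and cartesian squares), hence cartesian. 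The right-hand constituent is a cofiber square, so also cartesian, and the pullback-pasting lemma for derivators — the cartesian counterpart of \autoref{prop:2-out-of-3-D_R} — forces $Q$ itself to be cartesian. As hypothesis $(2)$ is invariant under passage to $\D\op$, the dual argument shows every cartesian square is cocartesian, so $\D$ is stable.

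The main obstacle is the step $\Sigma\text{ equivalence}\Rightarrow\cof\text{ equivalence}$ inside $(3)\Rightarrow(1)$: this is the only non-formal point, and it is where one must genuinely invoke the fractionally Calabi--Yau identity $\cof^3\simeq\Sigma$ on $\D([1])$ together with the faithful-endofunctor lemma to descend from $\cof^3$ to $\cof$. Everything else — the analysis of the (co)unit of $(\cof,\fib)$, the restriction to zero-source/zero-target subcategories, and the final deduction that cartesian and cocartesian squares coincide — is bookkeeping with (Der2) and the standard (co)cartesian pasting calculus.
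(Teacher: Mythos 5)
Your proof is correct and follows essentially the same route as the cited \cite[Thm.~7.1]{gps:mayer}: the crux is the pointed-derivator identity $\cof^3\cong\Sigma$ on $\D([1])$ (\cite[Lem.~5.13]{gps:mayer}, cited here in \eqref{eq:cof-cube}), combined with pointwise detection of invertible (co)units via (Der2), the faithful-endofunctor descent from $\cof^3$ being an equivalence to $\cof$ being one, and the (co)cartesian pasting calculus for the gluing argument in $(2)\Rightarrow(1)$. These are precisely the details the paper delegates to that reference for the converse implications.
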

\begin{proof}
If $\D$ is stable, then cofiber squares \eqref{eq:cof-sq-der} and fibre squares are essentially the same, and similarly for suspension squares \eqref{eq:susp-sq-der} and loop squares. From this it is fairly straightforward to deduce that (i) implies (ii) and (iii). The converse implications are more involved and we refer the reader to \cite[Thm.~7.1]{gps:mayer}.
\end{proof}

\begin{egs}\label{egs:stable}
With this preparation we can revisit our examples of pointed derivators (\autoref{egs:pt-der}).
\begin{enumerate}
\item Let $\cC$ be a pointed, complete and cocomplete category and $y_\cC$ its represented derivator. In this case the abstract suspension functor is naturally isomorphic to the constant functor on the zero object \eqref{eq:sigma-rep}. Hence, by \autoref{thm:stable} we conclude that $y_\cC$ is stable if and only if $\Sigma\cong 0$ is an equivalence of categories if and only if $\cC$ is equivalent to the terminal category $\ast$.
\item Let $\cA$ be a Grothendieck abelian category and $\D_\cA$ its derivator. In $\D_\cA$ the abstract suspension functor agrees with the shift functor $\Sigma\colon D(\cA)\to D(\cA)$, which clearly is an equivalence of categories. Hence, derivators of the form $\D_\cA$ are stable.
\item A stable model category is a pointed model category such that the abstract suspension functor is an equivalence in its homotopy derivator (\cite[Def.~7.1.1]{hovey:model}). By \autoref{thm:stable} we deduce that homotopy derivators of stable model categories are stable (as it also follows from \cite[Rmk.~7.1.12]{hovey:model}). As a special case, the derivator $\cSp$ of spectra is stable, and there are many additional interesting examples (see \autoref{egs:stable-2} or \cite{schwede-shipley:morita}). 
\end{enumerate}
\end{egs}

Additional classes of examples arise as homotopy derivators of exact categories \cite{gillespie:exact,stovicek:exact-model}, stable cofibration categories \cite{schwede:p-order,lenz:derivators}, or stable $\infty$-categories \cite{HTT,HA,lenz:derivators}.

\begin{rmk}\label{rmk:stable-invisible}
We want to stress the observation that a represented derivator $y_\cC$ is stable if and only if $\cC\simeq\ast$. This essentially says that stability is \emph{invisible to ordinary category theory}. In order to capture the phenomenon of stability as it arises in interesting examples in homological algebra, stable homotopy theory or homotopical algebra we have to use more refined techniques. Among these are triangulated categories, stable derivators, stable model categories, and stable $\infty$-categories (see again \autoref{discl:higher-cats}).
\end{rmk}

We conclude this subsection by sketching the relation to triangulated categories. Many arguments are completely parallel to the discussion in \S\ref{sec:rep-thy-tria}, and here we hence focus on the remaining arguments. We begin by sketching that stability implies additivity. In order to motivate the general approach, we revisit the classical situation in topology.

\begin{rmk}\label{rmk:loop-sign}
Let $(X,x_0)$ be a pointed topological space. The \emph{loop space} $\Omega X$ of $X$ at $x_0$ is the space of paths $[0,1]\to X$ which send both boundary points $0,1$ to $x_0$. The loop space is the homotopy pullback (in the usual Serre model structure) of the cospan on the left in 
\[
\xymatrix{
&\ast\ar[d]^-{x_0}&&\Omega X\ar[r]\ar[d]\pullbackcorner&PX\ar[d]^-{\mathrm{ev}_1}\\
\ast\ar[r]_-{x_0}&X,&& \ast\ar[r]_-{x_0}&X.
}
\]
In fact, if one wants to calculate this homotopy pullback, then it suffices to replace one of the maps $x_0\colon\ast\to X$ by a weakly homotopy equivalent Serre fibration and then to calculate the categorical pullback. The standard example of such an approximation is the path space $PX$ of paths $[0,1]\to X$ starting at $x_0$ endowed with the evaluation map $\mathrm{ev_1}\colon PX\to X$ at the end point $1$. This space is weakly contractible since all such paths are homotopic to the constant path at $x_0$, and the pullback of the cospan on the right gives the above description of the loop space. The concatenation of loops can be used to show that the loop space $\Omega X$ is a group object in the homotopy category $\Ho(\mathrm{Top}_\ast)$.

In order to motivate the general approach in pointed derivators, we explain a purely categorical way of modelling the inversion of loops in topology. And for this purpose it is convenient to use a different model for the above homotopy pullback, which is obtained by replacing \emph{both} maps by the above Serre fibration and then calculating the pullback
\begin{equation}\label{eq:classical-inverse-loops}
\vcenter{
\xymatrix{
\Omega X\pullbackcorner\ar[r]^-{p_2}\ar[d]_-{p_1}&PX\ar[d]^-{\mathrm{ev_1}}\\
PX\ar[r]_-{\mathrm{ev_1}}&X.
}
}
\end{equation}
Up to an obvious homeomorphism, this model of the loop space has as points paths $[-1,1]\to X$ which send $-1,1$ to $x_0$, and in this model the inversion of loops $\iota\colon\Omega X\to\Omega X$ is given by a reparametrization via the reflection at $0\in[-1,1]$. The point is that this reparametrization simply amounts to interchanging the two copies of $PX$ in the pullback square \eqref{eq:classical-inverse-loops}. More formally, the outer commutative square in the diagram
\[
\xymatrix{
\Omega X\ar[rrd]^-{p_1}\ar[ddr]_-{p_2}\ar@{-->}[dr]&&\\
&\Omega X\ar[r]_-{p_2}\ar[d]^-{p_1}\pullbackcorner&PX\ar[d]^-{\mathrm{ev_1}}\\
&PX\ar[r]_-{\mathrm{ev_1}}&X
}
\]
induces by the universal property of the pullback square a canonical dashed morphism making both triangles commute. And this morphism of course is the above reparametrization map $\iota\colon\Omega X\to\Omega X$.
\end{rmk}

This can be abstracted to pointed derivators as follows.

\begin{rmk}\label{rmk:add}
Let \D be a pointed derivator. For every $x\in\D(\ast)$, the loop object $\Omega x$ can be naturally turned into a group object. The vague idea is to model the concatenation of loops from topology purely categorically. The details are a bit more involved (see \cite[\S4.1]{groth:ptstab}) and they rely on the Segalian approach to $A_\infty$-multiplications \cite{segal:categories}. (In fact, using fibrational techniques the arguments in \cite[\S4.1]{groth:ptstab} can be adapted to construct coherently associative multiplication maps.) One can make precise that the inversion map $(-)^{-1}\colon\Omega x\to\Omega x$ comes from the swap symmetry of loop squares (the duals of \eqref{eq:susp-sq-der}) which interchanges $(1,0)$ and $(0,1)$. A variant of the \emph{Eckmann--Hilton argument}~\cite[Thm.~4.17]{eckmann-hilton:group-like-I} implies that two-fold loop objects $\Omega^2x$ are abelian group objects.
\end{rmk}

\begin{cor}\label{cor:stab-add}
The underlying category of a stable derivator is additive.
\end{cor}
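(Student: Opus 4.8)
The plan is to show that the underlying category $\D(\ast)$ is \emph{preadditive} (enriched in abelian groups, with composition bi-additive) and has finite products, since a category enriched in abelian groups that admits finite products automatically has finite biproducts and is therefore additive. The preadditivity will be extracted from the fact (\autoref{rmk:add}) that two-fold loop objects are abelian group objects, upgraded using stability; the finite products come from the basic axioms of a derivator.

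First I would record that $\D(\ast)$ has finite products and coproducts. For a finite discrete category $E$, axiom (Der1) of \autoref{defn:derivator} gives an equivalence $\D(E)\simeq\D(\ast)^{|E|}$, under which the restriction functor $\pi_E^\ast\colon\D(\ast)\to\D(E)$ along the unique functor $\pi_E\colon E\to\ast$ is identified with the diagonal. By (Der3) this functor has a left adjoint $(\pi_E)_!$ and a right adjoint $(\pi_E)_\ast$, which are therefore the $|E|$-fold coproduct and product functors on $\D(\ast)$. In particular $\D(\ast)$ has all finite products (and coproducts); the empty case recovers the zero object guaranteed by pointedness of $\D$.

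Next I would equip the identity functor of $\D(\ast)$ with a natural abelian-group-object structure. By \autoref{rmk:add}, for every $y\in\D(\ast)$ the two-fold loop object $\Omega^2 y$ is an abelian group object, with multiplication, unit, and inversion natural in $y$ — that is, the endofunctor $\Omega^2\colon\D(\ast)\to\D(\ast)$ is lifted to a functor into abelian group objects of $\D(\ast)$. Since $\D$ is stable, \autoref{thm:stable} shows that $(\Sigma,\Omega)\colon\D(\ast)\rightleftarrows\D(\ast)$ is an adjoint equivalence, hence so is $(\Sigma^2,\Omega^2)$, and the unit provides a natural isomorphism $\theta\colon\id_{\D(\ast)}\Rightarrow\Omega^2\Sigma^2$. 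Transporting the abelian group object structure of $\Omega^2\Sigma^2 x$ along $\theta_x^{-1}$ equips each $x$ with an abelian group object structure; naturality of $\theta$ together with the naturality of the structure maps of $\Omega^2(-)$ ensures that these assemble into a natural structure on $\id_{\D(\ast)}$, with multiplication $m_x\colon x\times x\to x$, unit $0\to x$, and inversion $\iota_x\colon x\to x$. From this one reads off that each hom-set $\Hom_{\D(\ast)}(a,b)$ is an abelian group under $f+g:=m_b\circ\langle f,g\rangle$, with the zero morphism $a\to 0\to b$ as neutral element; pre-composition is additive by the universal property of the product ($\langle f,g\rangle\circ h=\langle fh,gh\rangle$), and post-composition is additive by naturality of $m$ and of the unit and inversion maps. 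Thus $\D(\ast)$ is preadditive, and in a preadditive category the finite products found above are automatically biproducts: the maps $\langle\id,0\rangle$ and $\langle 0,\id\rangle$ exhibit $x\times y$ as a coproduct $x\oplus y$, since $i_x p_x+i_y p_y=\id_{x\times y}$. Hence $\D(\ast)$ has a zero object and all finite biproducts and is additive.

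I expect the main subtlety to lie in the transport step: one must read \autoref{rmk:add} as providing the group-object structure maps on $\Omega^2(-)$ as genuine natural transformations of endofunctors of $\D(\ast)$, so that transporting along the equivalence $(\Sigma^2,\Omega^2)$ yields a structure on $\id_{\D(\ast)}$ that is again natural; this naturality (which is where essential surjectivity and full faithfulness of $\Omega^2$ are used) is precisely what makes post-composition by an \emph{arbitrary} morphism additive, rather than only by morphisms of the form $\Omega^2\varphi$. Everything else is a routine unwinding of the definitions of abelian group object and of additive category.
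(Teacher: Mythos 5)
Your argument is correct and follows the same route as the paper's proof: invoke \autoref{rmk:add} to make $\Omega^2$ land in abelian group objects, then use stability via \autoref{thm:stable} to transport that structure along the natural isomorphism $x\cong\Omega^2\Sigma^2 x$ to all of $\D(\ast)$. You simply spell out the unwinding (finite products from (Der1)/(Der3), the naturality of the transported group-object structure, preadditivity plus finite products giving biproducts) that the paper leaves implicit, and your flagged subtlety about naturality is exactly the point the terse published proof elides.
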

\begin{proof}
In a stable derivator $\D$, the suspension $\Sigma\colon\D(\ast)\to\D(\ast)$ is an equivalence (\autoref{thm:stable}), and for every object $x\in\D(\ast)$ there is a natural isomorphism $x\cong\Omega^2\Sigma^2x$. By \autoref{rmk:add} this implies that every object in $\D(\ast)$ is an abelian group object, showing that $\D(\ast)$ is additive.
\end{proof}

With this preparation the construction of triangulations on stable derivators (\autoref{thm:stable-tria}) is fairly parallel to \S\ref{sec:rep-thy-tria}. As a variant of the underlying diagram functor (\autoref{con:dia}), for every prederivator $\D$ there are partial underlying diagram functors
\[
\ndia_{A,B}\colon \D(A\times B)\to\D(A)^B,\qquad A,B\in\cCat,
\]
making a diagram incoherent in the $B$-direction only. More formally, for a coherent diagram $X\in\D(A\times B)$ and $b\in B$ we set
\[
\ndia_{A,B}(X)_b=(\id_A\times b)^\ast(X)\in\D(A),
\]
the restriction along $\id_A\times b\colon A\cong A\times \ast\to A\times B$.

\begin{defn}\label{defn:strong}
A derivator $\D$ is \textbf{strong} if the partial underlying diagram functor
\[
\ndia_{A,F}\colon\D(A\times F)\to\D(A)^F
\]
is essentially surjective and full for all free categories $F\in\cCat$ and all $A\in\cCat$.
\end{defn}

\begin{con}\label{con:can-tria}
Let \D be a stable derivator. By \autoref{con:basic-ptd} the underlying category $\D(\ast)$ can be endowed with a suspension functor
\begin{equation}\label{eq:can-tria-1}
\Sigma\colon\D(\ast)\to\D(\ast)
\end{equation}
which is an equivalence by \autoref{thm:stable}. For every morphism $(f\colon x\to y)\in\D([1])$, completely parallel to the sketch proof of \autoref{thm:verdier} (see the discussion around \eqref{eq:stand-tria}) one constructs the corresponding standard triangle of $f$. A triangle
\begin{equation}\label{eq:can-tria-2}
x\stackrel{f}{\to} y\stackrel{g}{\to} z\stackrel{h}{\to}\Sigma x
\end{equation}
in $\D(\ast)$ is distinguished if it is isomorphic to a standard triangle.
\end{con}

The following theorem has some history, and we refer to work of Franke \cite{franke:adams}, Maltsiniotis \cite{maltsiniotis:seminar}, and the author \cite{groth:ptstab}.

\begin{thm}\label{thm:stable-tria}
For every strong and stable derivator $\D,$ the underlying category $\D(\ast)$ admits a triangulation given by \eqref{eq:can-tria-1} and \eqref{eq:can-tria-2}. 
\end{thm}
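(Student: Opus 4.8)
The plan is to verify the octahedron of axioms for a triangulated category --- existence of triangles, weak functoriality, rotation, and the octahedral axiom --- following the template already executed for $D(R)$ in the sketch proof of \autoref{thm:verdier}, but now reading every step internally to $\D$. First I would record that $\D(\ast)$ is additive by \autoref{cor:stab-add} and that $\Sigma$ in \eqref{eq:can-tria-1} is an equivalence by \autoref{thm:stable}, so that the triangulation data \eqref{eq:can-tria-1}--\eqref{eq:can-tria-2} at least type-checks. Then I would observe that the constructions of \S\ref{subsec:BP} and \S\ref{subsec:octa} were all phrased using only the formal properties (Der1)--(Der4), the vanishing of appropriate Kan extensions at new objects, \autoref{prop:Kan-ff}, and the pasting/cancellation of bicartesian squares (\autoref{prop:2-out-of-3-D_R}), each of which holds in any stable derivator. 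In particular the analogues of \autoref{thm:BP} and \autoref{thm:octa} --- equivalences $\D(M_2)^\ex\simeq\D([1])$ and $\D(M_3)^\ex\simeq\D([2])$ realized by explicit composites of fully faithful Kan extensions --- go through verbatim (this is exactly the content of \cite[\S4]{gst:Dynkin-A}). This gives the functor $\mathrm{tria}\colon\D([1])\to\D(\ast)^{[3]}$ sending $f$ to its standard triangle, and similarly $\mathrm{octa}\colon\D([2])\to\D(\ast)^B$.

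Next I would feed these through the strongness hypothesis. Since $[1]=\A2$ and $[2]=\A3$ are free categories (path categories of linear quivers), \autoref{defn:strong} gives that $\ndia_{[1]}\colon\D([1])\to\D(\ast)^{[1]}$ and $\ndia_{[2]}\colon\D([2])\to\D(\ast)^{[2]}$ are essentially surjective and full. Essential surjectivity of $\ndia_{[1]}$ yields, for any $f\colon x\to y$ in $\D(\ast)$, a coherent lift whose standard triangle extends $f$; so (TR1) holds. Fullness of $\ndia_{[1]}$ then handles the weak functoriality axiom (TR3): a commuting square between two morphisms in $\D(\ast)$ lifts to a morphism of coherent $[1]$-diagrams, whose image under $\mathrm{tria}$ is the required morphism of triangles. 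For the rotation axiom (TR2) I would invoke the symmetry of Barratt--Puppe sequences exactly as in \autoref{rmk:BP}: the shift symmetry of the poset $M_2$ preserves the defining exactness conditions, and under the equivalence $\D(M_2)^\ex\simeq\D([1])$ it implements passing from the triangle of $f$ to the triangle of $g=\cof(f)$; the sign in the rotation is produced by the swap symmetry of loop/suspension squares and the identification of the induced self-equivalence of $\D(\ast)$ with $-\id$ (the discussion around \autoref{rmk:loop-sign} and \autoref{rmk:add}). For the octahedral axiom I would run the argument of \autoref{rmk:octa} inside $\D$: given $(x\to y\to z)\in\D([2])$, form $F_{[2]}(X)\in\D(M_3)^\ex$, read off the four cofiber sequences from the labelled subsquares via \autoref{prop:2-out-of-3-D_R}, and obtain the octahedral diagram $\mathrm{octa}(X)$; essential surjectivity of $\ndia_{[2]}$ upgrades this to a statement about arbitrary composable pairs in $\D(\ast)$, as summarized by diagram \eqref{eq:dia-octa} with $D(R)$ replaced by $\D(\ast)$.

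Finally I would check the remaining bookkeeping axioms (the zero triangle $x\xrightarrow{\id}x\to 0\to\Sigma x$ is distinguished; $\Sigma$-compatibility of the classes; the "$2$-out-of-$3$" completion statements are subsumed by the above), all of which are immediate from the corresponding properties of cofiber sequences. The main obstacle I anticipate is \emph{not} any single axiom but the careful handling of signs and coherence: verifying that the various canonical isomorphisms $Cf\toiso u$, $\Sigma x\toiso x'$, etc.\ assemble into an \emph{honest} morphism of triangles rather than merely a degreewise isomorphism, and that the sign conventions forced by the swap symmetry of $M_1$ (\autoref{rmk:loop-sign}) are exactly those in the rotation axiom; this is where one must be most attentive, and it is precisely the point where passing from $D(R)$ to a general stable derivator requires the mates/homotopy-exact-squares calculus rather than explicit chain-level formulas. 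For this last point I would cite \cite[\S4]{groth:ptstab} and \cite[Thm.~7.1]{gps:mayer} rather than reproduce the computation.
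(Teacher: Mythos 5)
Your proposal matches the paper's own proof essentially step for step: the paper likewise declares that "the proof is essentially the same as the proof of \autoref{thm:verdier}" read internally to $\D$, relies on strongness for the essential surjectivity/fullness of $\ndia_{[1]}$ and $\ndia_{[2]}$, and then spends its effort on exactly the issue you flag last, namely deriving the rotation sign from the swap symmetry of suspension squares via \autoref{rmk:add}. The only quibble is that the precise reference for the sign computation is \cite[\S4.1]{groth:ptstab} (via \autoref{rmk:add}) rather than \cite[Thm.~7.1]{gps:mayer}, which is cited for the equivalence characterizations of stability in \autoref{thm:stable}.
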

\begin{proof}
With this preparation, the proof is essentially the same as the proof of \autoref{thm:verdier}. We expand a bit on the rotation axiom in order to explain where the sign comes from,  and refer the reader to \emph{loc.~cit.} for the remaining aspects. For every morphism $(f\colon x\to y)\in\D([1])$ there is an associated Barratt--Puppe sequence (\autoref{fig:BP}). Let us redraw the relevant part of it and add decorations to the various zero objects in order to distinguish them notationally. So let $B$ be the following poset and let $Y\in\D(B)$ look like
\begin{equation}\label{eq:rot-ax}
\vcenter{
\xymatrix{
x\ar[r]^-f\ar[d]\ar@{}[rd]|{\square}&y\ar[r]\ar[d]_-g\ar@{}[rd]|{\square}&0_2\ar[d]\\
0_1\ar[r]&z\ar[r]^-h\ar[d]\ar@{}[rd]|{\square}&x'\ar[d]^-{f'}\\
&0_3\ar[r]&y'.
}
}
\end{equation}
In order to identify the underlying morphism of $f'$ as $\Sigma f$, we make the identifications
\[
\varphi_1\colon \Sigma x\toiso x'\qquad\text{and}\qquad\psi_1\colon \Sigma y\toiso y',
\]
and these lead to the commutative square
\begin{equation}\label{eq:f'-1}
\vcenter{
\xymatrix{
\Sigma x\ar[r]^-{\varphi_1}_-\sim\ar[d]_-{\Sigma f}&x'\ar[d]^-{f'}\\
\Sigma y\ar[r]_-{\psi_1}^-\sim&y'.
}
}
\end{equation}
Strictly speaking these identification $\varphi_1$ and $\psi_1$ respectively are the canonical maps associated to the suspension squares
\[
\xymatrix{
x\ar[r]\ar[d]\ar@{}[rd]|{\square}&0_2\ar[d]&&y\ar[r]\ar[d]\ar@{}[rd]|{\square}&0_2\ar[d]\\
0_1\ar[r]&x',&&0_3\ar[r]&y'.
}
\]
Similarly, in order to identify the final object in the distinguished triangles of $f$ and $g$ as suitable suspensions, we restrict \eqref{eq:rot-ax} in order to obtain the cofiber sequences
\[
\xymatrix{
x\ar[r]^-f\ar[d]\ar@{}[rd]|{\square}&y\ar[r]\ar[d]_-g\ar@{}[rd]|{\square}&0_2\ar[d]&&
y\ar[r]^-g\ar[d]\ar@{}[rd]|{\square}&z\ar[r]\ar[d]_-h\ar@{}[rd]|{\square}&0_3\ar[d]\\
0_1\ar[r]&z\ar[r]_-h&x'&&
0_2\ar[r]&x'\ar[r]_-{f'}&y',
}
\]
paste the respective bicartesian squares in order to obtain the suspension squares
\[
\xymatrix{
x\ar[r]\ar[d]\ar@{}[rd]|{\square}&0_2\ar[d]&&y\ar[r]\ar[d]\ar@{}[rd]|{\square}&0_3\ar[d]\\
0_1\ar[r]&x',&&0_2\ar[r]&y',
}
\]
and deduce from these the resulting identifications
\[
\varphi_2\colon \Sigma x\toiso x'\qquad\text{and}\qquad\psi_2\colon \Sigma y\toiso y'.
\]
Now, the suspension squares giving rise to the identifications $\varphi_1$ and $\varphi_2$ are literally the same and we deduce $\varphi_1=\varphi_2$. In contrast to this, the suspension squares inducing $\psi_1$ and $\psi_2$ differ by a restriction along the symmetry exchanging $(1,0)$ and $(0,1)$, and by \autoref{rmk:add} it follows that $\psi_1$ agrees with the \emph{negative} of $\psi_2$. Thus, in order to write the third morphism in the distinguished triangle associated to $g$ as a morphism $\Sigma x\to \Sigma y$, invoking \eqref{eq:f'-1} we are led to
\begin{equation}\label{eq:f'-2}
\vcenter{
\xymatrix{
\Sigma x\ar[r]^-{\varphi_1}_-\sim\ar[d]_-{-\Sigma f}&x'\ar[d]^-{f'}\\
\Sigma y\ar[r]_-{\psi_2}^-\sim&y'.
}
}
\end{equation}
Consequently, the distinguished triangle associated to $g$ takes the form
\[
y\stackrel{g}{\to} z\stackrel{h}{\to} \Sigma x\stackrel{-\Sigma f}{\to}\Sigma y,
\]
as asked for by the rotation axiom of triangulated categories. 
\end{proof}

\subsection{Exponentials of derivators}
\label{subsec:exponentials}

In this short subsection we discuss the construction of derivators of representations or exponentials in more categorical terminology. This is a derivatorish version of diagram categories and these exponentials govern the calculus of parametrized limits and parametrized Kan extensions. The formation of represented derivators, of derivators of abelian categories and of homotopy derivators are compatible with the passage to exponentials.

Let \D be a derivator and let $A\in\cCat$. Taking the philosophy of derivators serious, we should not be happy with a \emph{category} $\D(A)$ of coherent diagrams of shape $A$ in $\D$, but we should rather ask for a \emph{derivator} $\D^A$ of such diagrams. The heuristics are as follows: diagrams of shape $B$ in $\D^A$ are diagrams of shape $A$ in diagrams of shape $B$ in $\D$ which by the exponential law should be the same as diagrams of shape $A\times B$ in $\D$. This suggests the following construction.

\begin{con}\label{con:shift}
Let \D be a prederivator and let $A\in\cCat$. We denote by $\D^A$ the $2$-functor
\[
\D^A\colon\cCat\op\to\cCAT\colon B\mapsto \D(A\times B).
\]
This is the prederivator of diagrams of shape $A$ in $\D$. The underlying category is $\D^A(\ast)=\D(A\times\ast)\cong\D(A)$.
\end{con}

The following example illustrates the relation to representation theory. In that example we already invoke the notion of a morphism of (pre-)derivators which we introduce formally in \S\ref{subsec:mor}.

\begin{eg}\label{eg:shift-field}
Let $k$ be a field, let $Q$ be a quiver with finitely many vertices only, and let $kQ$ be the $k$-linear path algebra. Identifying the quiver $Q$ with the free category $Q$ generated by it, we want to describe the prederivator
\[
\D_k^Q\colon\cCat\op\to\cCAT\colon B\mapsto \D_k(Q\times B)
\]
differently. For this purpose, given an abelian category $\cA$, let us write $\Ho(\nCh(\cA))$ for the localization of $\nCh(\cA)$ at the class of quasi-isomorphisms, so that we have $D(\cA)=\Ho(\nCh(\cA))$. With this notation, by means of the exponential law and the equivalence $\Mod{k}^Q\simeq\Mod{kQ}$ there is the following chain of equivalences of categories
\begin{align*}
\D_k^Q(B)&=\D_k(Q\times B)\\
&=\Ho(\nCh(k)^{Q\times B})\\
&\cong\Ho\big((\nCh(k)^Q)^B\big)\\
&\simeq\Ho\big(\nCh(kQ)^B\big)\\
&=\D_{kQ}(B).
\end{align*}
This holds for every $B\in\cCat$ in a compatible way, and we hence conclude that there is an equivalence of derivators
\[
\D_k^Q\simeq\D_{kQ}.
\]
Thus the formation of shifted derivators models the passage to the path algebra, and there are of course variants for incidence algebras, group algebras, and more general category algebras. 

\end{eg}

In the above example, the prederivator $\D_k^Q$ of $Q$-shaped diagrams in $\D_k$ is again a derivator. Let us recall from classical category theory that functor categories $\cC^A$ inherit ``exactness properties'' from $\cC$ by means of pointwise constructions. The following is a derivatorish version of this.

\begin{thm}\label{thm:shift}
Let $\D$ be a derivator and let $A\in\cCat.$ The prederivator $\D^A$ is a derivator, the \textbf{derivator of $A$-shaped diagrams in $\D$}. Moreover, if $\D$ is pointed, stable, or strong, then $\D^A$ is pointed, stable, or strong, respectively.
\end{thm}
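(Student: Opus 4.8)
The plan is to verify each of the four derivator axioms for $\D^A$ by reducing them, via the exponential law $\D^A(B) = \D(A \times B)$, to the corresponding axioms for $\D$. The key observation throughout is that a functor $u \colon B \to C$ induces the functor $\id_A \times u \colon A \times B \to A \times C$, and restriction, Kan extensions, and evaluation for $\D^A$ are all computed as the corresponding operations for $\D$ along such product functors.

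First I would check (Der1): a coproduct decomposition $B = \coprod_i B_i$ gives $A \times B = \coprod_i (A \times B_i)$ (products distribute over coproducts in $\cCat$), so $\D^A(\coprod_i B_i) = \D(\coprod_i A \times B_i) \toiso \prod_i \D(A \times B_i) = \prod_i \D^A(B_i)$ by (Der1) for $\D$. For (Der2), note that the underlying objects of $\D^A$ at $\ast$ are objects of $\D(A)$, and a morphism $f$ in $\D^A(B) = \D(A \times B)$ is an isomorphism iff it is one in $\D(A\times B)$; by (Der2) for $\D$ this holds iff every $f_{(a,b)}$ is an isomorphism in $\D(\ast)$, which is exactly the pointwise criterion for $\D^A$ since evaluating at $b \in B$ in $\D^A$ and then at $a\in A$ amounts to evaluating at $(a,b) \in A\times B$. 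For (Der3), given $u\colon B\to C$, the restriction $u^\ast \colon \D^A(C) \to \D^A(B)$ is $(\id_A \times u)^\ast$, which by (Der3) for $\D$ has a left adjoint $(\id_A\times u)_!$ and a right adjoint $(\id_A\times u)_\ast$; these serve as $u_!$ and $u_\ast$ for $\D^A$. For (Der4), one must identify the slice categories: the comma category $(u/c)$ computed for $\id_A\times u$ over an object $(a,c) \in A \times C$ is isomorphic to $\{a\} \times (u/c) \cong (u/c)$, with the projection functor matching up, so the pointwise formula for $(\id_A\times u)_!$ from (Der4) for $\D$ translates directly into the pointwise formula for $u_!$ in $\D^A$, and dually for $u_\ast$.

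Next I would treat the three adjectives. Pointedness is immediate: $\D^A(\ast) \cong \D(A)$, and by \autoref{rmk:pointed}(ii) $\D(A)$ has a zero object whenever $\D$ is pointed. For stability, I would use the characterization in \autoref{thm:stable}: a pointed derivator is stable iff $(\Sigma,\Omega)\colon \D(\ast)\rightleftarrows\D(\ast)$ is an equivalence. But the suspension and loop functors are built (\autoref{con:basic-ptd}) from Kan extensions along functors between the finite posets $\ast,[1],\ulcorner,\lrcorner,\square$, and by the reduction above these are computed in $\D^A$ as the corresponding Kan extensions in $\D$ along $\id_A \times (-)$; hence $\Sigma_{\D^A}$ and $\Omega_{\D^A}$ on $\D^A(\ast) = \D(A)$ are the restrictions to $\D(A)$ of the suspension and loop functors of $\D$ acting "pointwise in the $A$-direction", and one checks $\Sigma_{\D^A}\Omega_{\D^A} \cong \id$ and $\Omega_{\D^A}\Sigma_{\D^A}\cong\id$ pointwise on $A$ from the corresponding isomorphisms in $\D$ (using (Der2) for $\D^A$, i.e. (Der2) for $\D$ on $A$). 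Alternatively, and perhaps more cleanly, one can argue directly that a square $X \in \D^A(\square) = \D(A\times\square)$ is cocartesian iff cartesian: this is a statement about the Kan extension essential image along $\ulcorner \to \square$ resp.\ $\lrcorner \to \square$, which after crossing with $A$ becomes a statement that can be checked by restricting along $a \colon \ast \to A$ for each $a$ (Kan extensions commute with such restrictions by a homotopy-exactness/base-change argument, which I would cite from \cite{groth:ptstab}), reducing to stability of $\D$. Strength is similar: for free $F$ and arbitrary $A'$, the partial underlying diagram functor $\D^A(A' \times F) \to \D^A(A')^F$ is $\D(A \times A' \times F) \to \D(A\times A')^F$, which is $\ndia_{A\times A', F}$ for $\D$ and hence essentially surjective and full by strength of $\D$ (note $A\times A' \in \cCat$ is allowed as the first argument).

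The main obstacle I anticipate is making the stability step fully rigorous: the cleanest phrasing requires knowing that left (resp.\ right) Kan extensions along $\id_A \times u$ are compatible with restriction along $a\colon \ast\to A$ — i.e.\ that the relevant square of functors is homotopy exact — and similarly that the "essential image" characterization of cocartesian squares behaves well under these restrictions. This is exactly the kind of base-change/mates computation developed in \cite{groth:ptstab}, so I would invoke it rather than reprove it; modulo that, everything else is a routine unwinding of the exponential law and identification of slice categories. I would also remark that (Der4) is the one axiom requiring a genuine (if easy) lemma — the identification $(\,(\id_A\times u)/(a,c)\,) \cong (u/c)$ — whereas (Der1), (Der2), (Der3) are essentially formal.
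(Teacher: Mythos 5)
Your overall strategy — reducing each axiom for $\D^A$ to the corresponding property of $\D$ via the identity $\D^A(B)=\D(A\times B)$ and the observation that restrictions and Kan extensions in $\D^A$ are computed along $\id_A\times(-)$ in $\D$ — is the same as the paper's, and your treatments of (Der1), (Der2), (Der3), pointedness, and strength are correct. However, the "genuine (if easy) lemma" you isolate for (Der4) is false. An object of the slice category $\bigl((\id_A\times u)/(a,c)\bigr)$ is a pair $\bigl((a',b'),\,(\phi_1,\phi_2)\bigr)$ with $\phi_1\colon a'\to a$ \emph{any} morphism of $A$ and $\phi_2\colon ub'\to c$ a morphism of $C$, so that
\[
\bigl((\id_A\times u)/(a,c)\bigr)\;\cong\;(A/a)\times(u/c),
\]
and the first factor is the full slice $(A/a)$, not the one-object category $\{a\}$. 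Your claimed identification with $(u/c)$ only holds when $A$ is discrete.

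This is not fatal, but it means the pointwise formula of (Der4) for $\D$ does not ``translate directly'' into (Der4) for $\D^A$; an additional homotopy-(co)finality input is required. Concretely, the functor $(u/c)\to(A/a)\times(u/c)$, $(b',g)\mapsto\bigl((a,\id_a),(b',g)\bigr)$, is a right adjoint (as $(a,\id_a)$ is terminal in $(A/a)$) and hence homotopy final, so that colimits over the two categories agree; combining this with (Der4) for $\D$ applied to $\id_A\times u$ and the commutation of Kan extensions with restrictions in unrelated variables — the homotopy-exact-square fact you correctly flag as needing citation (\cite[Prop.~2.5]{groth:ptstak} is the relevant statement, see also \cite[Thm.~1.25]{groth:ptstab}) — produces the required isomorphism. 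Equivalently, and closer to what the paper cites, one observes that $A\times(-)$ sends homotopy exact squares to homotopy exact squares and applies this to the slice squares witnessing (Der4) for $\D$. Either way, (Der4) for $\D^A$ genuinely requires a homotopy-exactness argument and cannot be reduced to a mere bookkeeping of slice categories as you wrote.
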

\begin{proof}
Most of the axioms of a derivator are straightforward. In order to show that $\D^A$ also satisfies (Der4), some basics on the calculus of homotopy exact squares are needed \cite[Thm.~1.25]{groth:ptstab}. The fact that $\D^A$ is strong if this is the case for $\D$ is immediate from \autoref{defn:strong}. Moreover, by \autoref{rmk:pointed} the property of being pointed is inherited as well, and for the remaining case of stable derivators we refer to \cite[Prop.~4.3]{groth:ptstab}.
\end{proof}

We also refer to these derivators $\D^A$ as \textbf{exponentials} or \textbf{shifted derivators}.

\begin{rmk}
Let us expand a bit on the calculus of Kan extensions in shifted derivators. Let \D be a derivator, let $A\in\cCat$, and let $u\colon B\to B'$ be a functor. The proof of \autoref{thm:shift} shows that left Kan extension along $u$ in $\D^A$ is given by left Kan extension along $\id_A\times u$ in \D,
\[
u_!^{\D^A}=(\id_A\times u)_!^{\D}\colon\D(A\times B)\to\D(A\times B').
\]
Moreover, Kan extensions and restrictions in unrelated variables commute (\cite[Prop.~2.5]{groth:ptstab}). This specializes to the fact that for every $a\in A$ the following square
\[
\xymatrix{
\D(A\times B)\ar[r]^-{(\id\times u)_!}\ar[d]_-{(a\times\id)^\ast}\ar@{}[rd]|{\cong}&\D(A\times B')\ar[d]^-{(a\times\id)^\ast}\\
\D(B)\ar[r]_-{u_!}&\D(B')
}
\]
commutes up to a canonical isomorphism. In particular, for every $X\in\D(A\times B)$ there is a canonical isomorphism
\[
\colim^\D_B (a^\ast X)\toiso a^\ast\colim^{\D^A}_B X
\]
in $\D(\ast)$. In this precise sense (co)limits and Kan extensions in $\D^A$ treat the $A$-variable as parameters.
\end{rmk}

\begin{rmk}
This closure under the formation of exponentials is one of the technical advantages of derivators over triangulated categories. Recall that, in general, for a triangulated category $\cT$ and a small category $A$ there is no natural triangulation on the functor category $\cT^A$ (which, for instance, is compatible with all evaluations). And even if there would be such a triangulation, in most examples, the category $\cT^A$ is not the one we would like to study (for instance, for a field $k$ and a quiver $Q$, we are interested in $D(kQ)$ which is different from $D(k)^Q$, the category of $\lZ$-graded representations of $Q$).

Similarly to derivators, $\infty$-categories also enjoy this closure property \cite[Prop.~1.2.7.3]{HTT}, while for arbitrary Quillen model category the situation is somewhat more subtle (related to this see \cite{bousfield-kan:htpy-limits,heller:htpythies,jardine:presheaves} for early special cases, and \cite[Thm.~11.6.1]{hirschhorn:model} or \cite[\S A2.8]{HTT} for published general results).
\end{rmk}

\begin{egs}
Let $B$ be a small category.
\begin{enumerate}
\item For every complete and cocomplete category $\cC$ the derivators $y_\cC^B$ and $y_{\cC^B}$ are equivalent. In these derivators the abstract calculus of limits and Kan extensions agrees with the classical parametrized versions of limits and Kan extensions (\cite[\S V.3]{maclane}).
\item For every Grothendieck abelian category $\cA$ there is an equivalence of derivators
\[
\D_\cA^B\simeq\D_{\cA^B}.
\]
\item For every model category $\cM$ there is an equivalene of derivators
\[
\ho_{\cM^B}\simeq\ho_\cM^B.
\]
\end{enumerate}
\end{egs}

There is a similar variant for $\infty$-categories.

\begin{rmk}
Let $\D$ be a derivator and $A\in\cCat$.
\begin{enumerate}
\item The underlying category of $\D^A$ is $\D(A)$, which is to say that every stage of a derivator is the underlying category of a derivator. In particular, this often allows us to focus on the underlying category when we want to establish certain properties for arbitrary stages. For instance, the class of strong and stable derivators is closed under exponentials (\autoref{thm:shift}), and we conclude by \autoref{thm:stable-tria} that all stages of such derivators admit canonical triangulations. These triangulations are compatible with each other as it follows from the discussion of morphisms in \S\ref{subsec:mor}.
\item The calculus of limits and Kan extensions in $\D^A$ is the calculus of \emph{parametrized} limits and Kan extensions in $\D$. Implicitly, this parametrized calculus already appeared in proof of the rotation axiom. In fact, the vertical coherent morphism $f'$ in \eqref{eq:rot-ax} really is the value of the parametrized suspension functor $\Sigma\colon\D([1])\to\D([1])$ at $f\in\D([1])$. In fact, by \cite[Lem.~5.13]{gps:mayer} there is a natural isomorphism
\begin{equation}\label{eq:cof-cube}
\Sigma\cong\cof^3\colon\D([1])\to\D([1]).
\end{equation} 
\end{enumerate}
\end{rmk}

\subsection{Morphisms of derivators}
\label{subsec:mor}

In this subsection we briefly discuss morphisms of derivators, the interaction of morphisms with limits and the relation to exact functors of triangulated categories. We also introduce natural transformations between morphisms of derivators and the related notions of adjunctions and equivalences of derivators. To begin with, there is the following definition.

\begin{defn}
A \textbf{morphism} of derivators is a pseudo-natural transformation.
\end{defn}

Let us unravel this definition. For derivators \D and \E, a morphism $F\colon\D\to\E$ consists of 
\begin{enumerate}
\item functors $F_A\colon\D(A)\to\E(A)$ for $A\in\cCat$ and
\item natural isomorphisms $\gamma_u\colon u^\ast F_B\toiso F_A u^\ast$ for $u\colon A\to B,$
\[
\xymatrix{
\D(A)\ar[r]^-{F_A}&\E(A)\dltwocell\omit{\cong}\\
\D(B)\ar[u]^-{u^\ast}\ar[r]_-{F_B}&\E(B),\ar[u]_-{u^\ast}
}
\] 
\end{enumerate}
satisfiying certain coherence axioms. For instance, for every pair of composable functors $u\colon A\to B$ and $v\colon B\to C$ the natural isomorphism $\gamma_{vu}$ agrees with the following pasting of $\gamma_u$ and $\gamma_v$,
\[
\xymatrix{
\D(A)\ar[r]^-{F_A}&\E(A)\dltwocell\omit{\cong}\\
\D(B)\ar[u]^-{u^\ast}\ar[r]_-{F_B}&\E(B),\dltwocell\omit{\cong}\ar[u]_-{u^\ast}\\
\D(C)\ar[u]^-{v^\ast}\ar[r]_-{F_C}&\E(C).\ar[u]_-{v^\ast}
}
\] 
And there is the axiom $\gamma_{\id_A}=\id$ and one additional axiom concerning the interaction of $\gamma_u$ and $\gamma_v$ with natural transformations $\alpha\colon u\to v$. A morphism is \textbf{strict} if all components $\gamma_u$ are identities, which is to say that the functors $F_A$ commute with restrictions on the nose (and not only up specified, coherent isomorphisms).

\begin{egs}\label{egs:mor}
Let us take up again some of \autoref{egs:der}.
\begin{enumerate}
\item Every functor $F\colon\cC\to\cD$ between complete and cocomplete categories induces a strict morphism of represented derivators
\[
F=y_F\colon y_\cC\to y_\cD.
\]
In fact, for every $A\in\cCat$, we define $F_A$ as the postcomposition functor induced by $F$,
\[
F_A\colon\cC^A\to\cD^A\colon X\mapsto F\circ X,
\]
and it is straightforward to verify the axioms of a strict morphism.
\item Every left Quillen functor $F\colon\cM\to\cN$ of model categories induces a total left derived morphism of homotopy derivators
\[
LF\colon\ho_\cM\to\ho_\cN;
\]
see \cite{kahn-maltsiniotis} and \cite[\S3]{cisinski:derivable}. The explicit construction of the components $(LF)_A$ is given by
\[
(LF)_A=L(F_A)\colon\Ho(\cM^A)\to\Ho(\cN^A).
\]
The passage to total left derived functors is pseudo-natural \cite[\S1.4]{hovey:model}, and this yields the desired pseudo-naturality constraints $\gamma_u$. There is, of course, a similar result for right Quillen functors, and these results yield extrinsic constructions of many interesting examples of morphisms of derivators. This includes derived tensor products or derived hom functors, and we refer the reader to the literature for plenty of examples.
\item A relevant intrinsic example is given by restriction morphisms. In fact, for every derivator \D and every $u\colon A\to B$ there is an induced \textbf{restriction morphism}
\[
u^\ast\colon\D^B\to\D^A.
\]
For every $C\in\cCat$ the corresponding component is defined as
\[
u^\ast_C=(u\times\id_C)^\ast\colon\D(B\times C)\to\D(A\times C).
\]
These functors define a strict morphism $u^\ast$ as it follows immediately from $2$-functoriality of $\D$. (There are also Kan extension morphisms as we will see in a bit.)
\end{enumerate}
\end{egs}

\begin{defn}\label{defn:ccts}
A morphism of derivators $F\colon\D\to\E$ preserves colimits of shape $A$ if for every $X\in\D(A)$ the canonical map
\[
\colim_A FX\toiso F\colim_A X
\]
is an isomorphism in $\E(\ast)$.
\end{defn}

These canonical maps are further instances of the canonical mates mentioned in \autoref{rmk:cocart}. In all detail, these are morphisms of the form
\[
\colim_A^\E F_A(X)\toiso F_\ast\colim^\D_A(X),
\]
where $F_\ast\colon\D(\ast)\to\E(\ast)$ is the underlying functor of $F$. In represented derivators this notion reduces to the usual notion of preservation of colimits, while in homotopy derivators of abelian categories or model categories this captures the preservation of derived colimits or homotopy colimits, respectively. Of course, there are refined notions such as the preservation of the $A$-shaped colimit of a fixed diagram only.

\begin{egs}\label{egs:classes-of-mor}
The following classes of morphisms are important.
\begin{enumerate}
\item A morphism of pointed derivators is \textbf{pointed} if and only if it preserves zero object (in the usual sense).
\item A morphism of derivators is \textbf{right exact} if it preserves initial objects and pushouts, i.e., colimits of shape $\ulcorner$. There is the dual notion of a \textbf{left exact morphism} and the combined notion of an \textbf{exact morphism}.
\item Finally, a morphism is \textbf{cocontinuous} if it preserves all colimits and \textbf{continuous} if it preserves all limits.
\end{enumerate}
\end{egs}

\begin{lem}\label{lem:exact}
A morphism of stable derivators is left exact if and only if it is right exact if and only if it is exact.
\end{lem}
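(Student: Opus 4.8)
The plan is to reduce the statement to the key fact that in a pointed derivator the suspension functor $\Sigma$ is an autoequivalence precisely when the derivator is stable, and more usefully that a morphism of pointed derivators which preserves pushouts automatically preserves the constructions $\cof$, $C$, and $\Sigma$ that are built out of Kan extensions along fully faithful inclusions and evaluations. First I would record the elementary half: if $F\colon\D\to\E$ is exact then in particular it is both left and right exact, so both implications hold trivially; the content is that right exactness alone forces left exactness (and dually). By the duality principle for derivators (the opposite derivator $\D\op$ and \autoref{lem:exact} applied to $F\op$), it suffices to prove that a right exact morphism of stable derivators is left exact.

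So suppose $F\colon\D\to\E$ is right exact, i.e.\ $F$ preserves initial objects and pushouts (colimits of shape $\ulcorner$). The first step is to observe that $F$ is then pointed: it preserves the initial object, and in a pointed derivator the initial object is the zero object, so $F_\ast(0)\cong 0$. Next, because $\cof$, $C$, and $\Sigma$ are defined (\autoref{con:basic-ptd}) as composites of a right Kan extension $0_\ast$ or $i_\ast$ along a fully faithful functor, a left Kan extension $(i_\ulcorner)_!$, and restrictions/evaluations, I would show that $F$ commutes with these. The restriction and evaluation functors are handled by pseudo-naturality of $F$; the left Kan extension $(i_\ulcorner)_!$ is handled because $F$ is right exact; and the right Kan extension $0_\ast$ (resp.\ $i_\ast$), which inserts a zero object into a morphism, is handled because $F$ preserves zero objects and, by (Der4), $0_\ast$ is computed pointwise as "insert $0$''. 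Concretely: given $x\in\D(\ast)$, $0_\ast x$ is the coherent morphism $(x\to 0)$, and $F$ of this is $(F x\to F0)\cong(Fx\to 0)\cong 0_\ast(Fx)$. Hence $F$ commutes with the cone and suspension functors up to canonical isomorphism; in particular there is a natural isomorphism $F\Sigma\cong\Sigma F$.

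Now I would exploit stability of $\D$ and $\E$. By \autoref{thm:stable}, in each of $\D$ and $\E$ the suspension $\Sigma$ is an equivalence with inverse $\Omega$, and there is a natural isomorphism $\id\cong\Omega\Sigma$ (equivalently $\id\cong\Omega^2\Sigma^2$). To show $F$ is left exact it suffices to show $F$ preserves pullbacks, i.e.\ colimits... rather, limits of shape $\lrcorner$. The standard trick is: in a stable derivator a square is cartesian iff it is cocartesian (\autoref{defn:stable}), and pullback squares can be reconstructed from pushout squares by applying $\Omega$ to an associated diagram. More precisely, one shows that the fibre functor $\fib\colon\D([1])\to\D([1])$ satisfies $\fib\cong\Omega\circ\cof\circ(\text{shift})$ — using the chain $\Sigma\cong\cof^3$ from \eqref{eq:cof-cube} and its dual — so that preservation of $\cof$ together with preservation of $\Sigma$ (hence of $\Omega$, since $\Omega$ is the chosen quasi-inverse of the equivalence $\Sigma$ and $F\Sigma\cong\Sigma F$ forces $F\Omega\cong\Omega F$) yields preservation of $\fib$. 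From preservation of $\fib$ and of zero objects one deduces that $F$ sends fibre squares to fibre squares; combined with the fact (parallel to \autoref{prop:cof-sq-D_R}, dualized) that an arbitrary pullback square is built from a fibre square by left Kan extension along a fully faithful functor that only inserts already-present data, one concludes $F$ preserves all pullbacks, hence is left exact.

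The main obstacle I expect is the bookkeeping in the last paragraph: turning "$F$ preserves $\cof$ and $\Sigma$'' into "$F$ preserves all finite limits of shape $\lrcorner$'' cleanly, without re-deriving large portions of the theory of homotopy exact squares. The cleanest route is probably to avoid an explicit reconstruction of pullbacks and instead argue via the characterization in \autoref{thm:stable}: since $F\Sigma\cong\Sigma F$ and $\Sigma$ is an equivalence on both sides, $F$ intertwines the two equivalences, so it also intertwines their inverses $\Omega$; then one invokes that in a stable derivator every limit functor of a finite shape is, up to a finite number of suspension/loop shifts and pushout-type Kan extensions, expressible through colimit functors and zero insertions — a statement that can be quoted from \cite{groth:ptstab} or \cite{gps:mayer} rather than reproved. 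With that quotation in hand the argument is short; without it, the honest verification that right exact $+$ pointed $+$ $\Sigma$-preserving implies left exact for \emph{stable} derivators is the real work, and I would expect the reference-backed version to be what actually appears.
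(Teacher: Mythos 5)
Your overall strategy (use stability to convert preservation of pushout-type data into preservation of pullback-type data) is in the right spirit, but it takes a substantially longer and, in one crucial place, incorrect route, and you yourself flag that the final step isn't nailed down.

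The concrete error is in your last reduction: ``an arbitrary pullback square is built from a fibre square by left Kan extension along a fully faithful functor that only inserts already-present data.'' This is not true. A fibre square is a pullback square in which one corner is a zero object; the dual of \autoref{prop:cof-sq-D_R} says a fibre square is determined by its right-hand vertical arrow, not that every pullback square arises from a fibre square. There is no fully faithful inclusion along which a Kan extension turns a fibre square into a general pullback. (If one really wants to reconstruct pullbacks from $\fib$, the honest route is a Mayer--Vietoris-style argument through $F$ of $y\to w\ot z$ being the fibre of $y\oplus z\to w$, which additionally requires preservation of finite biproducts; but even this is much more work than is needed.)

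The paper's proof bypasses all of this. The single fact it uses is \cite[Prop.~3.9]{groth:revisit}: a morphism of derivators preserves colimits of shape $A$ if and only if it preserves colimiting cocones of shape $A$ (and dually). Specializing to $A=\ulcorner$, a morphism preserves pushouts iff it sends cocartesian squares to cocartesian squares; dually for pullbacks and cartesian squares. Since in a stable derivator cocartesian and cartesian squares are literally the same class (\autoref{defn:stable}), and since initial and terminal objects coincide (both are the zero object), right exact and left exact are equivalent, and both are then equivalent to exact. No appeal to $\Sigma$, $\Omega$, $\cof$, $\fib$, or the Kan-extension decompositions of those functors is needed. If you do want to run your longer argument, the missing ingredient you would have to supply (in place of the false fibre-square reduction) is precisely this characterization of colimit preservation via preservation of colimiting cocones; once you have it, the intermediate work with $\Sigma$ and $\fib$ becomes unnecessary anyway.
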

\begin{proof}
The key is the following elementary observation: a morphism of derivators preserves colimits of shape $A$ if and only if it is preserves colimiting cocones of shape $A$ \cite[Prop.~3.9]{groth:revisit}. As a special case this implies that a morphism of derivators preserves pushouts if and only if it is preserves cocartesian squares, and the claims follow by definition of stability.
\end{proof}

Exact morphisms are the good notion of morphisms of stable derivators. To provide some first evidence for this claim, we include the following discussion. Recall that in ordinary category theory all colimits can be constructed from coproducts and coequalizers \cite[\S V.2]{maclane}. More relevant to our current discussion is a variant of this for finite colimits. To begin with, let us recall that a category is finite if it has finitely many objects and morphisms only. \emph{Finite colimits} can be constructed from finite coproducts and coequalizers or, alternatively, from initial objects and pushouts. Similarly, a functor preserves finite colimits if and only if it is right exact, i.e., it preserves initial objects and pushouts.

A variant of these results also holds for derivators, but in this case we have to invoke a more restrictive notion of finiteness. Suggestively, this is due to the fact that in derivator land colimits also include derived colimits and homotopy colimits.

\begin{defn}
A category $A$ is \textbf{strictly homotopy finite} if $A$ is finite, if every endomorphism $f\colon x\to x$ in $A$ is equal to $\id_x$, and if isomorphic objects in $A$ are equal ($A$ is skeletal). A category is \textbf{homotopy finite} if it is equivalent to a strictly homotopy finite category.
\end{defn}

The following result is due to Ponto--Shulman. There is also a variant for the construction of colimits, but here we only formulate the result for morphisms.

\begin{thm}\label{thm:rex}
A morphism of derivators is right exact if and only if it preserves homotopy finite colimits.
\end{thm}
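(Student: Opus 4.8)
The ``if'' direction is immediate: the empty category $\emptyset$ and the span $\ulcorner$ are both homotopy finite, so a morphism preserving homotopy finite colimits in particular preserves initial objects and pushouts, hence is right exact (\autoref{egs:classes-of-mor}).

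For the converse, given a right exact $F\colon\D\to\E$, the plan is to prove by induction that $F$ preserves $\colim_A$ for every homotopy finite $A$. Since abstract colimits are invariant under equivalences of the indexing category, I may assume $A$ is strictly homotopy finite; then the relation ``there is a morphism $x\to y$'' is a partial order (antisymmetry holds because maps both ways compose to identity endomorphisms, forcing $x\cong y$ and hence $x=y$ by skeletality), and the nerve of $A$ is a finite simplicial set. Write $\dim(A)$ for its dimension, i.e.\ the maximal length of a chain of non-identity morphisms, and induct on the pair $(\dim(A),|\mathrm{Ob}(A)|)$ ordered lexicographically. If $A=\emptyset$, then $\colim_\emptyset$ picks out the initial object, which $F$ preserves. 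Otherwise choose a maximal object $a\in A$ (one admitting no non-identity morphism out of it), which exists since the order above is a finite nonempty poset. Set $A'=A\setminus\{a\}$, with inclusion $j\colon A'\hookrightarrow A$, and let $p\colon(j/a)\to A'$ be the projection from the slice category. Then $A'$ is a sieve in $A$, and the canonical comparison maps — the one induced by $p$, and the one induced by the natural transformation $jp\Rightarrow\mathrm{const}_a$ that sends $(x,f)$ to $f$ — fit into a homotopy pushout square
\[
\xymatrix{
\colim_{(j/a)}p^\ast j^\ast X\ar[r]\ar[d]&X_a\ar[d]\\
\colim_{A'}j^\ast X\ar[r]&\colim_A X
}
\]
natural in $X\in\D(A)$; this ``attach one object along its slice'' decomposition of a colimit is a standard consequence of the pointwise formula (Der4) together with the calculus of homotopy exact squares. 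Now $A'$ has strictly fewer objects and $\dim(A')\le\dim(A)$, so its complexity is lexicographically smaller; and any chain of non-identity morphisms in $(j/a)$ of length $\ell$ projects to such a chain in $A'$ which, appended with the structure map to $a$, yields a chain of length $\ell+1$ in $A$, so $\dim((j/a))\le\dim(A)-1$ and (after passing to a skeleton) $(j/a)$ also has strictly smaller complexity. By the inductive hypothesis $F$ preserves $\colim_{A'}$ and $\colim_{(j/a)}$, and being right exact it preserves pushouts; transporting the square through the pseudo-naturality isomorphisms of $F$ identifies its image under the underlying functor $F_\ast$ with the corresponding square for $FX$, whence $F$ preserves $\colim_A X$. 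Dualizing yields the variant for left exact morphisms and homotopy finite limits.

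I expect the bulk of the work to lie in two places. First, establishing the homotopy pushout square above with enough naturality in $X$ that it can be pushed through $F$: the underlying idea — a category built by attaching one cell has a homotopy colimit computed by a single pushout — is classical, but turning it into a precise, natural statement in the language of derivators requires care with canonical mates and with the coherence data carried by a morphism of derivators. Second, the elementary but slightly delicate combinatorics of recognizing ``homotopy finite'' as ``finite nerve'' and of verifying that the slices genuinely decrease in complexity; the subtle point is that $(j/a)$ can have far more objects than $A$, which is exactly why the induction must be ordered by dimension first and by cardinality only second.
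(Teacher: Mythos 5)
Your proof is correct and takes essentially the same approach as the result the paper cites, \cite[Thm.~7.1]{ps:linearity}: reduce to strictly homotopy finite (equivalently, finite direct) categories, peel off a maximal object via the latching/cell-attachment pushout square, and induct lexicographically on (nerve dimension, number of objects) — the dimension-first ordering being exactly what handles the fact that the slice $(j/a)$ may have more objects than $A$. You also correctly locate the real work, namely realizing the pushout decomposition as a genuinely coherent cocartesian square, natural enough in $X$ to be pushed through the pseudo-naturality and right-exactness constraints of $F$.
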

\begin{proof}
This is \cite[Thm.~7.1]{ps:linearity}.
\end{proof}

\begin{cor}\label{cor:ex-mor-st}
A morphism of stable derivators is exact if and only if it preserves homotopy finite limits and homotopy finite colimits.
\end{cor}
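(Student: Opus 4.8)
The plan is to deduce Corollary~\ref{cor:ex-mor-st} directly by combining the self-duality of stable derivators with the characterizations already in hand. First I would recall that, by \autoref{lem:exact}, a morphism $F$ of stable derivators is exact if and only if it is right exact. So the statement reduces to showing that, for such $F$, right exactness is equivalent to preservation of homotopy finite limits \emph{and} homotopy finite colimits. The ``if'' direction is immediate: preservation of homotopy finite colimits in particular gives preservation of the pushout shape $\ulcorner$ and of the initial object, hence right exactness by \autoref{egs:classes-of-mor}. The substance is the ``only if'' direction.

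For that, the key input is \autoref{thm:rex}: a morphism of derivators is right exact if and only if it preserves homotopy finite colimits. Applying this to $F$ directly handles the ``homotopy finite colimits'' half of the conclusion. For the ``homotopy finite limits'' half I would pass to opposite derivators. If $\D$ and $\E$ are stable, then so are $\D\op$ and $\E\op$ (this is part of \autoref{thm:shift}'s circle of ideas, or rather is immediate from \autoref{defn:stable} since bicartesian squares are self-dual under $B\mapsto B\op$), and a morphism $F\colon\D\to\E$ induces a morphism $F\op\colon\D\op\to\E\op$. Under this passage, a homotopy finite category $A$ goes to the homotopy finite category $A\op$, colimits of shape $A$ in $\D\op$ are limits of shape $A\op$ in $\D$, and ``right exact'' for $F\op$ means ``left exact'' for $F$. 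By \autoref{lem:exact} again, $F$ exact forces $F\op$ exact, hence $F\op$ right exact, hence (by \autoref{thm:rex} applied to $F\op$) $F\op$ preserves homotopy finite colimits, which unwinds to $F$ preserving homotopy finite limits.

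The only point requiring a little care is the bookkeeping of the duality: one must check that the class of homotopy finite categories is stable under $A\mapsto A\op$ (clear from \autoref{defn:strong}'s neighbouring definition, since finiteness, the endomorphism condition, and skeletality are all self-dual), and that the canonical comparison maps defining preservation of colimits in $\D\op$ really do correspond to those defining preservation of limits in $\D$. Neither is hard, but since the paper suppresses the $2$-categorical details of mates (see \autoref{rmk:cocart}), I would state the correspondence and cite the self-duality of derivators (as recorded after the \textbf{opposite derivator} example) rather than verify it by hand. So the real content is entirely borrowed: \autoref{lem:exact} collapses the three notions of exactness, and \autoref{thm:rex} of Ponto--Shulman upgrades ``right exact'' to ``preserves homotopy finite colimits'' — the corollary is just these two facts read once for $F$ and once for $F\op$.

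\begin{proof}
By \autoref{lem:exact}, a morphism $F$ of stable derivators is exact if and only if it is right exact, so it suffices to show that right exactness of $F$ is equivalent to preservation of homotopy finite limits and homotopy finite colimits.

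If $F$ preserves homotopy finite colimits, then in particular it preserves initial objects and pushouts (colimits of shape $\ulcorner$), hence is right exact by \autoref{egs:classes-of-mor}.

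Conversely, suppose $F$ is right exact, hence exact by \autoref{lem:exact}. By \autoref{thm:rex}, $F$ preserves all homotopy finite colimits. For the limits, pass to opposite derivators. Since $\D$ and $\E$ are stable, so are $\D\op$ and $\E\op$: a square in $B\mapsto\D(B\op)\op$ corresponds under $B\leftrightarrow B\op$ to a square in $\D$, and the cocartesian/cartesian conditions are interchanged, so the coincidence of cocartesian and cartesian squares in $\D$ yields the same coincidence in $\D\op$. The morphism $F$ induces $F\op\colon\D\op\to\E\op$, and $F$ exact implies $F\op$ exact, hence right exact. Applying \autoref{thm:rex} to $F\op$, it preserves homotopy finite colimits. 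Now for a homotopy finite category $A$, the opposite $A\op$ is again homotopy finite (finiteness, the condition that every endomorphism is an identity, and skeletality are all self-dual), and colimits of shape $A$ in $\D\op$ are precisely limits of shape $A\op$ in $\D$; thus $F\op$ preserving homotopy finite colimits unwinds to $F$ preserving homotopy finite limits.

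Combining the two directions, $F$ is exact if and only if it preserves homotopy finite limits and homotopy finite colimits.
\end{proof}
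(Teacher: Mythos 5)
Your proof is correct and takes essentially the same approach as the paper, which simply cites \autoref{thm:rex} and \autoref{lem:exact} as immediate inputs. The duality bookkeeping you spell out --- that $\D\op$ inherits stability, that $F\op$ is again exact, that homotopy finiteness is self-dual, and that $F\op$ preserving homotopy finite colimits unwinds to $F$ preserving homotopy finite limits --- is precisely the content the paper leaves implicit in the word ``immediate''.
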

\begin{proof}
This is immediate from \autoref{thm:rex} and \autoref{lem:exact}.
\end{proof}

\begin{rmk}\label{rmk:ex-vs-Kan}
Generalizing \autoref{defn:ccts}, a morphism of derivators $F\colon\D\to\E$ preserves left Kan extensions along $u\colon A\to B$ in $\cCat$ if for every $X\in\D(A)$ the canonical morphism
\[
u_! F(X)\toiso Fu_!(X)
\]
is an isomorphism in $\E(B)$. Since Kan extensions are pointwise, one checks that a morphism is cocontinuous if and only if it preserves all left Kan extensions \cite[Prop.~2.3]{groth:ptstab}. As a variant, a morphism of stable derivators is exact if and only if it preserves all sufficiently finite left and right Kan extensions (see \cite[Thm.~9.14]{groth:revisit} for more details).
\end{rmk}

In order to offer a second justification that exact morphisms are the good morphisms for stable derivators we include the following discussion. Since our main focus is on stable derivators, in the following construction we immediately specialize to this context.

\begin{con}
Let $G\colon\D\to\E$ be a pointed morphism of stable derivators.
\begin{enumerate}
\item For every $x\in\D(\ast)$ there is the defining suspension square $X\in\D(\square)$ on the left in 
\[
\xymatrix{
x\ar[r]\ar[d]&0\ar[d] &&Gx\ar[r]\ar[d] & 0\ar[d]\\
0\ar[r]&\Sigma x\pushoutcorner, && 0\ar[r]& G\Sigma x.
}
\]
Since $G$ preserves zero objects, the image $GX\in\E(\square)$ looks like the above diagram on the right (here, we invoke that morphisms of derivators commute with evaluation functors as special cases of restriction functors). In general, $GX$ will not be cocartesian. Hence, a comparison of this square against the suspension square of $Gx$ yields a canonical comparison map
\[
\psi\colon \Sigma G x\to G\Sigma x.
\]
This comparison map is an isomorphism if and only if $GX$ is cocartesian, which is certainly the case if $G$ is not only pointed but even exact. In that case, we can consider its inverse
\begin{equation}\label{eq:exact-str}
\varphi=\psi^{-1}\colon G\Sigma x\toiso \Sigma Gx
\end{equation}
which allows us to `pull out the suspension'.
\item As a variant of the previous case, let $(f\colon x\to y)\in\D([1])$ be a morphism and let $X\in\D(\square)$ be the cofiber square associated to it,  
\[
\xymatrix{
x\ar[r]^-f\ar[d]&y\ar[d] &&Gx\ar[r]^-{Gf}\ar[d] & Gy\ar[d]\\
0\ar[r]&C f\pushoutcorner, && 0\ar[r]& GCf.
}
\]
The image square $GX\in\E(\square)$ again vanishes at the lower left corner, but, in general, it fails to be cocartesian. This time this leads to a canonical comparison map
\[
\psi\colon C G f\to G C x,
\]
which is invertible if $G$ is exact.
\end{enumerate}
\end{con}

The details of these constructions are carried out in \cite[\S8]{groth:revisit}, and the connection to exact functors of triangulated categories is provided by the following results. Therein, the underlying categories of strong, stable derivators are of course endowed with the canonical triangulations established in \autoref{thm:stable-tria}.

\begin{prop}\label{prop:exact-exact}
Let $F\colon\D\to\E$ be an exact morphism of strong stable derivators. The natural isomorphism $\varphi\colon F\Sigma \toiso \Sigma F$ defined by \eqref{eq:exact-str} turns the underlying functor $F\colon\D(\ast)\to\E(\ast)$ into an exact functor.
\end{prop}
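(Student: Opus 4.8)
The plan is to verify the two defining conditions of an exact functor of triangulated categories: that $F$ commutes with the suspension via the chosen isomorphism $\varphi$ (this is built into the statement), and that $F$ carries distinguished triangles to distinguished triangles. Since every distinguished triangle in $\D(\ast)$ is isomorphic to a standard triangle, and $F\colon\D(\ast)\to\E(\ast)$ preserves isomorphisms, it suffices to check that $F$ sends the standard triangle of an arbitrary morphism $(f\colon x\to y)\in\D([1])$ to a distinguished triangle in $\E(\ast)$.

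First I would recall the construction of the standard triangle from \autoref{con:can-tria}: given $f\in\D([1])$, one extends it (via the fully faithful Kan extension functors of \autoref{thm:BP}) to a Barratt--Puppe sequence $Y\in\D(M_2)^\ex$, restricts to the relevant cofiber sequence, and reads off the triangle $x\xrightarrow{f} y\xrightarrow{g} z\xrightarrow{h}\Sigma x$, using the identification $\Sigma x\toiso x'$ coming from the composed suspension square. The key point is that since $F$ is exact, it preserves homotopy finite colimits and limits (\autoref{cor:ex-mor-st}), hence in particular it preserves cocartesian squares and zero objects (\autoref{lem:exact}). Therefore $F$ applied to the coherent diagram $FY\in\E(M_2)$ is again a Barratt--Puppe sequence: the vanishing-on-boundary-stripes condition is preserved because $F$ is pointed, and the bicartesianness of every square is preserved because $F$ is exact and stable derivators conflate cartesian and cocartesian squares. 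Here I would use that morphisms of derivators commute with restriction functors, so $F$ commutes with the evaluations and with the passage to the relevant subdiagrams.

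The remaining step is to compare the triangle obtained from $FY$ with the triangle one gets by first restricting $Y$ to $f$, applying $F$ to get $Ff$, and then forming the standard triangle of $Ff$. Since Kan extensions into Barratt--Puppe sequences are essentially unique (they are fully faithful with a characterized essential image, \autoref{thm:BP}), the coherent diagram $FY$ \emph{is} the Barratt--Puppe sequence of $Ff$ up to canonical isomorphism. Under this identification, the third map in the triangle of $FY$ is the image $Fh$ composed with $F$ applied to the suspension-square identification $\Sigma x\toiso x'$; one then has to check this matches $Fh$ followed by the suspension-square identification $\Sigma(Fx)\toiso (Fx)'$ in $\E$ precomposed with $\varphi^{-1}\colon\Sigma Fx\toiso F\Sigma x$. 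This is exactly the compatibility that \eqref{eq:exact-str} is designed to encode: $\varphi$ is by construction the comparison between $F$ of a suspension square and the suspension square of $F$ applied to the corner object. So the triangle $Fx\xrightarrow{Ff} Fy\xrightarrow{Fg} Fz\xrightarrow{\varphi\circ Fh}\Sigma Fx$ is, up to the isomorphism $\varphi$, a standard triangle, hence distinguished.

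I expect the main obstacle to be the bookkeeping in this last comparison: making precise that $\varphi$, defined via a single suspension square, is the same comparison that intervenes when one tracks the identification $\Sigma x\toiso x'$ through a whole Barratt--Puppe sequence and its image under $F$. This requires the calculus of canonical mates and the naturality of the comparison maps with respect to restriction, which is the content of the referenced \cite[\S8]{groth:revisit}; I would cite that for the coherence details rather than redo them, and otherwise the argument is formal. A secondary point worth a sentence is that $F$ also preserves the sign data (the swap symmetry of loop squares from \autoref{rmk:add}), so that the rotation-compatible signs match up — but since we only need that standard triangles go to distinguished triangles, and distinguishedness is closed under isomorphism, this does not actually need to be checked separately for the statement at hand.
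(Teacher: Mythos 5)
Your sketch is essentially the direct argument the paper outsources to \cite[Thm.~10.6]{groth:revisit}, and it correctly identifies both the main line (exact morphisms preserve zero objects and bicartesian squares, hence send Barratt--Puppe/cofiber sequences to Barratt--Puppe/cofiber sequences, hence send standard triangles to standard triangles up to the comparison isomorphism) and the one genuinely delicate point, namely the coherence check that the single-square comparison $\varphi$ from \eqref{eq:exact-str} is the same isomorphism that appears when tracking $\Sigma x\toiso x'$ through the whole diagram and through $F$. The paper's own remark following the proposition admits exactly this: that the direct proof ``is a lengthy direct calculation,'' and you are right that the length is entirely in that bookkeeping, for which deferring to \cite[\S8, \S10]{groth:revisit} is the appropriate move in a sketch. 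One small imprecision: you invoke \autoref{thm:BP}, which is stated for $\D_R$; the version you actually need is the stable-derivator statement \autoref{thm:AR} (of which \autoref{thm:BP} is the $\D_R$ specialization). The paper's remark also advertises a more systematic alternative based on ``exact formulas in stable derivators'' (compatibility of exact morphisms with $\cof^3\cong\Sigma$, see \cite[\S13]{bg:cubical}); your proof takes the direct route rather than that one, but that is what the cited reference does as well, so there is no real divergence from the paper's approach.
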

\begin{proof}
This is \cite[Thm.~10.6]{groth:revisit}.
\end{proof}

\begin{rmk}
The direct proof of \autoref{prop:exact-exact} is a lengthy direct calculation. There is a more systematic approach based on the notion of \emph{exact formulas in stable derivators} \cite[\S 13]{bg:cubical}. An instance of such an exact formula is the canonical isomorphism $\cof^3\cong\Sigma\colon\D([1])\to\D([1])$ from \eqref{eq:cof-cube}, and \autoref{prop:exact-exact} follows from the compatibility of exact morphisms with this formula. However, as detailed in \emph{loc.~cit.} there are many additional such exact formulas (see for instance \cite[Ex.~13.14]{bg:cubical}). 
\end{rmk}

Having discussed the notion of exact morphisms, in order to fulfill our duties in this section it only remains to talk about equivalences of derivators. This notion is, of course, defined internally in the $2$-category of derivators, and correspondingly we begin by defining natural transformations.

\begin{defn}
A natural transformation of morphisms of derivators is a modification.
\end{defn}

Again, for basics on this notion, we refer to \cite[\S2]{groth:ptstab} and \cite[Lem.~3.11]{groth:revisit}, but for convenience we unravel this definition. Let $F,G\colon\D\to\E$ be morphisms of derivators. A natural transformation $\alpha\colon F\to G$ consists of natural transformations
\[
\alpha_A\colon F_A\to G_A\colon \D(A)\to\E(A),\qquad A\in\cCat.
\]
These components $\alpha_A$ are supposed to satisfy the obvious compatibility with the pseudo-naturality constraints of $F$ and $G$.

\begin{rmk}\label{rmk:DerStex}
There is a $2$-category $\cDER$ of derivators, morphisms of derivators, and natural transformations. We denote by $\cDER_{\mathrm{St},\ex}$ the sub-$2$-category given by stable derivators, exact morphisms, and all natural transformations. (The basic calculus of the interaction of morphisms with colimits implies, for instance, that exact morphisms are closed under composition.) This $2$-category plays a key role in abstract representation theory, as we discuss in \S\ref{sec:ART}.
\end{rmk}

\begin{defn}
An \textbf{adjunction} of derivators is an adjunction internally to $\cDER$. An \textbf{equivalence} of derivators is an equivalence internally to $\cDER$.
\end{defn}

Thus, an adjunction consists of morphisms $L\colon\D\to\E$ and $R\colon\E\to\D$ together with natural transformations $\eta\colon\id\to RL$ and $\varepsilon\colon LR\to\id$ which are subject to the triangular identities. We will denote adjunctions by
\[
(L,R)\colon\D\rightleftarrows\E.
\]
There is the following result which often allows us to exhibit a given morphism as part of an adjunction.

\begin{prop}\label{prop:detect-adj}
A morphism $F\colon\D\to\E$ of derivators is a left adjoint if and only if all components $F_A\colon\D(A)\to\E(A), A\in\cCat,$ are left adjoints and the morphism $F$ is cocontinuous.
\end{prop}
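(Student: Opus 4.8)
The plan is to prove the two implications separately, with the forward direction being essentially formal and the backward direction requiring one genuine piece of work: assembling a global adjoint morphism from the pointwise adjoints.

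First I would dispatch the easy direction. Suppose $F\colon\D\to\E$ is a left adjoint, say $(F,G)\colon\D\rightleftarrows\E$ with unit $\eta$ and counit $\varepsilon$. For each $A\in\cCat$ the components $F_A$ and $G_A$ together with $\eta_A$ and $\varepsilon_A$ satisfy the triangular identities in $\cCAT$ (this is just evaluating the modifications defining $\eta$ and $\varepsilon$ at $A$), so $F_A\dashv G_A$; hence every $F_A$ is a left adjoint. For cocontinuity one argues as in the discussion after \autoref{defn:ccts}: a left adjoint morphism of derivators preserves all colimits, because for $X\in\D(A)$ the canonical mate $\colim_A^\E F_A(X)\to F_\ast\colim_A^\D(X)$ is an isomorphism by the standard compatibility of canonical mates with adjunctions of derivators (the right adjoint $G$ preserves the limit $\Delta_A$, equivalently $\pi_A^\ast$, and one transposes); alternatively, invoke \autoref{rmk:ex-vs-Kan}, since preserving all left Kan extensions is equivalent to preserving all colimits and the same transposition argument handles arbitrary $u_!$.

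The substance is the converse. Assume each $F_A$ has a right adjoint $R_A\colon\E(A)\to\D(A)$ — with chosen unit $\eta_A$ and counit $\varepsilon_A$ — and that $F$ is cocontinuous. I would first upgrade the family $(R_A)_A$ to a pseudo-natural transformation $R\colon\E\to\D$. For a functor $u\colon A\to B$ in $\cCat$, the structure isomorphism $\gamma_u^F\colon u^\ast F_B\toiso F_A u^\ast$ of $F$ induces, by the usual mate construction for adjunctions, a canonical natural transformation
\[
\delta_u\colon u^\ast R_B\to R_A u^\ast,
\]
namely the composite $u^\ast R_B\xrightarrow{\eta_A u^\ast R_B} R_A F_A u^\ast R_B \xrightarrow{R_A (\gamma_u^F)^{-1} R_B} R_A u^\ast F_B R_B \xrightarrow{R_A u^\ast\varepsilon_B} R_A u^\ast$. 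The key claim is that $\delta_u$ is invertible; this is exactly the point where cocontinuity of $F$ is used. Concretely, $\delta_u$ is an isomorphism iff $F$ preserves left Kan extension along $u$ (the Beck--Chevalley / base-change condition for the adjunction square), and by \autoref{rmk:ex-vs-Kan} cocontinuity of $F$ is equivalent to preservation of all left Kan extensions; so every $\delta_u$ is an isomorphism. Coherence of the family $(\delta_u)_u$ — the cocycle condition $\delta_{vu} = \delta_u\circ(u^\ast\delta_v)$ up to the structural $2$-cells, and $\delta_{\id}=\id$, plus compatibility with natural transformations $\alpha\colon u\to v$ — follows mechanically from the corresponding coherences for $\gamma^F$ and the triangular identities; this is a routine but slightly tedious $2$-categorical diagram chase that I would either spell out or cite from the literature on mates (e.g.\ \cite{kelly-street:review}). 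This makes $R$ a morphism of derivators.

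Finally I would assemble the global unit and counit. The families $(\eta_A)_A$ and $(\varepsilon_A)_A$ are candidates for modifications $\eta\colon\id_\D\to RF$ and $\varepsilon\colon FR\to\id_\E$; one must check they are compatible with the pseudo-naturality constraints, i.e.\ for each $u\colon A\to B$ the evident square relating $\eta_A, \eta_B, \gamma_u^F, \delta_u$ commutes — again a diagram chase from the mate construction. The triangular identities for $(\eta,\varepsilon)$ hold because they hold componentwise at every $A$. Hence $(F,R)\colon\D\rightleftarrows\E$ is an adjunction of derivators, so $F$ is a left adjoint. The main obstacle is the middle step: verifying that the mates $\delta_u$ are isomorphisms (this is where cocontinuity does real work, via \autoref{rmk:ex-vs-Kan}) and that they satisfy the pseudo-functoriality coherences so that $R$ is genuinely a morphism of derivators; everything else is formal transport of the $1$-categorical adjunction data through the $2$-functor $\cCat\op\to\cCAT$.
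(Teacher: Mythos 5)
Your proposal is correct and takes essentially the same route as the paper, which merely sketches the nontrivial direction (citing \cite[Prop.~2.9]{groth:ptstab} for details): choose levelwise right adjoints $G_A$, and use cocontinuity of $F$ together with the mate calculus to produce and invert the pseudo-naturality constraints $u^\ast G_B\to G_A u^\ast$, then assemble the adjunction. You have correctly identified the crux — that invertibility of the mate $\delta_u$ is equivalent to $F$ preserving $u_!$, hence follows from cocontinuity via \autoref{rmk:ex-vs-Kan} — and the rest is the routine coherence bookkeeping you describe.
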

\begin{proof}
This is \cite[Prop.~2.9]{groth:ptstab}, but we want to sketch one direction of the proof. Given a morphism $F\colon\D\to\E$ such that all components $F_A$ are left adjoints, we begin by choosing right adjoints $G_A$ and corresponding levelwise adjunctions 
\[
(F_A,G_A)\colon\D(A)\rightleftarrows\E(A)
\]
for all $A\in\cCat$ independently. It turns out that the cocontinuity of $F$ allows us to define pseudo-naturality constraints $u^\ast G_B\toiso G_A u^\ast$, thereby obtaining the intended adjunction $(F,G)\colon\D\rightleftarrows\E$.
\end{proof}

In order to provide some additional feeling for the notion we mention the following compatibility of the various adjunctions $(F_A,G_A)$ constituting an adjunction $(F,G)\colon\D\rightleftarrows\E$ of derivators. For every $u\colon A\to B$ in $\cCat$, $X\in\D(B)$, and $Y\in\E(B)$ the diagram
\[
\xymatrix{
\hom_{\E(B)}(FX,Y)\ar[r]^-\sim\ar[d]_-{u^\ast}&\hom_{\D(B)}(X,GY)\ar[d]^-{u^\ast}\\
\hom_{\E(A)}(u^\ast FX,u^\ast Y)\ar[d]_-\sim&\hom_{\D(A)}(u^\ast X,u^\ast GY)\ar[d]^-\sim\\
\hom_{\E(A)}(Fu^\ast X,u^\ast Y)\ar[r]_-\sim&\hom_{\D(A)}(u^\ast X,G u^\ast Y)
}
\]
commutes. 

\begin{egs}\label{egs:adjunctions}
In order to obtain key examples of adjunctions we revisit the situations considered in \autoref{egs:mor}.
\begin{enumerate}
\item A functor $F\colon\cC\to\cD$ between complete and cocomplete categories is a left adjoint if and only if the morphism $y_F\colon y_\cC\to y_\cD$ is a left adjoint. This follows from the obvious fact that the passage to represented derivators is $2$-functorial.
\item For every Quillen adjunction $(F,G)\colon\cM\rightleftarrows\cN$ there is an induced adjunction
\[
(LF,RG)\colon\ho_\cM\rightleftarrows\ho_\cN.
\]
In particular, if the model categories $\cM$ and $\cN$ are stable, then $LF$ and $RG$ are exact morphisms (by \autoref{prop:detect-adj} and \autoref{cor:ex-mor-st}), and this leads to a rich supply of exact morphisms of stable derivators.
\item Let $\D$ be a derivator, let $u\colon A\to B$ in $\cCat$, and let $u^\ast\colon\D^B\to\D^A$ be the restriction morphism. Clearly, all components $(u\times\id_C)^\ast, C\in\cCat,$ of the restriction morphism have adjoints on both sides, and the morphism is also continuous and cocontinuous. In fact, as a further application of the calculus of canonical mates, Kan extensions and restrictions in unrelated variables commute up to canonical isomorphisms \cite[Prop.~2.5]{groth:ptstab}. As an upshot, by  \autoref{prop:detect-adj} there are adjunctions of derivators
\[
(u_!,u^\ast)\colon\D^A\rightleftarrows\D^B\qquad\text{and}\qquad(u^\ast,u_\ast)\colon\D^B\rightleftarrows\D^A.
\]
The components of these \textbf{Kan extension morphisms} are the Kan extension functors $(u\times\id_C)_!$ and $(u\times\id_C)_\ast$ of \D.
\end{enumerate}
\end{egs}

\begin{cor}
Let \D be a strong, stable derivator and let $u\colon A\to B$ be in $\cCat$. The functors
\[
u^\ast\colon\D(B)\to\D(A),\quad u_!\colon\D(A)\to\D(B),\quad\text{and}\quad u_\ast\colon\D(A)\to\D(B)
\]
are exact functors with respect to the canonical triangulations.
\end{cor}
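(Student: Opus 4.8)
The plan is to deduce each of the three cases from material already assembled in this section, treating the restriction morphism and the two Kan extension morphisms as morphisms of derivators and then reading off their underlying functors. First I would invoke \autoref{egs:adjunctions}(iii): for $u\colon A\to B$ in $\cCat$ there are adjunctions of derivators $(u_!,u^\ast)\colon\D^A\rightleftarrows\D^B$ and $(u^\ast,u_\ast)\colon\D^B\rightleftarrows\D^A$, whose component functors at $\ast$ are precisely $u_!,u^\ast,u_\ast$ on the underlying categories $\D(A)=\D^A(\ast)$ and $\D(B)=\D^B(\ast)$. Since $\D$ is stable, \autoref{thm:shift} guarantees that $\D^A$ and $\D^B$ are again stable (and strong, since $\D$ is strong), so \autoref{thm:stable-tria} equips $\D(A)$ and $\D(B)$ with their canonical triangulations, which are the ones referred to in the statement.

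Next I would argue that each of $u^\ast$, $u_!$, $u_\ast$ is an \emph{exact morphism} of stable derivators in the sense of \autoref{egs:classes-of-mor}. For $u_!$ this is immediate: as a left adjoint (internal to $\cDER$) it is cocontinuous, hence in particular right exact, hence exact by \autoref{lem:exact}. Dually $u_\ast$ is a right adjoint, hence continuous, hence left exact, hence exact by \autoref{lem:exact}. For $u^\ast$ one uses that it is \emph{both} a left and a right adjoint (it sits in both adjunctions above), so it is cocontinuous and therefore exact, again by \autoref{lem:exact}; alternatively one cites that Kan extensions and restrictions in unrelated variables commute (\cite[Prop.~2.5]{groth:ptstab}), which directly exhibits the relevant canonical comparison maps as isomorphisms. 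Finally, \autoref{prop:exact-exact} applies verbatim: an exact morphism of strong stable derivators induces, via the canonical isomorphism $\varphi$ of \eqref{eq:exact-str}, an exact functor on underlying categories. Applying this to the three exact morphisms $u^\ast$, $u_!$, $u_\ast$ and reading off their underlying functors yields the claim.

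I do not expect a genuine obstacle here, since the corollary is essentially a bookkeeping consequence of \autoref{thm:shift}, \autoref{egs:adjunctions}, \autoref{lem:exact}, and \autoref{prop:exact-exact}. The one point requiring a little care is that the triangulation on $\D(A)$ named in the statement should be the one coming from viewing $\D(A)$ as the underlying category of the stable derivator $\D^A$ (so that \autoref{prop:exact-exact} is literally applicable); this is exactly the convention fixed in the remark following \autoref{thm:shift} (``all stages of such derivators admit canonical triangulations \ldots\ compatible with each other''), so no ambiguity arises. The only mild subtlety is checking that $u^\ast$ really is a two-sided adjoint as a morphism of derivators rather than merely levelwise, but this is precisely the content of \autoref{egs:adjunctions}(iii) together with \autoref{prop:detect-adj}, so it can be cited directly.
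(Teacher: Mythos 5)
Your proposal is correct and follows essentially the same route as the paper's proof: deduce that $u_!$, $u^\ast$, and $u_\ast$ are (co)continuous and hence exact morphisms of the stable derivators $\D^A$ and $\D^B$ (via \autoref{prop:detect-adj}, \autoref{thm:shift}, and \autoref{lem:exact}), then apply \autoref{prop:exact-exact} to obtain exactness of the underlying functors with respect to the canonical triangulations. The only difference is stylistic (you spell out the three cases separately rather than compressing them into one sentence), and your remark about the triangulation on $\D(A)$ being the one induced by $\D^A$ correctly matches the convention the paper uses implicitly.
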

\begin{proof}
As adjoint morphisms, $u_!$ and $u^\ast$ are cocontinuous, while $u^\ast$ and $u_\ast$ are continuous (\autoref{prop:detect-adj}). Since stability is inherited by the shifted derivators $\D^A$ and $\D^B$ (\autoref{thm:shift}), all three morphisms $u^\ast, u_!,$ and $u_\ast$ are exact (\autoref{lem:exact}), and the claim hence follows from \autoref{prop:exact-exact}.
\end{proof}

There is the following more refined version of this result.

\begin{rmk}\label{rmk:2-Cat-Tria}
Let $\cTriaCAT$ be the $2$-category of triangulated categories, exact functors, and exact natural transformations. For every strong and stable derivator \D, the formation of canonical triangulations and canonical exact structures yields a lift of \D against the forgetful functor $\cTriaCAT\to\cCAT$,
\begin{equation}\label{eq:tria-lift}
\vcenter{
\xymatrix{
&\cTriaCAT\ar[d]\\
\cCat\op\ar@{-->}[ru]^-\D\ar[r]_-\D&\cCAT;
}
}
\end{equation}
see \cite[Thm.~10.14]{groth:revisit}. We want to use this result to stress once more the distinction between properties and structures. As we discussed, the property of being strong and stable implies the existence of canonical triangulations (\autoref{thm:stable-tria}). Similarly, the property of preserving certain basic (co)limits, implies the existence of canonical exact structures (\autoref{prop:exact-exact}). In particular, equivalences of derivators always are exact.

Along these lines, there is the following remark. Let us recall that an exact natural transformation $\alpha\colon F\to G$ of exact functors between triangulated categories is a natural transformation $\alpha$ which commutes with the natural isomorphisms $F\Sigma\toiso\Sigma F$ and $G\Sigma\toiso\Sigma G$. Since in derivator land these isomorphisms arise canonically, there is no counterpart for the notion of an exact natural transformation for derivators. In fact, \emph{every} natural transformation between exact morphisms is compatible with the canonical morphisms \eqref{eq:exact-str} (see \cite[Cor.~10.12]{groth:revisit}). This result is used implicitly in the construction of the lifts in \eqref{eq:tria-lift}.
\end{rmk}

\section{Higher symmetries}
%\section{Abstract representation theory}
\label{sec:ART}

In this section we give an overview over some main results of this project on higher symmetries. In \S\ref{subsec:sse} we make precise the notion of strong stable equivalences as certain uniform versions of derived equivalences for abstract representations. In \S\ref{subsec:Dynkin-A} we briefly illustrate the concept by a discussion of the abstract representation theory of Dynkin quivers of type $A$. In \S\ref{subsec:reflection} we pass to more general abstract reflection functors. In \S\ref{subsec:monoidal} we include a discussion of monoidal derivators, enriched derivators, and the universality of the derivator of spectra. Finally, in \S\ref{subsec:modules} we apply this to the construction of universal tilting modules. These are invertible, spectral bimodules that realize strong stable derivators in arbitrary stable derivators.

\subsection{Strong stable equivalences}
\label{subsec:sse}

Motivated by the compatibility of the formation of derivators of abelian categories and exponentials (\autoref{eg:shift-field}), in this subsection we define strong stable equivalences as a variant of the classical derived equivalences of quivers. 

\begin{con}\label{con:Yoneda}
Let $A$ be a small category and let $\D$ be a stable derivator. We again denote by $\D^A$ the $2$-functor constructed in \autoref{con:shift}, which by \autoref{thm:shift} is a derivator, the derivator of representations of shape $A$ with values in \D. The motivation for this terminology stems from \autoref{eg:shift-field}. It is straightforward to check that the formation $\D\mapsto\D^A$ extends to a $2$-functor
\[
(-)^A\colon\cDER_{\mathrm{St},\mathrm{ex}}\to\cDER\colon \D\mapsto \D^A,
\]
where $\cDER_{\mathrm{St},\mathrm{ex}}\subseteq\cDER$ again denotes the $2$-category of stable derivators, exact morphisms, and arbitrary transformations (\autoref{rmk:DerStex}). 
\end{con}

The slogan is that this $2$-functor encodes the abstract representation theory of the small category $A$. To fill this slogan with more life, we collect the following examples of stable derivators and specialize the shape $A$ to the case of (the path-category of) a quiver $Q$.

\begin{egs}\label{egs:stable-2}
As part of the structure encoded by the $2$-functor $(-)^Q$ of abstract representations of a quiver $Q$ there are the following stable derivators of more specific representations.
\begin{enumerate}
\item For every ordinary, not necessarily commutative ring $R$ there is the stable derivator $\D_R$ of the ring (\autoref{egs:der}). In fact, this derivator arises for instance from the projective model structure \cite[\S2.3]{hovey:model} on the category $\Ch(R)$ of unbounded chain complexes over $R$. From this we obtain by shifting the derivator $\D_R^Q$ of representations of $Q$ in $\D_R$, and there is an equivalence of stable derivators
\[
\D_R^Q\simeq\D_{RQ}.
\]
One way to see this is by observing that $\mathrm{Mod}(RQ)\simeq\mathrm{Mod}(R)^Q$ induces a Quillen equivalence at the level of unbounded chain complexes. 
\item This generalizes immediately to arbitrary Grothendieck abelian categories~$\cA$. In fact, the injective model structure on the category $\Ch(\cA)$ (see e.g.~\cite{hovey:sheaves} or \cite[Chapter~1]{HA}) induces a stable derivator $\D_\cA$. For example, for every quasi-compact, quasi-separated scheme~$X$ there is the stable derivator $\D_X$ of unbounded chain complexes of quasi-coherent $\mathcal{O}_X$-modules \cite{hovey:sheaves}. More generally, associated to every such $\cA$ there is the derivators $\D_\cA^Q$ of representations of $Q$ with values in $\D_\cA$.
\item Still sticking to the framework of classical homological algebra, this can be generalized further to exact categories in the sense of Quillen \cite[\S2]{quillen:k-theory}. At least if we are willing to restrict to suitably finite shapes, for every exact category $\cE$ there is by \cite{keller:exact}, \cite{gillespie:exact} or \cite{stovicek:exact-model} the bounded derivator $\D_\cE$ of $\cE$ enhancing the bounded derived category. Correspondingly, for finite quivers $Q$ there is the derivator $\D_\cE^Q$ of representations.
\item As additional interesting variants, given a differential-graded algebra $A$ over an arbitrary ground ring we can consider \emph{differential-graded representations} of $Q$ over $A$ which is to say functors from $Q$ to dg-modules over $A$. In order to import this to derivators, we recall that the category of dg-modules over $A$ admits suitable stable model structures (see for instance \cite{hinich:homological,schwede-shipley:algebras,fresse:modules}), and consequently we obtain the stable derivator $\D_A$ of dg-modules over $A$. For our quiver $Q$ there is an equivalence $\D_A^Q\simeq\D_{AQ}$, where $AQ$ is a differential-graded version of the usual path-algebra.
\item Another algebraic context giving rise to stable derivators is \emph{stable module theory} and \emph{representation theory of groups}. For every quasi-Frobenius ring or, more generally, Iwanaga--Gorenstein ring \cite[\S9.1]{enochs-jenda:relative} the corresponding category $\mathrm{Mod}(R)$ of modules can be endowed with the Gorenstein projective and the Gorenstein injective model structure~\cite[Theorem~8.6]{hovey:cotorsion}. These Quillen equivalent model structures are stable (see for instance~\cite[Corollary~1.1.16]{becker:models-singularity}), and hence induce up to equivalence the same stable derivator $\D^\mathrm{Gor}_R$. Correspondingly, associated to $Q$ there is the stable derivator $(\D^\mathrm{Gor}_R)^Q$ of representations. A special case occurs when $R$ is the group algebra $kG$ of a finite group $G$ over a field $k$ , and this important special case was for instance studied in~\cite{benson-rickard-carlson:thick-stmod,rickard:idemp,bik:stratification-finite-gp}.
\item Going beyond the algebraic context, we can pass to spectra in the sense of topology (see for example \cite{hss:symmetric,ekmm:rings,mmss:diagram}). For concreteness, let us stick to one of these monoidal models and assume that $E$ is a symmetric ring spectrum. Then the category of $E$-module spectra can be endowed with a stable model structure \cite{hss:symmetric}, and we obtain the associated stable derivator $\D_E$ of $E$-module spectra. Correspondingly, we obtain the derivator $\D_E^Q$ of \emph{spectral representations} of~$Q$ over~$E$. 
\item Finally, as mentioned in \S\ref{subsec:der}, typically derivators arise as shadows of the various approaches to  axiomatic homotopy theory. Hence, an entire zoo of additional examples of stable derivators is induced by the many examples of stable model categories, stable $\infty$-categories \cite{HA}, or stable cofibration categories \cite{schwede:alg-versus-top}. Among others many examples of interest arise in equivariant stable homotopy theory \cite{mandell-may:equivariant,lms:equivariant}, motivic stable homotopy theory \cite{voevodsky:a1,morel-voevodsky:a1,jardine:motivic} or parametrized stable homotopy theory \cite{may-sigurdsson:parametrized,ABGHR:units,ABG:twists}. For many more examples of stable model categories arising in various areas of algebra, geometry, and topology see \cite{schwede-shipley:morita}. 
\end{enumerate}
\end{egs}

Before defining strong stable equivalences we recall the following classical definition. 

\begin{defn}\label{def:der-equiv}
Two quivers $Q$ and $Q'$ are \textbf{derived equivalent} over a field~$k$ if the path-algebras $kQ$ and $kQ'$ are derived equivalent, i.e., if there is an exact equivalence of derived categories
\[
D(kQ)\stackrel{\Delta}{\simeq}D(kQ').
\]
\end{defn}

Such derived equivalences are usually obtained by means of tilting theory (see the handbook~\cite{angeleri-happel-krause:handbook} and the many references therein) and have been studied systematically (also for more general finite dimensional algebras over a field).

\begin{rmk}
In \autoref{def:der-equiv} we were very careful and stressed that the existence of derived equivalences potentially depends on the choice of the field or ring of coefficients. It turns out that certain derived equivalences are ``more combinatorial in nature'' and they even extend to representations with values in arbitrary abelian categories (hence they are \emph{universal derived equivalences} in the sense of Ladkani \cite{ladkani:posets,ladkani:thesis}). 
\end{rmk}

Following the line of though of Ladkani's universal derived equivalences one step further, we are led to the following definition \cite[Def.~5.1]{gst:basic}.

\begin{defn}\label{defn:sse}
Two small categories $A$ and $A'$ are \textbf{strongly stably equivalent}, in notation $A\sse A'$, if there is a pseudo-natural equivalence 
\[
\Phi\colon(-)^A\simeq (-)^{A'}\colon\cDER_{\mathrm{St},\mathrm{ex}}\to\cDER.
\]
We call such a pseudo-natural equivalence a \textbf{strong stable equivalence}.
\end{defn}

In more down-to-earth terms this means the following. Given two small categories $A$ and $A'$, a strong stable equivalence $\Phi\colon A\sse  A'$ consists of  
\begin{enumerate}
\item for \emph{every} stable derivator \D an equivalence of derivators 
\[
\Phi_\D\colon\D^A\simeq\D^{A'}
\]
\item and for every exact morphism of stable derivators $F\colon\D\to\E$ a natural isomorphism 
$\gamma_F\colon F\circ \Phi_\D\to \Phi_E\circ F,$
\[
\xymatrix{
\D^A\ar[r]^-{\Phi_\D}_-\simeq\ar[d]_-F\drtwocell\omit{\cong}&\D^{A'}\ar[d]^-F\\
\E^A\ar[r]^-\simeq_-{\Phi_{\E}}&\E^{A'}.
}
\]
\end{enumerate}
And this datum is supposed to satisfy some obvious coherence axioms. Let us comment a bit on this definition.

\begin{rmk}
The notion of strong stable equivalences is in various respects more restrictive than the notion of a derived equivalence or a universal derived equivalence. 
\begin{enumerate}
\item Strongly stably equivalent small categories have equivalent homotopy theories of representations in Grothendieck abelian categories, of differential-graded representations, of spectral representations and of more general abstract representations (by choosing specific examples in \autoref{egs:stable-2}).
\item The components $\Phi_\D$ of a strong stable equivalence are equivalences of \emph{homotopy theories} and not merely of \emph{homotopy categories} (together with the classical triangulation). This means that also the higher-order homotopy theoretic information is supposed to be preserved.
\item The various equivalences $\Phi_\D$ are suitably compatible with exact morphisms. For instance, Quillen adjunctions between stable model categories induce exact morphisms of homotopy derivators and, similarly, exact functors between stable $\infty$-categories induce exact morphisms of homotopy derivators. For strongly stably equivalent quivers or categories this implies that the equivalences commute with various kinds of restriction of scalar functors, induction and coinduction functors as well as (Bousfield) localizations and colocalizations. 
\end{enumerate}
To put it as a slogan, strongly stably equivalent have the same abstract stable representation theory. 

We also want to point the following. \autoref{defn:sse} is formulated in the language of derivators. However, for every stable equivalence $A\sse B$ we can conclude the following (related to this see \cite{renaudin}, \cite{dugger:combinatorial}, and \cite{HTT}).
\begin{enumerate}
\item For every combinatorial model category $\cM$ the diagram categories $\cM^A$ and $\cM^B$ are Quillen equivalent.
\item For every presentable $\infty$-category $\cC$ the diagram categories $\cC^A$ and $\cC^B$ are equivalent.
\end{enumerate}
\end{rmk}

In \S\S\ref{subsec:Dynkin-A}-\ref{subsec:reflection} we illustrate this notion by some interesting examples of strong stable equivalences. We conclude this section by some obvious closure properties and also obstructions.

\begin{lem}\label{lem:closure}
Let $A,A'$, $B,B'$, and $A_i,A_i',i\in I,$ be small categories.
\begin{enumerate}
\item The relation of `being strongly stably equivalent' $\sse$ defines an equivalence relation. 
\item Equivalent categories are strongly stably equivalent.
\item If $A\sse A'$ and $B\sse B'$, then $A\times B\sse A'\times B'$.
\item If $A_i\sse A_i'$ for $i\in I$, then $\bigsqcup A_i\sse \bigsqcup A_i'$.
\end{enumerate}
\end{lem}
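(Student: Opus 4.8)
The plan is to treat all four statements as formal manipulations in the $2$-category of derivators, reducing each to the behaviour of the shift construction $(-)^A$ (\autoref{con:shift}, \autoref{thm:shift}) and to axiom (Der1). Throughout I would use the standard fact that a pseudo-natural transformation between $2$-functors is an equivalence in the relevant functor $2$-category exactly when each of its components is an equivalence of derivators.

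For (i) and (ii): reflexivity is witnessed by the identity transformation $\id\colon(-)^A\simeq(-)^A$; for symmetry I would choose, componentwise, quasi-inverse equivalences $\Psi_\D$ of the equivalences $\Phi_\D\colon\D^A\simeq\D^{A'}$ and assemble them, using the coherence data of $\Phi$, into a pseudo-natural quasi-inverse; transitivity is the vertical composite of pseudo-natural equivalences, whose components compose to equivalences of derivators. For (ii), given an equivalence of categories $w\colon A\to A'$ with quasi-inverse $v$, I would take the restriction morphisms $w^\ast_\D\colon\D^{A'}\to\D^A$ of \autoref{egs:mor}, observe that these together with the coherence isomorphisms of exact morphisms of derivators constitute a strong stable equivalence, and check that each $w^\ast_\D$ is an equivalence of derivators: since natural isomorphisms of functors induce natural isomorphisms of the associated restriction functors (part of the $2$-functoriality of a prederivator), $w^\ast_\D v^\ast_\D\cong(vw)^\ast_\D\cong\id$ and symmetrically $v^\ast_\D w^\ast_\D\cong\id$. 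Hence $A'\sse A$, and $A\sse A'$ by (i).

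For (iii) the key point is that, by \autoref{thm:shift}, $(-)^B$ is a genuine $2$-endofunctor of $\cDER$ (sending a morphism $F$ to $F^B$, with components $F_{B\times C}$, and similarly on transformations), and that $(-)^B\circ(-)^A$ is canonically isomorphic to $(-)^{A\times B}$ as $2$-functors $\cDER_{\mathrm{St},\mathrm{ex}}\to\cDER$ via the associativity isomorphisms $A\times B\times C\cong(A\times B)\times C$. Given a strong stable equivalence $\Phi\colon(-)^A\simeq(-)^{A'}$, whiskering it with $(-)^B$ produces a pseudo-natural transformation whose component at $\D$ is $(\Phi_\D)^B\colon\D^{A\times B}\to\D^{A'\times B}$; since $2$-functors preserve equivalences this is a strong stable equivalence $A\times B\sse A'\times B$, and whiskering $\Psi\colon(-)^B\simeq(-)^{B'}$ with $(-)^{A'}$ gives $A'\times B\sse A'\times B'$, so (i) finishes (iii). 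For (iv) I would use $(\bigsqcup_i A_i)\times C\cong\bigsqcup_i(A_i\times C)$ together with (Der1) to get, pseudo-naturally in $\D$, an equivalence of derivators
\[
\D^{\bigsqcup_i A_i}\simeq\prod_i\D^{A_i},
\]
implemented by restriction along the component inclusions, so that compatibility with all restrictions (hence pseudo-naturality in $\D$) is automatic, the product on the right being a derivator by checking the axioms of \autoref{defn:derivator} componentwise. The product transformation $\prod_i\Phi^{(i)}$ of the given strong stable equivalences is then an equivalence $\prod_i\D^{A_i}\simeq\prod_i\D^{A_i'}$, and composing with the two (Der1) equivalences yields $\bigsqcup_i A_i\sse\bigsqcup_i A_i'$.

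I expect the only genuinely non-formal ingredients to be \autoref{thm:shift} and axiom (Der1); accordingly, the main obstacle will be purely organizational, namely verifying that the evident pointwise equivalences $\D^{A\times B}\simeq(\D^A)^B$ and $\D^{\bigsqcup_i A_i}\simeq\prod_i\D^{A_i}$ upgrade to equivalences of \emph{derivators} that are compatible with restrictions (hence pseudo-natural in $\D$), and tracking the coherence isomorphisms through the whiskerings and vertical composites of pseudo-natural transformations.
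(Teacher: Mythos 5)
Your proposal is correct, and since the paper's proof is literally left as an exercise there is no competing argument to compare against; what you have written is the expected ``straightforward'' argument, spelled out in full. The two places where one has to pause are exactly the ones you flag: that $(-)^B$ is a genuine $2$-endofunctor preserving stability and exactness (for (iii)), and that (Der1) at each stage $C$ gives $\D((\bigsqcup A_i)\times C)\simeq\prod_i\D(A_i\times C)$ pseudo-naturally in $\D$, which upgrades to an equivalence of derivators $\D^{\bigsqcup A_i}\simeq\prod_i\D^{A_i}$ compatible with restrictions (for (iv)). One small remark on (ii): restriction along $w\colon A\to A'$ is a \emph{strict} morphism, and the $2$-functoriality identity $(vw)^\ast=w^\ast v^\ast$ holds on the nose, so the chain $w^\ast v^\ast=(vw)^\ast\cong\id_A^\ast=\id$ only needs the isomorphism $vw\cong\id_A$ and (Der2)-level $2$-functoriality; there is nothing further to check. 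In (iii) it is worth being explicit that the components $\Phi_\D$, being equivalences, are automatically exact (cf.\ \autoref{rmk:2-Cat-Tria}), so $\Phi$ does factor through $\cDER_{\mathrm{St},\mathrm{ex}}$ and the whiskering $(-)^B\circ\Phi$ is legitimate.
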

\begin{proof}
The straightforward proof is left as an exercise.
\end{proof}

On the other hand, classical results from representation theory provide us with a non-trivial necessary condition for quivers to be strongly stably equivalent.

\begin{prop} \label{prop:strongly-equiv-necessary}
If two finite quivers without oriented cycles are strongly stably equivalent, then the underlying non-oriented graphs are isomorphic.
\end{prop}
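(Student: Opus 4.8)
The plan is to specialize the hypothesis to a field, convert it into a statement about derived equivalence of path algebras, and then read off the underlying graph as a classical derived invariant of hereditary algebras.

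\emph{Reduction to a derived equivalence over a field.} First I would evaluate the given strong stable equivalence $\Phi\colon Q\sse Q'$ at the stable derivator $\D_k$ of a field $k$ (\autoref{egs:stable-2}), obtaining an equivalence of derivators $\D_k^Q\simeq\D_k^{Q'}$. Identifying $Q$ with its free path category, the exponential law together with $\mathrm{Mod}(kQ)\simeq\mathrm{Mod}(k)^Q$ gives equivalences of stable derivators $\D_k^Q\simeq\D_{kQ}$ and $\D_k^{Q'}\simeq\D_{kQ'}$ (\autoref{eg:shift-field}, \autoref{egs:stable-2}), hence $\D_{kQ}\simeq\D_{kQ'}$. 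Passing to underlying categories yields an equivalence $D(kQ)\simeq D(kQ')$; since equivalences of derivators are exact (\autoref{rmk:2-Cat-Tria}), as the underlying functor of an exact morphism of strong stable derivators it is exact for the canonical triangulations (\autoref{prop:exact-exact}), which on $\D_\cA$ are the Verdier triangulations of \autoref{thm:verdier}. Restricting to compact objects — for $kQ$ finite-dimensional and hereditary these are exactly the perfect complexes, i.e. all of $D^b(\mathrm{mod}\,kQ)$ — one concludes that $kQ$ and $kQ'$ are derived equivalent in the sense of \autoref{def:der-equiv}.

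\emph{Extracting the invariant.} A triangle equivalence $F\colon D^b(\mathrm{mod}\,kQ)\xto{\sim}D^b(\mathrm{mod}\,kQ')$ induces an isomorphism $F_\ast\colon K_0(kQ)\xto{\sim}K_0(kQ')$, and since $F$ commutes with suspension up to natural isomorphism it transports the Euler form $\langle x,y\rangle=\sum_i(-1)^i\dim_k\Hom(x,\Sigma^i y)$ to the Euler form, hence also the symmetrized bilinear form $(x,y):=\langle x,y\rangle+\langle y,x\rangle$. For a finite acyclic quiver $Q$ the classes $[S_q]$ of the simple $kQ$-modules are a $\lZ$-basis of $K_0(kQ)$, and since $kQ$ is hereditary one has $\langle[S_p],[S_q]\rangle=\delta_{pq}-\dim_k\mathrm{Ext}^1_{kQ}(S_p,S_q)$; acyclicity rules out loops and two-way connections between vertices, so in this basis the Gram matrix of $(-,-)$ is $2\,\mathrm{Id}-A(\overline{Q})$, where $A(\overline{Q})$ is the adjacency matrix of the underlying graph $\overline{Q}$ (edges counted with multiplicity). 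The same computation for $Q'$ shows that the generalized Cartan matrices $2\,\mathrm{Id}-A(\overline{Q})$ and $2\,\mathrm{Id}-A(\overline{Q'})$ are congruent over $\lZ$ via $F_\ast$.

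\emph{Conclusion and the main obstacle.} It then remains to deduce $\overline{Q}\cong\overline{Q'}$ from the $GL_n(\lZ)$-congruence of their Cartan matrices — equivalently, to invoke the classical fact that the underlying graph of a finite acyclic quiver is a derived invariant of its path algebra. This is the single non-formal step, and the one where classical representation theory enters: the subtlety is that the isometry $F_\ast$ is a priori an arbitrary element of $GL_n(\lZ)$, not a permutation of the vertices, so one must argue that the congruence class of $2\,\mathrm{Id}-A$ already determines the graph — passing to connected components, and using the representation type (finite, tame, or wild according as $\overline{Q}$ is Dynkin, extended Dynkin, or neither), the Coxeter polynomial, and, where needed, the finer root-theoretic data carried by the form. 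Granting this classical input one gets $\overline{Q}\cong\overline{Q'}$, as claimed. Everything else is soft: the reduction to fields uses only the constructions of \S\ref{subsec:exponentials} and the exactness of equivalences of derivators, the identification of the symmetrized Euler form with $2\,\mathrm{Id}-A$ is the standard Euler-form computation for a hereditary algebra, and no connectedness hypothesis is needed since the argument runs over possibly disconnected $Q$ with $\overline{Q}$ the disjoint union of the graphs of the components.
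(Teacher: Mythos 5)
Your reduction is the right first move and it is what the citation chain in the paper actually rests on: specializing the strong stable equivalence to $\D_k$ for a field $k$, rewriting $\D_k^Q\simeq\D_{kQ}$, and passing to underlying categories does produce an exact equivalence $D(kQ)\simeq D(kQ')$, and restricting to compacts gives a triangle equivalence $D^b(\mathrm{mod}\,kQ)\simeq D^b(\mathrm{mod}\,kQ')$. The paper's own proof is a bare reference to \cite[Prop.~5.3]{gst:basic}, and the later echo of this statement in \S\ref{subsec:reflection} explains that the cited result ``relies essentially on \cite{happel:fd-algebra}'', i.e.\ exactly on this reduction to a derived equivalence of hereditary path algebras.

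Where you diverge, and where the gap sits, is in the attempt to finish by a numerical invariant. Your Euler-form computation is correct (for a finite acyclic quiver the Gram matrix of the symmetrized Euler form in the simple basis is $2\,\mathrm{Id}-A(\overline{Q})$), and a triangle equivalence does transport $\langle-,-\rangle$ and hence its symmetrization. But the step you then flag as ``the single non-formal step'' --- that the $GL_n(\lZ)$-congruence class of $2\,\mathrm{Id}-A$, supplemented by connectedness data, representation type, the Coxeter polynomial and ``finer root-theoretic data'', pins down the graph --- is not a theorem you can point to, and it is not how this is proved in the literature. It is far from clear that this collection of invariants is complete in the wild case, and the appeal to ``finer root-theoretic data where needed'' is not yet an argument. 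The classical input that actually closes the proof is Happel's structural theorem from \cite{happel:fd-algebra}: a derived equivalence between hereditary path algebras comes from a tilting complex, over a hereditary algebra tilting complexes split and reduce to tilting modules, and a tilting module between two hereditary algebras decomposes into BGP/APR reflections, each of which manifestly preserves the underlying unoriented graph. That is a structural invariance argument, not a form-congruence argument, and it is what the proposition's one-line proof delegates to. So: keep the reduction, drop the Euler-form detour, and cite Happel (or \cite[Prop.~5.3]{gst:basic}, which already does exactly that) for the final step.
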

\begin{proof}
This is \cite[Prop.~5.3]{gst:basic}.
\end{proof}

\subsection{Abstract representation theory of $A_n$-quivers}
\label{subsec:Dynkin-A}

In this subsection we illustrate the notion of a strong stable equivalence by a few examples related to Dynkin quivers of type $A$. We construct reflection functors and briefly study the related Coxeter and Serre functors in this case. This subsection is largely based on the paper \cite{gst:Dynkin-A} which is joint with Jan {\v S}{\v t}ov{\'\i}{\v c}ek.  

Let us begin by a few toy examples which make the connection to stability very obvious. The first examples are discussed in quite some detail, but later we allow ourselves to be a bit more concise.

\begin{eg}\label{eg:sse-A3-1}
The source $(\bullet\leftarrow \bullet\rightarrow\bullet)$ and the sink $(\bullet\rightarrow \bullet\leftarrow\bullet)$ of valence two are strongly stably equivalent. In fact, let \D be a stable derivator and let $X$ be an abstract representation of the source of valence two with values in \D as displayed on the left in 
\[
\xymatrix{
x\ar[r]\ar[d]&y&&x\ar[r]\ar[d]&y\ar[d]&& &y\ar[d]\\
z,& && z\ar[r]&w,&& z\ar[r]&w.
}
\]
The idea is that the strong stable equivalence is obtained by first forming the cocartesian square in the middle and then restricting it to the sink of valence two as displayed on the very right, thereby obtaining $\Phi(X)$. To formalize this idea, we recall the following two facts.
\begin{enumerate}
\item Every functor $u\colon A\to B$ between small categories induces by \autoref{egs:adjunctions} Kan extension and restriction morphisms of derivators
\[
u_!\colon\D^A\to\D^B,\quad u^\ast\colon\D^B\to\D^A,\quad\text{and}\quad u_\ast\colon\D^A\to\D^B.
\]
\item Moreover, Kan extensions along fully faithful functors (\autoref{prop:Kan-ff}) are fully faithful and hence induce equivalences on their images.
\end{enumerate}
We are interested in the special case of the fully faithful inclusions $i_\ulcorner\colon\ulcorner\to\square$ and $i_\lrcorner\colon\lrcorner\to\square$. The left Kan extension morphisms $(i_\ulcorner)_!\colon\D^\ulcorner\to\D^\square$ sends an abstract representation of the source to the corresponding cocartesian square. Denoting by $\D^{\square,\mathrm{cocart}}$ the full subderivator of $\D^\square$ consisting of the cocartesian squares, we obtain the equivalence of derivators on the left in
\[
(i_\ulcorner)_!\colon\D^\ulcorner\toiso\D^{\square,\mathrm{cocart}},\qquad (i_\lrcorner)_\ast\colon\D^\lrcorner\toiso\D^{\square,\mathrm{cart}}.
\]
Similarly, the formation of cartesian squares yields the equivalence on the right, and in both cases inverse equivalences are given by the corresponding restriction morphisms. Now, by definition of stability (\autoref{defn:stable}), the derivators $\D^{\square,\mathrm{cocart}}$ and $\D^{\square,\mathrm{cart}}$ agree, and we obtain the desired equivalence $\Phi_\D\colon\D^\ulcorner\toiso\D^\lrcorner$ as a composition of equivalences of derivators
\[
\Phi_\D=(i_\lrcorner)^\ast\circ(i_\ulcorner)_!\colon\D^\ulcorner\toiso\D^{\square,\mathrm{cocart}}=\D^{\square,\mathrm{cart}}\toiso\D^\lrcorner.
\]
Since only restrictions and sufficiently finite Kan extensions are involved in this construction, it is straightforward to verify (invoking \autoref{rmk:ex-vs-Kan}) that these equivalences are pseudo-natural with respect to exact morphisms. Consequently, we obtain the desired strong stable equivalence
\[
\Phi\colon\ulcorner=(\bullet\leftarrow \bullet\rightarrow\bullet)\sse\lrcorner=(\bullet\rightarrow \bullet\leftarrow\bullet).
\]
\end{eg}

The following example is similar, but it involves an additional homotopy finality argument.

\begin{eg}\label{eg:sse-A3-2}
The source of valence two $\ulcorner=(\bullet\leftarrow \bullet\rightarrow\bullet)$ and the linearly oriented quiver $[2]=(\bullet\rightarrow \bullet\rightarrow\bullet)$ are strongly stably equivalent. In fact, let \D be a stable derivator and let $X$ be an abstract representation of the source of valence two with values in \D which is displayed on the left in: 
\[
\xymatrix{
x\ar[r]^-g\ar[d]_-f&y&&Ff\ar[r]^-{\fib(f)}\ar[d]\ar@{}[rd]|{\square}&x\ar_-f[d]\ar[r]^-g&y&& Ff\ar[r]^-{\fib(f)}&x\ar[r]^-g&y\\
z,& && 0\ar[r]&z,&&& 
}
\]
The idea is to simply replace the morphism $f\colon x\to z$ by its fiber $\fib f\colon Ff\to x$, thereby obtaining the above representation $\Phi (X)$ on the right. In more detail, starting from either side, by means of fully faithful Kan extension morphisms we can pass to a representation as displayed in the middle. In fact, starting with our representation $X=(z\ot x\to y)$ we first add a zero object and then the cartesian square as in the diagram
\[
\xymatrix{
x\ar[r]^-g\ar[d]_-f&y&&&x\ar_-f[d]\ar[r]^-g&y&& Ff\ar[r]^-{\fib(f)}\ar[d]\ar@{}[rd]|{\square}&x\ar[d]\ar[r]^-g&y\\
z,& && 0\ar[r]&z,&&&0\ar[r]&z. 
}
\]
To re-express this in terms of Kan extensions, let $C\subseteq[1]\times[2]$ be the full subposet obtained by removing the lower right corner $(1,2)$, and let $B_1\subseteq C$ be the result of also removing the upper left corner $(0,0)$. There are the obvious full inclusions $i_1\colon\ulcorner\to B_1$ and $i_2\colon B_1\to C$ with corresponding Kan extension morphisms
\[
(i_1)_\ast\colon\D^\ulcorner\to\D^{B_1}\qquad\text{and}\qquad (i_2)_!\colon\D^{B_1}\to\D^C.
\]
By \autoref{prop:Kan-ff} both Kan extension morphisms are fully faithful and they induce equivalences onto their respective images. The morphism $(i_1)_!$ precisely amounts to adding a zero object (as a consequence of (Der4)) while $(i_2)_\ast$ adds a cartesian square (and this step invokes a simple homotopy finality argument). As an upshot we obtain an equivalence of derivators
\[
(i_2)_\ast\circ (i_1)_!\colon\D^{\ulcorner}\toiso\D^{C,\mathrm{ex_1}}
\]
where $\D^{C,\mathrm{ex_1}}\subseteq\D^C$ is the full subderivator spanned by all diagrams which vanish on the lower left corner and which make the square cartesian (the square is hence essentially a fiber square).

If we instead begin with a representation of the linearly oriented $A_3$-quiver, we simply add the cofiber square to the first of the two morphisms. Thus, in suggestive notation we carry out the constructions:
\[
\xymatrix{
w\ar[r]^-f& x\ar[r]^-g&y, & w\ar[r]^-f\ar[d]&x\ar[r]^-g&y, & w\ar[r]^-f\ar[d]\ar@{}[rd]|{\square}&x\ar[d]^-{\cof(f)}\ar[r]^-g&y\\
&& & 0&& & 0\ar[r]&Cf 
}
\]
Denoting by $B_2\subseteq C$ the full subposet obtained by removing $(1,1)$, there are full inclusion functors $j_1\colon[2]=(\bullet\rightarrow \bullet\rightarrow\bullet)\to B_2$ and $j_2\colon B_2\to C$. Arguments similar to the previous case imply that we obtain an equivalence of derivators
\[
(j_2)_!\circ (j_1)_\ast\colon\D^{[2]}\toiso\D^{C,\mathrm{ex_2}}
\]
where $\D^{C,\mathrm{ex_2}}\subseteq\D^C$ is the full subderivator spanned by all diagrams which vanish on the lower left corner and which make the square cocartesian (the square is hence a cofiber square). Now, since \D is a stable derivator, the two subderivators $\D^{C,\mathrm{ex_1}}$ and $\D^{C,\mathrm{ex_2}}$ agree, and we obtain the desired equivalence
\[
\Phi_\D=(j_1)^\ast\circ (j_2)^\ast\circ (i_2)_\ast\circ (i_1)_!\colon\D^\ulcorner\toiso\D^{[2]}.
\]
Since only restrictions and sufficiently finite Kan extensions are involved, these equivalences are pseudo-natural with respect to exact morphisms (\autoref{rmk:ex-vs-Kan}), and we obtain  the desired strong stable equivalence
\[
\Phi\colon\ulcorner=(\bullet\leftarrow \bullet\rightarrow\bullet)\sse [2]=(\bullet\rightarrow \bullet\rightarrow\bullet).
\]
\end{eg}

\begin{cor}\label{cor:sse-A3}
All $A_3$-quivers are strongly stably equivalent.
\end{cor}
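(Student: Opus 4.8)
The plan is to reduce the statement to the two strong stable equivalences already established in \autoref{eg:sse-A3-1} and \autoref{eg:sse-A3-2}. First I would record the elementary combinatorial fact that, up to isomorphism of quivers, an $A_3$-quiver has exactly one of three orientations: the linearly oriented $[2]=(\bullet\to\bullet\to\bullet)$, the source $\ulcorner=(\bullet\leftarrow\bullet\to\bullet)$, or the sink $\lrcorner=(\bullet\to\bullet\leftarrow\bullet)$ of valence two; the two ``linear'' orientations $1\to 2\to 3$ and $1\leftarrow 2\leftarrow 3$ are interchanged by the order-reversing relabelling of the vertex set. By \autoref{lem:closure}(ii) a quiver isomorphic to one of these three is strongly stably equivalent to it, so it suffices to treat these three representatives.

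Next I would invoke \autoref{eg:sse-A3-1}, which yields $\ulcorner\sse\lrcorner$, and \autoref{eg:sse-A3-2}, which yields $\ulcorner\sse[2]$. Since $\sse$ is an equivalence relation by \autoref{lem:closure}(i), transitivity then gives $[2]\sse\ulcorner\sse\lrcorner$, so all three orientations — and hence all $A_3$-quivers — are pairwise strongly stably equivalent, which is the assertion.

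There is essentially no obstacle beyond the bookkeeping that the three listed orientations genuinely exhaust all $A_3$-quivers up to isomorphism; the real work is contained in \autoref{eg:sse-A3-1} and \autoref{eg:sse-A3-2}, where stability of \D is used to identify $\D^{\square,\mathrm{cocart}}$ with $\D^{\square,\mathrm{cart}}$, respectively $\D^{C,\mathrm{ex_1}}$ with $\D^{C,\mathrm{ex_2}}$. If a more uniform description of the resulting equivalences is desired, I would note that each is a composite of restriction morphisms and fully faithful Kan extension morphisms along finite poset inclusions, so by \autoref{rmk:ex-vs-Kan} it is automatically pseudo-natural with respect to exact morphisms — exactly the datum asked for in \autoref{defn:sse} — and by the remarks following that definition these equivalences then descend to the diagram categories of arbitrary combinatorial model categories and presentable $\infty$-categories.
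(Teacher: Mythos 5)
Your proof is correct and takes essentially the same route as the paper: both reduce to the representatives $[2]$, $\ulcorner$, $\lrcorner$ (the paper lists four orientations and identifies $1\to2\to3$ with $1\ot2\ot3$ via \autoref{lem:closure}, which is the same bookkeeping you perform ``up to isomorphism''), and both then compose the two strong stable equivalences from \autoref{eg:sse-A3-1} and \autoref{eg:sse-A3-2} using that $\sse$ is an equivalence relation. The closing paragraph about pseudo-naturality is accurate but redundant here, since that datum is already part of what \autoref{eg:sse-A3-1} and \autoref{eg:sse-A3-2} deliver.
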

\begin{proof}
We have to show that the quivers $Q_1=(1\rightarrow 2\rightarrow 3)$, $Q_2=(1\leftarrow 2\rightarrow 3)$, $Q_3=(1\rightarrow 2\leftarrow 3)$, and $Q_4=(1\leftarrow 2\leftarrow 3)$ are strongly stably equivalent. There are strong stable equivalences $Q_1\sse Q_2$ (\autoref{eg:sse-A3-2}) and $Q_2\sse Q_3$ (\autoref{eg:sse-A3-1}), and since $Q_1$ and $Q_4$ are equivalent we also deduce $Q_1\sse Q_4$ (\autoref{lem:closure}). Since~$\sse$ is an equivalence relation (\autoref{lem:closure}), we are done.
\end{proof}

There is a version of \autoref{cor:sse-A3} for longer $A_n$-quivers as well, and a proof of this essentially follows the above pattern. Given two differently oriented $A_n$-quivers $Q_1$ and $Q_2$ and a stable derivator \D, we construct a certain poset $P=P_{Q_1,Q_2}$ together with suitable combinations of fully faithful Kan extensions morphisms
\[
\D^{Q_1}\to\D^P\qquad\text{and}\qquad \D^{Q_2}\to\D^P.
\]
The stability of \D will then imply that in both cases the essential image consist of precisely the same representations of $P$ (which are determined by certain exactness conditions such as the vanishing on certain objects or the fact that certain squares are \emph{bicartesian}). In fact, in \autoref{eg:sse-A3-1} the poset $P=\square$ was enough, while in \autoref{eg:sse-A3-2} we considered the subposet $P=C\subseteq[1]\times[2]$ obtained by removing the final vertex $(1,2)$. For a direct proof of the $A_n$-version of \autoref{cor:sse-A3} we refer the reader to \cite[\S6]{gst:basic}. Here we instead prefer to present the more systematic approach as considered in \cite{gst:Dynkin-A} which also makes more precise the connection to (higher) triangulations (and hence the content of \S\ref{sec:rep-thy-tria} and the construction of canonical triangulations in~\S\ref{sec:crash}). Luckily, it turns out that for every fixed $n$ there is poset which conveniently encodes all $A_n$-quivers with arbitrary orientations, and this poset will be described in detail in the following construction.

\begin{con}
We recall that every quiver $Q$ has associated to it a repetitive quiver $\widehat{Q}$ with the following description: vertices in it are pairs $(k,q)$ with $k\in\lZ$ and $q\in Q$ while associated to every edge $\alpha\colon q_1\to q_2$ in $Q$ there are the two edges $\alpha\colon (k,q_1)\to (k,q_2)$ and $\alpha^\ast\colon (k,q_2)\to (k+1,q_1)$ in $\widehat{Q}$. Here we are only interested in the special case of the linearly oriented $A_n$-quiver $\A{n}=(1<\ldots < n)$. In the special case of $n=3$ the repetitive quiver of $\A{3}$ takes the form:
\begin{equation} \label{eq:ar-quiver}
\vcenter{
\xymatrix@R=0.8em@C=0.5em{
\ar[dr] && (-1,3) \ar[dr]^{\beta^*} && (0,3) \ar[dr]^{\beta^*} && (1,3) \ar[dr]^{\beta^*} && (2,3) \ar[dr]\\
\cdots & (-1,2) \ar[ur]^{\beta} \ar[dr]_-{\alpha^\ast} &&
(0,2) \ar[ur]^{\beta}  \ar[dr]_-{\alpha^\ast} && (1,2) \ar[ur]^{\beta}  \ar[dr]_-{\alpha^\ast} &&
(2,2) \ar[ur]^{\beta}  \ar[dr]_-{\alpha^\ast} && \cdots\\
\ar[ur] && (0,1) \ar[ur]_-\alpha && (1,1) \ar[ur]_-\alpha && (2,1) \ar[ur]_-\alpha && (3,1) \ar[ur]
}
}
\end{equation}
Let $M_{\A{n}}$ be the category which is obtained from the repetitive quiver of $\A{n}$ by making all squares commutative. By abuse of terminology, we call this poset $M_{\A{n}}$ the \textbf{mesh category}.
\end{con}

We want to show that, for stable derivators, representations of $\A{n}$ can be equivalently encoded by suitable representations of mesh categories. It is worth to compare the following to the discussion of the rotation axiom in \S\ref{subsec:BP} and of octahedral axiom in \S\ref{subsec:octa}.

\begin{con}
For every $n\geq 0$ we introduce the following short-hand notation for mesh categories: 
\[
M_n=M_{[n+1]}\quad\text{for}\quad [n+1]=(0<\ldots <n+1).
\]
Given a stable derivator \D, we denote by $\D^{M_n,\mathrm{ex}}\subseteq\D^{M_n}$ the full subderivator spanned by all representations which 
\begin{enumerate}
\item vanish on the boundary stripes (i.e., at $(k,0),(k,n+1)$ for all $k\in\lZ$)
\item and which make all squares bicartesian.
\end{enumerate}
The fact that this is a derivator is a consequence of \autoref{thm:AR} since derivators are closed under equivalences of prederivators. In order to relate $\D^{M_n,\mathrm{ex}}$ to $\D^{\A{n}}$, we note that there is the fully faithful functor
\begin{equation}
i\colon \A{n}\to M_n\colon l\mapsto (0,l) 
\label{eq:i}
\end{equation}
\end{con}

The following is a derivatorish version of \autoref{thm:BP} and \autoref{thm:octa}. In fact, the proofs of these two theorems were modeled after the proof of the following result.

\begin{thm}\label{thm:AR}
For every stable derivator \D and $n\geq 0$ restriction along $i$ \eqref{eq:i} induces an equivalence of derivators
\[
i^\ast\colon\D^{M_n,\mathrm{ex}}\toiso\D^{\A{n}}.
\]
This equivalence is pseudo-natural with respect to exact morphisms, and the inclusion $\D^{M_n,\mathrm{ex}}\to\D^{M_n}$ is exact.
\end{thm}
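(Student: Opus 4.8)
The plan is to build an inverse equivalence $F_{\A{n}}\colon\D^{\A{n}}\toiso\D^{M_n,\ex}$ as a finite composition of Kan extension morphisms along fully faithful inclusions, exactly following the template used in \autoref{thm:BP} and \autoref{thm:octa} but now at the level of derivators rather than of $\D_R$. First I would factor the inclusion $i\colon\A{n}\to M_n$ of \eqref{eq:i} through a chain of full subposets
\[
\A{n}=K_0\stackrel{i_1}{\to}K_1\stackrel{i_2}{\to}\ldots\stackrel{i_r}{\to}K_r=M_n,
\]
alternating between two types of steps: steps that adjoin new objects sitting on the two boundary stripes (these will be handled by Kan extensions that simply insert zero objects, using \autoref{con:basic-ptd} and pointedness of \D, together with the fact that the relevant slice categories are empty or have initial/terminal objects), and steps that adjoin new interior objects and thereby create new mesh squares (these will be handled by left or right Kan extensions that produce bicartesian squares, exactly as in the proof of \autoref{prop:cof-sq-D_R} and \autoref{thm:BP}). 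Since $M_n$ is an infinite poset I would, as in \autoref{thm:BP}, fill in the ``positive'' direction (the objects under the image of $i$) first and then the ``negative'' direction, so that at each stage only a finite amount of new data is added and the relevant (co)finality arguments are available. By \autoref{prop:Kan-ff} each $(i_j)_!$ or $(i_j)_\ast$ is fully faithful, hence an equivalence onto its essential image, and one checks step by step that the composite essential image is precisely $\D^{M_n,\ex}$: the boundary-stripe steps force the vanishing conditions, and the interior steps force the bicartesianness of all squares (invoking \autoref{prop:2-out-of-3-D_R} for the squares that are only created implicitly as pastings). Thus $F_{\A{n}}=(i_r)_{?}\circ\cdots\circ(i_1)_{?}$ is an equivalence $\D^{\A{n}}\toiso\D^{M_n,\ex}$ with quasi-inverse $i^\ast$.

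Next I would address pseudo-naturality with respect to exact morphisms. Each building block of $F_{\A{n}}$ is a Kan extension along a fully faithful functor that is \emph{homotopy finite} in the appropriate local sense — the boundary steps have trivial slices, and the interior steps are pushouts/pullbacks along the punctured-square inclusions $i_\ulcorner,i_\lrcorner$, which are homotopy finite. By \autoref{rmk:ex-vs-Kan} (and \autoref{thm:rex}, \autoref{cor:ex-mor-st}) an exact morphism $F\colon\D\to\E$ of stable derivators preserves all sufficiently finite left and right Kan extensions, so $F$ commutes, up to the canonical comparison isomorphisms, with each building block and hence with $F_{\A{n}}$; restriction morphisms are always strictly compatible with exact morphisms, so $i^\ast$ is as well. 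Assembling these isomorphisms and checking the coherence axioms (which is routine mate-calculus bookkeeping) yields the required pseudo-natural equivalence. One must also note that $\D^{M_n,\ex}$ really is a derivator: this is automatic once $i^\ast$ is exhibited as an equivalence of prederivators onto it, since derivators are stable under equivalence of prederivators (this is the remark made just before the statement, and it is also where \autoref{thm:shift} enters, guaranteeing $\D^{M_n}$ and $\D^{\A{n}}$ are derivators in the first place).

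Finally, exactness of the inclusion $\D^{M_n,\ex}\to\D^{M_n}$: since $\D^{M_n}$ is stable (\autoref{thm:shift}) it suffices by \autoref{lem:exact} to show this inclusion is right exact, i.e.\ preserves initial objects and pushouts of shape $\ulcorner$. Initial (= zero) objects lie in $\D^{M_n,\ex}$ because the zero object of $\D^{M_n}$ vanishes everywhere and makes every square bicartesian. For pushouts I would argue that the full subderivator $\D^{M_n,\ex}$ is closed under the formation of cocartesian squares computed in $\D^{M_n}$: a cocartesian square in $\D^{M_n}$ is a bicartesian square (stability), its four vertices lie in $\D^{M_n,\ex}$, and the exactness conditions defining $\D^{M_n,\ex}$ — pointwise vanishing on the boundary, and bicartesianness of each mesh square — are ``levelwise'' conditions preserved under colimits that are themselves computed levelwise by pointwise bicartesian squares; concretely, evaluating at each boundary object gives a cocartesian square with three (hence all four) vertices zero, and restricting to each mesh square gives a pushout of bicartesian squares in the stable derivator $\D$, which is again bicartesian (a standard $3\times3$-pasting argument, as in \cite{gps:mayer}). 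Hence the inclusion preserves pushouts, so it is right exact and therefore exact.

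The main obstacle I expect is not any single ingredient — each is a now-standard derivator manipulation — but the careful combinatorial organization of the chain $K_0\to\cdots\to K_r$ and the precise identification of each intermediate essential image, together with the homotopy (co)finality verifications that make the boundary-insertion steps literally add zeros and the interior steps literally add bicartesian squares. This is exactly the bookkeeping carried out in \cite[\S4]{gst:Dynkin-A}, to which one should ultimately refer for the full details; here the point is that no new idea beyond \autoref{thm:BP} and \autoref{thm:octa} is needed, only their systematic formulation for an arbitrary stable derivator in place of $\D_R$.
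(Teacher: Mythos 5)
Your proposal follows essentially the same route as the paper's proof: factor the inclusion $i\colon\A{n}\to M_n$ through a chain of full subposets $K_0\to K_1\to\cdots\to K_r$ alternating boundary-stripe steps (right or left Kan extensions adding zeros) with interior steps (left or right Kan extensions adding bicartesian squares), observe by \autoref{prop:Kan-ff} that each step is fully faithful, identify the composite essential image with $\D^{M_n,\ex}$, and invoke compatibility of exact morphisms with sufficiently finite Kan extensions for pseudo-naturality. (The paper pins down the chain precisely as four steps — positive boundary, positive interior, negative boundary, negative interior — but this is a presentational choice, not a conceptual difference.)

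One place where you add content is the exactness of the inclusion $\D^{M_n,\ex}\to\D^{M_n}$, which the paper leaves to the cited reference. Your direct argument (closure of the exactness conditions under pushouts, via interchange of colimits and a $3\times 3$ pasting) is correct, but there is a shorter route you might prefer: once $i^\ast\colon\D^{M_n,\ex}\toiso\D^{\A{n}}$ is established with inverse $F_{\A{n}}$, the inclusion factors up to isomorphism as $(\iota\circ F_{\A{n}})\circ i^\ast$, where $\iota\circ F_{\A{n}}\colon\D^{\A{n}}\to\D^{M_n}$ is a finite composition of restriction and Kan extension morphisms between stable derivators; each such morphism is exact by \autoref{lem:exact} and \autoref{prop:detect-adj}, hence so is the composite. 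Two small points of hygiene: the pasting/cancellation property you want to invoke is the derivator-level statement \cite[Prop.~4.6]{groth:ptstab} rather than the $\D_R$-specific \autoref{prop:2-out-of-3-D_R}; and the building blocks are Kan extensions along inclusions of \emph{infinite} posets, so what makes them compatible with exact morphisms is homotopy finiteness of the relevant slice categories (verified by a short finality check), not finiteness of the inclusions themselves.
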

\begin{proof}
We only sketch the proof and refer the reader to \cite[Thm.~4.6]{gst:basic} for details. For every stable derivator \D we want to construct an inverse equivalence of $i^\ast$. Thus, given a representation $X$ of $\A{n}$ with values in $\D$ we want to obtain a coherent diagram of shape $M_n$ satisfying the defining exactness conditions of $\D^{M_n,\mathrm{ex}}$. To this end, similarly to the sketch proofs of \autoref{thm:BP} and \autoref{thm:octa}, we note that the inclusion \eqref{eq:i} factors as a composition of inclusions of full subcategories
\[
i\colon \A{n}\stackrel{i_1}{\to} K_1\stackrel{i_2}{\to} K_2\stackrel{i_3}{\to} K_3\stackrel{i_4}{\to} M_n
\]
where
\begin{enumerate}
\item $K_1$ contains all objects from $\A{n}$ and the objects $(k,n+1)$ for $k\geq 0$ and $(k,0)$ for $k>0$,
\item $K_2$ is obtained from $K_1$ by adding the objects $(k,l), k>0$, and
\item $K_3$ contains all objects from $K_2$ and the objects $(k,n+1)$ for $k<0$ and $(k,0)$ for $k\leq 0$.
\end{enumerate}
The inclusion $i_4$ thus adds the remaining objects in the negative $k$-direction. By \autoref{prop:Kan-ff}, associated to these fully faithful functors there are fully faithful Kan extension functors
\begin{equation}
\vcenter{
\xymatrix{
\D^{\A{n}}\ar[r]^-{(i_1)_\ast}&\D^{K_1}\ar[r]^-{(i_2)_!}&\D^{K_2}\ar[r]^-{(i_3)_!}&
\D^{K_3}\ar[r]^-{(i_4)_\ast}&\D^{M_n}.
}
}
\label{eq:AR}
\end{equation}
These Kan extension morphisms can be analyzed in turn, and the conclusion is that $(i_1)_\ast$ adds zero objects on the boundary stripes in the positive direction, $(i_2)_!$ adds bicartesian squares in the positive direction, $(i_3)_!$ adds zero objects in on the boundary stripes in the negative direction, and finally $(i_4)_\ast$ fills up by bicartesian squares in the negative direction. Thus the essential image agrees with $\D^{M_n,\mathrm{ex}}$, and this concludes the construction of an equivalence $F_{\A{n}}\colon\D^{\A{n}}\toiso\D^{M_n,\mathrm{ex}}$ which is inverse to $i^\ast$. Again, since we only used restrictions and sufficiently finite Kan extension morphisms, both $i^\ast$ and its inverse $F_{\A{n}}$ are pseudo-natural with respect to exact morphisms (\autoref{rmk:ex-vs-Kan}).
\end{proof}

Variants of this theorem for different orientations of $A_n$-quivers lead to the following result.

\begin{thm}\label{thm:sse-An}
Let $n\geq 0$ be fixed. All $A_n$-quivers are strongly stably equivalent.
\end{thm}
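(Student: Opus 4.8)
The plan is to deduce \autoref{thm:sse-An} from \autoref{thm:AR} by the same bootstrap argument used for $A_3$-quivers in \autoref{cor:sse-A3}, but now carried out uniformly in $n$. First I would observe that, by \autoref{lem:closure}, the relation $\sse$ is an equivalence relation, so it suffices to connect any two orientations of an $A_n$-quiver by a finite chain of strong stable equivalences. Since reversing the orientation of \emph{all} arrows produces an equivalent category (via the order-reversing bijection of the underlying poset), \autoref{lem:closure} reduces us to changing the orientation at a single vertex at a time; that is, it suffices to show that two $A_n$-quivers $Q$ and $Q'$ which differ by a single reflection at a source or sink $q$ are strongly stably equivalent.

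For such a single reflection, the key step is to realise both $\D^{Q}$ and $\D^{Q'}$ as full subderivators of a common derivator of diagrams on a larger poset, cut out by bicartesian/vanishing conditions, exactly in the spirit of \autoref{eg:sse-A3-1} and \autoref{eg:sse-A3-2}. Concretely, I would take the relevant piece of the mesh category $M_{n-1}$ (in the notation preceding \autoref{thm:AR}, where $M_{n-1}=M_{[n]}$ corresponds to $\A{n}$-quivers): \autoref{thm:AR} already gives a strong stable equivalence $\D^{\A{n}}\simeq\D^{M_{n-1},\ex}$ realised by fully faithful Kan extension morphisms, pseudo-natural in exact morphisms. The same statement holds verbatim for any other orientation $Q$ of the $A_n$-quiver: one just embeds $Q$ into a correspondingly sheared copy of the mesh pattern and runs the four-step Kan extension argument (adding zero objects on boundary stripes, then bicartesian squares, in the positive and then the negative direction). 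The essential image is again the subderivator of diagrams on the (shifted) mesh poset which vanish on the boundary and have all squares bicartesian. Crucially, for different orientations these embeddings land in the \emph{same} ambient poset-shaped derivator, and — because the defining exactness conditions (vanishing plus all squares bicartesian) are manifestly invariant under the symmetries of the mesh poset that implement the reflection — the two essential images coincide. Composing one equivalence with the inverse of the other yields
\[
\Phi_\D\colon\D^{Q}\toiso\D^{M,\ex}=\D^{M,\ex}\toiso\D^{Q'},
\]
and since only restrictions and sufficiently finite Kan extension morphisms are used, \autoref{rmk:ex-vs-Kan} guarantees that the $\Phi_\D$ are pseudo-natural with respect to exact morphisms. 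Hence $Q\sse Q'$.

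Chaining these single-reflection equivalences, and using transitivity together with the "reverse all arrows" equivalence, connects \emph{every} orientation of $\A{n}$ to the linearly oriented one, which proves the theorem. I expect the main obstacle to be purely bookkeeping: setting up the correct family of mesh-type posets $M_{Q,Q'}$ and the factorisations of the embeddings $Q\hookrightarrow M_{Q,Q'}$ into fully faithful inclusions, and then verifying — via homotopy (co)finality arguments as in \autoref{thm:AR} — that the successive Kan extensions genuinely perform the claimed operations (adding zeros on boundary stripes, filling in bicartesian squares) and that the essential images for $Q$ and $Q'$ are literally the same subderivator. None of this requires a new idea beyond \autoref{thm:AR}, \autoref{prop:Kan-ff}, and the definition of stability; the combinatorics of the mesh category is where all the work sits, and for the details one refers to \cite[\S6]{gst:basic} and \cite{gst:Dynkin-A}.
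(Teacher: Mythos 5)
Your proposal follows essentially the same route as the paper: realize $\D^Q$ for each orientation $Q$ as the same exact subderivator $\D^{M_n,\mathrm{ex}}$ of the mesh-category derivator via fully faithful Kan-extension morphisms, and then compose, with pseudo-naturality in exact morphisms supplied by \autoref{rmk:ex-vs-Kan}. Two small remarks: your indexing is off by one (by \autoref{thm:AR} it is $M_n$, not $M_{n-1}$, whose exact diagrams model $\D^{\A{n}}$); and the reduction to single reflections via ``reverse all arrows'' is not quite how it works --- what one needs is the purely combinatorial fact that any two orientations of a tree are related by a finite sequence of reflections at sources and sinks (\cite[Lemma 1.2(1)]{bernstein-gelfand-ponomarev:Coxeter}), and in any case the paper's sketch bypasses single reflections entirely by choosing for \emph{each} orientation $Q$ an admissible embedding $i_Q\colon Q\to M_n$ and directly composing $(i_{Q'})^\ast\circ F_Q$ for an arbitrary pair of orientations.
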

\begin{proof}
We sketch very roughly the main idea of the proof, and for this purpose we consider an arbitrarily oriented $A_n$-quiver $Q$. For every such $Q$ there are admissible embeddings $i_Q\colon Q\to M_n$ and the corresponding restriction morphisms $(i_Q)^\ast\colon\D^{M_n}\to\D^Q$ again restrict to equivalences $(i_Q)^\ast\colon\D^{M_n,\mathrm{ex}}\toiso\D^Q$. In this case it is more tricky to write down in closed form the inverse equivalence $F_Q$ (which depends on $Q$ and the choice of the admissible embedding $i_Q$). 

To illustrate this step, we content ourselves by one example, but we invite the reader to come up with additional examples. In the case of $n=3$ and the source of valence two $Q=(1\leftarrow 2\rightarrow 3)$, we can consider the admissible embedding $i_Q\colon Q\to M_3$ given by 
\[
2\mapsto (0,2),\quad 1\mapsto (1,1),\quad\text{and}\quad 2\mapsto (0,3).
\]
The corresponding equivalence $F_Q$ sends a representation $X$ looking like
\[
\xymatrix{
x\ar[r]^-g\ar[d]_-f&y\\
z&
}
\]
to a coherent diagram as in \autoref{fig:octa-recycle}. Therein, the boundary stripes are populated by zero objects and all squares are bicartesian. To put it differently, the diagram $F_Q(X)$ is a refined octahedral diagram of $(Ff\to x\to y)$ (as in \S\ref{subsec:octa}).

\begin{figure}[h]
\centering
\[
\xymatrix{
\ar@{}[dr]|{\ddots}&\ar@{}[dr]|{\ddots}&\ar@{}[dr]|{\ddots}&\ar@{}[dr]|{\ddots}&\ar@{}[dr]|{\ddots}&&&&&\\
&0\ar[r]&\bullet\ar@{}[rd]|{\square}\ar[d]\ar[r]&\bullet\ar@{}[rd]|{\square}\ar[r]\ar[d]&\bullet\ar@{}[rd]|{\square}\ar[d]\ar[r]&0\ar[d]&&&&\\
&&0\ar[r]&\bullet\ar[r]\ar[d]\ar@{}[rd]|{\square}&x\ar[r]^-g\ar[d]_-f\ar@{}[rd]|{\square}&y\ar[d]\ar[r]\ar@{}[rd]|{\square}&0\ar[d]&&&\\
&&&0\ar[r]&z\ar[r]\ar[d]\ar@{}[rd]|{\square}&\bullet\ar[r]\ar[d]\ar@{}[rd]|{\square}&\bullet\ar[d]\ar[r]\ar@{}[rd]|{\square}&0\ar[d]&&\\
&&&&0\ar[r]\ar@{}[dr]|{\ddots}&\bullet\ar[r]\ar@{}[dr]|{\ddots}&\bullet\ar[r]\ar@{}[dr]|{\ddots}&\bullet\ar[r]\ar@{}[dr]|{\ddots}&0\ar@{}[dr]|{\ddots}&\\
&&&&& & & & &
}
\]
\caption{The equivalence $F_Q$ for $Q=(\bullet\ot\bullet\to\bullet)$}
\label{fig:octa-recycle}
\end{figure}
 
Now, given an additional $A_n$-quiver $Q'$ we choose an admissible embedding $i_{Q'}\colon Q'\to M_n$, and obtain the corresponding inverse equivalence $F_{Q'}$ of $(i_{Q'})^\ast$. In order to conclude the proof it suffices to make the definition
\[
\Phi_{Q',Q}=(i_{Q'})^\ast\circ F_Q\colon\D^Q\toiso\D^{M_n,\mathrm{ex}}\toiso\D^{Q'}.
\]
As a composition of pseudo-natural equivalences, this defines a strong stable equivalence $\Phi_{Q',Q}\colon Q\sse Q'$.
\end{proof}

The strong stable equivalences constructed in \autoref{thm:sse-An} arise as finite compositions from certain basic building blocks. These building blocks admit a fairly elementary description as we discuss next, and they turn out to be special cases of the more general reflection morphisms from \S\ref{subsec:reflection}.

\begin{con}\label{con:reflection-An}
Let $n\geq 0$, let \D be a stable derivator, and let $M_n$ be the mesh category. Choosing two $A_n$-quivers $Q,Q'$ and admissible embeddings $i_Q,i_{Q'}$ of the quivers to $M_n$, the resulting strong stable equivalence $\Phi_{Q',Q}\colon\D^Q\toiso\D^{Q'}$ measures the difference between the restriction morphisms $(i_Q)^\ast$ and $(i_{Q'})^\ast$. The above-mentioned basic building blocks arise when the two embeddings ``differ by one square only''. Instead of making this precise by explicit combinatorial formulas, we illustrate this by an example related to \autoref{fig:octa-recycle}.

As in the proof of \autoref{thm:sse-An}, let $Q=(\bullet\ot\bullet\to\bullet)$ and let $i_Q\colon Q\to M_3$ be the embedding indicated by \autoref{fig:octa-recycle}. If $Q'=(\bullet\to\bullet\ot\bullet)$ is the sink of valence two, then there is an obvious embedding of $i_{Q'}\colon Q'\to M_3$ which has two objects in common with $i_Q$ but differs from it by one square. In this case, the resulting strong stable equivalence $\Phi_{Q',Q}$ agrees with the one from \autoref{eg:sse-A3-1}. Alternatively, we can consider the linearly oriented quiver $\A{3}=(\bullet\to\bullet\to\bullet)$ with the standard embedding $i\colon\A{3}\to M_3$ as in \eqref{eq:i}. Also these embeddings differ by one square only and we reproduce the strong stable equivalence from \autoref{eg:sse-A3-2}. Finally, there is the additional vertical embedding $j\colon\A{3}\to M_3$ and in this case the strong stable equivalence $Q\sse\A{3}$ forms the fiber of the morphism labeled by $g$ in \autoref{fig:octa-recycle}. 

All these three cases are special cases of reflection functors. Given an $A_n$-quiver $Q$ and a source $a\in Q$, let $\sigma_aQ$ be the $A_n$-quiver obtained by reorienting all edges adjacent to $a$ such that the vertex $a\in\sigma_aQ$ now is a sink. By \autoref{thm:sse-An} the quivers $Q$ and $Q'=\sigma_aQ$ are strongly stably equivalent, and a preferred such equivalence is constructed as follows. Let $i_Q\colon Q\to M_n$ be an admissible embedding and let $i_{Q'}=i_{\sigma_aQ}\colon Q'=\sigma_aQ\to M_n$ be the induced admissible embedding which differs from $i_Q$ by one square only. The strong stable equivalence
\[
s_a^-=\Phi_{Q',Q}=(i_{Q'})^\ast\circ F_Q\colon\D^Q\toiso\D^{Q'}
\]
and its inverse
\[
s_a^+=\Phi_{Q,Q'}=(i_Q)^\ast \circ F_{Q'}\colon\D^{Q'}\toiso\D^Q
\]
are \textbf{reflection functors}. It is easy to see that these morphisms do not depend on the choice of the embeddings $i_Q$ and $i_{Q'}$ as long as they differ by precisely one square.
\end{con}

The strong stable equivalences from \autoref{thm:sse-An} can be described as compositions of the above reflection morphisms, and by the following remark the order of the reflections does not matter.

\begin{rmk}\label{rmk:sasb-commute}
Let $Q$ be an $A_n$-quiver and let $a_1,a_2\in Q$ be two distinct sinks. In that case the vertex $a_2$ is also a sink in the reflected quiver $\sigma_{a_1}Q$ and we can hence iterate the reflection thereby obtaining $\sigma_{a_2}\sigma_{a_1} Q$. A straightforward argument shows that at the level of the quivers the order of the reflections is irrelevant and definition
\[
\sigma_{\{a_1,a_2\}}Q=\sigma_{a_1}\sigma_{a_2} Q=\sigma_{a_2}\sigma_{a_1} Q
\]
hence makes sense. Playing a bit with the corresponding embeddings, we note that for the corresponding reflection functors there are natural isomorphisms
\[
s_{a_1}^+s_{a_2}^+\cong s_{a_2}^+s_{a_1}^+\colon\D^Q\toiso\D^{\sigma_{\{a_1,a_2\}}Q}.
\]
Of course there are similar results for sinks instead of sources.
\end{rmk}

\begin{defn}
An \textbf{admissible sequence of sinks} in a finite quiver $Q$ is a total ordering $(a_1,\ldots,a_n)$ of all vertices of $Q$ such that $a_1$ is a sink in $Q$ and $a_i$ is a sink in $\sigma_{a_{i-1}}\ldots\sigma_{a_1}Q$ for all $2\leq i\leq n$. \textbf{Admissible sequences of sources} are defined dually.
\end{defn}

It turns out that every finite, acyclic quiver admits an admissible sequence of sources and sinks \cite[Lemma 1.2(1)]{bernstein-gelfand-ponomarev:Coxeter}. Moreover, one can show that the quiver $\sigma_{a_n}\ldots\sigma_{a_1}Q$ agrees with the original quiver $Q$ and similarly in the case of sources. In what follows we only need these results for Dynkin quivers of type $A$, in which case the arguments are more straightforward. While admissible sequences always exist, they are by no means unique as simple examples show. However, at the level of iterated reflection functors there is the following result.

\begin{con}\label{con:indep-admis}
Let $n\geq 0$ and let $Q$ be an $A_n$-quiver. For every stable derivator \D and every admissible sequence of sinks $(a_1,\ldots,a_n)$ in $Q$ there is the iterated reflection functor
\[
\Phi^+_{(a_1,\ldots,a_n)}=s^+_{a_n}\circ\ldots\circ s^+_{a_1}\colon\D^Q\to\D^Q.
\]
A combination of \autoref{rmk:sasb-commute} with some combinatorial arguments imply that up to natural isomorphism this iterated reflection functor is independent of $(a_1,\ldots,a_n)$ (see the proof of~\cite[Lemma 1.2(3)]{bernstein-gelfand-ponomarev:Coxeter}). Of course, the same observation applies to iterated reflections $\Phi^-_{(b_1,\ldots,b_n)}$ at admissible sequences of sources.
\end{con}

By \autoref{con:indep-admis} the following functors are well-defined up to natural isomorphisms. 

\begin{defn} \label{defn:Coxeter}
Let \D be a stable derivator and let $Q$ be an $A_n$-quiver.
\begin{enumerate}
\item The \textbf{Coxeter functor} $\Phi^+=\Phi^+_Q\colon\D^Q\to\D^Q$ is $\Phi^+=\Phi^+_{(a_1,\ldots,a_n)}$ for some admissible sequence of sinks $(a_1,\ldots,a_n)$ in $Q$.
\item The \textbf{Coxeter functor} $\Phi^-=\Phi^-_Q\colon\D^Q\to\D^Q$ is $\Phi^-=\Phi^-_{(b_1,\ldots,b_n)}$ for some admissible sequence of sources $(b_1,\ldots,b_n)$ in $Q$.
\end{enumerate}
\end{defn}

We illustrate these reflections and Coxeter functors by an example.

\begin{eg}\label{eg:Coxeter-A3-linear}
The linearly oriented $A_3$-quiver $\A{3}=(1\to2\to3)$ has a unique admissible sequence of sinks $(3,2,1)$. Consequently, for every stable derivator \D the Coxeter functor is given by
\[
\Phi^+=s^+_1\circ s^+_2\circ s^+_3\colon\D^{\A{3}}\toiso\D^{\A{3}}.
\]
Given a representation $X=(x\stackrel{f}{\to}y\stackrel{g}{\to}z)$ in \D, to calculate $\Phi^+(X)$ we first determine $s^+_3(X)$ by considering the diagram on the left in 
\[
\xymatrix{
&Fg\ar[d]\ar[r]\ar@{}[rd]|{\square}&0\ar[d]&&
F(gf)\ar[r]\ar[d]\ar@{}[rd]|{\square}&Fg\ar[d]\ar[r]\ar@{}[rd]|{\square}&0\ar[d]\\
x\ar[r]_-f&y\ar[r]_-g&z,&&
x\ar[r]_-f&y\ar[r]_-g&z.
}
\]
In order to describe $s^+_2s^+_3(X)$ we next reflect $(x\to y\ot Fg)$ at the vertex decorated by $y$. This is achieved by extending the above diagram by one more bicartesian square. Since the pasting of the squares is again bicartesian, the upper left corner is indeed populated by $F(gf)$. As a final step, it remains to reflect the representation $(x\ot F(gf)\to Fg)$ at the vertex decorated by $x$. For this purpose we extend the diagram by one more fiber square in order to obtain
\[
\xymatrix{
\Omega z\ar[r]\ar[d]\ar[d]\ar@{}[rd]|{\square}&F(gf)\ar[r]\ar[d]\ar@{}[rd]|{\square}&Fg\ar[d]\ar[r]\ar@{}[rd]|{\square}&0\ar[d]\\
0\ar[r]&x\ar[r]_-f&y\ar[r]_-g&z.
}
\]
Again, the outer most square is bicartesian and hence a loop square, and the upper left corner consequently agrees with $\Omega z$. As an upshot we conclude that the Coxeter functor $\Phi^+$ is given by
\begin{equation}\label{eq:Coxeter-linear-A3}
\Phi^+(x\stackrel{f}{\to}y\stackrel{g}{\to}z)=\big(\Omega z\to F(gf)\to Fg\big).
\end{equation}

Similarly, the quiver $\A{2}=(1\to 2)$ has $(2,1)$ as unique admissible sequence of sinks. We leave it to the reader to check that in this simpler case the Coxeter functor $\Phi^+$ is given by
\begin{equation}\label{eq:Coxeter-A2}
\Phi^+\cong\fib^2\colon\D^{\A{2}}\toiso\D^{\A{2}}.
\end{equation}
\end{eg}

\begin{eg}\label{eg:Coxeter-span}
Let \D be a stable derivator and let $Q=(1\ot 2\to 3)$ be the source of valence two. This quiver has $(1,3,2)$ and $(3,1,2)$ as admissible sequences of sinks. In order to calculate the Coxeter functor $\Phi^+\colon\D^Q\toiso\D^Q$ we consider an arbitrary representation $X=(z\ot x\to y)$ and extend it to the diagram
\[
\xymatrix{
w\ar[r]\ar[d]\ar@{}[dr]|{\fbox{2}}&Fg\ar[r]\ar[d]\ar@{}[dr]|{\fbox{3}}&0\ar[d]\\
Ff\ar[r]\ar[d]\ar@{}[dr]|{\fbox{1}}&x\ar[r]^-g\ar[d]_-f\ar@{}[dr]|{\fbox{0}}&y\ar[d]\\
0\ar[r]&z\ar[r]&y\cup_xz
}
\]
consisting of bicartesian cubes. Ignoring the lower right square for the moment, each of the remaining three squares will be used to define a reflection at a sink and the labels match the corresponding vertices. This illustrates again the commutativity of reflections at different sinks, since independently of the order we have
\[
s^+_1s^+_3(X)\cong s^+_3s^+_1(X)\cong(Ff\to x\ot Fg).
\] 
To determine $\Phi^+(X)$ it suffices to understand the upper left corner. For this purpose we also drew the remaining auxiliary cocartesian square $\fbox{0}$, since it allows us to conclude that the total pasting of these squares is a loop square. Consequently, there is a canonical isomorphism $w\cong\Omega(y\cup_xz)$ and we obtain
\begin{equation}\label{eq:Coxeter-span}
\Phi^+(z\stackrel{f}{\ot}x\stackrel{g}{\to}y)\cong(Ff\ot\Omega(y\cup_xz)\to Fg).
\end{equation}
\end{eg}

Motivated by the classical situation (see \cite[\S I.2]{reiten-bergh:serre}) we make the following definition.

\begin{defn}\label{defn:Serre-An}
Let \D be a stable derivator and let $Q$ be an $A_n$-quiver. The \textbf{Serre functor} of $\D^Q$ is
 \[
S=\Sigma\Phi^+\cong\Phi^+\Sigma\colon\D^Q\to\D^Q.
\]
\end{defn}

In order to put this definition into context we recall the notion of a Serre functor. Serre functors have been formalized in various contexts and references include \cite{bondal-kapranov:serre,bondal-orlov:reconstruction} in the framework of $k$-linear categories, \cite{chen:Serre} in the context of $k$-linear triangulated categories, and \cite{lyubashenko-mazyuk:Serre} in the realm of (pretriangulated) $A_\infty$-categories (see also \cite{bondal-kapranov:framed,kontsevich-soibelman:Aoo,BLM:Aoo}). Here we are mainly interested in the case of sufficiently well-behaved triangulated categories (see~\cite[\S I.1]{reiten-bergh:serre} and~\cite{bondal-kapranov:serre}). 

\begin{defn}\label{defn:Serre-triangulated}
Let $k$ be a field and let $\cT$ be a $k$-linear triangulated category with finite-dimensional mapping vector spaces. An exact auto-equivalence $S\colon\cT\to\cT$ is a \textbf{Serre functor} if it is endowed with an isomorphism
\begin{equation} \label{eq:Serre}
\hom_\cT(x,y) \stackrel{\cong}\longrightarrow \big(\hom_\cT(y, S x)\big)^*
\end{equation}
which is natural in $x$ and $y$.
\end{defn}

Here, $(-)^\ast$ of course denotes the vector space duals. As a consequence of the Yoneda lemma we see that Serre functors are essentially unique if they exist, so the existence of a Serre functor is really a property of the triangulated category. Let us recall the following two typical examples of Serre functors.

\begin{egs}\label{rmk:Serre-An}
\begin{enumerate}
\item The example which is of key interest to here is given by bounded derived categories of finitely generated modules over finite dimensional algebras of finite global dimension (\cite[\S3.6]{happel:fd-algebra}, \cite{krause-le:AR}). In that case, Serre functors exist and they are given by \emph{derived Nakayama functors} \cite[\S3.6]{happel:fd-algebra}. We will come back to this explicit description in \S\ref{subsec:modules}.
\item Let $k$ be an algebraically closed field, let $X$ be a smooth, projective variety of dimension $n$ over $k$, and let $D^b(X)$ be the bounded derived category of coherent sheaves on $X$. The derived tensor product with the shifted canonical line bundle 
\[
S_X= \omega_X[n]\otimes -\colon  D^b(X)\toiso D^b(X)
\]
is a Serre equivalence on $D^b(X)$ \cite[Thm.~3.12]{huybrechts:fourier}. Relevant classical references include \cite{bondal-kapranov:serre,bondal-orlov:reconstruction,serre:duality} and also \cite{mukai:duality,huybrechts:fourier}.
\end{enumerate}
\end{egs}

Let us now return to the situation in abstract representation theory, and let us revisit \autoref{eg:Coxeter-A3-linear} and \autoref{eg:Coxeter-span}.

\begin{egs}\label{egs:Serre-An}
There are the following examples of Serre functors.
\begin{enumerate}
\item The Serre functor of $\A{2}$ is $S\cong\cof\colon\D^{\A{2}}\to\D^{\A{2}}$ as it easily follows from \eqref{eq:Coxeter-A2} and \eqref{eq:cof-cube}.
\item In the case of the linearly oriented $A_3$-quiver $\A{3}$ it follows from \eqref{eq:Coxeter-linear-A3} that the Serre functor is given by
\[
S(x\stackrel{f}{\to}y\stackrel{g}{\to}z)=\big(z\to C(gf)\to Cg\big).
\]
\item Similarly, in the case of $Q=(1\ot 2\to 3)$ we can recycle \eqref{eq:Coxeter-span} in order to conclude that the Serre functor has the description
\[
S(z\stackrel{f}{\ot}x\stackrel{g}{\to}y)\cong(Cf\ot y\cup_xz\to Cg).
\]
\end{enumerate}
\end{egs}

\begin{rmk}
The justification for the terminology Serre functors in abstract representation theory is as follows. Specialized to the derivator $\D_k$ of a field, these Serre functors induce on underlying categories the classical Serre functors from representation theory. For an additional example of Serre functors in abstract representation we refer to \cite{bg:cubical}. Jointly with Falk Beckert, we intend to study these abstract Serre functors more systematically elsewhere. 
\end{rmk}

For more details on the abstract representation theory of Dynkin quivers of type $A$ we refer to \cite{gst:Dynkin-A}. In particular, therein is also a discussion of an abstract fractional Calabi--Yau property \cite{keller:orbit,keller:CY-triang,keller-scherotzke:integral,roosmalen:CY} which plays a key role in the context of spectral Picard groups. These Picard groups will be briefly discussed in \autoref{rmk:Picard}. We conclude this subsection by a remark on \emph{higher triangulations}.

\begin{rmk}\label{rmk:higher-triangulations}
Despite the fact that triangulated categories are extremely useful in plenty of situations, from the early days on people were pointing out defects of the theory (see for instance the introduction to the early \cite{heller:shc}). Besides the enhancements in the sense of stable model categories, stable $\infty$-categories or differential-graded categories, there are the attempts to fix some of the deficiencies of triangulated categories by also keeping track of \emph{higher triangles}. The idea of considering higher triangles already appears in \cite[Remark~1.1.14]{beilinson:perverse}, while a more recent definition of strong triangulations is in \cite{maltsiniotis:higher} (see also the closely related notion in \cite{balmer:separability} as well as the thesis of Beckert \cite{beckert:thesis}). 

We sketch the rough idea and consider an additive category $\cA$ together with an auto-equivalence $\Sigma\colon\cA\toiso\cA$. An \emph{$n$-angle} in $(\cA,\Sigma)$ is essentially an ordinary diagram $X\colon M_n\to\cA$ which vanishes on the boundary stripes and such that the triangular fundamental domains match up to suitable powers of the suspension $\Sigma$. Thus, $2$-angles correspond to the usual triangles in the sense of Puppe and Verdier, while $3$-angles are closely related to octahedral diagrams; we refer to \cite{maltsiniotis:higher} for more details. A \emph{strong triangulation} or \emph{\infty-triangulation} on $(\cA,\Sigma)$ is given by classes of distinguished $n$-angles for $n\geq 2$ subject to various axioms. By \cite[Thm.~13.6]{gst:Dynkin-A} the values of a strong, stable derivator \D admit canonical strong triangulations. In fact, analogously to \autoref{rmk:2-Cat-Tria}, there is a lift of \D to the $2$-category of strong triangulated categories (as follows from the details in \cite[\S13]{gst:Dynkin-A} and \cite{groth:revisit}). Moreover, the coherent versions of these distinguished $n$-triangles (thus objects in $\D^{M_n,\mathrm{ex}}$) in combination with the Mayer--Vietoris triangles from \cite{gps:mayer} give rise to many additional triangles in the underlying category $\D(\ast)$.
\end{rmk}

\subsection{Reflection functors}
\label{subsec:reflection}

In this subsection we discuss additional examples of strong stable equivalences given by reflection functors. Happel \cite[\S1.7]{happel:fd-algebra} showed that for every finite, acyclic quiver reflection functors at sources or sinks induce derived equivalences between the corresponding category algebras. In \cite{gst:acyclic} these derived equivalences were shown to be shadows of corresponding strong stable equivalences (even without the finiteness or acyclicity assumption). We illustrate these reflection functors by some important special cases. 

Let us begin this subsection by the following obstruction against the existence of strong stable equivalence.

\begin{prop}
If the finite acyclic quivers $Q$ and $Q'$ are strongly stably equivalent, then the underlying unoriented graphs are isomorphic.\end{prop}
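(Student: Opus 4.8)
The statement repeats \autoref{prop:strongly-equiv-necessary}, so the plan is essentially to recall how that proposition is proved, namely by extracting a numerical invariant of a quiver $Q$ from the $2$-functor $(-)^Q$ and showing that this invariant determines the underlying unoriented graph. The point is that a strong stable equivalence $Q\sse Q'$ in particular yields, for the specific stable derivator $\D_k$ of a field $k$, an exact equivalence of underlying triangulated categories $D(kQ)\simeq D(kQ')$ which is moreover compatible with the ambient homotopy-theoretic structure. So the first step is to specialize \autoref{defn:sse} to $\D=\D_k$ and invoke \autoref{eg:shift-field} to rewrite $\D_k^Q\simeq\D_{kQ}$ and $\D_k^{Q'}\simeq\D_{kQ'}$, so that the component $\Phi_{\D_k}$ gives an equivalence of derivators $\D_{kQ}\simeq\D_{kQ'}$, hence in particular a triangulated equivalence $D(kQ)\simeq D(kQ')$ on underlying categories.

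Next I would pass to a derived invariant that sees the unoriented graph. For a finite acyclic quiver $Q$ with vertex set $Q_0$, the path algebra $kQ$ is finite-dimensional of finite global dimension, so $D^b(kQ)$ carries a Serre functor and hence the Coxeter/Auslander--Reiten translate; equivalently, one has the (symmetrized) Euler form on $K_0(D^b(kQ))\cong\lZ^{Q_0}$, whose associated quadratic form is $q_Q(v)=\sum_i v_i^2-\sum_{\alpha\colon i\to j}v_iv_j$ and depends only on the underlying unoriented graph. A triangulated equivalence $D^b(kQ)\simeq D^b(kQ')$ induces an isometry between these Euler forms (the Euler form is defined via $\sum(-1)^n\dim\Hom(x,y[n])$ and is therefore preserved by any exact equivalence), so $q_Q$ and $q_{Q'}$ are equivalent quadratic forms over $\lZ$; for the graphs attached to finite quivers without oriented cycles this forces the underlying unoriented graphs to be isomorphic. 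One must be slightly careful that the equivalence restricts to bounded derived categories and respects the standard $t$-structures enough to identify $K_0$ and the Euler form, but since $\Phi_{\D_k}$ is an equivalence of \emph{derivators} (not merely of triangulated categories) it is compatible with all restriction and Kan extension functors, which pins down the relevant structure; alternatively one cites the classical fact that a derived equivalence of finite-dimensional algebras induces an isometry of Grothendieck groups with the Euler form (Rickard, Happel).

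The main obstacle — and the reason this is not completely routine — is the combinatorial implication ``equivalent Euler forms $\Rightarrow$ isomorphic underlying graphs'' for graphs underlying finite acyclic quivers. This is not true for arbitrary graphs (the Euler form only remembers the graph up to the equivalence class of its quadratic form), so one genuinely uses features of the situation: either that the quiver is connected and one restricts to the cases that actually arise, or one invokes the more refined fact that derived equivalence of hereditary algebras over a field is classified and in particular preserves the number of vertices and the number of arrows (hence, via the trace/rank of the symmetrized Euler form, the full adjacency data). I would therefore structure the proof as: (i) reduce to a derived equivalence $D(kQ)\simeq D(kQ')$ via \autoref{defn:sse} and \autoref{eg:shift-field}; (ii) deduce an isometry of Euler forms on $K_0$; (iii) invoke the representation-theoretic input that this isometry forces the underlying graphs to be isomorphic, citing \cite{gst:basic} (where this is \cite[Prop.~5.3]{gst:basic}, i.e.\ exactly \autoref{prop:strongly-equiv-necessary}) for the details of step (iii). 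In fact, since the statement is literally \autoref{prop:strongly-equiv-necessary}, the cleanest proof is simply: ``This is a restatement of \autoref{prop:strongly-equiv-necessary}; see \cite[Prop.~5.3]{gst:basic},'' with the above sketch indicating why it holds.
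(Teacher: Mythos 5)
Your proposal lands on exactly the paper's proof: cite \cite[Prop.~5.3]{gst:basic} and note that the underlying input is Happel's classification of derived equivalences of hereditary path algebras, which is precisely the dependency on \cite{happel:fd-algebra} that the paper flags. The intermediate Euler-form sketch you offer is plausible but not something the paper spells out, and you correctly caution yourself that the Euler form alone does not determine the graph — the refined hereditary-algebra input (or simply the citation) is what actually closes the argument.
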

\begin{proof}
This is \cite[Prop.~5.3]{gst:basic} which in turn relies essentially on \cite{happel:fd-algebra}.
\end{proof}

Thus, in the context of finite acyclic quivers all that can happen by strong stable equivalences are reorientations of some of the edges. These can not be carried out arbitrary as the case of oriented cycles shows (see \autoref{cor:sse-cycle}). However, a valid reorientation is the classical reflection of (acyclic) quivers at sources and sinks (see, for instance, \cite[\S VII.5]{assem-simson-skowronski}, or \cite{krause:reflection} for more detail). Let us recall this construction.

\begin{con}\label{con:reflect-quiver}
Let $Q$ be a quiver and let $q\in Q$. The vertex $q$ is a \textbf{source} if there are no edges in $Q$ with $q$ as target, and there is dual notion of a \textbf{sink} of a quiver. Given a source $q$ in a quiver, the \textbf{reflected quiver} $\sigma_qQ$ is obtained from $Q$ by turning the source into a sink. Thus, both quivers have the same vertices and their is a bijection between the edges. The only difference is that the orientations of all edges in $Q$ that start at the source $q\in Q$ have been reversed in $\sigma_qQ$ so that they now end at $q\in\sigma_qQ$.
\end{con}

Special cases of such reflections already occurred in \S\ref{subsec:Dynkin-A}. For instance, in the case of $\A{3}$-quivers, \autoref{eg:sse-A3-1} and \autoref{eg:sse-A3-2} provide examples of strong stable equivalences 
\[
(1\ot 2\to 3)\sse \sigma_2(1\ot 2\to 3)=(1\to 2\ot 3)
\]
and
\[
(1\ot 2\to 3)\sse \sigma_1(1\ot 2\to 3)=(1\to 2\to 3).
\]
These reflections at the level of quivers are accompanied by the following reflection functors.

\begin{con}\label{con:reflection}
Let $Q$ be a finite quiver, let $q_0\in Q$ be a source, and let $k$ be a field. For every representation $M\colon Q\to\mathrm{Mod}(k)$ we obtain an induced representation $s^-_{q_0}M\colon \sigma_{q_0}Q\to\mathrm{Mod}(k)$ of the reflected quiver as follows. For all vertices $q'\neq q_0\in\sigma_{q_0}Q$ we set $(s^-_{q_0}M)_{q'}=M_{q'}$. To describe the remaining component, let $q_1,\ldots, q_n$ denote the finitely many vertices of $Q$ such that there are edges $q_0\to q_i$ for $i=1,\ldots,n$. Then we define
\[
(s^-_{q_0}M)_{q_0}=\cok(M_{q_0}\to M_{q_1}\oplus\ldots\oplus M_{q_n}),
\]
and the canonical homomorphisms yield the required maps $(s^-_{q_0}M)_{q_i}\to (s^-_{q_0}M)_{q_0}$. This construction is clearly functorial in $M$, and we hence obtain the \textbf{reflection functor} $s^-_{q_0}\colon\mathrm{Mod}(kQ)\to\mathrm{Mod}(k\sigma_{q_0}Q)$ associated to $Q$ and the source $q_0$. Dually, if $q_0$ is a sink, then there is a reflection functor $s^+_{q_0}\colon\mathrm{Mod}(kQ)\to\mathrm{Mod}(k\sigma_{q_0}Q)$.
\end{con}

At the level of abelian categories of modules these reflection functors fail to be equivalences (see, for example, \cite[Thm.~VII.5.3]{assem-simson-skowronski}), but there is the following positive derived version of it.

\begin{thm}\label{thm:reflection-Happel}
Let $k$ be a field, let $Q$ be a finite acyclic quiver, let $q_0\in Q$ be a source, and let $Q'=\sigma_{q_0}Q$ be the reflected quiver. The reflection functors induce an exact equivalence of bounded derived categories
\[
(Ls^-_{q_0},Rs^+_{q_0})\colon D^b(kQ)\stackrel{\Delta}{\simeq} D^b(kQ').
\]
\end{thm}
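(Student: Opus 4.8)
The plan is to deduce the classical statement of \autoref{thm:reflection-Happel} as a shadow of a strong stable equivalence $Q \sse \sigma_{q_0}Q$, rather than proving it by a direct homological computation with tilting modules. The key point is that everything already set up for $A_n$-quivers in \S\ref{subsec:Dynkin-A} localizes around the source $q_0$: only the vertex $q_0$ and its outgoing edges are affected by the reflection, and the relevant manipulation is the very same ``reflect across one square'' move encoded by a combination of fully faithful Kan extensions. So the first step is to upgrade \autoref{con:reflection} to the derivator level: given a finite quiver $Q$ with source $q_0$ and edges $q_0 \to q_i$ ($i=1,\dots,n$), I would build an auxiliary category $P$ containing both $Q$ and $Q' = \sigma_{q_0}Q$ as full subcategories, such that the object over $q_0$ in $Q$ sits as the initial corner and the object over $q_0$ in $Q'$ sits as the terminal corner of a (generalized, $n$-ary) bicartesian cube whose remaining vertices are the $M_{q_i}$ together with zeros. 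Concretely, $P$ is obtained from $Q$ by adjoining one new object $q_0'$, a zero object, with maps $q_i \to q_0'$ for all $i$, and then one decorates $\D^P$ by requiring the cube spanned by $q_0, q_1,\dots,q_n,q_0'$ to be cocartesian (equivalently cartesian, by stability) and the new zero to vanish. The two restriction morphisms $\D^P \to \D^Q$ and $\D^P \to \D^{Q'}$ then restrict to equivalences on the subderivators cut out by these exactness conditions, exactly as in \autoref{eg:sse-A3-1} and the proof of \autoref{thm:sse-An}.

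Second, I would verify that these restriction morphisms, composed appropriately, assemble into inverse equivalences of derivators. In one direction: start with a coherent representation $X \in \D^Q$, left Kan extend along $Q \hookrightarrow Q^+ \hookrightarrow P$ where the first step adds the new zero object (this merely appends a zero by (Der4), since the relevant slice is empty) and the second step left Kan extends to fill in $q_0'$, producing precisely the $n$-ary pushout $\colim$ of the punctured cube; by \autoref{prop:Kan-ff} both steps are fully faithful, and one checks the essential image is exactly $\D^{P,\ex}$. In the other direction one performs the dual construction over $Q'$ with right Kan extensions, filling in the initial corner as the corresponding $n$-ary pullback. Stability (\autoref{defn:stable}) forces the two essential images inside $\D^P$ to coincide, giving
\[
\Phi_\D = (i_{Q'})^\ast \circ (\text{Kan extensions}) \colon \D^Q \toiso \D^P_{\ex} = \D^{P}_{\ex} \toiso \D^{Q'}.
\]
Because only restrictions and \emph{homotopy finite} Kan extensions are used, \autoref{rmk:ex-vs-Kan} (together with \autoref{lem:exact}) shows $\Phi_\D$ is pseudo-natural in exact morphisms $F\colon\D\to\E$, so we obtain a genuine strong stable equivalence $\Phi\colon Q \sse Q'$ in the sense of \autoref{defn:sse}.

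Third, I would extract the classical statement. Specialize \D to $\D_k$, the derivator of the field $k$. By \autoref{eg:shift-field} there are equivalences of stable derivators $\D_k^Q \simeq \D_{kQ}$ and $\D_k^{Q'} \simeq \D_{kQ'}$, hence on underlying categories $\Phi_{\D_k}$ restricts to an equivalence $D(kQ) \simeq D(kQ')$; since $\Phi_{\D_k}$ is an equivalence of derivators it is automatically exact (\autoref{prop:exact-exact} and \autoref{rmk:2-Cat-Tria}), so this is an exact equivalence of triangulated categories. Passing to the bounded subcategories (which correspond under the equivalence because $\Phi$ is built from finite Kan extensions and restrictions, hence preserves the homotopy-finite and bounded-below/above conditions defining $D^b$), one gets $D^b(kQ) \stackrel{\Delta}{\simeq} D^b(kQ')$. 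Finally, one identifies the underlying functor of $\Phi_{\D_k}$, on the level of homology, with the classical $Ls^-_{q_0}$: the formula $(s^-_{q_0}M)_{q_0} = \cok(M_{q_0}\to \bigoplus_i M_{q_i})$ is precisely the $\pi_0$ of the $n$-ary homotopy pushout that defines our new corner, so $\Phi_{\D_k}$ is the total left derived functor $Ls^-_{q_0}$, with quasi-inverse the total right derived $Rs^+_{q_0}$.

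\textbf{Main obstacle.} The hard part is the Kan-extension bookkeeping for the $n$-ary cube: verifying that $(i_2)_!$ really adds an $n$-ary \emph{homotopy} cocartesian cube whose essential image is exactly the subderivator $\D^{P,\ex}$, and the dual statement with $(i_3)_\ast$ and an $n$-ary cartesian cube, so that stability makes the two images literally equal. For $n=1$ this is \autoref{eg:sse-A3-1}; for general $n$ one needs the appropriate finality/cofinality arguments (the punctured $(n{+}1)$-vertex ``star'' poset is homotopy equivalent to the relevant slice) plus the pasting lemma for bicartesian cubes, and one must be careful that in $Q$ there may be other edges into the $q_i$, so the cube one adjoins lives in a larger ambient diagram and the Kan extension is a \emph{parametrized} one — this is exactly where \autoref{thm:shift} and the remark that Kan extensions in $\D^A$ are computed as Kan extensions in \D along $\id_A \times u$ are needed. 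The detailed combinatorics is carried out in \cite{gst:acyclic}, and I would refer the reader there for the full argument while presenting the above as the conceptual skeleton.
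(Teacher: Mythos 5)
The paper offers no proof of this statement at all: \autoref{thm:reflection-Happel} is Happel's classical theorem and the ``proof'' in the text is a citation to \cite{happel:fd-algebra}, where it is established by tilting theory (APR tilting modules). Your proposal therefore takes a genuinely different route, deducing the classical statement \emph{from} the later and stronger \autoref{thm:sse-reflection} by specializing $\D=\D_k$ and restricting to bounded derived categories. That inverts the paper's logical order (Happel as classical input, the strong stable equivalence as strengthening) but is a legitimate alternative provided the construction of the strong stable equivalence in \cite{gst:acyclic} does not itself depend on Happel's theorem --- the obstruction result \autoref{prop:strongly-equiv-necessary} does, but the construction of the reflection morphisms does not. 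What Happel's proof buys is a short, self-contained argument within classical homological algebra; what your route buys, at considerably greater cost, is the much stronger conclusion of a strong stable equivalence valid in all abstract stable homotopy theories.

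However, your ``conceptual skeleton'' of the strong stable equivalence contains a concrete error. You identify $(s^-_{q_0}M)_{q_0}=\cok(M_{q_0}\to\bigoplus_i M_{q_i})$ with the zeroth homology of the homotopy colimit over the source poset of valence $n$, and your auxiliary poset $P$ accordingly adjoins only a single new vertex $q_0'$. These do not agree for $n\geq 3$. Take $M_{q_0}=M_{q_1}=M_{q_2}=M_{q_3}=k$ with all structure maps the identity: the reflection formula gives $\cok\bigl(k\xrightarrow{(1,1,1)}k^3\bigr)\cong k^2$, whereas the colimit over the star of valence $3$ identifies the three images of $M_{q_0}$ with each other and yields $k$; the corresponding derived colimit is likewise not quasi-isomorphic to the cofiber of $M_{q_0}\to\bigoplus_i M_{q_i}$. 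Relatedly, the double star $q_0<q_i<q_0'$ you describe carries no cube, so neither \autoref{defn:stable} nor \autoref{cor:strongly-bicart} applies to show that your left- and right-Kan-extended subderivators of $\D^P$ coincide --- stability equates cocartesian with cartesian squares and strongly bicartesian $n$-cubes, but says nothing about a star colimit being recovered as a costar limit for $n\geq 3$. This is precisely why the construction sketched in the paper between \autoref{thm:reflection-Happel} and \autoref{thm:sse-reflection} (and carried out in \cite{gst:acyclic}) glues in a biproduct cube that exhibits $\bigoplus_i M_{q_i}$ explicitly as a vertex; the reflected corner is then the cofiber along a single morphism into that biproduct, matching the classical formula. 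If you replace your description of $P$ with the biproduct-cube one-point extension, the remainder of your argument (specializing to $\D_k$, identifying with $Ls^-_{q_0}$, and restricting to $D^b$ via compact objects using that $kQ$ has finite global dimension) goes through as you outline.
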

\begin{proof}
This is due to Happel and can be found in \cite[\S1.7]{happel:fd-algebra}.
\end{proof}

It can be shown that these reflection functors extend to abstract stable homotopy theories, and by now there are two approaches to such a generalization. Let us recall from \autoref{con:reflect-quiver} and \autoref{con:reflection}, that these constructions rely on some ``local modifications at the level of shapes and representations''. Correspondingly, the idea of gluing shapes and representations is central to this business. Depending on the formal framework, there are different ways of dealing with it.
\begin{enumerate}
\item Similar to the case of triangulated categories, gluing is not available in the $2$-category $\cDER_{\mathrm{St,ex}}$ of stable derivators. But in that framework (and this project began in this framework of derivators), one can partially sail around this defect by more involved combinatorial arguments. This approach was taken in \cite{gst:basic,gst:tree,gst:acyclic}, where representation-theoretic techniques were developed (one-point extensions and homotopical epimorphisms).
\item In contrast to this, the more flexible language of stable $\infty$-categories allows, among many other things, for gluing. At the price of quoting results from the more involved machinery \cite{HTT,HA}, a very elegant construction of reflection functors in stable $\infty$-categories was worked out by Dyckerhoff, Jasso, and Walde in \cite{dycker-jasso-walde:BGP}.
\end{enumerate}

Because of its importance, we single out the special case of a source and a sink of finite valence. We begin by recalling some notation from \cite{gst:tree} and \cite{bg:cubical}.

\begin{notn}\label{notn:chunks}
For $n\geq 1$ we denote by $\cube{n}$ the $n$-cube which is to say the $n$-fold product
\[
\cube{n}=[1]\times\ldots\times[1].
\]
The $n$-cube $\cube{n}$ can be realized as the power set of $\{1,\ldots,n\}$ with the ordering given by inclusion. Correspondingly, the $n$-cube admits the following filtration by full subcategories. For $0\leq l\leq n$ we denote by $\cube{n}_{0\leq l}$ the full subposet spanned by the subsets of cardinality at most $l$. The subposet $\cube{n}_{k\leq l}$ is defined similarly, and there are fully faithful inclusion functors $\iota_{k,l}\colon\cube{n}_{k\leq l}\to\cube{n}$. As special cases, we obtain the inclusion of the source and the sink of valence $n$,
\[
\iota_{0,1}\colon\cube{n}_{0\leq 1}\to\cube{n}\quad\text{and}\quad\iota_{n-1\leq n}\colon\cube{n}_{n-1\leq n}\to\cube{n}.
\]
Similarly, there are the inclusions of the two possible punctured $n$-cubes
\[
\iota_{0,n-1}\colon\cube{n}_{0\leq n-1}\to\cube{n}\quad\text{and}\quad\iota_{1\leq n}\colon\cube{n}_{1\leq n}\to\cube{n}.
\]
\end{notn}

\begin{defn}\label{defn:strongly-cocart}
Let \D be a derivator, let $n\geq 1$, and let $X\in\D(\cube{n})$. The $n$-cube $X$ is \textbf{cocartesian} if it lies in the essential image of $(\iota_{0,n-1})_!\colon\D(\cube{n}_{0\leq n-1})\to\D(\cube{n})$. The $n$-cube $X$ is \textbf{strongly cocartesian} if it lies in the essential image of the functor $(\iota_{0,1})_!\colon\D(\cube{n}_{0\leq 1})\to\D(\cube{n})$.
\end{defn}

Of course, there are dual notions of \textbf{(strongly) cartesian $n$-cubes}. For squares there is obviously no difference between the two notions in \autoref{defn:strongly-cocart}. In higher dimension every strongly cocartesian $n$-cube is cocartesian since $\iota_{0,1}\colon\cube{n}_{0\leq 1}\to\cube{n}$ factors through $\cube{n}_{0\leq n-1}$. But there is no converse to this and, in fact, there is the following more precise result.

\begin{thm}\label{thm:strongly-cocart}
Let \D be a derivator, let $n\geq 2$, and let $X\in\D(\cube{n})$. The $n$-cube $X$ is strongly cocartesian if and only if every subsquare of it is cocartesian.
\end{thm}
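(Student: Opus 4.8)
The plan is to reduce the theorem, via a filtration of $\cube{n}$ by cardinality of subsets, to a purely cube-theoretic statement which is then proved by induction on the dimension. First I would factor the inclusion $\iota_{0,1}\colon\cube{n}_{0\leq 1}\to\cube{n}$ as the composite of the full inclusions $\cube{n}_{0\leq l}\hookrightarrow\cube{n}_{0\leq l+1}$ for $1\leq l\leq n-1$ (with $\cube{n}_{0\leq n}=\cube{n}$). Left Kan extension along $\iota_{0,1}$ is the composite of the left Kan extensions along these inclusions, and since each is fully faithful, \autoref{prop:Kan-ff} makes the relevant units invertible; a routine formal argument then shows that $X$ is strongly cocartesian, i.e.\ $X\cong(\iota_{0,1})_!\iota_{0,1}^\ast X$, if and only if for each $l$ the restriction $X|_{\cube{n}_{0\leq l+1}}$ is the left Kan extension of $X|_{\cube{n}_{0\leq l}}$. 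By (Der4) this holds if and only if, for every $T\subseteq\{1,\dots,n\}$ with $|T|=l+1$, the canonical map from the colimit of $X$ over the poset of proper subsets of $T$ to $X_T$ is an isomorphism — the slice category of the inclusion $\cube{n}_{0\leq l}\hookrightarrow\cube{n}_{0\leq l+1}$ over such a $T$ being exactly that poset. Writing $\cube{T}\subseteq\cube{n}$ for the full subposet of subsets of $T$ (a copy of $\cube{|T|}$), this says: $X$ is strongly cocartesian if and only if the subcube $X|_{\cube{T}}$ is cocartesian for every $T$ with $|T|\geq 2$.

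Given this, the theorem becomes the assertion that $X$ has all its subsquares cocartesian if and only if all its subcubes of dimension $\geq 2$ are cocartesian; the implication from right to left is trivial, squares being $2$-dimensional subcubes. For the converse, since every subsquare of a subcube $X|_{\cube{T}}$ is a subsquare of $X$, it suffices to prove, by induction on $k\geq 2$, that any $Y\in\D(\cube{k})$ all of whose subsquares are cocartesian is itself cocartesian. The base $k=2$ is the definition. For $k\geq 3$ I would write $\cube{k}=\cube{k-1}\times[1]$, singling out the last coordinate, and let $Y_0,Y_1\in\D(\cube{k-1})$ be the restrictions to the two facets $\cube{k-1}\times\{0\}$ and $\cube{k-1}\times\{1\}$. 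Every $(k-1)$-subcube of $Y$ inherits the hypothesis on subsquares, so by the inductive hypothesis all $(k-1)$-subcubes of $Y$ — in particular $Y_0$ and $Y_1$ — are cocartesian; thus the canonical maps $\colim_{\cube{k-1}_{0\leq k-2}}(Y_i|_{\cube{k-1}_{0\leq k-2}})\to(Y_i)_{\{1,\dots,k-1\}}$ are isomorphisms for $i=0,1$.

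The heart of the argument is then to compute $\colim$ of $Y$ over the punctured cube $\cube{k}_{0\leq k-1}$. The key combinatorial observation is that the punctured cube is a pushout of posets, $\cube{k}_{0\leq k-1}=(\cube{k-1}_{0\leq k-2}\times[1])\cup(\cube{k-1}\times\{0\})$ with intersection $\cube{k-1}_{0\leq k-2}\times\{0\}$, in which the inclusion of the intersection into the first piece is a sieve. By the calculus of homotopy exact squares (\autoref{rmk:derivator}(iii)) this pushout is homotopy exact, so $\colim_{\cube{k}_{0\leq k-1}}Y$ is canonically the pushout of $\colim_{\cube{k-1}\times\{0\}}Y\leftarrow\colim_{\cube{k-1}_{0\leq k-2}\times\{0\}}Y\rightarrow\colim_{\cube{k-1}_{0\leq k-2}\times[1]}Y$, compatibly with the three maps to $Y_{\{1,\dots,k\}}$. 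Cofinality ($\{1,\dots,k-1\}$ is terminal in $\cube{k-1}$, and $P\times\{1\}\hookrightarrow P\times[1]$ is homotopy final for any $P$) identifies the left term with $(Y_0)_{\{1,\dots,k-1\}}$, the middle with $\colim_{\cube{k-1}_{0\leq k-2}}(Y_0|_{\cube{k-1}_{0\leq k-2}})$, and the right with $\colim_{\cube{k-1}_{0\leq k-2}}(Y_1|_{\cube{k-1}_{0\leq k-2}})$; under these identifications the left-hand leg of the span is the canonical comparison map of the cocartesian cube $Y_0$, hence an isomorphism, so the pushout collapses to $\colim_{\cube{k-1}_{0\leq k-2}}(Y_1|_{\cube{k-1}_{0\leq k-2}})$, and the induced map to $Y_{\{1,\dots,k\}}=(Y_1)_{\{1,\dots,k-1\}}$ becomes the canonical comparison map of the cocartesian cube $Y_1$, which is an isomorphism. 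This shows $Y$ is cocartesian and finishes the induction.

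The step I expect to be the main obstacle is this last colimit computation: producing the pushout square for $\colim_{\cube{k}_{0\leq k-1}}Y$ together with its comparison maps purely from homotopy-exactness of the relevant square of small categories and the cofinality facts, and then checking carefully that all the identifications are compatible with the maps to the top vertex, so that the composite genuinely is the canonical comparison map. Everything else is bookkeeping with \autoref{prop:Kan-ff}, (Der2) and (Der4). An alternative is to run the induction inside the shifted derivator $\D^{\cube{k-2}}$ — writing $\cube{k}=\cube{2}\times\cube{k-2}$, the hypothesis that all subsquares in the first two coordinates are cocartesian says precisely that $Y$ is a cocartesian square in $\D^{\cube{k-2}}$, since cocartesianness in a shifted derivator is detected pointwise (by (Der2) together with the fact that evaluation commutes with colimits in shifted derivators) — but identifying ``cocartesian $k$-cube'' with that parametrized statement still needs the same gluing input, so this is really a repackaging rather than a shortcut.
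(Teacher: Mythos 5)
Your filtration reduction and the ``heart'' computation are both sound: the factorization of $\iota_{0,1}$ through the cardinality filtration, the use of (Der4) to identify the slices at $|T|=l+1$ with punctured $T$-cubes, the homotopy-exactness of the pushout of posets $\cube{k}_{0\leq k-1}=(\cube{k-1}_{0\leq k-2}\times[1])\cup(\cube{k-1}\times\{0\})$ via the sieve condition, and the two cofinality facts are all correct, and the inductive lemma you isolate is true. But the bookkeeping that assembles them only establishes one direction of the theorem. Write (R) for ``$X|_{\cube{T}}$ is cocartesian for every $T$ with $|T|\geq 2$'' and (P) for ``every subsquare of $X$ is cocartesian''. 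Your filtration step shows ``strongly cocartesian $\Leftrightarrow$ (R)'', so the theorem is the equivalence (R) $\Leftrightarrow$ (P). Your inductive claim yields (P) $\Rightarrow$ ``all faces of dimension $\geq 2$ are cocartesian'', which in particular gives (P) $\Rightarrow$ (R). The converse (R) $\Rightarrow$ (P), i.e.\ ``strongly cocartesian $\Rightarrow$ every subsquare cocartesian'', is never addressed, and it is not automatic: the $\cube{T}$ with $|T|=2$ are exactly the subsquares through the bottom vertex $\emptyset$, while (P) also concerns the $2$-faces with nonempty base vertex, and those do not occur among the $\cube{T}$. Your claim that the theorem ``becomes'' the equivalence ``(P) $\Leftrightarrow$ all subcubes cocartesian'' is false for the same reason: that equivalence plus (R) does not imply (P).

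The fix is close at hand, because your pushout computation actually proves more than you use. Without assuming $Y_1$ cocartesian, it identifies the comparison map of $Y$ with the comparison map of $Y_1$, granting only that $Y_0$ is cocartesian; so it really proves the pasting/cancellation lemma ``if $Y_0$ is cocartesian, then $Y$ is cocartesian if and only if $Y_1$ is cocartesian'', and you only invoke the direction from $Y_1$ to $Y$. State the lemma in both directions, and the missing implication follows by a secondary induction on $|S_0|$: to see that a face $(S_0,T)$ with $|T\setminus S_0|\geq 2$ and $S_0\neq\emptyset$ is cocartesian, pick $i\in S_0$ and apply the lemma to the face $(S_0\setminus\{i\},T)$, one dimension higher, whose decomposition along the $i$-direction has bottom cube $(S_0\setminus\{i\},T\setminus\{i\})$; the total cube and the bottom cube are cocartesian by the inductive hypothesis, so the top cube $(S_0,T)$ is too, with base case $S_0=\emptyset$ given by (R). So the decomposition and the key computation are right; what is missing is the observation that your computation proves an ``iff'' together with the short argument that uses it in the other direction.
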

\begin{proof}
This is an entirely combinatorial problem and the details can be found in \cite[Thm.~8.4]{gst:basic}.
\end{proof}

\begin{cor}\label{cor:strongly-bicart}
Let \D be a stable derivator and let $n\geq 2$. An $n$-cube in \D is strongly cocartesian if and only if it is strongly cartesian. 
\end{cor}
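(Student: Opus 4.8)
The plan is to reduce \autoref{cor:strongly-bicart} to the combinatorial characterization of strongly (co)cartesian $n$-cubes in \autoref{thm:strongly-cocart} together with the defining property of stability from \autoref{defn:stable}. The key observation is that both ``strongly cocartesian'' and ``strongly cartesian'' have, by \autoref{thm:strongly-cocart} and its dual, purely square-wise characterizations: an $n$-cube $X\in\D(\cube{n})$ is strongly cocartesian if and only if every subsquare of $X$ is cocartesian, and $X$ is strongly cartesian if and only if every subsquare of $X$ is cartesian.

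First I would make precise what ``every subsquare'' means: for each pair of indices $1\le i<j\le n$ and each subset $S\subseteq\{1,\ldots,n\}\setminus\{i,j\}$ there is an injective poset map $\cube{2}\to\cube{n}$ whose image is the square spanned by $S,\ S\cup\{i\},\ S\cup\{j\},\ S\cup\{i,j\}$, and the restriction of $X$ along this map is the corresponding subsquare. (These are exactly the square-shaped ``chunks'' of the cube, and $\iota_{0,1}\colon\cube{n}_{0\le 1}\to\cube{n}$ and the subsquare inclusions all sit inside the cube as full subposets, so the relevant Kan extensions are fully faithful by \autoref{prop:Kan-ff}.) Then I would invoke \autoref{thm:strongly-cocart} and its formal dual to rewrite the two properties in the statement entirely in terms of subsquares.

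The heart of the argument is then immediate: since \D is stable (\autoref{defn:stable}), a square in \D is cocartesian if and only if it is cartesian. Hence for a fixed $n$-cube $X\in\D(\cube{n})$, ``every subsquare of $X$ is cocartesian'' holds if and only if ``every subsquare of $X$ is cartesian'' holds. Combining this equivalence with \autoref{thm:strongly-cocart} on the left and its dual on the right yields that $X$ is strongly cocartesian if and only if it is strongly cartesian, which is the claim. I would also remark that the hypothesis $n\ge 2$ is needed only so that there is a genuine square-wise criterion; for $n=1$ the notions of (strongly) cocartesian and cartesian degenerate and the statement is the content of stability for squares, while for $n=0$ it is vacuous.

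The only subtle point — and the step I would treat most carefully — is that \autoref{thm:strongly-cocart} is stated as an assertion about a single derivator \D that need not be stable, so I should check that its dual genuinely applies: the dual of \autoref{thm:strongly-cocart}, obtained by passing to $\D\op$ as in the opposite-derivator construction, says that $X\in\D(\cube{n})$ is strongly cartesian if and only if every subsquare is cartesian, using that opposites interchange $\iota_{0,1}$-type inclusions with $\iota_{n-1\le n}$-type ones and reverse the cube. This is routine but worth spelling out, since the whole reduction hinges on having the square-wise criterion available on both sides simultaneously. Once that is in place, no further homotopy-theoretic input beyond the definition of stability is required, and the corollary follows formally.
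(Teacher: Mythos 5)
Your argument is correct and is precisely the proof the paper gives: the paper's proof reads ``This follows immediately from \autoref{thm:strongly-cocart} and \autoref{defn:stable},'' and your proposal is a careful spelling-out of that one-liner, including the (correctly flagged) point that one must invoke the formal dual of \autoref{thm:strongly-cocart} via the passage to $\D\op$. Only your aside on $n=1$ is slightly off --- by \autoref{defn:strongly-cocart} every $1$-cube is automatically both strongly cocartesian and strongly cartesian (since $\iota_{0,1}$ is the identity for $n=1$), so the statement would be trivially true there rather than being ``the content of stability for squares''; but this is tangential to the main argument.
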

\begin{proof}
This follows immediately from \autoref{thm:strongly-cocart} and \autoref{defn:stable}.
\end{proof}

We refer to this common class of $n$-cubes as the class of \textbf{strongly bicartesian} $n$-cubes.

\begin{cor}\label{cor:sse-source-sink}
For every fixed $n\geq 2$ the source and the sink of of valence $n$ are strongly stably equivalent,
\[
\cube{n}_{0\leq 1}\sse\cube{n}_{n-1\leq n}.
\]
\end{cor}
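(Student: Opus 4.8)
This is the higher-dimensional analogue of \autoref{eg:sse-A3-1} (which is the case $n=2$, where $\cube{2}_{0\leq 1}=\ulcorner$ and $\cube{2}_{1\leq 2}=\lrcorner$), and the plan is to transcribe that argument essentially verbatim, with \autoref{cor:strongly-bicart} playing the role of \autoref{defn:stable}. Fix a stable derivator $\D$. By \autoref{prop:Kan-ff} the Kan extension morphisms
\[
(\iota_{0,1})_!\colon\D^{\cube{n}_{0\leq 1}}\to\D^{\cube{n}}\qquad\text{and}\qquad(\iota_{n-1\leq n})_\ast\colon\D^{\cube{n}_{n-1\leq n}}\to\D^{\cube{n}}
\]
are fully faithful, hence induce equivalences onto their essential images. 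By \autoref{defn:strongly-cocart} and its dual, these essential images are precisely the full subderivators of $\D^{\cube{n}}$ spanned by the strongly cocartesian and by the strongly cartesian $n$-cubes, respectively. (That these full subprederivators are genuine derivators can be seen as in \autoref{eg:sse-A3-1}; alternatively, by \autoref{thm:strongly-cocart} strong cocartesianness is equivalent to the exactness condition that every subsquare be cocartesian, which is manifestly preserved by restrictions, so one may also simply transport the derivator structure of $\D^{\cube{n}_{0\leq 1}}$ along the fully faithful $(\iota_{0,1})_!$.)

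The decisive point is \autoref{cor:strongly-bicart}: since $\D$ is stable, these two subderivators coincide, and we write $\D^{\cube{n},\mathrm{sbc}}\subseteq\D^{\cube{n}}$ for their common value, the subderivator of strongly bicartesian $n$-cubes. We thus obtain equivalences of derivators
\[
(\iota_{0,1})_!\colon\D^{\cube{n}_{0\leq 1}}\toiso\D^{\cube{n},\mathrm{sbc}}\qquad\text{and}\qquad(\iota_{n-1\leq n})_\ast\colon\D^{\cube{n}_{n-1\leq n}}\toiso\D^{\cube{n},\mathrm{sbc}},
\]
with inverses given by the restriction morphisms $(\iota_{0,1})^\ast$ and $(\iota_{n-1\leq n})^\ast$ (the units and counits being invertible because $\iota_{0,1}$ and $\iota_{n-1\leq n}$ are fully faithful). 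Composing them yields, for every stable derivator $\D$, an equivalence of derivators
\[
\Phi_\D=(\iota_{n-1\leq n})^\ast\circ(\iota_{0,1})_!\colon\D^{\cube{n}_{0\leq 1}}\toiso\D^{\cube{n}_{n-1\leq n}}.
\]

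It remains to assemble the $\Phi_\D$ into a strong stable equivalence in the sense of \autoref{defn:sse}, i.e., to produce the coherent isomorphisms $\gamma_F$ for exact morphisms $F\colon\D\to\E$. But $\Phi_\D$ is built only from restriction morphisms and the single left Kan extension morphism $(\iota_{0,1})_!$ along a functor of homotopy finite posets, and by \autoref{rmk:ex-vs-Kan} exact morphisms of stable derivators commute, up to canonical isomorphism, with both of these; pasting these canonical isomorphisms gives the required $\gamma_F$, whose coherence is routine to check. Hence $\Phi\colon(-)^{\cube{n}_{0\leq 1}}\simeq(-)^{\cube{n}_{n-1\leq n}}$ is a pseudo-natural equivalence on $\cDER_{\mathrm{St},\mathrm{ex}}$, which is exactly the asserted strong stable equivalence $\cube{n}_{0\leq 1}\sse\cube{n}_{n-1\leq n}$. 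I do not expect a genuinely hard step here: the real content sits in \autoref{thm:strongly-cocart} and \autoref{cor:strongly-bicart}, and the only mild subtlety is confirming that $\D^{\cube{n},\mathrm{sbc}}$ really is a derivator, which is handled by the subsquare characterization (or by transport of structure along $(\iota_{0,1})_!$) mentioned above.
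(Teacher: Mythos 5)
Your proof is correct and is exactly the adaptation the paper has in mind: the paper's own proof of this corollary literally says to transcribe \autoref{eg:sse-A3-1} with \autoref{cor:strongly-bicart} replacing \autoref{defn:stable}, which is what you did. The details you supply about pseudo-naturality via \autoref{rmk:ex-vs-Kan} and about $\D^{\cube{n},\mathrm{sbc}}$ being a derivator are sound and fill in exactly what the paper leaves to the reader.
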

\begin{proof}
It is straightforward to adapt the proof of \autoref{eg:sse-A3-1} to our current situation (invoking \autoref{cor:strongly-bicart} instead of \autoref{defn:stable}).
\end{proof}

\begin{figure}
\begin{displaymath}
\xymatrix{
\emptyset\ar[r]&\cube{n}_{=0}\ar[r]&\cube{n}_{0\leq 1}\ar[r]&\ldots\ar[r]&\cube{n}_{0\leq n-1}\ar[r]&\cube{n}\\
&\emptyset\ar[r]\ar[u]&\cube{n}_{=1}\ar[r]\ar[u]&\ldots\ar[r]\ar[u]&\cube{n}_{1\leq n-1}\ar[r]\ar[u]&\cube{n}_{1\leq n}\ar[u]\\
&&\ldots\ar[u]\ar[r]&\ldots\ar[u]\ar[r]&\ldots\ar[u]\ar[r]&\ldots\ar[u]\\
&&&\ldots\ar[u]\ar[r]&\cube{n}_{=n-1}\ar[r]\ar[u]&\cube{n}_{n-1\leq n}\ar[u]\\
&&&&\emptyset\ar[r]\ar[u]&\cube{n}_{=n}\ar[u]\\
&&&&&\emptyset\ar[u]
}
\end{displaymath}
\caption{The cardinality filtration of the $n$-cube $\cube{n}$}
\label{fig:chunks}
\end{figure}
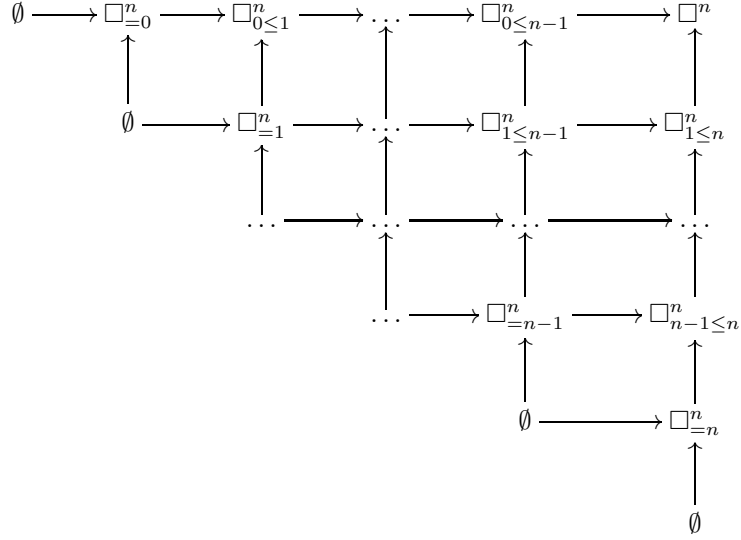

\begin{rmk}
The way we have presented \autoref{cor:sse-source-sink}, it is an immediate consequence of the calculus of $n$-cubes in stable derivators. This cubical calculus was developed more systematically in \cite{bg:cubical}, and here we want to mention some of the key results. Using suggestive notation (compare to \autoref{notn:chunks}), the cardinality filtration of the $n$-cube 
\[
\xymatrix{
\emptyset\ar[r]&\cube{n}_{=0}\ar[r]&\cube{n}_{\leq 1}\ar[r]&\ldots\ar[r]&\cube{n}_{\leq n-1}\ar[r]&\cube{n}
}
\]
leads to an interpolation between cocartesian and strongly cocartesian $n$-cubes. The differences between these individual filtration layers are given by the full subposets $\cube{n}_{k\leq l}\subseteq \cube{n}, 0\leq k\leq l\leq n,$ which we refer to as \textbf{chunks of $n$-cubes}. The chunks and the inclusion functors between them are neatly organized by \autoref{fig:chunks}. In \emph{loc.~cit.} the authors study in detail the calculus of iterated partial cofiber constructions of $n$-cubes and their interaction with functors in \autoref{fig:chunks}. As a generalization of \autoref{cor:sse-source-sink} there is a strong stable equivalence
\[
\cube{n}_{k\leq l}\sse\cube{n}_{n-l\leq n-k};
\]
see \cite[Thm.~10.15]{bg:cubical}. Moreover, for every fixed $n$ these strong stable equivalences for the various $0\leq k\leq l\leq n$ are suitably compatible with each other \cite[Thm.~12.15]{bg:cubical}.

An additional interesting symmetry comes from a further incarnation of Serre equivalences. For every $n\geq 1$ and every $0\leq k\leq l\leq n$ there is a \textbf{Serre equivalence}
\[
S=S_{k,l}\colon\cube{n}_{k\leq l}\sse\cube{n}_{k\leq l}.
\]
Again, also these Serre equivalences turn out to be compatible with the morphisms between the chunks. More specifically, in this situation, for every inclusion functor $i\colon\cube{n}_{k'\leq l'}\to\cube{n}_{k\leq l}$ in \autoref{fig:chunks} there is a canonical isomorphism
\[
S_{k,l}\circ i_!\cong i_\ast\circ S_{k',l'};
\]
see \cite[Thm.~12.6]{bg:cubical}. To put this into plain English, in this case left and right Kan extensions match up to a conjugation by Serre equivalences. As a special case (\cite[Cor.~12.12]{bg:cubical}), for every stable derivator \D there are canonical isomorphisms
\[
\colim_{\cube{n}_{k\leq l}}\cong\mathrm{lim}_{\cube{n}_{k\leq l}}\circ S_{k,l}\colon\D^{\cube{n}_{k\leq l}}\to\D.
\]

These compatibilities of local Serre equivalences with Kan extensions are the starting point of an on-going project on \textbf{global Serre dualities}. Jointly with Falk Beckert we intend to come back to this more systematically elsewhere.
\end{rmk}

Let us now return to the case of acyclic quivers. The approach to reflection functors for acyclic quivers in stable derivators taken in \cite{gst:basic,gst:tree,gst:acyclic} relies largely on the special case in \autoref{cor:sse-source-sink}. More precisely, the classical construction of reflection functors uses finite biproduct in vector spaces (\autoref{con:reflection}). In stable derivators finite biproducts and all the related inclusion and projection maps can be organized in suitable cubical diagrams. For instance, in the case of two objects $x,y\in\D(\ast)$ in the underlying category of a stable derivator \D, there is the associated diagram
 \[
 \xymatrix{
 0\ar[r]\ar[d]\ar@{}[rd]|{\square}&x\ar[r]\ar[d]\ar@{}[rd]|{\square}&0\ar[d]\\
 y\ar[r]\ar[d]\ar@{}[rd]|{\square}&x\oplus y\ar[r]\ar[d]\ar@{}[rd]|{\square}&y\ar[d]\\
 0\ar[r]&x\ar[r]&0,
 }
 \]
 in which all squares are bicartesian. Given a finite acyclic quiver and a source, such cubes can be glued to the quiver in order to then mimic the construcion of reflection functors in \autoref{con:reflection}. One of the main results of \cite{gst:acyclic} is the following.

\begin{thm}\label{thm:sse-reflection}
Let $Q$ be a finite acyclic quiver, let $q_0\in Q$ be a source, and let $Q'=\sigma_{q_0}Q$ be the reflected quiver. The reflection functors define a strong stable equivalence
\[
(s^-_{q_0},s^+_{q_0})\colon Q\sse Q'.
\]
\end{thm}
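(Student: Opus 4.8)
The plan is to follow the blueprint established in \S\ref{subsec:Dynkin-A} for $A_n$-quivers, replacing the mesh category $M_n$ by a suitable ``reflection gadget'' adapted to a general source $q_0$ of a finite acyclic quiver $Q$. The key observation is that reflecting at a single source $q_0$ only affects the edges adjacent to $q_0$, so the construction should be local: away from the star of $q_0$ the representation is left untouched, and at $q_0$ we replace the outgoing span $(M_{q_1}\leftarrow M_{q_0}\to\cdots)$ of \autoref{con:reflection} by the cubical incarnation of a biproduct-and-cofiber diagram, exactly as in the two-object diagram displayed just before \autoref{thm:sse-reflection} but now with $n$ legs, where $n$ is the valence of $q_0$. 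The relevant exactness condition on the enlarged shape is that the glued-in cube is strongly bicartesian (\autoref{cor:strongly-bicart}), and stability is what lets us read this condition as either ``strongly cocartesian'' (the source-of-valence-$n$ picture, computing the cokernel $s^-_{q_0}$) or ``strongly cartesian'' (the sink-of-valence-$n$ picture, computing the kernel $s^+_{q_0}$).

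Concretely, first I would build an auxiliary poset $P=P_{Q,q_0}$ containing both $Q$ and $Q'=\sigma_{q_0}Q$ as full subposets, obtained from $Q$ by adjoining the ``free'' part of an $n$-cube glued along the star of $q_0$ (so that $\iota_{0,1}\colon\cube n_{0\le 1}\to\cube n$ sits inside $P$ over the star of $q_0$, and symmetrically for $\iota_{n-1\le n}$). Then I would exhibit the inclusions $i_Q\colon Q\to P$ and $i_{Q'}\colon Q'\to P$ and define $\D^{P,\mathrm{ex}}\subseteq\D^P$ as the full subderivator of diagrams which vanish on the newly-added ``zero corners'' and make the glued cube strongly bicartesian. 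Following the pattern of the proof of \autoref{thm:AR}, I would factor each of $i_Q$ and $i_{Q'}$ as a composite of fully faithful inclusions along which the Kan extension morphisms (left or right as appropriate) either add zero objects or add strongly (co)cartesian cubes; \autoref{prop:Kan-ff} guarantees each step is fully faithful, and a homotopy (co)finality argument identifies the essential images. The upshot is equivalences $(i_Q)^\ast\colon\D^{P,\mathrm{ex}}\toiso\D^Q$ and $(i_{Q'})^\ast\colon\D^{P,\mathrm{ex}}\toiso\D^{Q'}$, and the strong stable equivalence is then $\Phi_\D=(i_{Q'})^\ast\circ F_Q$ where $F_Q$ is the inverse of $(i_Q)^\ast$. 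Since only restrictions and homotopy-finite Kan extensions are used, pseudo-naturality with respect to exact morphisms follows from \autoref{rmk:ex-vs-Kan}, and the identification of the underlying functors with (derived) reflection functors $s^-_{q_0},s^+_{q_0}$ in the sense of \autoref{con:reflection} is a matter of unwinding the cube (the cokernel at $q_0$ appears as the far corner of a strongly cocartesian cube, cf.\ \autoref{thm:strongly-cocart}).

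The main obstacle I expect is purely combinatorial bookkeeping: because $Q$ is an arbitrary finite acyclic quiver (not just an $A_n$-quiver), the star of $q_0$ can meet the rest of $Q$ in complicated ways, and one must check that gluing the $n$-cube along the star genuinely produces a \emph{poset} (no accidental relations, no cycles created) and that the factorizations of $i_Q$ and $i_{Q'}$ into ``add zeros'' and ``add strongly bicartesian cubes'' steps can be arranged so that the intermediate Kan extension morphisms are computed by the pointwise formulas in the expected way — this is where acyclicity of $Q$ is really used, to ensure the slice categories governing (Der4) are under control. A secondary point requiring care is that the adjunction $(s^-_{q_0},s^+_{q_0})$ claimed in the statement must be shown to be an adjunction, i.e.\ that $s^+_{q_0}$ really is the inverse equivalence coming from the sink-of-valence-$n$ picture; this should come out of the fact that $F_Q$ and $F_{Q'}$ are built from mutually adjoint Kan extension morphisms, but verifying the triangle identities explicitly (or citing \autoref{prop:detect-adj}) is the remaining loose end. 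For the details I would refer the reader to \cite[\S\S3--4]{gst:acyclic}, where the necessary representation-theoretic machinery (one-point extensions and homotopical epimorphisms) is developed to make these gluings precise.
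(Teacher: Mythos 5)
Your proposal tracks the expository sketch that the paper gives immediately before the theorem (glue a biproduct cube onto the quiver at the source, use strongly bicartesian cubes, invoke \autoref{cor:strongly-bicart}), and you correctly flag most of the places where the combinatorics get serious. But two things are worth calling out. First, the paper's ``proof'' is literally a citation to \cite[Thm.~10.3, Thm.~9.11]{gst:acyclic}, and the surrounding text is explicit that the technology developed there is \emph{not} a direct rerun of the mesh-category argument of \autoref{thm:AR}: the paper says that gluing is unavailable in $\cDER_{\mathrm{St,ex}}$ and that one ``sails around this defect by more involved combinatorial arguments'' via one-point extensions and homotopical epimorphisms. That is a genuinely different route from your ``build one big ambient shape $P$, carve out a common subderivator $\D^{P,\mathrm{ex}}$, and restrict'' — the actual argument in \cite{gst:acyclic} manipulates $\D^{A_{\mathrm{source}}}$ and $\D^{A_{\mathrm{sink}}}$ relative to $\D^A$ rather than inside a shared ambient $\D^P$. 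Your approach may well reproduce the result for a simple enough source, but it is not what the cited proof does, and the paper's wording signals that the naive gluing runs into trouble.

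Second, there is a concrete wrinkle in your description of the gadget. You build $P$ ``by adjoining the free part of an $n$-cube glued along the star of $q_0$'' and worry about whether it is a poset. That framing is too restrictive from the start: a finite acyclic quiver with parallel edges does not generate a poset, and the remark immediately after the theorem stresses that the objects $q_1,\dots,q_n$ adjacent to $q_0$ need not even be pairwise distinct — in which case the ``glue singletons to $q_i$'' prescription is ill-posed as stated. Relatedly, for $n\geq 3$ the embeddings $i_Q$ and $i_{Q'}$ both skip the intermediate cardinality layers of $\cube{n}$, and the slice categories controlling the intermediate Kan extensions mix cube vertices with arbitrary zig-zags in $Q^o$; this is not a cosmetic bookkeeping problem but precisely where the one-point-extension and homotopical-epimorphism machinery earns its keep. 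You do acknowledge both points as loose ends and correctly defer to \cite{gst:acyclic}, which is the right instinct — just be aware that the cited proof does not follow your ``common home'' blueprint, and a fully worked-out version of your sketch would at minimum need to replace ``poset'' by ``small category'', handle coincidences among the $q_i$, and justify the claimed factorizations of $i_Q$, $i_{Q'}$ into steps that add only zeros or only strongly bicartesian cubes.
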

\begin{proof}
We refer the reader to \cite{gst:acyclic} for a detailed construction of these reflection functors. The theorem appears as \cite[Thm.~10.3]{gst:acyclic} as a specializiation of a more general result (\cite[Thm.~9.11]{gst:acyclic}).
\end{proof}

\begin{cor}
Let $T$ be a finite oriented tree and let $T'$ be an arbitrary reorientation of $T$. There is a strong stable equivalence
\[
T\sse T'.
\]
\end{cor}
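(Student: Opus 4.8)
The plan is to deduce the corollary from \autoref{thm:sse-reflection} together with the closure properties of strong stable equivalences in \autoref{lem:closure}, using a purely combinatorial induction on the edges of the tree. The key point is that any reorientation of a finite tree can be reached from the original by a finite sequence of reflections at sources. More precisely, first I would observe that a finite quiver whose underlying graph is a tree is automatically acyclic, so \autoref{thm:sse-reflection} applies to every orientation of $T$ that we encounter along the way.

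The heart of the argument is the following combinatorial claim: if $T$ and $T'$ are two orientations of the same finite tree, then there is a finite sequence of quivers $T = T_0, T_1, \ldots, T_m = T'$ such that each $T_{i+1}$ is obtained from $T_i$ by reflecting at a source $q_i \in T_i$. Granting this, \autoref{thm:sse-reflection} gives strong stable equivalences $T_i \sse T_{i+1}$ for each $i$, and since $\sse$ is an equivalence relation (\autoref{lem:closure}) we compose them to obtain $T \sse T'$. To prove the combinatorial claim I would induct on the number $d(T,T')$ of edges on which $T$ and $T'$ disagree. If $d(T,T') = 0$ there is nothing to do. Otherwise, pick an edge $e$ on which the two orientations differ; since $T$ is a tree, removing $e$ disconnects it into two components, and one of them, say the one not containing the endpoint that is the source of $e$ in $T'$, call it $C$. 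One checks that within $C$ there is a vertex $q$ which is a source of $T\!\mid_C$ with the property that successively reflecting $T$ at a chain of sources inside $C$ eventually reverses $e$ without disturbing the edges outside $C$ on which $T$ and $T'$ already agree; a clean way to organize this is to process the vertices of $C$ in an order compatible with the orientation of $T'$ restricted to $C$ (a topological sort, which exists since $T'\!\mid_C$ is acyclic), reflecting at each vertex once it has become a source. After this finite batch of reflections we arrive at an orientation $T''$ with $d(T'',T') < d(T,T')$, and the induction closes.

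Alternatively, and perhaps more transparently, one can argue by induction on the number of edges of $T$ directly: remove a leaf edge, reorient the smaller tree by the inductive hypothesis via a sequence of source reflections (each of which extends to the full tree since the leaf vertex can always be appended without affecting which interior vertices are sources), and then handle the single leaf edge by at most one final reflection at the leaf or its neighbour. Either way, the only genuine content beyond bookkeeping is the existence of such a sequence of source reflections connecting any two orientations of a tree, which is a standard fact in the representation theory of quivers (it underlies the theory of Coxeter transformations, cf.\ \cite{bernstein-gelfand-ponomarev:Coxeter}). The main obstacle is thus not homotopy-theoretic at all — everything on the derivator side is supplied by \autoref{thm:sse-reflection} and \autoref{lem:closure} — but rather making the reflection-sequence induction precise while keeping track of which edges have been fixed; the tree structure is exactly what makes this manageable, since it guarantees that local reorientations can be performed independently on the components cut out by an edge.
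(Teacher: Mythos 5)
Your proposal is correct and follows the same route as the paper: reduce to the purely combinatorial fact that any reorientation of a finite tree is reachable by a finite sequence of reflections at sources and sinks (which the paper, like you, attributes to Bern{\v{s}}te{\u\i}n--Gel$'$fand--Ponomarev), then apply \autoref{thm:sse-reflection} edge by edge and compose using the closure properties of $\sse$. Your two sketched inductions are just unwound versions of that cited combinatorial lemma, so there is no substantive difference.
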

\begin{proof}
It is a purely combinatorial argument that a reorientation of a finite tree can be obtained by a finite number of reflections at sources and sinks (\cite[Thm.~1.2(1)]{bernstein-gelfand-ponomarev:Coxeter}). The claim hence follows from \autoref{thm:sse-reflection}.
\end{proof}

In fact, there is also a converse to this result as follows from a specialization to the derivator $\D_k$ of a field $k$. 

\begin{rmk}
\autoref{thm:sse-reflection} is obtained by specialization of the following more general result. Let $A$ be a small category and let $q_1,\ldots, q_n\in A$ be finitely many, not necessarily pairwisely different objects. The two categories which are obtained from $A$ by freely gluing a source or a sink of valence $n$ to the given objects are strongly stable equivalent \cite[Thm.~9.11]{gst:acyclic}. In particular, this shows that a variant of \autoref{thm:sse-reflection} also works for quivers which fail to be finite or acyclic.
\end{rmk}

In contrast to the case of trees, for acyclic quivers the underlying graphs to not determine the strong stable equivalence type. To illustrate this, we revisit the following classical example of orientations $Q$ of an unoriented $n$-cycle
\[
\xymatrix{
&& n \\
1 \ar@{-}[r] \ar@{-}[urr] & 2 \ar@{-}[r] & \cdots \ar@{-}[r] & n-2 \ar@{-}[r] & n-1. \ar@{-}[ull]
}
\]
Such quivers are called \textbf{Euclidean} or \textbf{extended Dynkin} quivers of type $\widetilde{A}_{n-1}$ \cite{ringel:tame-alg,simson-skowronski:2}. For every orientation $Q$ of the $n$-cycle we write $c(Q) = \{p,q\}$, if $p$ arrows in $Q$ are oriented clockwise and $q$ are oriented counterclockwise. 

\begin{cor}\label{cor:sse-cycle}
If $Q$ and $Q'$ are $n$-cycles with $c(Q)=c(Q')$, then there is a strong stable equivalence
\[
Q\sse Q'.
\]
\end{cor}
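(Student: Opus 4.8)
The plan is to reduce \autoref{cor:sse-cycle} to the reflection-functor results for acyclic quivers (\autoref{thm:sse-reflection} and its corollary on trees) together with some elementary combinatorics of orientations of the $n$-cycle. The key observation is that while an $n$-cycle is not acyclic as a whole, we may cut it open: if $Q$ is an orientation of the $n$-cycle and $e$ is any edge of $Q$, then deleting $e$ yields an orientation of the linear $A_n$-quiver, which is a tree. The invariant $c(Q)=\{p,q\}$ (with $p+q=n$) records how many of the $n$ edges point in each of the two circular directions, and it is exactly the data that survives reorientation at sources and sinks — reflecting at an interior source or sink reverses two adjacent edges, one of which was clockwise and one counterclockwise, so $c(Q)$ is unchanged; reflecting at a source or sink that happens to be adjacent to the edge $e$ we cut is handled by the same bookkeeping once we track what happens around the full cycle.

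First I would set up the combinatorial core: show that any two orientations $Q$ and $Q'$ of the $n$-cycle with $c(Q)=c(Q')$ are related by a finite sequence of reflections at sources and sinks of the cyclic quiver. The clean way to see this is to note that an orientation of the cycle with $c(Q)=\{p,q\}$ is determined, up to rotation of the cycle, by nothing at all when $p=0$ or $q=0$ (the two cyclic orientations), and otherwise the clockwise and counterclockwise edges partition the cycle into maximal ``arcs'' all pointing the same way; a source of $Q$ is a vertex where a clockwise arc meets a counterclockwise arc, a sink likewise. Reflecting at such a vertex merges or shifts these arcs, and a standard argument (essentially the cyclic analogue of the fact that reorientations of a tree are generated by reflections, \cite[Thm.~1.2(1)]{bernstein-gelfand-ponomarev:Coxeter}) shows that one can move between any two orientations with the same $\{p,q\}$ by such moves while never creating an oriented cycle at any intermediate stage. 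I would record this as a lemma, proved purely at the level of quivers.

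Next I would promote each elementary reflection to a strong stable equivalence. Here the point is that \autoref{con:reflect-quiver} and \autoref{con:reflection} and hence \autoref{thm:sse-reflection} do not actually require the quiver to be acyclic — as noted in the remark following that theorem, there is a variant of \autoref{thm:sse-reflection} for quivers that fail to be finite or acyclic, obtained as a specialization of \cite[Thm.~9.11]{gst:acyclic} on freely gluing a source or sink of valence $n$ to finitely many objects of a small category $A$. Applying this with $A$ the path category of the linear $A_{n-1}$-quiver obtained by deleting the two edges at the reflected vertex, and gluing back a source (resp.\ sink) of valence two, realizes each elementary reflection $\sigma_{q_0}$ as a strong stable equivalence $(s^-_{q_0},s^+_{q_0})\colon Q\sse \sigma_{q_0}Q$. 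Composing these along the sequence of reflections from the lemma, and using that $\sse$ is an equivalence relation (\autoref{lem:closure}), gives $Q\sse Q'$.

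The main obstacle I expect is the cyclic combinatorics in the first step: unlike the tree case, one must be careful that the reflections can be performed \emph{within} the class of orientations of the $n$-cycle without either (a) changing $c(Q)$ or (b) needing the cyclic quiver to be acyclic at the intermediate stage — and in fact an orientation of an honest cycle with both $p>0$ and $q>0$ is never acyclic, so one genuinely needs the non-acyclic version of the reflection theorem, which is why invoking \cite[Thm.~9.11]{gst:acyclic} rather than \autoref{thm:sse-reflection} directly is essential. Once the lemma is in place the rest is formal. A secondary point worth checking is the boundary cases $c(Q)=\{n,0\}$ or $\{0,n\}$ (the two genuinely cyclic orientations): here $Q$ has no sources or sinks at all, so no reflections are available, but then $c(Q)=c(Q')$ forces $Q'$ to be the same cyclic orientation up to a rotation of the vertices, i.e.\ $Q\cong Q'$ as quivers, whence $Q\sse Q'$ by \autoref{lem:closure}(ii).
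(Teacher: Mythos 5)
Your overall strategy---a combinatorial lemma showing that orientations of the $n$-cycle with the same $c$-invariant are connected by reflections at sources/sinks, followed by an application of the reflection theorem and closure of $\sse$ under composition---is correct, and is very likely the content of the cited \cite[Prop.~10.5]{gst:acyclic} (the paper itself offers no independent proof here, only the reference). However, there is a factual error in the middle of your argument that sends you down a needlessly convoluted path.

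You assert that ``an orientation of an honest cycle with both $p>0$ and $q>0$ is never acyclic.'' This is backwards. In the representation-theoretic sense used throughout the paper (and in the hypothesis of \autoref{thm:sse-reflection}), a quiver is acyclic when it has no \emph{oriented} cycle. The underlying graph of the $n$-cycle has exactly one cycle, so a directed cycle in the quiver would have to traverse all $n$ edges coherently, which happens if and only if every arrow points the same way around. Thus the orientations with $p>0$ and $q>0$ are precisely the \emph{acyclic} ones, and the genuinely non-acyclic orientations are exactly the boundary case $c(Q)=\{n,0\}$. Consequently \autoref{thm:sse-reflection} applies directly whenever $p,q>0$: such a $Q$ is finite and acyclic, hence has a source $q_0$ by the usual argument for finite DAGs, and the reflected quiver $\sigma_{q_0}Q$ is again an orientation of the $n$-cycle with the same $c$-value, hence again finite acyclic. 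The ``main obstacle'' you flag---that intermediate stages of the reflection sequence might fail to be acyclic---never arises, and the detour through the gluing result \cite[Thm.~9.11]{gst:acyclic} is logically sound (it subsumes \autoref{thm:sse-reflection}) but unnecessary. Your treatment of the boundary case $c(Q)=\{n,0\}$ is correct: there $Q$ is the oriented cycle, which indeed has no sources or sinks, and $c(Q)=c(Q')$ forces $Q\cong Q'$ (the two cyclic orientations are interchanged by the quiver isomorphism reversing the cycle), so $Q\sse Q'$ by \autoref{lem:closure}. One small slip worth fixing in your second step: the category $A$ in the one-point-extension framing should be the $A_{n-1}$-path obtained by deleting the \emph{vertex} $q_0$ together with its two edges, not merely the two edges (otherwise an isolated vertex remains).
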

\begin{proof}
This is \cite[Prop.~10.5]{gst:acyclic}.
\end{proof}

Again, the case of the derivator of a field can be used to show that the converse implication also holds, and $c(Q)$ hence determines the strong stable equivalence type of an $n$-cycle $Q$.

\begin{rmk}
Ladkani studied additional interesting examples of reflection functors for representations of posets with values in arbitrary abelian categories. For these universal derived equivalences we refer to \cite{ladkani:posets,ladkani:posets-tilting,ladkani:posets-cluster-tilting}. Dyckerhoff, Jasso, and Walde \cite{dycker-jasso-walde:BGP} obtain a general construction of reflection functors in stable $\infty$-categories which also generalizes the examples of Ladkani to the homotopical setting.
\end{rmk}

\subsection{Digression: monoidal derivators}
\label{subsec:monoidal}

In this subsection we include a digression on monoidal derivators, the calculus of tensor products of functors, and enriched derivators. We briefly discuss the universality of the derivator of spectra and the resulting canonical enrichment of stable derivators over spectra \cite{heller:htpythies,heller:stable,cisinski:derived-kan,tabuada:universal-invariants,cisinski-tabuada:non-connective,cisinski-tabuada:non-commutative}. These techniques are central to the construction of universal tilting modules in \S\ref{subsec:modules}. 

In this subsection we follow the notional conventions from \cite{gps:additivity} and also refer to that paper for a more detailed discussion of monoidal and stable, monoidal derivators. For basics on monoidal categories see for instance \cite[\S VII]{maclane}.

The starting point of this theory is the following elementary observation.

\begin{rmk}
The $2$-category $\cDER$ of derivators admits ($2$-)products. In more detail, given two derivators $\D_1$ and $\D_2$ the pointwise product
\[
\D_1\times \D_2\colon \cCat\op\to\cCAT\colon A\mapsto \D_1(A)\times\D_2(A)
\]
is again a derivator. The derivator $\D_1\times\D_2$ enjoys the universal property of a product.
\end{rmk}

This simple observation allows us to speak about pseudo-monoid objects and their pseudo-actions in the world of derivators. For instance, a first attempt to define a monoidal derivator would be as a pseudo-monoid object in $\cDER$, i.e., a derivator \V which is endowed with coherently associative and unital multiplications $\otimes\colon\V\times\V\to\V$. Such a pseudo-monoid structure is equivalent to a lift of \V to the $2$-category of monoidal categories, strong monoidal functors and monoidal transformations. However, since we are mostly interested in monoidal structures which are compatible with colimits, we want to build this into the basic notion. Consequently, we begin by making this compatibility precise and, with later applications in mind, we immediately consider arbitrary morphisms of two variables.

\begin{rmk}\label{rmk:2-var-int-ext}
Let $\D_1,\D_2,$ and $\D_3$ be derivators. A \textbf{morphism of two variables} is a morphism of derivators $\otimes\colon\D_1\times\D_2\to\D_3$. By definition of the product $\D_1\times\D_2$ such a morphism has components
\begin{equation}\label{eq:int}
\otimes_A\colon \D_1(A)\times\D_2(A)\to \D_3(A),\qquad A\in\cCat.
\end{equation}
These components are related by pseudo-naturality isomorphisms. In particular, given diagrams $X\in\D_1(A)$ and $Y\in\D_2(A)$, there are canonical isomorphisms
\[
(X\otimes_A Y)_a\cong X_a\otimes Y_a,\qquad a\in A.
\] 
We refer to this description of a morphism of two variables as the \textbf{internal version}. 

Given two categories $A$ and $B$, we can restrict diagrams along the projections $A\times B\to A$ and $A\times B\to B$ and then apply $\otimes_{A\times B}$. This way we obtain functors
\begin{equation}\label{eq:ext}
\otimes\colon\D_1(A)\times\D_2(B)\to\D_3(A\times B),\qquad A,B\in\cCat,
\end{equation}
which again are endowed with suitable coherence isomorphisms. For instance, for diagrams $X\in\D_1(A)$ and $Y\in\D_2(B)$, these specialize to canonical isomorphisms
\[
(X\otimes Y)_{(a,b)}\cong X_a\otimes Y_b,\qquad a\in A, b\in B.
\]
 
One checks that $\otimes\colon\D_1\times\D_2\to\D_3$ can be recovered from this datum by also invoking restrictions along diagonal functors $\Delta\colon A\to A\times A$. In fact, these two descriptions are equivalent \cite[Theorem~3.11]{gps:additivity}, and we refer to the second one as the \textbf{external version} of the morphism of two variables. Note that we distinguish these two versions notationally. 
\end{rmk}

One point of this external reformulation is that it allows for the following simple definition of cocontinuity. 

\begin{defn}\label{defn:2-var-cocont}
Let $\D_1,\D_2$ and $\D_3$ be derivators and let $\otimes\colon\D_1\times\D_2\to\D_3$ be a morphism of two variables. The morphism $\otimes$ \textbf{preserves left Kan extensions along $u\colon A_1\to A_2$ in the first variable} if the canonical morphisms
\begin{equation}\label{eq:first}
(u\times 1)_! (X \otimes Y) \toiso u_! (X) \otimes Y,\qquad X\in\D_1(A_1), Y\in\D_2(B),
\end{equation}
are isomorphisms. 
\end{defn}

This notion has various relevant variants. In particular, we say that $\otimes$ is \textbf{cocontinuous in the first variable} if \eqref{eq:first} are invertible for all $u\colon A_1\to A_2$. Similarly to \cite[Prop.~2.3]{groth:ptstab}, one only has to verify that colimits are preserved. With this preparation we now make the following definition.

\begin{defn}
A \textbf{monoidal derivator} is a pseudo-monoid object $(\V,\otimes,\lS)$ in \cDER such that the monoidal structure $\otimes\colon\V\times\V\to\V$ preserves left Kan extensions in both variables separately.
\end{defn}

Just to stress this, our convention is that a monoidal derivator is obtained by categorifying the notion of a monoid \emph{and} imposing a cocontinuity condition. There are obvious variants of \textbf{braided} or \textbf{symmetric monoidal derivators} (see also \cite[\S XI]{maclane} for the classical case).

\begin{egs}\label{egs:monoidal}
There are the following expected examples of monoidal derivators.
\begin{enumerate}
\item Let $\cV$ be a complete and cocomplete category together with a monoidal structure $\otimes\colon\cV\times\cV\to\cV$ which preserves colimits in both variables separately (this is for example the case when $\cV$ is closed monoidal). Then the represented derivator $y_\cV$ inherits a monoidal structure. In the external version this monoidal structure sends diagrams $X\colon A\to\cV$ and $Y\colon B\to\cV$ to 
\[
(X\otimes Y)_{(a,b)}= X_a\otimes Y_b,\qquad a\in A, b\in B.
\]
\item The homotopy derivator of a monoidal model category is a monoidal derivator. For a more general statements related to Quillen adjunctions of two variables see \cite[Example~3.23]{gps:additivity}. 
\item Let us again consider a commutative ring $R$. The category $\Ch(R)$ of unbounded chain complexes over~$R$ endowed with the projective model structure \cite[\S2.3]{hovey:model} yields the derivator $\D_R$ of the ring. With respect to the usual tensor product $\otimes_R$ of chain complexes, $\Ch(R)$ is a stable, symmetric monoidal model category. We conclude that the derivator $\D_R$ together with the derived tensor product is stable and symmetric monoidal. This applies, in particular, to the derivator $\D_k$ of a field~$k$.
\item There are various Quillen equivalent stable, symmetric monoidal closed model categories of spectra such as the ones in \cite{hss:symmetric,ekmm:rings,mmss:diagram}. Taking any of these as a model for the derivator of spectra \cSp, we conclude that \cSp endowed with the derived smash product is a stable, symmetric monoidal derivator. Similarly, also the derivators \cS and $\cS_\ast$ are symmetric monoidal. (One can show that there is an intrinsic approach to these monoidal structures based on universal properties of the derivators under consideration.)
\end{enumerate}
\end{egs}

Besides the internal and the external versions of morphisms of two variables, there also is the \emph{canceling version} which we discuss next. These specialize to a categorification of the usual tensor products of bimodules over rings and play a key role in \S\ref{subsec:modules}. The classical tensor product in algebra is obtained by coequalizing left and right module structures. The categorification of this construction is given by \emph{coends},  so we begin by extending coends to derivators. Recall that in classical category theory there are various equivalent ways of defining coends \cite[\S IX]{maclane}. In homotopical situations one has to be a bit careful which one to choose, since not all of them lead to the good notion (see \cite[Rmk.~13.21]{bg:cubical}). The following approach works perfectly well (see \cite[\S5]{gps:additivity} and \cite[Appendix~A]{gps:additivity} for more details). 

\begin{con}\label{con:twisted-arrow}
Let $A$ be a small category. The \textbf{twisted arrow category} $\tw(A)$ of $A$ has as objects all morphisms $f\colon a\to b$ in $A$. A morphism $f_1\to f_2$ in $\tw(A)$ is a commutative diagram
\[
\xymatrix{
a_1\ar[r]^-{f_1}&b_1\ar[d]\\
a_2\ar[r]_-{f_2}\ar[u]&b_2,
}
\]
and the ``twist'' is the reorientation of the first coordinate. To put this in plain english, a morphism $f_1\to f_2$ is a $2$-sided factorization of $f_2$ through $f_1$. We note that $\tw(A)$ is simply the category of elements of $\hom_A\colon A\op\times A\to\mathrm{Set}$.

The category $\tw(A)$ comes with a functor 
\begin{equation}\label{eq:(s,t)}
(s,t)\colon\tw(A)\to A\op\times A
\end{equation}
given by the source and target functors. As in the case of ordinary category theory, in order to define the coend construction we need essentially the opposite of this functor, namely the composition
\begin{equation}\label{eq:tw-op}
(t\op,s\op)\colon\tw(A)\op\stackrel{(s,t)\op}{\to}(A\op\times A)\op\cong A\op\times A.
\end{equation}
\end{con}

\begin{defn}\label{defn:coend}
Let $\D$ be a derivator, let $A\in\cCat$, and let $X\in\D(A\op\times A)$. The \textbf{coend} $\int^A X\in\D(\ast)$ of $X$ is given by
\[
\int^AX=\colim_{\tw(A)\op}(t\op,s\op)^\ast X.
\]
\end{defn}

As a composition of functors, the coend functor $\int^A\colon\D(A\op\times A)\to\D(\ast)$ is
\begin{equation}\label{eq:coend}
\int^A\colon \D(A\op\times A)\stackrel{(t\op,s\op)^\ast}{\to}\D(\tw(A)\op)\stackrel{\colim}{\to}\D(\ast).
\end{equation}

\begin{egs}
In the case of a represented derivator, this definition of coends reduces to a formula in \cite{maclane}. Hence, the notion reproduces the classical one in a (complete and) cocomplete category. There is also a different classical description of coends as certain coequalizers. This, however, does not extend that directly to homotopical frameworks. In fact, in that case coequalizers have to be replaced by geometric realizations of simplicial bar constructions and for an extension of this reformulation to derivators we refer to \cite[Appendix~A]{gps:additivity}.
\end{egs}

The motivation for us to discuss coends here is the following construction.

\begin{con}\label{con:tensor-prod-fun}
Let $\D_1,\D_2,$ and $\D_3$ be derivators and let $\otimes\colon\D_1\times\D_2\to\D_3$ be a morphism of two variables. Based on the coend, there is the following third version of such a morphism. Given a small category $A$ and diagrams $X\in\D_1(A\op)$ and $Y\in\D_2(A)$, the external product $X\otimes Y$ lives in $\D_3(A\op\times A)$, and qualifies as an input for the coend. The \textbf{canceling version} of $\otimes$ is
\[
X\otimes_{[A]} Y=\int^AX\otimes Y,\qquad X\in\D_1(A\op), Y\in\D_2(A).
\]
Thus, as a functor $\otimes_{[A]}$ is the composition
\begin{equation}\label{eq:cancel}
\otimes_{[A]}\colon\D_1(A\op)\times \D_2(A)\stackrel{\otimes}{\to}\D_3(A\op\times A)\stackrel{\int^A}{\to}\D_3(\ast),
\end{equation}
and we refer to it as the \textbf{(canceling) tensor product of functors}. Note that we distinguish this canceling version notationally from the internal version \eqref{eq:int} and the external one \eqref{eq:ext}. For the canceling one the subscript $[A]$ is added in order to indicate which ``category is canceled''. 
\end{con}

In order to develop some first intuition for this calculus, we unravel the notion of a twisted arrow category for posets.

\begin{lem}\label{lem:tw-poset}
Let $P$ be a small poset. The functor $(s,t)\colon\tw(P)\to P\op\times P$ induces an isomorphism between $\tw(P)$ and the up-set generated by the diagonal $\Delta_P\subseteq P\op\times P$. 
\end{lem}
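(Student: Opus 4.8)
\textbf{Proof plan for Lemma~\ref{lem:tw-poset}.}
The statement is a purely combinatorial identification, so the plan is to unwind the definition of $\tw(P)$ from \autoref{con:twisted-arrow} in the special case where $P$ is a poset, and then exhibit an explicit inverse to $(s,t)$. First I would recall that for a poset $P$ there is at most one morphism between any two objects; hence an object of $\tw(P)$, which is a morphism $f\colon a\to b$ in $P$, is uniquely determined by the pair $(a,b)$ subject to the single constraint $a\leq b$. In other words, the object set of $\tw(P)$ is already in canonical bijection with the set $\{(a,b)\in P\op\times P : a\leq b\}$, which is precisely the up-set $U$ generated by the diagonal $\Delta_P\subseteq P\op\times P$ (note that in $P\op\times P$ the element $(a,b)$ lies above a diagonal element $(c,c)$ iff $a\geq_{P\op} c$ and $b\geq_P c$, i.e.\ $a\leq_P c\leq_P b$ for some $c$, equivalently $a\leq_P b$). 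On objects the functor $(s,t)$ sends $f\colon a\to b$ to $(a,b)$, so it is a bijection onto $U$.

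Next I would check that $(s,t)$ restricted to the (full) image is fully faithful, i.e.\ that $\tw(P)$ is itself a poset with the order transported from $U$. A morphism $f_1\to f_2$ in $\tw(P)$, with $f_i\colon a_i\to b_i$, is by definition a commutative square consisting of maps $a_2\to a_1$ and $b_1\to b_2$ in $P$; since $P$ is a poset, such a square exists if and only if $a_2\leq a_1$ and $b_1\leq b_2$ (the commutativity condition is automatic, both composites being the unique map $a_2\to b_2$, which exists because $a_2\le a_1\le b_1\le b_2$), and if it exists it is unique. Thus $\Hom_{\tw(P)}(f_1,f_2)$ has exactly one element when $a_1\geq a_2$ and $b_1\leq b_2$ and is empty otherwise. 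Under the object bijection $(s,t)$, the pair $(a_1,b_1)$ maps to $(a_2,b_2)$ in $U$ exactly when $a_1\geq_{P\op}$... — more carefully, the order on $P\op\times P$ has $(a_1,b_1)\leq(a_2,b_2)$ iff $a_1\geq_P a_2$ and $b_1\leq_P b_2$, which is precisely the condition for a morphism $f_1\to f_2$ to exist in $\tw(P)$. Hence $(s,t)$ is an isomorphism of categories onto the full subposet $U\subseteq P\op\times P$, and one verifies directly that $U$ is the up-set generated by $\Delta_P$.

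There is essentially no hard step here; the only thing to be mildly careful about is the direction of the ``twist'' — the source variable is contravariant, so the order on the first coordinate of $P\op\times P$ is reversed, and one must confirm that this matches the reorientation built into the definition of a morphism in $\tw(P)$ (namely that a morphism $f_1\to f_2$ factors $f_2$ \emph{through} $f_1$, going forward on targets and backward on sources). Once the variances are lined up, the claimed description of the image as the up-set of the diagonal is immediate: $(a,b)$ lies in the up-set of $\Delta_P$ in $P\op\times P$ iff there is $c$ with $(c,c)\leq(a,b)$, i.e.\ $c\geq_P a$ and $c\leq_P b$ is wrong — rather $a\leq_P c\leq_P b$ — so iff $a\leq_P b$, which is exactly the condition that $(a,b)$ underlies an object of $\tw(P)$. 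I would close by remarking that this lemma is the poset-level shadow of the general fact that $\tw(A)$ is the category of elements of $\Hom_A$, which was already noted in \autoref{con:twisted-arrow}.
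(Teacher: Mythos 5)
Your proposal is correct and follows essentially the same route as the paper's proof: observe that $(s,t)$ is injective on objects and fully faithful (you make the latter explicit by unwinding the hom-sets, which the paper leaves to the reader), then identify the image with the up-set of $\Delta_P$ by checking both inclusions. One small remark: in your final paragraph you ``correct'' the condition ``$c\geq_P a$ and $c\leq_P b$'' to ``$a\leq_P c\leq_P b$'', but these are literally the same statement, so no correction was needed.
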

\begin{proof}
The functor $(s,t)$ is injective on objects and fully faithful, so it induces an isomorphism onto its image. By reflexivity the diagonal $\Delta_P=\{(p,p)\mid p\in P\}$ lies in the image. Let $f\colon p\to q$ be an object in $\tw(P)$, which is to say that $p,q\in P$ and $p\leq q$. Then in $P\op\times P$ we have $(p,p),(q,q)\leq (p,q)$, and we deduce that the image of $(s,t)$ lies in the up-set generated by the diagonal. Conversely, let $p,q_1,q_2\in P$ be such that $(p,p)\leq (q_1,q_2)$ in $P\op\times P$. Then $q_1\leq p$ and $p\leq q_2$ in $P$, and hence $q_1\to q_2$ defines an object in $\tw(P)$ which under $(s,t)$ is mapped to $(q_1,q_2)$.
\end{proof}

In the calculation of coends we essentially use the opposite functor of $(s,t)$. Thus in order to calculate tensor products of functors over posets we first form the external product, then restrict the diagram to the \emph{down-set} generated by the diagonal and finally calculate the colimit. We illustrate this by the following prominent example of \autoref{con:tensor-prod-fun}. Therein and in what follows, our drawing convention for diagrams of two variable is that the first coordinate is drawn horizontally and the second one vertically. 

\begin{eg}\label{eg:pushout-product}
Let $\D_1,\D_2,$ and $\D_3$ be derivators and let $\otimes\colon\D_1\times\D_2\to\D_3$ be a morphism of two variables. Given coherent morphisms $X=(f\colon x\to y)\in\D_1([1]\op)$ and $X'=(f'\colon x'\to y')\in\D_2([1])$, the external product $X\otimes X'\in\D_3([1]\op\times[1])$ takes by pseudo-naturality the form
\[
\xymatrix{
y\otimes x'\ar[d]_-{\id\otimes f'}&x\otimes x'\ar[l]_-{f\otimes\id}\ar[d]^-{\id\otimes f'}\\
y\otimes y'&x \otimes y'.\ar[l]^-{f\otimes\id}
}
\]
The restriction of $X\otimes X'$ to the down-set generated by the diagonal is the span $(y\otimes x'\ot x\otimes x'\to x\otimes y')$, and the tensor product of functors $X\otimes_{[[1]]}X'\in\D_3(\ast)$ hence sits in a defining cocartesian square
\[
\xymatrix{
y\otimes x'\ar[d]&x\otimes x'\ar[l]_-{f\otimes\id}\ar[d]^-{\id\otimes f'}\\
X\otimes_{[[1]]}X'&x\otimes y'.\ar[l]
}
\]
In this special case the universal object
\[
X\otimes_{[[1]]}X'=y\otimes x'\cup_{x\otimes x'}x\otimes y'
\]
is often referred to as the \textbf{pushout product} of the two morphisms.
\end{eg}

In \S\ref{subsec:modules} we illustrate the calculus of pushout products by specializing to more specific morphisms. Here, we content ourselves by pointing out that this calculus also plays a key role in refined axioms for tensor triangulated categories or monoidal triangulated categories (see \autoref{rmk:monoidal-triang} and the references therein).

\begin{rmk}\label{rmk:variants}
Let $\D_1,\D_2,$ and $\D_3$ be derivators and let $\otimes\colon\D_1\times\D_2\to\D_3$ be a morphism of two variables.
The internal \eqref{eq:int}, external \eqref{eq:ext}, and canceling versions \eqref{eq:cancel} of $\otimes$ can be suitably combined. For instance there is the functor
\[
\D_1(A\times B\op)\times\D_2(A\times B\times C)\to\D_3(A\times C)
\]
which treats the $A$-variable internally and the $C$-variable externally, while the $B$-variable is canceled by means of a coend. Taking the philosophy of derivators serious, we should not be happy with such functors but instead ask for corresponding morphisms of derivators. In the first two cases this leads to parametrized internal and parametrized external tensor products \cite[\S3]{gps:additivity},
\[
\otimes_A\colon\D_1^A\times\D_2^A\to\D_3^A\qquad\text{and}\qquad \otimes\colon\D_1^A\times\D_2^B\to\D_3^{A\times B}.
\]
As a preparation for the remaining case, one notes that for every derivator \D there are parametrized coends
\[
\int^A\colon\D^{A\op}\times\D^A\to\D.
\]
These are derivatorish versions of the usual coends with parameters \cite[\S IX.7]{maclane}. With this in place, the parametrized canceling tensor products
\[
\otimes_{[A]}\colon\D_1^{A\op}\times\D_2^A\to\D_3
\]
can now be defined by the same formula as in \autoref{con:tensor-prod-fun}.
\end{rmk}

For every monoidal derivator \V, the monoidal structure $\otimes\colon\V\times\V\to\V$ is associative and unital up to coherence isomorphisms. This is the case by definition for the internal version and it is straightforward to also check this for the external version. We now turn towards a similar result for the canceling tensor product.

\begin{con}\label{con:bicategory}
Let \V be a monoidal derivator and let $A,B\in\cCat$. We also call objects in $\V(A\times B\op)$ \textbf{bimodules} in $\V$ or \textbf{$(A,B)$-bimodules} in case we want to be more specific. Given a third small category $C$, we can define a \textbf{composition functor} (over $B$) as the parametrized canceling tensor product
\begin{equation}\label{eq:bicat-comp}
\otimes_{[B]}\colon\V(A\times B\op)\times\V(B\times C\op)\to\V(A\times C\op)
\end{equation}
which treats $A$ and $C\op$ externally. For every small category $B$ there is also a preferred $(B,B)$-bimodule $\lI_B$ (which somehow corresponds to the regular bimodule over a ring in algebra). To build towards these bimodules, let us recall that, as part of the monoidal structure, \V is endowed with a monoidal unit \lS. This is a pseudo-functor $\lS\colon y_\ast=\ast\to\V$ defined on the terminal derivator. More concretely, $\lS$ amounts to a pseudo-functorial choice of objects $\lS_A\in\V(A), A\in\cCat$, all of which are monoidal units with respect to the internal tensor products $\otimes_A$. The pseudo-functoriality constraint for the projection $\pi_A\colon A\to\ast$ gives a preferred isomorphism $\lS_A\cong\pi_A^\ast(\lS_\ast)$, which is to say that $\lS_A$ is constant on the monoidal unit $\lS_\ast$ of the underlying category $\V(\ast)$. With this preparation we can define the \textbf{Yoneda bimodule} or \textbf{identity profunctor} of $B$ as the bimodule
\begin{equation}\label{eq:bicat-unit}
\lI_B=(t,s)_!\lS_{\tw(B)}\in\V(B\times B\op).
\end{equation}
Here $(t,s)\colon\tw(B)\to B\times B\op$ sends $(f\colon a\to b)$ to $(b,a)$ (see \autoref{con:twisted-arrow}). By an application of the pointwise formulas for left Kan extensions one checks that for $a,b\in B$ there is a canonical isomorphism
\[
(\lI_B)_{(b,a)}\cong\coprod_{\hom_B(a,b)}\lS_\ast.
\]
In plain English, the value at $(b,a)$ is the coproduct of copies of the unit $\lS_\ast$ parametrized by the morphism set $\hom_B(a,b)$, and this justifies the terminology \emph{Yoneda bimodule}.
\end{con}

The composition of bimodules in monoidal derivators is associative and unital in the following sense. For basic terminology on bicategories we refer the reader to \cite{benabou:intro}, \cite[\S7.7]{borceux:1}, or \cite[\S\S XII.6-7]{maclane}.

\begin{thm}\label{thm:bicategory}
For every monoidal derivator \V there is a bicategory $\cProf(\V)$ with the following description. Its objects are small categories, the category of homomorphisms from $A$ to $B$ is the category of bimodules $\V(A\times B\op)$, the composition functors are given by \eqref{eq:bicat-comp} and the identity 1-cells are the Yoneda bimodules \eqref{eq:bicat-unit}. We refer to $\cProf(\V)$ as the \textbf{bicategory of bimodules} in \V.
\end{thm}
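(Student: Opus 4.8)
The plan is to check the bicategory axioms for $\cProf(\V)$ by reducing each of them, via the derivator formalism, to a statement about homotopy exact squares, mirroring the classical proof that bimodules over rings form a bicategory. First I would record the 2-cell structure: a 2-cell between parallel bimodules $A\to B$ is simply a morphism in the category $\V(A\times B\op)$, and vertical composition is composition in that category, so each hom-category is a genuine category for free. The nontrivial data are the horizontal composition functors \eqref{eq:bicat-comp}, the identity 1-cells \eqref{eq:bicat-unit}, and the associativity and unit coherence isomorphisms. For horizontal composition I would note that $\otimes_{[B]}$ is built from the parametrized external tensor product $\otimes\colon\V_1^{A\times B\op}\times\V^{B\times C\op}\to\V^{A\times B\op\times B\times C\op}$ (which is a morphism of derivators since the monoidal structure is cocontinuous in each variable) followed by the parametrized coend $\int^B$ (a composite of a restriction and a left Kan extension, hence again a morphism of derivators, as in \autoref{rmk:variants}); thus $\otimes_{[B]}$ is functorial in all three bimodule arguments, which handles the interchange/functoriality requirements for 2-cells automatically.

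Next I would construct the associativity constraint. Given $M\in\V(A\times B\op)$, $N\in\V(B\times C\op)$, $P\in\V(C\times D\op)$, both $(M\otimes_{[B]}N)\otimes_{[C]}P$ and $M\otimes_{[B]}(N\otimes_{[C]}P)$ are obtained from the triple external product $M\otimes N\otimes P\in\V(A\times B\op\times B\times C\op\times C\op\times D\op)$ by taking two coends, over $B$ and over $C$, in the two possible orders; the key point is that the internal tensor $\otimes$ on $\V$ is associative up to canonical isomorphism (part of the pseudo-monoid structure) and that two left Kan extensions in disjoint variables commute up to canonical isomorphism, which is an instance of the calculus of canonical mates already invoked for \autoref{thm:shift} and in \cite[Prop.~2.5]{groth:ptstab}; I would cite \cite[\S5]{gps:additivity} for the precise Fubini-type statement for iterated parametrized coends. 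The unit constraints are the analogous bicategorical ``Morita'' isomorphisms $\lI_B\otimes_{[B]}N\cong N$ and $M\otimes_{[B]}\lI_B\cong M$; these follow from the defining formula $\lI_B=(t,s)_!\lS_{\tw(B)}$ together with a co-Yoneda computation — concretely, one rewrites $\lI_B\otimes_{[B]}N=\int^B\lI_B\otimes N$ using adjunction $(t,s)_!\dashv(t,s)^\ast$ and identifies the resulting colimit over $\tw(B)\op$ as a colimit whose indexing category has an initial-type cofinality property, exactly as in the classical coend manipulation $\int^B\hom_B(b,-)\times N(b)\cong N$, here carried out at the level of derivators via a homotopy cofinality argument (the relevant slice categories having terminal or contractible comma categories).

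Finally I would verify the two coherence axioms of a bicategory, namely Mac Lane's pentagon for the associativity constraint and the triangle identity relating the associator and the unitors. Both reduce, after unwinding, to equalities of pasting diagrams of canonical isomorphisms built from (i) the associativity and unit coherence of the internal $\otimes$ on $\V$ (which hold since $\V$ is a pseudo-monoid, i.e.\ lifts to the 2-category of monoidal categories) and (ii) the coherence of the canonical mates comparing Kan extensions in disjoint variables; by the general principle that all such canonically-constructed isomorphisms between a fixed pair of derivator-theoretic functors agree (the mates calculus is coherent — cf.\ \cite{kelly-street:review} and the discussion after \autoref{rmk:cocart}), these diagrams commute. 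I expect the main obstacle to be the bookkeeping in the associativity constraint and the pentagon: one must keep careful track of the several product factors being permuted, restricted, and Kan-extended, and organize the twisted-arrow categories $\tw(B)$, $\tw(C)$ so that the two orders of taking coends are visibly related by a homotopy-exact square; all of this is essentially done in \cite[\S\S5--6]{gps:additivity}, so in the write-up I would assemble the statement from those ingredients rather than reprove the exactness lemmas from scratch.
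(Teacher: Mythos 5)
Your proposal matches the paper's approach in outline: the paper likewise delegates the work to \cite[Theorem~5.9]{gps:additivity}, identifying the Fubini lemma for iterated coends \cite[Lem.~5.3]{gps:additivity} as the source of associativity and the coherence of canonical mates as the source of the pentagon and triangle axioms. Your treatment of the 2-cell structure, functoriality of $\otimes_{[B]}$, and the Fubini-based associator all agree with the intended argument.

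The one place where you underestimate the difficulty is the unit axiom. The paper explicitly singles this out as the hard part (``the proof of the unitality is more involved \cite[Appendix~B]{gps:additivity}''), whereas you claim it goes through ``exactly as in the classical coend manipulation $\int^B\hom_B(b,-)\times N(b)\cong N$'' via a single cofinality/co-Yoneda argument. In the derivator setting the naive version of this does not directly work: the coend $\int^B$ is defined as a colimit over $\tw(B)\op$, and after using cocontinuity of $\otimes$ to push the Kan extension defining $\lI_B=(t,s)_!\lS_{\tw(B)}$ to the outside, one is left with a restriction of a large left Kan extension, and the resulting comparison does not reduce to a single ``terminal object in a slice'' verification. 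The argument in \cite[Appendix~B]{gps:additivity} instead goes through the alternative description of derivator coends as geometric realizations of simplicial bar constructions established in \cite[Appendix~A]{gps:additivity} (which the paper itself alludes to when it remarks that the classical coequalizer description of coends ``does not extend that directly to homotopical frameworks''). So while your high-level strategy and your handling of associativity are sound, a write-up should not present the unit isomorphism as a routine co-Yoneda computation; it should either cite the bar-resolution argument or reproduce it.
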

\begin{proof}
The proof of the associativity of composition functors is a fairly direct consequence of a derivatorish version of the Fubini lemma on iterated coends \cite[Lem.~5.3]{gps:additivity}. It turns out that the proof of the unitality is more involved \cite[Appendix~B]{gps:additivity}. For details we refer the reader to \cite[Theorem~5.9]{gps:additivity}.
\end{proof}

In \cite{gps:additivity} we refer to $\cProf(\V)$ as the \emph{bicategory of profunctors} (and this explains the notation), while here we prefer the terminology of bimodules. For later reference we include the following example of Yoneda bimodules.

\begin{eg}\label{eg:Yoneda-poset}
Let \V be a pointed, monoidal derivator and let $P$ be a poset. The Yoneda bimodule $\lI_P\in\cV(P\times P\op)$ restricted to the up-set generated by the diagonal $\Delta_P$ is constant with value the monoidal unit $\lS_\ast\in\V(\ast)$ and it vanishes on the complement \cite[Lem.~7.4]{gst:Dynkin-A}.
\end{eg}

By now we have a reasonably solid understanding of the basic formalism of tensor products of functors in monoidal derivators (more examples will be discussed in \S\ref{subsec:modules}). As a final preparation for the study of tensor-hom-adjunctions we extend the notion of adjunctions of two variables to derivators. There are various ways to make this precise \cite[\S8]{gps:additivity}, and here we choose the following definition. 

\begin{defn}\label{defn:2-var-adj}
Let $\D_1,\D_2,$ and $\D_3$ be derivators. A morphism of two variables $\otimes\colon\D_1\times\D_2\to\D_3$ is a \textbf{two-variable left adjoint} if 
\begin{enumerate}
\item the external components $\otimes\colon\D_1(A)\times\D_2(B)\to\D_3(A\times B)$ are two-variable left adjoints for all $A,B\in\cCat$ and
\item the morphism $\otimes\colon\D_1\times\D_2\to\D_3$ is cocontinuous in both variables separately.
\end{enumerate}
\end{defn}

Thus, for every $A,B\in\cCat$ we ask for the existence of functors
\[
\rhd_{[B]}\colon\D_2(B)\op\times\D_3(A\times B)\to\D_1(A)\quad\text{and}\quad
\lhd_{[A]}\colon\D_3(A\times B)\times\D_1(A)\op\to\D_2(B)
 \]
and for natural isomorphisms
\[
\hom_{\D_3(A\times B)}(X\otimes Y,Z)\cong \hom_{\D_1(A)}(X,Y\rhd_{[B]}Z)\cong\hom_{\D_2(B)}(Y,Z\lhd_{[A]}X).
\]
The notational convention follows \cite{gps:additivity} and it has the feature that it preserves the cyclic structure of the arguments $X,Y,$ and $Z$. The notation of the functors $\rhd_{[B]}$ and $\lhd_{[A]}$ again indicated which ``category is canceled''. One can show that there are morphisms of derivators
\[
\lhd\colon\D_2\op\times\D_3\to\D_1\qquad\text{and}\qquad\rhd\colon\D_3\times\D_1\op\to\D_2
\]
and that the functors $\lhd_{[B]}$ and $\rhd_{[A]}$ arise from these by certain \emph{ends} (see \cite[\S\S8-9]{gps:additivity}).

\begin{defn}
A \textbf{closed monoidal derivator} is a monoidal derivator such that the monoidal structure is a two-variable left adjoint.
\end{defn}

There is the obvious variant of symmetric closed monoidal derivators.

\begin{egs}\label{egs:2-var-adj}
\begin{enumerate}
\item Derivators represented by closed monoidal categories are also closed monoidal. More generally, two-variable adjunctions between complete and cocomplete categories induce two-variable adjunctions of represented derivators \cite[Ex.~8.9]{gps:additivity}.
\item Homotopy derivators of cofibrantly-generated monoidal model categories are closed monoidal \cite[Thm.~9.11]{gps:additivity}. Two-variable Quillen left adjoint functors between combinatorial model categories induces two-variable adjunctions between homotopy derivators \cite[Ex.~8.11]{gps:additivity}.
\item The derivator $\D_R$ of a commutative ring $R$, the derivator $\cS$ of spaces, the derivator $\cS_\ast$ of pointed spaces, and the derivator $\cSp$ of spectra are symmetric closed monoidal.
\item Let $\otimes\colon\D_1\times\D_2\to\D_3$ be a two-variable left adjoint morphism of derivators. Then also the internal, external, and canceling versions of $\otimes$ from \autoref{rmk:variants} are two-variable left adjoints \cite[\S8]{gps:additivity}. In particular, for every closed monoidal derivator \V the bicategory $\cProf(\V)$ of bimodules is closed. 
\end{enumerate}
\end{egs}

\begin{rmk}\label{rmk:monoidal-triang}
Many interesting examples of triangulated categories arising in nature come with an additional monoidal structure (see for instance \cite[\S1]{balmer:TTG}). A natural question then is what kind of axioms should be imposed on a category which is simultaneously endowed with a triangulation and a monoidal structure. The search for good compatibility axioms to be imposed on such categories has already some history and references include \cite{margolis:spectra,HPS:axiomatic,may:additivity,keller-neeman:D4}. Of course, the answer to this question depends on ones purposes (for instance for the discussion of spectra of tensor triangulated categories relatively simple axioms are good enough; see \cite{balmer:TTG} and the many references therein).

A different purpose one might have is the study of \emph{duality phenomena} (see \cite{dold-puppe:duality} as well as \cite{becker-gottlieb:duality} and the references there). Let \cT be monoidal, triangulated category and let $x\in\cT$ be a dualizable object. Every endomorphism $\varphi\colon x\to x$ has a trace $\mathrm{tr}(\varphi)\colon\lS\to\lS$ which is an endomorphism of the monoidal unit. Morally, one would expect that traces of dualizable objects are additive with respect to distinguished triangles. More precisely, given an endomorphism of a distinguished triangle
\[
\xymatrix{
x\ar[r]\ar[d]_-{\varphi_x}&y\ar[r]\ar[d]_-{\varphi_y}&z\ar[r]\ar[d]^-{\varphi_z}&\Sigma x\ar[d]^-{\Sigma\varphi_x}\\
x\ar[r]&y\ar[r]&z\ar[r]&\Sigma x
}
\]
such that $x,y,$ and $z$ are dualizable, the formula
\[
\mathrm{tr}(\varphi_x) + \mathrm{tr}(\varphi_z) =\mathrm{tr}(\varphi_y)
\]
should hold. However, as a further reminiscence of the non-functoriality of cones, such a formula does not hold at the level of triangulated categories \cite{ferrand:nonadd}. And, in fact, this failure was part of the original motivation of Grothendieck to come up with the notion of a derivator in the first place. 

May \cite{may:additivity} proposed very carefully chosen compatibility axioms for monoidal, triangulated categories which can be used to establish additivity of Euler characteristics (traces of identities), and a representation theoretic perspective on these appears in work of Keller--Neeman \cite{keller-neeman:D4}. In \cite{gps:additivity} the basic formalism of monoidal, stable derivators is developed, and canonical monoidal triangulations in the sense of May are constructed for such derivators. As an additional application of these techniques, it is shown that traces of coherent morphisms are additive with respect to cofiber sequences \cite{gps:additivity}.

The credo suggested by this is that all the compatibility is already encoded in a stable, monoidal derivator (or stable, monoidal $\infty$-category). Hence, instead of imposing more and more complicated \emph{axioms} on monoidal, triangulated categories, one should try to prove \emph{lemmas} about, say, stable, monoidal derivators and the corresponding calculus of bimodules. This was illustrated successfully in the papers \cite{ps:linearity,ps:linearity-fp,gallauer:traces} where the above additivity formula was extended to homotopy finite colimits of dualizable objects (see also the closely related \cite{jin-yang}).
\end{rmk}

\begin{defn}\label{defn:enriched}
Let \V be a monoidal derivator. A \textbf{\V-module} is a cocontinuous pseudo-module over \V. A \textbf{\V-enriched derivator} is a \V-module such that the action is a two-variable left adjoint.
\end{defn}

Thus, a \V-module is a derivator \D together with a morphism of two variables
\begin{equation}\label{eq:action}
\otimes\colon\V\times\D\to\D,
\end{equation}
the \emph{action} or \emph{module structure}, such that
\begin{enumerate}
\item the action is associative and unital up to coherence isomorphisms and
\item the action is cocontinuous in both variables separately.
\end{enumerate} 
If \V and \D are stable derivators, then these are derivatorish versions of actions of triangulated categories (as in \cite{stevenson:support-action}). For a \V-enriched derivator the action \eqref{eq:action} is supposed to be a two-variable left adjoint (\autoref{defn:2-var-adj}). Thus, for a \V-enriched derivator \D besides the action morphisms we also have \textbf{internal homs} or \textbf{internal mapping objects}
\[
\rhd\colon\D\op\times\D\to\V
\]
and \textbf{cotensors}
\[
\lhd\colon\D\times\V\op\to\D.
\]

\begin{egs}
\begin{enumerate}
\item If $\cV$ is complete and cocomplete closed symmetric monoidal category and $\cC$ is a complete and cocomplete $\cV$-enriched category in the classical sense \cite{kelly:enriched}, then $y_\cC$ is a $y_\cV$-enriched derivator (\autoref{egs:2-var-adj}).
\item If $\cV$ is symmetric monoidal model category and $\cM$ is a $\cV$-model category in the sense of \cite{hovey:model}, then $\ho_\cM$ is a $\ho_\cV$-enriched derivator.
\item Any closed monoidal derivator is enriched over itself.
\end{enumerate}
\end{egs}

This gives rise to many interesting examples of enriched derivators. We invite the reader to come up with additional closure properties of enriched derivator such as the passage to shifted derivators or the behavior under monoidal adjunctions. We conclude this subsection by a discussion of the universality of the classical homotopy theories of spaces, pointed spaces, and spectra, and the resulting canonical enrichments of stable derivators over spectra. These are deep theorems and they are crucial to our applications in \S\ref{subsec:modules}. Related references include \cite{heller:htpythies,heller:stable,franke:adams,cisinski:derived-kan,tabuada:universal-invariants,cisinski-tabuada:non-connective,cisinski-tabuada:non-commutative}.

\begin{rmk}\label{rmk:universal-htpy-thy}
The homotopy derivator of spaces~\cS is freely generated under colimits by the singleton~$\Delta^0$ \cite[Theorem~3.24]{cisinski:derived-kan}. In more detail, given derivators $\D,\E$ we denote by $\Hom_!(\D,E)$ the category of cocontinuous morphisms $\D\to\E$ and all natural transformations. In particular, given a cocontinuous morphism $F\colon\cS\to\D$, we can evaluate the underlying functor $\cS(\ast)\to\D(\ast)$ at $\Delta^0$. The universality of spaces is made precise by the statement that the evaluation induces an equivalence of categories
\[
\Hom_!(\cS,\D)\stackrel{\sim}{\to}\D(\ast)\colon F\mapsto F(\Delta^0).
\]
This universal property explains the ubiquity of spaces in abstract homotopy theory. A formally correct proof of this result is highly non-trivial, and we refer the reader to \cite{heller:htpythies} and \cite{cisinski:derived-kan}. But to provide some evidence for this result let us recall the following.
\begin{enumerate}
\item Every topological space has a CW-approximation.
\item CW-complexes are constructed from $n$-cells under coproducts, pushouts and countable colimits.
\end{enumerate}
Since the $n$-cells are all weakly contractible, this means that every space can be built from $\Delta^0$ using homotopy colimits only. These heuristics make the universality at least quite plausible.

There are variants of this result for pointed spaces and spectra. The derivator of pointed spaces $\cS_\ast$ is the free pointed derivator generated by the $0$-sphere $\lS^0$. And the derivator $\cSp$ is the free stable derivator generated by the sphere spectrum~$\lS$ \cite[Theorem~A.11]{cisinski-tabuada:non-connective}. Thus, for every stable derivator \D the evaluation
\[
\Hom_!(\cSp,\D)\stackrel{\sim}{\to}\D(\ast)\colon F\mapsto F(\lS).
\]
is an equivalence of categories.
\end{rmk}

\begin{thm}\label{thm:stable-enriched}
Every stable derivator is enriched over the derivator $\cSp$ of spectra.
\end{thm}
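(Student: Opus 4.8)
The plan is to deduce this as a combination of two inputs: the universality of the derivator $\cSp$ of spectra as the free stable derivator on the sphere spectrum (\autoref{rmk:universal-htpy-thy}), and the self-enrichment of closed monoidal derivators (the last item of \autoref{egs:2-var-adj}), together with a general principle that an action of a monoidal derivator can be transported along a monoidal morphism. The point is that \cSp carries a symmetric closed monoidal structure (\autoref{egs:monoidal}), hence is a closed monoidal derivator and thus enriched over itself; so the real content is to produce, for an arbitrary stable derivator \D, a sufficiently structured action $\cSp\times\D\to\D$ which is a two-variable left adjoint in the sense of \autoref{defn:2-var-adj}, so that \D becomes a \cSp-enriched derivator in the sense of \autoref{defn:enriched}.

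First I would fix a stable derivator \D and consider the $2$-category $\cDER_{\mathrm{St},\mathrm{ex}}$ together with the endomorphism derivator, or rather the derivator $\underline{\mathrm{Hom}}_!(\D,\D)$ of cocontinuous endomorphisms of \D; by the usual composition-is-monoidal argument this is a monoidal derivator (composition of cocontinuous morphisms is cocontinuous, associativity and unitality are strict, and one checks the cocontinuity of the composition $2$-functor in each variable using that cocontinuous morphisms are detected on colimits, as in \autoref{rmk:ex-vs-Kan}). Next I would invoke the universal property: since \D is stable, the evaluation $\Hom_!(\cSp,\D)\xrightarrow{\sim}\D(\ast)$ is an equivalence, and more relevantly the free-stable-derivator property upgrades to a monoidal statement — the unique cocontinuous morphism $\cSp\to\underline{\mathrm{Hom}}_!(\D,\D)$ sending the sphere $\lS$ to the identity morphism $\id_\D$ is canonically a (lax, hence by cocontinuity strong) monoidal morphism of monoidal derivators. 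Adjointing this across, a monoidal morphism $\cSp\to\underline{\mathrm{Hom}}_!(\D,\D)$ is exactly the datum of a cocontinuous action $\otimes\colon\cSp\times\D\to\D$ that is associative and unital up to coherent isomorphism, i.e.\ a \cSp-module structure on \D in the sense of \autoref{defn:enriched}.

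It then remains to promote this \cSp-module structure to a \cSp-\emph{enrichment}, i.e.\ to check that the action $\otimes\colon\cSp\times\D\to\D$ is a two-variable left adjoint (\autoref{defn:2-var-adj}). By construction the action is cocontinuous in both variables separately (it is built from cocontinuous data), so by the criterion for detecting two-variable left adjoints it suffices to check that each external component $\otimes\colon\cSp(A)\times\D(B)\to\D(A\times B)$ is a two-variable left adjoint of ordinary categories; equivalently, by \autoref{prop:detect-adj}-style arguments, that for fixed $X\in\cSp(A)$ the functor $X\otimes-\colon\D(B)\to\D(A\times B)$ and for fixed $Y\in\D(B)$ the functor $-\otimes Y\colon\cSp(A)\to\D(A\times B)$ each admit right adjoints. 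For the second of these one uses that \cSp is \emph{generated under colimits} by the sphere — so a cocontinuous functor out of $\cSp(A)$ (in the appropriate parametrized sense) automatically has a right adjoint by the usual adjoint functor argument on a compactly generated stable setting — and for the first one uses the closedness of \cSp together with the pointwise-Kan-extension formulas to build the internal hom $\rhd\colon\D\op\times\D\to\cSp$ levelwise, verifying the mate conditions via the canonical-mates calculus. The hard part will be precisely this last enrichment step: making rigorous the passage from "cocontinuous action" to "two-variable left adjoint" requires the Brown representability / adjoint functor input that is the technical heart of the universality theorems cited in \autoref{rmk:universal-htpy-thy}, and one must be careful that the right adjoints assembled levelwise are compatible with restrictions (i.e.\ genuinely glue to a morphism of derivators $\rhd\colon\D\op\times\D\to\cSp$), which is again a canonical-mates bookkeeping that I would reference to \cite[\S\S8-9]{gps:additivity} rather than carry out. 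For the details of all of this I would refer the reader to \cite{cisinski-tabuada:non-connective} and \cite{gps:additivity}.
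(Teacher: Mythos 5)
Your overall strategy --- invoke the universality of $\cSp$ as the free stable derivator, package the output as a cocontinuous $\cSp$-action on $\D$, and then upgrade the module structure to a two-variable adjunction --- is the same route taken in the reference the paper actually leans on, \cite[Appendix~A.3]{cisinski-tabuada:non-connective}, and your final paragraph correctly identifies the Brown-representability and mates-bookkeeping inputs for the enrichment step and defers them to \cite[\S\S8-9]{gps:additivity}. That last step is not where the trouble lies.

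The genuine gap is in the middle, in the sentence ``the unique cocontinuous morphism $\cSp\to\underline{\mathrm{Hom}}_!(\D,\D)$ sending $\lS$ to $\id_\D$ is canonically a (lax, hence by cocontinuity strong) monoidal morphism.'' The universal property you quote from \autoref{rmk:universal-htpy-thy} says only that evaluation at $\lS$ gives an equivalence of plain categories $\Hom_!(\cSp,\E)\simeq\E(\ast)$ for any stable derivator $\E$; it has no monoidal content whatsoever. It produces a cocontinuous $F$ with $F(\lS)\cong\id_\D$, but supplies no $2$-cells $F(x)\circ F(y)\to F(x\wedge y)$. Producing the lax structure at all (and only then showing it is strong, using cocontinuity and generation of $\cSp$ by its unit) is precisely the non-formal content of the theorem --- the derivatorish analogue of the statement that spectra is \emph{initial} among presentably symmetric monoidal stable homotopy theories --- and you have written it down as though it were a corollary of the one-object universal property. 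There is also a smaller issue you wave away: making $\underline{\mathrm{Hom}}_!(\D,\D)$ into a \emph{monoidal derivator} under composition is not ``the usual argument,'' because morphisms and modifications of derivators are not coherent enough to hand you the external components $\Hom_!(\D,\D^A)\times\Hom_!(\D,\D^B)\to\Hom_!(\D,\D^{A\times B})$ for free; this is exactly the coherence limitation the paper flags in \autoref{discl:higher-cats}. Both points are resolved in the cited literature, but your sketch does not yet locate where the work actually is.
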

\begin{proof}
The proof is essentially a consequence of the above-mentioned universal property of the derivator of spectra, and we refer the reader to \cite[Appendix~A.3]{cisinski-tabuada:non-connective} for details. It turns out that the action
\[
\otimes\colon\cSp\times\D\to\D
\]
which belongs to this enrichment is characterized by two properties. First, the action preserves colimits in both variables separately, and, second, the sphere spectrum acts trivially (which is to say $\lS\otimes-\cong\id_\D$ as it is the case for all actions).
\end{proof}

In \S\ref{subsec:modules} we use this canonical enrichment to construct universal tilting modules. Let us close this subsection by the following obvious remark just to underline the usefulness of mapping spectra.

\begin{rmk}
Let \D be a stable derivator, let $A\in\cCat$, and let $X,Y\in\D(A)$. We refer to $X\rhd_{[A]} Y\in\cSp(\ast)$ as the \textbf{mapping spectrum} of $X,Y$. This is a higher-structured version of the categorical morphisms $\hom_{\D(A)}(X,Y)$. In fact, these can be recovered as
\[
\pi_0(X\rhd_{[A]}Y)\cong\hom_{\cSp(\ast)}(\lS,X\rhd_{[A]}Y)\cong\hom_{\D(A)}(\lS\otimes X,Y)\cong\hom_{\D(A)}(X,Y).
\]
Similarly, the extension groups can be described as homotopy groups of the mapping spectrum. The formalism of two-variable adjunctions allows us to calculate the mapping spectrum $X\rhd_{[A]} Y$ as an end which is useful in many situations. 
\end{rmk}

\subsection{Universal tilting modules}
\label{subsec:modules}

In this section we discuss universal tilting modules: certain explicitly constructed spectral bimodules realizing our strong stable equivalences. These universal tilting modules are spectral refinements of the classical tilting complexes and they are non-trivial invertible elements in the bicategory of spectral bimodules. Considered this way, abstract representation contributes to the calculation of spectral Picard groupoids.

Thus, analogously to the situation in tilting theory, here the focus shifts from the equivalences themselves to the representing bimodules. 

We begin this section by defining the class of morphisms which are associated to spectral bimodules.

\begin{defn}
Let $A,B\in\cCat$ and $M\in\cSp(B\times A\op)$. For every stable derivator \D, the \textbf{weighted colimit} with weight $M$ is the morphism
\[
M\otimes_{[A]}-\colon\D^A\to\D^B.
\]
\end{defn}

There is the dual notion of a \textbf{weighted limit} (using cotensors and ends instead of tensors end coends). In this subsection we will always focus on the case of weighted colimits.

\begin{rmk}\label{rmk:weight-unique}
Let \D be a stable derivator. We say that a morphism $F\colon\D^A\to\D^B$ is a weighted colimit if there is a spectral bimodule $M\in\cSp(B\times A\op)$ and a natural isomorphism
\[
F\cong M\otimes_{[A]}-\colon\D^A\to\D^B.
\]
If we do not merely have a morphism $F\colon\D^A\to\D^B$ for a fixed stable derivator, but a pseudo-natural transformation
\[
F\colon(-)^A\to(-)^B\colon\cDER_{\mathrm{St,ex}}\to\cDER
\]
between the corresponding abstract representation theories, then the weight is uniquely determined by $F$. In fact, it suffices to consider the universal stable derivator $\D=\cSp$ of spectra. The natural isomorphism $F_{\cSp}\cong M\otimes_{[A]}-$ specializes to a natural isomorphism of functors
\[
\cSp^A(A\op)=\cSp(A\times A\op)\to\cSp^B(A\op)=\cSp(B\times A\op).
\]
Plugging in the Yoneda bimodule $\lI_A$, we can invoke the unitality constraint from \autoref{thm:bicategory} in order to obtain
\[
F(\lI_A)\cong M\otimes_{[A]}\lI_A\cong M.
\]
\end{rmk}

This remark shows that in favorable cases the weight is obtained if one evaluates the morphism on a suitable Yoneda bimodule. This idea is also central to the following two results. The first of these two results also justifies the terminology of weighted \emph{colimits}.

\begin{thm}\label{thm:weighted-colim}
Let \D be a stable derivator and let $u\colon A\to B$ be a functor.
\begin{enumerate}
\item The restriction morphism $u^\ast\colon\D^B\to\D^A$ is a weighted colimit.
\item The left Kan extension morphism $u_!\colon\D^A\to\D^B$ is a weighted colimit. In particular, the colimit morphism $\colim_A\colon\D^A\to\D$ is a weighted colimit.
\end{enumerate}
\end{thm}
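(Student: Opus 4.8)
The plan is to identify the weight bimodule explicitly using the principle from \autoref{rmk:weight-unique}: if a pseudo-natural transformation $(-)^A \to (-)^B$ is a weighted colimit, then its weight must be its value on the Yoneda bimodule $\lI_A$. So before proving anything I would first guess the weights. For part (ii), the left Kan extension morphism $u_!$ evaluated on $\lI_A \in \cSp(A \times A\op)$ should be the bimodule $(u \times \id)_! \lI_A \in \cSp(B \times A\op)$; equivalently, using the description of $\lI_A$ as $(t,s)_!\lS_{\tw(A)}$ from \eqref{eq:bicat-unit}, this is the bimodule whose value at $(b,a)$ is $\coprod_{\hom_B(b, ua)} \lS_\ast$, i.e.\ the representable $\cSp$-profunctor $\hom_B(-, u-)$. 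For part (i), the restriction morphism $u^\ast$ evaluated on $\lI_B \in \cSp(B \times B\op)$ restricted appropriately is the bimodule in $\cSp(A \times B\op)$ with value $\coprod_{\hom_B(u-, -)} \lS_\ast$, i.e.\ the other representable $\hom_B(u-, -)$. Both of these are precisely the familiar representable profunctors, and part (i) is of course dual/adjoint to part (ii) in the sense that $(u^\ast, u_\ast)$ and $(u_!, u^\ast)$ are adjunctions of derivators.

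Having guessed the weights, the real work is to prove the natural isomorphisms
\[
u_!(X) \cong \big((u\times\id)_!\lI_A\big) \otimes_{[A]} X, \qquad u^\ast(Y) \cong \big(\hom_B(u-,-)\big) \otimes_{[B]} Y
\]
for all $X \in \D^A$, $Y \in \D^B$, naturally in $X$ (resp.\ $Y$) and pseudo-naturally in exact morphisms $\D \to \E$. The cleanest route is the co-Yoneda (density) philosophy: every coherent diagram $X \in \D(A)$ is canonically a weighted colimit of representables weighted by itself, which in the language of \S\ref{subsec:monoidal} is the statement $\lI_A \otimes_{[A]} X \cong X$ — but this is exactly the unitality constraint of the bicategory $\cProf$ (for $\V = \cSp$ acting on $\D$ via the canonical enrichment of \autoref{thm:stable-enriched}), i.e.\ \cite[Thm.~5.9]{gps:additivity} / \autoref{thm:bicategory}. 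From there, part (ii) follows by pushing $u_!$ through the coend: since $u_! = (u \times \id)_!$ at the level of shifted derivators commutes with the coend-defining colimit (Kan extensions and restrictions in unrelated variables commute, and $u_!$ is cocontinuous), one gets $u_!(X) \cong u_!(\lI_A \otimes_{[A]} X) \cong (u_!\lI_A) \otimes_{[A]} X$, where $u_!\lI_A$ means $(u\times\id)_!\lI_A$. Part (i) is then obtained either dually or by a direct adjunction argument: one checks $\big(\hom_B(u-,-)\big)\otimes_{[B]} Y$ corepresents the same functor as $u^\ast Y$, using the tensor--hom adjunction of \autoref{defn:2-var-adj} and the defining isomorphism $\lI_B \otimes_{[B]} Y \cong Y$ restricted along $u$.

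The main obstacle I expect is the bookkeeping needed to move $(u\times\id)_!$ inside the coend $\int^A = \colim_{\tw(A)\op}(t\op,s\op)^\ast$ correctly — this is not quite ``Kan extensions in unrelated variables commute'' because the twisted-arrow category $\tw(A)$ and the two copies of $A$ in $A\op \times A$ are genuinely intertwined. One has to verify that the square relating $u \times \id$ on $A\op \times A$ to the appropriate functor on $\tw(A)$ (induced by postcomposition with $u$) is homotopy exact, so that the canonical mate is an isomorphism; this is the kind of computation carried out systematically in \cite[\S5]{gps:additivity} and \cite{groth:ptstab}, and I would quote the Fubini-type lemma \cite[Lem.~5.3]{gps:additivity} together with cocontinuity of $u_!$ rather than redo it. The pseudo-naturality in exact morphisms $F\colon\D\to\E$ is comparatively routine: all the constructions (Kan extensions along $u$, coends, the $\cSp$-action) are preserved by exact morphisms up to canonical isomorphism by \autoref{rmk:ex-vs-Kan} and \autoref{thm:stable-enriched}, so the required coherence $\gamma_F$ assembles from these. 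Finally, the last clause of (ii) — that $\colim_A$ is a weighted colimit — is the special case $B = \ast$, $u = \pi_A$, with weight $(\pi_A)_!\lI_A \in \cSp(A\op)$, the constant-at-$\lS_\ast$-on-representables bimodule.
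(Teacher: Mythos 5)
Your argument for part (ii) is exactly the paper's proof: guess the weight by evaluating on $\lI_A$ (cf.\ \autoref{rmk:weight-unique}), establish it via the unitality isomorphism $\lI_A\otimes_{[A]}X\cong X$ from \autoref{thm:bicategory}, then push $u_!$ through the canceling tensor by cocontinuity, getting $u_!(X)\cong u_!(\lI_A\otimes_{[A]}X)\cong((u\times\id_{A\op})_!\lI_A)\otimes_{[A]}X$. Two small corrections are worth making. First, the ``main obstacle'' you anticipate is not actually there: the Kan extension $u_!$ acts on the external (covariant $A$, becoming $B$) coordinate of $\lI_A$, which is a genuinely unrelated variable from the $A\op\times A$ pair canceled by the coend, so the commutation you need is just cocontinuity of the external--canceling two-variable left adjoint $\otimes_{[D]}\colon\cSp^{C\times D\op}\times\D^{D\times E\op}\to\D^{C\times E\op}$ in the $C$ variable (\autoref{egs:2-var-adj}); no homotopy-exact square intertwining $u$ with $\tw(A)$ needs checking beyond what already went into \autoref{thm:bicategory} and the two-variable adjointness. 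Second, the variance in your profunctor glosses is flipped: $u_!$ carries the corepresentable $\hom_A(a,-)$ to $\hom_B(ua,-)$, so $((u\times\id_{A\op})_!\lI_A)_{(b,a)}\cong\coprod_{\hom_B(ua,b)}\lS_\ast$, not $\coprod_{\hom_B(b,ua)}\lS_\ast$; dually the weight for (i) has $((u\times\id_{B\op})^\ast\lI_B)_{(a,b)}\cong\coprod_{\hom_B(b,ua)}\lS_\ast$, so your two representables are interchanged. Finally, for (i) the paper needs neither duality nor the tensor--hom adjunction: restriction commutes with the two-variable morphism by pseudo-naturality (\autoref{rmk:2-var-int-ext}), giving $u^\ast(X)\cong u^\ast(\lI_B\otimes_{[B]}X)\cong((u\times\id_{B\op})^\ast\lI_B)\otimes_{[B]}X$ directly, in parallel with the argument for (ii).
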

\begin{proof}
The stable derivator \D is by \autoref{thm:stable-enriched} a $\cSp$-enriched derivator, and let us denote the corresponding adjunction of two variables by 
\[
\otimes\colon\cSp\times\D\to\D.
\]
Moreover, for arbitrary categories $C,D,E$ it follows from \autoref{rmk:variants} that also the external-canceling version of this action
\[
\otimes_{[D]}\colon\cSp^{C\times D\op}\times\D^{D\times E\op}\to\D^{C\times E\op}
\]
is a left adjoint of two variables. In particular, the morphism $\otimes_{[D]}$ is compatible with restrictions and left Kan extensions in both variables separately. To conclude the proof it suffices to specialize this to the two situations under consideration. 

In the first case, let $X\in\D(B)$ and let $\lI_B\in\cSp(B\times B\op)$ be the Yoneda bimodule. By left unitality and pseudo-naturality we obtain
\[
u^\ast(X)\cong u^\ast (\lI_B\otimes_{[B]}\otimes X)\cong \big((u\times\id_{B\op})^\ast\lI_B\big)\otimes_{[B]}X, 
\]
and this defines the intended natural isomorphism
\begin{equation}\label{eq:res-weight}
u^\ast\cong \big((u\times\id_{B\op})^\ast\lI_B\big)\otimes_{[B]}-\colon\D^B\to\D^A,
\end{equation} 
exhibiting the restriction morphism as a weighted colimit. 

Similarly, in the second case, let $X\in\D(A)$ and let $\lI_A\in\cSp(A\times A\op)$ be the Yoneda bimodule. Using the fact that the morphism $-\otimes_{[A]}X$ preserves colimits and hence left Kan extensions, we obtain the isomorphisms
\[
u_!(X)\cong u_!(\lI_A\otimes_{[A]}X)\cong \big((u\times\id_{A\op})_!\lI_A\big)\otimes_{[A]}X.
\]
This yields the intended natural isomorphism
\begin{equation}\label{eq:lkan-weight}
u_!\cong\big((u\times\id_{A\op})_!\lI_A\big)\otimes_{[A]}-\colon\D^A\to\D^B
\end{equation}
exhibiting left Kan extensions as weighted colimits. Of course, the case of colimits is obtained by specializing to $u=\pi_A$.
\end{proof}
 
This first result generalizes to more general enriched derivators in not necessarily stable situations. In contrast to this, \autoref{thm:weighted-colim-II} relies crucially on stability.

\begin{rmk}
Let us recall that a left exact morphism of derivators is defined as a morphism which preserves terminal objects and cartesian squares (\autoref{egs:classes-of-mor}). By \autoref{thm:rex} such morphisms preserve homotopy finite limits. Since right Kan extensions in derivators can be calculated pointwise in terms of limits (by (Der4)), it follows that right exact morphisms also preserve \emph{right homotopy finite} right Kan extension (see \cite[Thm.~9.14]{groth:revisit}). This includes those right Kan extensions for which the corresponding slice categories are homotopy finite, and the following result hence covers a large class of examples.
\end{rmk}
 
\begin{thm}\label{thm:weighted-colim-II}
Let \D be a stable derivator and let $u\colon A\to B$ be a functor.
\begin{enumerate}
\item If $u$ is a sieve, then the right Kan extension morphism $u_\ast\colon\D^A\to\D^B$ is a weighted colimit.
\item If $u\colon A\to B$ is right homotopy finite, then the right Kan extension morphism $u_\ast\colon\D^A\to\D^B$ is a weighted colimit. In particular, for every homotopy finite category $A$ the limit morphism $\mathrm{lim}_A\colon\D^A\to\D$ is a weighted colimit.
\end{enumerate}
\end{thm}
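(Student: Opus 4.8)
The plan is to reduce both statements to the previously established fact (\autoref{thm:weighted-colim}) that \emph{left} Kan extensions and restrictions are weighted colimits, using stability to trade right Kan extensions for left Kan extensions. First I would treat (i). If $u\colon A\to B$ is a sieve (a fully faithful functor whose image is a downward-closed subcategory, i.e.\ the right Kan extension $u_\ast$ agrees with the ``extension by zero'' $u_!$ on the complement), then for a pointed derivator one has the standard identification $u_\ast\cong u_!$: both are fully faithful by \autoref{prop:Kan-ff}, and one checks on slices (via (Der4)) that their essential images coincide, namely the diagrams vanishing on $B\smallsetminus A$. Hence $u_\ast$ is a weighted colimit by the left Kan extension clause of \autoref{thm:weighted-colim}, with weight $(u\times\id_{A\op})_!\lI_A$. (Strictly one should invoke that $\cSp$ is pointed so that $\cSp^A\to\cSp^B$, $u_!\cong u_\ast$, and then transport along the natural isomorphism of \eqref{eq:lkan-weight}.)

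For (ii) the key input is the canonical isomorphism $\cof^3\cong\Sigma$ of \eqref{eq:cof-cube} and, more generally, the exact-formula machinery of \cite[\S13]{bg:cubical}: in a stable derivator a right homotopy finite right Kan extension can be rewritten as an iterated cofiber-type construction, i.e.\ as a composite of left Kan extensions and restrictions along finite shapes. The cleanest route is probably the dual of \autoref{thm:rex}: since $u$ is right homotopy finite, the slices $(b/u)$ are homotopy finite, so $u_\ast$ is computed by homotopy finite limits; in a stable derivator every homotopy finite limit is, up to a shift (equivalently, up to conjugating by the equivalence $\Sigma$), a homotopy finite colimit — this is exactly the content of stability made quantitative by the exact formulas. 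Concretely I would factor $u$ (or rather the relevant slice diagrams) through a finite tower in which each stage adds a bicartesian square or a zero object, exactly as in the proofs of \autoref{thm:BP}, \autoref{thm:octa}, and \autoref{thm:AR}, and observe that each such stage is a weighted colimit by \autoref{thm:weighted-colim} together with the already-proved part (i) (the zero-object stages are sieves). The composite of weighted colimits is a weighted colimit because the external-canceling tensor product $\otimes_{[-]}$ is associative (the Fubini lemma \cite[Lem.~5.3]{gps:additivity}, as in \autoref{thm:bicategory}), so weights compose; this yields the weight for $u_\ast$ and, specializing $u=\pi_A$ for $A$ homotopy finite, the weight for $\mathrm{lim}_A$. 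An alternative, more conceptual packaging: the Serre-duality formulas of \cite[\S12]{bg:cubical} already express $\mathrm{lim}_{\cube{n}_{k\le l}}$ as $\colim_{\cube{n}_{k\le l}}\circ S_{k,l}$, and $S_{k,l}$ is itself built from finite Kan extensions, hence a weighted colimit; the general homotopy finite case follows by resolving an arbitrary homotopy finite shape by such chunks.

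The main obstacle I expect is bookkeeping the weight explicitly and, more substantively, making the ``rewrite a right homotopy finite right Kan extension as a finite composite of left Kan extensions'' step precise without simply quoting \cite[\S13]{bg:cubical} wholesale — one needs that the finalization/cofinality arguments used in \autoref{thm:AR} are genuinely \emph{finite} (so that the resulting morphisms are sums of finite Kan extensions, hence weighted colimits, rather than infinite ones). The honest proof almost certainly proceeds, as the excerpt hints for the earlier theorems, by citing the corresponding result in \cite{gst:acyclic} or \cite{bg:cubical}; here I would state the reduction, invoke \autoref{thm:weighted-colim} and part (i) at each stage, cite \cite[\S13]{bg:cubical} for the exact formula that packages the stability input, and refer the reader there for the combinatorial details. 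A secondary subtlety is uniqueness/canonicity of the weight, which is not asserted in the statement but which one gets for free on $\cSp$ by plugging in the Yoneda bimodule $\lI_A$ exactly as in \autoref{rmk:weight-unique}.
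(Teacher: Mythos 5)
Your part~(i) has a genuine error: for a \emph{sieve} $u\colon A\to B$ it is $u_\ast$, not $u_!$, that is ``extension by the terminal object'' on $B\smallsetminus A$ (the slices $(b/u)$ are empty when $b\notin\mathrm{im}(u)$, while the slices $(u/b)$ need not be). In a pointed derivator one therefore gets that $u_\ast$ is zero-extension, but $u_!$ is computed from nonempty slices and is typically \emph{not} isomorphic to $u_\ast$. The simplest counterexample is the sieve $\{0\}\hookrightarrow[1]$: here $u_\ast(x)=(x\to 0)$ but $u_!(x)=(x\xrightarrow{\id}x)$. So the identification $u_\ast\cong u_!$ you invoke is false, and part~(i) cannot be reduced to \autoref{thm:weighted-colim} in this way. (You would instead want the dual statement, that $u_!\cong u_\ast$ for a \emph{cosieve}, but that is not what the theorem asks.)

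The paper's proof is both different from your plan and more economical, treating (i) and (ii) uniformly. As in the proof of \autoref{thm:weighted-colim}, the action of spectra gives a two-variable left adjoint $\otimes_{[D]}\colon\cSp^{C\times D\op}\times\D^{D\times E\op}\to\D^{C\times E\op}$. Fixing $X\in\D(A)$, the morphism $-\otimes_{[A]}X\colon\cSp^{A\op}\to\D$ is cocontinuous between stable derivators, hence exact by \autoref{lem:exact}. The key input, replacing your proposed combinatorial decomposition, is the general fact (cited as \cite[Thm.~9.17]{groth:revisit}) that \emph{exact} morphisms of stable derivators preserve right Kan extensions along sieves and, more generally, right homotopy finite right Kan extensions. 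One then simply pushes $u_\ast$ through the weight: $u_\ast(X)\cong u_\ast(\lI_A\otimes_{[A]}X)\cong\bigl((u\times\id_{A\op})_\ast\lI_A\bigr)\otimes_{[A]}X$, giving the weight $(u\times\id_{A\op})_\ast\lI_A$ in one line. This avoids any case-by-case reassembly of $u_\ast$ from left Kan extensions and restrictions, and the issue of whether cofinality reductions stay finite never arises.

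Your approach to (ii) is morally sound --- stability does make sufficiently finite right Kan extensions ``left-computable'', and composites of weighted colimits are weighted colimits by the Fubini/associativity lemma --- but as written it leans on sketched decompositions (``factor $u$ through a finite tower\ldots'') and on part~(i), which is broken. If you want to salvage the decomposition route, you would need to exhibit $u_\ast$ concretely as a finite composite of left Kan extensions and restrictions without invoking the false $u_!\cong u_\ast$ step. It is cleaner to instead isolate the single lemma the paper uses: exact morphisms preserve right homotopy finite right Kan extensions.
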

\begin{proof}
As in the proof of \autoref{thm:weighted-colim} for small categories $C,D,E$ the action of spectra yields a left adjoint of two variables
\[
\otimes_{[D]}\colon\cSp^{C\times D\op}\times\D^{D\times E\op}\to\D^{C\times E\op}
\]
between \emph{stable} derivators. In particular, for a fixed object $X\in\D(A)$ the morphism
\[
-\otimes_{[A]}X\colon\cSp^{A\op}\to\D
\]
is exact and hence preserves right Kan extensions along sieves and, more generally, right homotopy finite right Kan extensions (\cite[Thm.~9.17]{groth:revisit}). Thus, in both situations we obtain isomorphisms
\[
u_\ast(X)\cong u_\ast(\lI_A\otimes_{[A])}X)\cong \big((u\times\id_{A\op})_\ast\lI_A\big)\otimes_{[A]}X.
\]
Letting the diagram $X$ vary we obtain the natural isomorphism
\begin{equation}\label{eq:rkan-weight}
u_\ast\cong \big((u\times\id_{A\op})_\ast\lI_A\big)\otimes_{[A]}-\colon\D^A\to\D^B
\end{equation}
showing that these particular \emph{right} Kan extensions are weighted colimits.
\end{proof}

\begin{rmk}\label{rmk:thms-on-weights}
We want to include a short discussion of these two theorems.
\begin{enumerate}
\item The proofs of \autoref{thm:weighted-colim} and \autoref{thm:weighted-colim-II} are constructive in that they offer formulas which allow us to calculate the weights for restrictions and suitable Kan extensions. In fact, as detailed by the formulas \eqref{eq:res-weight}, \eqref{eq:lkan-weight}, and \eqref{eq:rkan-weight}, in all cases we start with a suitable Yoneda bimodule. One of the variables is bound by the canceling tensor product (or the corresponding coend), and we simply apply the corresponding operation to the remaining free variable. This allows us to explicitly calculate the representing weights as we illustrate a bit further below.
\item Restrictions and all sufficiently finite left and right Kan between stable derivators are weighted colimits, and there are dual statements for \emph{weighted limits}. The corresponding representing spectral bimodules admit a fairly rich calculus, and we refer the reader to \cite{gps:additivity,gst:Dynkin-A,gs:generalized,shulman:framed,ps:linearity} for more details. 
\item The motivation to break up the above examples of weighted colimits into two classes is twofold. First, \autoref{thm:weighted-colim} extends to more general \V-modules and \V-enriched derivator, while only in \autoref{thm:weighted-colim-II} we rely on additional exactness properties. In fact, the result about right Kan extensions along sieves is also valid in modules over pointed monoidal derivators. Second, the results in \autoref{thm:weighted-colim-II} indicate that in stable land the distinction between constructions on the left and constructions on the right is blurred to some extent (sufficiently finite limits are weighted colimits). It turns out that this is a defining feature of stability and this perspective offers an interesting invitation to a formal study of abstract stability (see \cite{gs:generalized} for first steps along these lines).
\end{enumerate}
\end{rmk}

\begin{eg}\label{eg:weight-susp}
For every diagram of spectra $M\in\cSp(A)$ and every stable derivator the action
\[
M\otimes-\colon\D\to\D^A
\]
is a weighted colimit.  As a special case, for the suspension of the sphere spectrum $\Sigma\lS\in\cSp(\ast)$ and $X\in\D(B)$ we have
\[
(\Sigma\lS)\otimes X\cong \Sigma(\lS\otimes X)\cong \Sigma X.
\]
Consequently, the suspension morphism $\Sigma\colon\D\to\D$ is a weighted colimit with weight $\Sigma\lS$,
\[
\Sigma\lS\otimes-\cong\Sigma\colon\D\to\D.
\]
There is a similar description of $\Sigma^n, n\in\lZ$ as weighted colimit with weight $\Sigma^n\lS$.
\end{eg}

In the following examples we study special cases of weighted colimits of morphisms. In those cases we start from the Yoneda bimodule $\lI_{[1]}$ and follow the above recipe to construct weights (see \autoref{rmk:thms-on-weights}). To also get more used to the calculus of coends from \S\ref{subsec:monoidal}, we double-check that the weights really represent the constructions we started with. Let us recall that our drawing convention for bimodules is that the first coordinate is drawn horizontally and the second one vertically. As a special case of \autoref{eg:Yoneda-poset}, there is the following example.

\begin{eg}\label{eg:Yoneda-A2}
The Yoneda module $\lI_{[1]}\in\cSp([1]\times[1]\op)$ looks like
\[
\xymatrix{
\lS\ar[r]^-\id &\lS\\
\lS\ar[r]\ar[u] &\lS.\ar[u]_-\id
}
\]
\end{eg}

\begin{eg}\label{eg:weight-0-1}
For every stable derivator the evaluation morphism $0^\ast\colon\D^{[1]}\to\D$ is a weighted colimit (\autoref{thm:weighted-colim}). The weight $P_0$ is obtained from $\lI_{[1]}$ by evaluation at $0$ in the covariant variable and is hence given by
\[
P_0=(0\times\id_{[1]\op})^\ast\lI_{[1]}=(\lS\ot 0)\in\cSp([1]\op).
\]
To double-check this result, for $X=(f\colon x\to y)\in\D([1])$ we calculate the canceling tensor product $P_0\otimes_{[[1]]}X=(\lS\ot 0)\otimes_{[[1]]}(x\to y)$. By \autoref{eg:pushout-product} this is simply the pushout-product of these two morphisms whose calculation is displayed in the following diagram
\[
\xymatrix{
\lS\otimes x\ar[d]&0\otimes x\ar[l]\ar[d]^-\cong&& x\ar@{-->}[d]_-\cong\ar@{}[rd]|{\square}&0\ar[l]\ar[d]^-\cong\\
\lS\otimes y&0\ar[l]\otimes y,&& x&0.\ar@{-->}[l]
}
\]
Since isomorphisms are stable under cobase change \cite[Prop.~3.12]{groth:ptstab}, the desired pushout is simply $x$. This calculation hence confirms that there is a natural isomorphism
\[
0^\ast\cong P_0\otimes_{[[1]]}-=(\lS\ot 0)\otimes_{[[1]]}-\colon\D^{[1]}\to\D.
\]

Similarly, the weight $P_1$ for the evaluation morphisms $1^\ast\colon\D^{[1]}\to\D$ is given by
\[
P_1=(1\times\id_{[1]\op})^\ast\lI_{[1]}=(\lS\ot \lS)\in\cSp([1]\op).
\]
Analogously to the previous case, in order to calculate $P_1\otimes_{[[1]]}X$ we contemplate the diagrams
\[
\xymatrix{
\lS\otimes x\ar[d]&\lS\otimes x\ar[l]_-\cong\ar[d]&& x\ar@{-->}[d]\ar@{}[rd]|{\square}&x\ar[l]_-\cong\ar[d]^-f\\
\lS\otimes y&\lS\ar[l]^-\cong\otimes y,&& y&y.\ar@{-->}[l]^-\cong
}
\]
Again, this calculation confirms the above claim and we obtain a natural isomorphism
\[
1^\ast\cong P_1\otimes_{[[1]]}-=(\lS\ot \lS)\otimes_{[[1]]}-\colon\D^{[1]}\to\D.
\]
\end{eg}

This example generalizes as follows.

\begin{rmk}
Let \D be a stable derivator, $A\in\cCat$, and $a\in A$. The evaluation morphism $a^\ast\colon\D^A\to\D$ is a weighted colimit with weight $P_a\in\cSp(A\op),$ the free diagram generated at $a$ by $\lS\in\cSp(\ast).$ More formally, let $a\colon\ast\to A\op$ be the functor classifying the object $a$, then $P_a$ is given by
\[
P_a\cong a_!(\lS)\in\cSp(A\op).
\]
This conclusion also holds in arbitrary derivators, and for this it suffices to consider the corresponding weight in spaces instead of in spectra.
\end{rmk}

\begin{eg}\label{eg:weight-C}
Let \D be a stable derivator and let $C\colon\D^{[1]}\to\D$ be the cone morphism. We recall from \autoref{con:basic-ptd} that $C$ is a finite composition of suitably finite Kan extensions and evaluation morphisms. By \autoref{thm:weighted-colim} and \autoref{thm:weighted-colim-II} the cone morphism is a weighted colimit. Moreover, the weight is obtained from the Yoneda bimodule $\lI_{[1]}\in\cSp([1]\times[1]\op)$ by an application of the cone with parameters in $[1]\op$. The isomorphisms
\[
C(0\to\lS)\cong\lS\qquad\text{and}\qquad C(\lS\toiso\lS)\cong 0
\] 
imply that the weight $M$ of $C$ is isomorphic to $(0\ot\lS)\in\cSp([1]\op)$. We again verify this description. Given a morphism $X=(f\colon x\to y)\in\D([1])$ the following calculation
\[
\xymatrix{
0\otimes x\ar[d]&\lS\otimes x\ar[l]\ar[d]&& 0\ar@{-->}[d]\ar@{}[rd]|{\square}&x\ar[l]\ar[d]^-f\\
0\otimes y&\lS\otimes y,\ar[l]&& Cf&y,\ar@{-->}[l]
}
\]
whose details are left to the reader double-checks the claim (compare again to \autoref{con:basic-ptd}). Thus, we obtain the intended natural isomorphism
\[
C\cong (0\ot \lS)\otimes_{[[1]]}-\colon\D^{[1]}\to\D.
\]
\end{eg}

\begin{rmk}
This description of cones as weighted colimits generalizes to pointed derivators. In fact, in that case the derivator $\cS_\ast$ of pointed spaces is universal \autoref{rmk:universal-htpy-thy}, and every pointed derivator is a $\cS_\ast$-module. Of course, in contrast to this the cone is \emph{not an ordinary colimit} but merely a weighted colimit. This is easy to see already for represented derivators because in that case the cone agrees with the usual cokernel (\autoref{egs:basic-ptd}).
\end{rmk}

\begin{eg}\label{eg:weight-cof-seq}
For every stable derivator \D there is the morphism 
\[
G\colon\D^{[1]}\to\D^{[1]\times[2]}
\]
which sends a morphism to a (coherent) cofiber sequence. Let us recall from the proof of \autoref{thm:stable-tria} that $G$ is a finite composition of sufficiently finite homotopy Kan extensions (see \autoref{prop:cof-seq} for the classical case which serves as a blueprint for arbitrary stable derivators). Consequently, $G$ is a weighted colimit whose weight $M$ lives in $\cSp(([1]\times[2])\times[1]\op)$. As one easily verifies by calculations similar to the previous cases, the representing weight $M$ looks like \autoref{fig:weight-cof-seq}. In that figure the paper plane corresponds to the coordinates in $[1]\times[2]$ while the diagonal direction corresponds to the copy of $[1]\op$. Evaluating at $(0,0), (0,1),$ and $(1,1)\in[1]\times[2]$, we obtain the coherent morphisms
\[
M_{0,0}=(\lS\ot 0),\quad M_{0,1}=(\lS\ot\lS), \quad\text{and}\quad M_{1,1}=(0\ot\lS)\in\cSp([1]\op)
\]
which already occured as weights for $0^\ast, 1^\ast,$ and $C$, respectively (see \autoref{eg:weight-0-1} and \autoref{eg:weight-C}). Let us only double-check the remaining non-trivial evaluation $M_{1,2}\in\cSp([1]\op)$ which is given by
\[
(\Sigma\lS\ot 0)\cong (\Sigma\lS)\otimes(\lS\ot 0).
\]

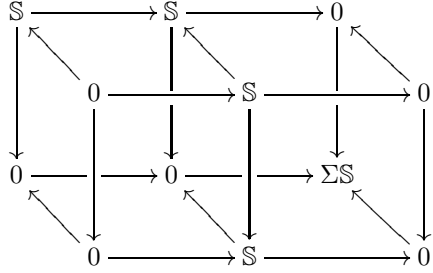
\begin{figure}[h]
\centering
\[
\xymatrix@-0.5pc{
\lS\ar[rr]\ar[dd] && \lS \ar'[d][dd] \ar[rr]&& 0 \ar'[d][dd]\\
& 0 \ar[rr] \ar[dd] \ar[ul]&& \lS \ar[dd] \ar[ul] \ar[rr]&& 0 \ar[dd] \ar[ul]\\
0 \ar'[r][rr] && 0  \ar'[r][rr]&& \Sigma\lS\\
& 0 \ar[ul] \ar[rr] && \lS \ar[ul] \ar[rr] && 0 \ar[ul] 
}
\]
\caption{The universal constructor for cofiber sequences.}
\label{fig:weight-cof-seq}
\end{figure}

Invoking \autoref{eg:weight-susp} and \autoref{eg:weight-0-1}, the corresponding weighted colimit sends $X=(f\colon x\to y)\in\D([1])$ to
\begin{align*}
(\Sigma\lS\ot 0)\otimes_{[[1]]}X&\cong \big((\Sigma\lS)\otimes(\lS\ot 0)\big)\otimes_{[[1]]}X\\
&\cong (\Sigma\lS)\otimes\big((\lS\ot 0)\otimes_{[[1]]}X\big)\\
&\cong (\Sigma\lS)\otimes x\\
&\cong \Sigma x,
\end{align*}
as it is supposed to be the case. Since $M$ is a representing weight for $G$, we also refer to $M$ as the \textbf{universal constructor for cofiber sequences}.

If we restrict $M$ along the functor $i\colon [1]\to[1]\times[2]$ which classifies the morphism $(0,1)\to(1,1)$, then we obtain the bimodule $N=i^\ast M\in\cSp([1]\times[1]\op)$ looking like
\begin{equation}\label{eq:weight-cof}
\vcenter{
\xymatrix{
\lS\ar[r] &0\\
\lS\ar[r]\ar[u] &\lS.\ar[u]
}
}
\end{equation}
The corresponding weighted colimit is the morphism $\cof\colon\D^{[1]}\to\D^{[1]},$
\[
N\otimes_{[[1]]}-\cong\cof\colon\D^{[1]}\to\D^{[1]},
\]
which at the level of canonical triangulations corresponds to the rotation of triangles. 

From these considerations we can also calculate the weight which describes the fiber morphism $\fib\colon\D^{[1]}\to\D^{[1]}$ as a weighted \emph{colimit}. In fact, by \eqref{eq:cof-cube} there is a natural isomorphism $\cof^3\cong\Sigma\colon\D^{[1]}\to\D^{[1]}$, and in combination with \autoref{thm:stable} this implies
\[
\fib\cong \Omega\circ\cof^2.
\]
The weight for $\cof^2$ can be calculated as $N\otimes_{[[1]]} N$. More efficiently, it can simply be read off from $M$ (see \autoref{fig:weight-cof-seq}) by restriction along the functor $j\colon[1]\to[1]\times[2]$ classifying the morphism $(1,1)\to(1,2)$. By \autoref{eg:weight-susp} and \autoref{thm:stable} we conclude that the weight for $\fib$ is given by $\Omega (j^\ast M)$ and looks like
\[
\xymatrix{
0\ar[r] \ar@{}[rd]|{\square}&\lS\\
\Omega\lS\ar[r]\ar[u] &0.\ar[u]
}
\]
\end{eg}

This example concludes the little detour which was included in order to develop some first feeling for the calculus of weighted colimits and canceling tensor products (for additional examples we refer to \cite{gps:additivity} or \cite{gst:Dynkin-A}). We now turn to the case which is of particular interest to abstract representation theory namely the case of weights such that the corresponding weighted colimits are equivalences. After all our goal is to describe strong stable equivalences by means of spectral bimodules.

\begin{prop}\label{prop:invertible}
Let $A,B\in\cCat$ and let $M\in\cSp(B\times A\op), N\in\cSp(A\op\times B)$ be spectral bimodules. The following are equivalent.
\begin{enumerate}
\item There are isomorphisms $M\otimes_{[A]}N\cong\lI_B$ and $N\otimes_{[B]} M\cong\lI_A$.
\item The weighted colimits $M\otimes_{[A]}-\colon\D^A\to\D^B$ and $N\otimes_{[B]}-\colon\D^B\to\D^A$ are inverse equivalences for all stable derivators \D.
\item The weighted colimits $M\otimes_{[A]}-\colon\cSp^A\to\cSp^B$ and $N\otimes_{[B]}-\colon\cSp^B\to\cSp^A$ are inverse equivalences.
\end{enumerate}
In this situation we say that $M$ is an \textbf{invertible spectral bimodule} and that $N$ is an \textbf{inverse bimodule} of $M$.
\end{prop}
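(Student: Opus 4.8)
The plan is to prove the three conditions equivalent by a cyclic chain of implications, with the bicategory structure of $\cProf(\cSp)$ (\autoref{thm:bicategory}) doing most of the heavy lifting. First I would observe that (i) is precisely the assertion that $M$ and $N$ are an adjoint-equivalence pair (or at least mutually inverse up to the coherent isomorphisms) in the bicategory $\cProf(\cSp)$ of spectral bimodules; this is an internal, coefficient-free statement. The implication (i)$\Rightarrow$(ii) then follows by functoriality: for any stable derivator $\D$, the canceling tensor product $-\otimes_{[A]}-$ is associative and unital up to coherent isomorphism (as in \autoref{thm:bicategory} and the discussion of parametrized versions in \autoref{rmk:variants}), so
\[
N\otimes_{[B]}(M\otimes_{[A]}X)\cong(N\otimes_{[B]}M)\otimes_{[A]}X\cong\lI_A\otimes_{[A]}X\cong X
\]
naturally in $X\in\D^A$, and symmetrically on the other side. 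Since $\D=\cSp$ is itself a stable derivator (\autoref{egs:stable}), (ii)$\Rightarrow$(iii) is immediate by specialization.

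The only implication requiring a genuine argument is (iii)$\Rightarrow$(i), and this is where I would use the uniqueness-of-weights trick from \autoref{rmk:weight-unique}: evaluating a weighted-colimit morphism on the appropriate Yoneda bimodule recovers the weight. Concretely, suppose $M\otimes_{[A]}-$ and $N\otimes_{[B]}-$ are inverse equivalences of $\cSp^A$ and $\cSp^B$. Plugging the Yoneda bimodule $\lI_A\in\cSp(A\times A\op)=\cSp^A(A\op)$ into the composite $N\otimes_{[B]}(M\otimes_{[A]}-)\cong\id$, and using associativity of the canceling tensor product together with unitality $M\otimes_{[A]}\lI_A\cong M$ (\autoref{thm:bicategory}), we get
\[
N\otimes_{[B]}M\cong N\otimes_{[B]}(M\otimes_{[A]}\lI_A)\cong(N\otimes_{[B]}M)\otimes_{[A]}\lI_A\cong\lI_A,
\]
where the last step is the evaluation of the identity morphism on $\lI_A$. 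Symmetrically, feeding $\lI_B$ into $M\otimes_{[A]}(N\otimes_{[B]}-)\cong\id$ yields $M\otimes_{[A]}N\cong\lI_B$. One has to be slightly careful that these Yoneda bimodules are being regarded in the correct slot and that the associativity/unitality isomorphisms of \autoref{thm:bicategory} really are natural in the parameter variables being carried along externally, but this is exactly the content of the parametrized versions discussed in \autoref{rmk:variants}.

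The main obstacle I anticipate is bookkeeping rather than mathematical depth: one must track which of the three variables in each canceling tensor product is bound by the coend and which are carried externally, so that the associativity isomorphism $(N\otimes_{[B]}M)\otimes_{[A]}X\cong N\otimes_{[B]}(M\otimes_{[A]}X)$ is applied in the right configuration, and so that ``evaluate the identity functor on $\lI_A$'' is legitimate (this needs that $M\otimes_{[A]}-$ is not merely an equivalence for the fixed derivator $\cSp$ but is genuinely the operation $\otimes_{[A]}$ applied to the identity 1-cell). Here the cleanest route is to appeal directly to the bicategory $\cProf(\cSp)$: conditions (i) and (iii) both say that $M$ is invertible as a 1-cell in $\cProf(\cSp)$ with inverse $N$ — (i) by the defining equations, (iii) because $-\otimes_{[A]}-$ on $\cSp$-valued bimodules is, up to the equivalences of \autoref{rmk:weight-unique}, precisely horizontal composition in $\cProf(\cSp)$ — so (i)$\Leftrightarrow$(iii) is a formal statement about invertible 1-cells in a bicategory. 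That reframing sidesteps the need to chase the pointwise naturality by hand, and I would present the argument in that form.
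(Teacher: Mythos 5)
Your proof is correct and follows the same cyclic decomposition (i)$\Rightarrow$(ii)$\Rightarrow$(iii)$\Rightarrow$(i) as the paper's: (i)$\Rightarrow$(ii) by associativity and unitality of the canceling tensor product, (ii)$\Rightarrow$(iii) by specializing to $\D=\cSp$, and (iii)$\Rightarrow$(i) via the uniqueness of representing weights from \autoref{rmk:weight-unique}, which is exactly the plug-in-$\lI_A$ argument you carry out. You merely spell out in a bit more detail what the paper leaves to the cited remark.
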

\begin{proof}
Statement (ii) follows easily from (i) since weighted colimits associated to Yoneda bimodules are naturally isomorphic to the corresponding identity morphisms. Clearly, (iii) is a consequence of (ii), and it remains to show that (iii) implies (i). This implication is immediate from the uniqueness of representing weights for weighted colimits (see \autoref{rmk:weight-unique}).
\end{proof} 

\begin{rmk}
This proposition of course only makes explicit the notion of equivalences internal to the bicategory $\cProf(\cSp)$ of spectral bimodules. The point is that the strong stable equivalences discussed in this paper give rise to such representing invertible spectral bimodules. In that context we also refer to these bimodules as \textbf{universal tilting modules} for the following two reasons.
\begin{enumerate}
\item The word ``universal'' alludes to the fact that invertible bimodules realize simultaneous symmetries in all stable derivators. 
\item These spectral bimodules are spectral refinements of the more classical tilting complexes. In more detail, for every commutative ring $k$ extensions of scalars along $\lZ\to k$ defines a cocontinuous monoidal morphism $k\otimes-\colon\D_\lZ\to\D_k$. Moreover, let $H\lZ$ be the integral Eilenberg--MacLane spectrum (realized as a symmetric ring spectrum) and let $\D_{H\lZ}$ be the derivator of $H\lZ$-module spectra. There is a zigzag of weakly monoidal Quillen equivalences relating $\Ch(\lZ)$ and $\mathrm{Mod}(H\lZ)$ \cite{shipley:spectra-dga}, and this induces a monoidal equivalence of derivators $\D_{H\lZ}\simeq\D_{\lZ}$. As a final ingredient, inducing up along $\lS\to H\lZ$ yields a cocontinuous monoidal morphism $H\lZ\wedge-\colon\cSp\to\D_{H\lZ}$, and we end up with a cocontinuous monoidal morphism of derivators
\begin{equation}\label{eq:smash-with-field}
k\otimes-\colon\cSp\stackrel{H\lZ\wedge-}{\to}\D_{H\lZ}\simeq\D_\lZ\stackrel{k\otimes-}{\to}\D_k.
\end{equation}
In the case of a field $k$, an application of this morphism to invertible spectral bimodules yields tilting complexes as in tilting theory (see for example \cite{apr:tilting,brenner-butler:tilting,happel-ringel:tilted-algebras,rickard:derived-fun,keller:deriving-dg} or the survey articles in \cite{angeleri-happel-krause:handbook}).
\end{enumerate}
\end{rmk}

We illustrate this notion of invertible spectral bimodules by some examples.

\begin{eg}\label{eq:susp-tilting}
For every stable derivator the morphisms $\Sigma^n\colon\D\to\D$ for $n\in\lZ$ are invertible (\autoref{thm:stable}) and weighted colimits (\autoref{eg:weight-susp}). By \autoref{prop:invertible} the corresponding weights $\Sigma^n\lS\in\cSp(\ast)$ are invertible. 
\end{eg}

\begin{eg}\label{eg:cof-tilting}
For every stable derivator \D the iterated cofiber morphisms
\[
\cof^n\colon\D^{[1]}\to\D^{[1]}
\]
are invertible for $n\in\lZ$ (\autoref{thm:stable}) and these are by \autoref{eg:weight-cof-seq} weighted colimits. In more detail, the weights for $\cof$ and $\fib$ are respectively given by
\begin{equation}\label{eq:weights-cof-fib}
\vcenter{
\xymatrix{
\lS\ar[r] &0 && 0\ar[r] \ar@{}[rd]|{\square}&\lS\\
\lS\ar[r]\ar[u] &\lS,\ar[u] && \Omega\lS\ar[r]\ar[u] &0,\ar[u]
}
}
\end{equation}
and these bimodules are inverse to each other (\autoref{prop:invertible}). 
\end{eg}

\begin{eg}
By \autoref{cor:sse-A3} all $A_3$-quivers are strongly stably equivalent. The proof of this statement shows that these strong stable equivalences are weighted colimits which by \autoref{prop:invertible} give rise to invertible spectral bimodules. To consider a specific example, let $Q=\ulcorner=(\bullet\ot\bullet\to\bullet)$ be the source of valence two and let $\A{3}=(\bullet\to\bullet\to\bullet)$ be the linearly oriented $A_3$-quiver. We explicitly describe the universal tilting modules associated to the strong stable equivalence $\Phi\colon\A{3}\sse Q$ and its inverse as constructed in \autoref{eg:sse-A3-2}.

The universal tilting module $T_{Q,\A{3}}\in\cSp(Q\times\A{3}\op)$ is obtained from the Yoneda bimodule $\lI_{\A{3}}$ by an application of the strong stable equivalence $\Phi\colon\cSp^{\A{3}}\to\cSp^Q$ (\autoref{rmk:thms-on-weights}). The bimodule $\lI_{\A{3}}$ takes by \autoref{eg:Yoneda-poset} the form as shown on the left in:
\[
\xymatrix{
&\lS\ar[r]&\lS\ar[r]&\lS && &0&\lS\ar[r]\ar[l]&\lS\\
\lI_{\A{3}}\colon&0\ar[r]\ar[u]&\lS\ar[r]\ar[u]&\lS\ar[u] && T_{Q,\A{3}}\colon&\lS\ar[u]&\lS\ar[r]\ar[l]\ar[u]&\lS\ar[u]\\
&0\ar[r]\ar[u]&0\ar[r]\ar[u]&\lS\ar[u] && &0\ar[u]&0\ar[r]\ar[l]\ar[u]&\lS\ar[u]
}
\]
\noindent
The strong stable equivalence $\Phi$ simply forms the cofiber of the first morphism, and this amounts in the three cases to
\[
\xymatrix{
\lS\ar[r]\ar[d]\ar@{}[rd]|{\square}&\lS\ar[d]\ar[r]&\lS&
0\ar[r]\ar[d]\ar@{}[rd]|{\square}&\lS\ar[d]\ar[r]&\lS& 
0\ar[r]\ar[d]\ar@{}[rd]|{\square}&0\ar[d]\ar[r]&\lS\\
0\ar[r]&0,& & 
0\ar[r]&\lS,& & 
0\ar[r]&0.& 
}
\]
Consequently, for every stable derivator \D there is a natural isomorphism 
\[
\Phi_\D\cong T_{Q,\A{3}}\otimes_{[\A{3}]}-\colon\D^{\A{3}}\toiso\D^Q,
\]
where $T_{Q,\A{3}}$ takes the form as displayed in the above diagram on the right.

Similarly, also the inverse strong stable equivalence $\Phi^{-1}\colon Q\sse\A{3}$ is given by a universal tilting module $T_{\A{3},Q}\in\cSp(\A{3}\times Q\op)$. This invertible spectral bimodule is obtained from $\lI_Q$ by an application of $\Phi^{-1}$ which amounts to forming fibers of the morphisms pointing from right to left. We leave it to the reader to double-check that $\lI_Q$ and $T_{\A{3},Q}$ look like:
\[
\xymatrix{
&\lS\ar[d]&0\ar[r]\ar[l]\ar[d]&0\ar[d] && &\Omega\lS\ar[r]\ar[d]&0\ar[d]\ar[r]&0\ar[d]\\
\lI_{Q}\colon&\lS&\lS\ar[r]\ar[l]&\lS && T_{\A{3},Q}\colon&0\ar[r]&\lS\ar[r]&\lS\\
&0\ar[u]&0\ar[u]\ar[l]\ar[r]&\lS\ar[u] && &0\ar[u]\ar[r]&0\ar[r]\ar[u]&\lS\ar[u]
}
\]
These universal tilting modules $T_{Q,\A{3}}$ and $T_{\A{3},Q}$ are inverse to each other.
\end{eg}

Additional interesting examples of universal tilting bimodules arise from the Coxeter and the Serre functors discussed in \S\ref{subsec:Dynkin-A}. 

\begin{eg}\label{eg:Serre-A2}
For every stable derivator \D the Serre functor $S\colon\D^{\A{2}}\toiso\D^{\A{2}}$ is naturally isomorphic to the cofiber morphism $\cof$ (\autoref{egs:Serre-An}). The universal tilting bimodule is by \autoref{eg:cof-tilting} given by 
\[
\xymatrix{
\lS\ar[r] &0\\
\lS\ar[r]\ar[u] &\lS.\ar[u]
}
\]
\end{eg}

We invite the reader to directly calculate the universal tilting bimodules for the Serre functors in the cases $\A{3}$ and $Q=(\bullet\ot\bullet\to\bullet)$ (see \autoref{eg:Coxeter-A3-linear} and \autoref{eg:Coxeter-span}). Here, instead we turn to the following different perspective on these bimodules. Let us recall from our discussion of opposite derivators that the Yoneda bimodule $\lI_A\in\cSp(A\times A\op)$ can also be considered as an object in
\[
\cSp(A\times A\op)\op\cong\cSp\op((A\times A\op)\op)\cong\cSp\op(A\op\times A).
\]
The point of this reformulation is that the sphere spectrum $\lS$ defines a morphism $-\rhd\lS\colon\cSp\op\to\cSp$.

\begin{defn} \label{defn:dualizing}
For every small category $A$ the \textbf{canonical duality bimodule} $D_A\in\cSp(A\times A\op)$ is the image of the Yoneda bimodule~$\lI_A\in\cSp(A\times A\op)$ under
\[
\cSp\op(A\op\times A)\stackrel{-\rhd\lS}{\to}\cSp(A\op\times A)\stackrel{\cong}{\to}\cSp(A\times A\op).
\]
\end{defn}

\begin{eg}\label{eg:dualizing-poset}
For every poset $P$ the canonical duality bimodule $D_P$ admits the following description. Let $P\times P\op$ be endowed with the product order and let $\Delta_P\subseteq P\times P\op$ be the diagonal. The bimodule $D_P\in\cSp(P\times P\op)$ restricted to the down-set generated by $\Delta_P$ is constant with value the sphere spectrum $\lS$ and it vanishes on the complement \cite[Lem.~7.4]{gst:Dynkin-A}.
\end{eg}

We illustrate this class of examples in the following specific cases. As in previous cases, for the bimodules $D_P\in\cSp(P\times P\op)$ the $P$-coordinate is drawn horizontally and the $P\op$-coordinate vertically.

\begin{eg}\label{eg:dualizing-A2}
The canonical duality bimodule $D_{\A{2}}\in\cSp(\A{2}\times\A{2}\op)$ looks like
\[
\xymatrix{
\lS\ar[r] &0\\
\lS\ar[r]\ar[u] &\lS.\ar[u]
}
\]
\end{eg}

\begin{eg}\label{eg:dualizing-A3}
The canonical duality bimodule $D_{\A{3}}\in\cSp(\A{3}\times\A{3}\op)$ takes the form
\[
\xymatrix{
\lS\ar[r]&0\ar[r]&0\\
\lS\ar[u]\ar[r]&\lS\ar[u]\ar[r]&0\ar[u]\\
\lS\ar[u]\ar[r]&\lS\ar[r]\ar[u]&\lS.\ar[u]
}
\]
\end{eg}

\begin{eg}\label{eg:dualizing-span}
Let $Q=(\bullet\ot\bullet\to\bullet)$ be the source of valence two. The canonical duality bimodule $D_Q\in\cSp(Q\times Q\op)$ is given by
\[
\xymatrix{
\lS\ar[d]&\lS\ar[r]\ar[l]\ar[d]&0\ar[d] \\
0&\lS\ar[r]\ar[l]&0\\
0\ar[u]&\lS\ar[u]\ar[l]\ar[r]&\lS\ar[u]
}
\]
\end{eg}

\begin{defn}\label{defn:nakayama}
Let \D be a stable derivator. The \textbf{Nakayama functor} associated to $A\in\cCat$ is
\[
D_A\otimes_{[A]}-\colon\D^A\to\D^A.
\]
\end{defn}

This definition is of course motivated by the corresponding situation in representation theory (see for instance \cite[\S4.6]{happel:triangulated}). Note that in the case of $\A{2}$ the Serre functor and the Nakayama functor are naturally isomorphic (\autoref{eg:Serre-A2} and \autoref{eg:dualizing-A2}). This holds in full generality.

\begin{thm}\label{thm:serre-nakayama}
Let \D be a stable derivator and let $Q$ be a Dynkin quiver of type~$A$. The Serre functor and the Nakayama functor are naturally isomorphic
\[
S\cong D_Q\otimes_{[Q]}-\colon\D^Q\toiso\D^Q.
\]
\end{thm}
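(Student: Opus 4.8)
The plan is to reduce the theorem to a statement about invertible spectral bimodules and then verify the required isomorphism of weights, using the two explicit descriptions of Yoneda and duality bimodules over posets that are already available. First I would recall that the Serre functor $S=\Sigma\Phi^+$ is, by \autoref{thm:weighted-colim} and \autoref{thm:weighted-colim-II}, a weighted colimit: it is assembled from restrictions and sufficiently finite Kan extensions (the suspension, the reflection functors which are built from finite bicartesian squares in mesh categories), so by \autoref{thm:stable-enriched} it is represented by a spectral bimodule $T_Q^S\in\cSp(Q\times Q\op)$. As in \autoref{rmk:thms-on-weights} and \autoref{rmk:weight-unique}, this weight is obtained by evaluating the pseudo-natural transformation on the Yoneda bimodule, i.e.\ $T_Q^S\cong S_{\cSp}(\lI_Q)$. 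Hence the statement $S\cong D_Q\otimes_{[Q]}-$ is equivalent, by the uniqueness of representing weights, to the single identity of spectral bimodules
\[
S_{\cSp}(\lI_Q)\cong D_Q\in\cSp(Q\times Q\op).
\]

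Next I would compute both sides using the combinatorial descriptions over posets. By \autoref{eg:Yoneda-poset}, $\lI_Q$ is the bimodule which is constantly $\lS_\ast$ on the up-set generated by the diagonal $\Delta_Q\subseteq Q\times Q\op$ and zero elsewhere; equivalently, column $q$ of $\lI_Q$ is the free diagram $P_q$ generated at $q$ by $\lS$, which is to say the representable functor $\hom_Q(q,-)\otimes\lS$. Applying $S_{\cSp}=\Sigma\circ\Phi^+_{\cSp}$ column-by-column therefore amounts to computing $\Sigma\Phi^+$ of each representable $\A{n}$-representation (or $Q$-representation) with values in $\cSp$. For this I would invoke the explicit formulas for Coxeter and Serre functors already worked out in \S\ref{subsec:Dynkin-A}: \autoref{egs:Serre-An} gives $S$ on $\A{2}$, $\A{3}$, and the source of valence two, and more generally \autoref{thm:AR} together with \autoref{con:reflection-An} pins down $\Phi^+$ on arbitrary $A_n$-quivers as a finite composition of reflections inside the mesh category $M_n$. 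On the other side, by \autoref{eg:dualizing-poset} the bimodule $D_Q$ is constantly $\lS$ on the \emph{down}-set generated by $\Delta_Q$ and zero elsewhere; its column $q$ is the representable functor of $Q\op$ at $q$, tensored with $\lS$. The comparison thus reduces to checking that $\Sigma\Phi^+$ sends the covariant representable at $q$ to the contravariant representable at $q$, which at the level of $A_n$-quivers is the classical fact that the Coxeter functor interchanges projectives and injectives up to a shift (Happel's description of Serre functors via Nakayama functors, \autoref{rmk:Serre-An}(i)), lifted to spectra via the universality in \autoref{rmk:universal-htpy-thy}.

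Concretely, the cleanest route is to use \autoref{thm:AR}: restriction along $i\colon\A{n}\to M_n$ identifies $\D^{\A{n}}$ with $\D^{M_n,\ex}$, and under this identification the Serre functor is realized by the translation symmetry of the mesh category $M_n$ composed with a suspension (this is how $S=\Sigma\Phi^+$ arises combinatorially, cf.\ \autoref{rmk:higher-triangulations}). Applying the translation-plus-suspension symmetry to the coherent mesh-diagram representing $\lI_Q$ and reading off the new columns produces exactly the down-set pattern of $D_Q$, with the boundary powers of $\Sigma$ cancelling against the shift in $S$. For a general orientation $Q$ one transports this along the strong stable equivalence $\Phi_{Q,\A{n}}$ of \autoref{thm:sse-An} and uses \autoref{rmk:2-Cat-Tria} (naturality of the canonical structures) to see the identification is compatible with the admissible embedding, so the computed bimodule is independent of the choices. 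I expect the main obstacle to be precisely this bookkeeping of suspension shifts: verifying that the various powers of $\Sigma$ introduced by the iterated reflection functors in $\Phi^+$ combine with the single global $\Sigma$ in $S$ to yield the \emph{unshifted} sphere spectrum $\lS$ in every slot of $D_Q$ — in other words, that the fractional Calabi--Yau bookkeeping for $A_n$ comes out exactly right. Once the shifts are controlled, the rest is a direct comparison of two explicitly-described poset bimodules, and \autoref{prop:invertible} then guarantees that the resulting isomorphism of weights upgrades to the claimed natural isomorphism of functors $S\cong D_Q\otimes_{[Q]}-$ simultaneously in all stable derivators \D.
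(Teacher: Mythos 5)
Your overall plan lines up with what the paper actually does: the cited proof of \cite[Thm.~7.7]{gst:Dynkin-A} is a direct computation of the weight of $S$ for the linearly oriented quiver, and \cite[Thm.~10.9]{gst:Dynkin-A} transports the statement to an arbitrary orientation via the calculus of bimodules. Your reduction to the single bimodule identity $S_{\cSp}(\lI_Q)\cong D_Q$, via uniqueness of the representing weight (\autoref{rmk:weight-unique}), is exactly the right first move, and your identification of the columns of $\lI_Q$ and $D_Q$ with the free and cofree $Q$-diagrams on $\lS$ is correct. So the skeleton of your argument does match the paper's.

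Two points deserve flagging. First, the phrase ``lifted to spectra via the universality in \autoref{rmk:universal-htpy-thy}'' is not a valid deduction. Universality of $\cSp$ supplies a cocontinuous morphism $\cSp\to\D_k$ (and more generally out of $\cSp$, cf.~\eqref{eq:smash-with-field}), not into it, so one cannot transport Happel's classical identity $S\cong\nu$ over a field up to spectra. The correct logic is the reverse: one establishes the identity of spectral bimodules by a direct mesh-category computation of $S(q_!\lS)\cong q_\ast\lS$ for each $q$, and the classical statement over any field is then recovered by applying $k\otimes-$. You already know what the answer must be, but that is a heuristic, not a proof step. Second, the passage where you claim ``the Serre functor is realized by the translation symmetry of the mesh category composed with a suspension'' is essentially a restatement of $S=\Sigma\Phi^+$ (with the translation being the Coxeter functor $\Phi^+$, i.e.\ the shift by $-1$ along the diagonal of $M_n$), and the suspension $\Sigma$ is itself one of the flip automorphisms of $M_n$; the composite $S$ is again such a flip. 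This is correct, but it is exactly the content that still has to be verified, not a shortcut around it: showing that this flip sends the coherent mesh-extension of $q_!\lS$ to the one of $q_\ast\lS$, with all boundary suspensions cancelling, is precisely the ``bookkeeping of shifts'' you yourself identify as the main obstacle — and it is the direct calculation the paper delegates to \cite[Thm.~7.7]{gst:Dynkin-A}. Your proposal correctly locates where the work happens, but does not carry it out.

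Finally, for the transport to arbitrary orientations you invoke \autoref{rmk:2-Cat-Tria}, but that remark concerns naturality of the canonical triangulations, not what is needed here; what one actually needs is that the reflection bimodules are compatible with both the formation of the duality bimodule $D_Q$ and the Coxeter bimodule. That compatibility is nontrivial and is the content of \cite[Thm.~10.9]{gst:Dynkin-A}; appealing to \autoref{rmk:2-Cat-Tria} does not supply it.
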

\begin{proof}
This theorem is in \cite{gst:Dynkin-A}. The special case of linearly oriented Dynkin quivers is established by direct calculation as \cite[Thm.~7.7]{gst:Dynkin-A}. And the general case is deduced from it as an application of the general calculus of bimodules \cite[Thm.~10.9]{gst:Dynkin-A}.
\end{proof}

Additional universal tilting bimodules arise from more general reflection functors (as in \S\ref{subsec:reflection}). We conclude this paper by the following remark offering a different perspective on the project on abstract representation theory.

\begin{rmk}\label{rmk:Picard}
A different way of thinking of abstract representation theory is that it contributes to the calculation of spectral Picard groups (or groupoids). For every closed monoidal derivator \V there is the bicategory $\cProf(\V)$ of bimodules (\autoref{thm:bicategory}). In particular, for every small category $A$ we obtain the monoidal category
\[
\cProf(\V)(A,A)=\V(A\times A\op)
\]
of $(A,A)$-bimodules with values in \V. The monoidal structure is the canceling tensor product $\otimes_{[A]}$ and the monoidal unit is the Yoneda bimodule $\lI_A$. Correspondingly, we define the \textbf{Picard group $\mathrm{Pic}_\V(A)$ of $A$ relative to \V} as the Picard group of the monoidal category $\cProf(\V)(A,A)$,
\[
\mathrm{Pic}_\V(A)=\mathrm{Pic}\big(\cProf(\V)(A,A)\big).
\]
Elements are simply isomorphism classes of invertible bimodules and the isomorphism class of $\lI_A$ is the neutral element. Particularly interesting cases arise for the stable, closed monoidal derivators $\cSp, \D_\lZ,$ and $\D_k$ for arbitrary commutative rings~$k$. Correspondingly, we obtain the \textbf{spectral Picard group} $\mathrm{Pic}_{\cSp}(A)$, the \textbf{integral Picard group} $\mathrm{Pic}_{\D_\lZ}(A)$, and also the \textbf{$k$-linear Picard group} $\mathrm{Pic}_{\D_k}(A)$. In particular, in the case of a field $k$ for special choices of $A$ the group $\mathrm{Pic}_{\D_k}(A)$ agrees with the \emph{derived Picard group} of Miyachi and Yekutieli~\cite{miyachi-yekutieli}. For every stable, closed monoidal derivator \V the suspension of the monoidal unit $\Sigma\lS$ defines an element in $\mathrm{Pic}_\V(\ast)$ (\autoref{eg:weight-susp}).

To the best of the knowledge of the author, the only shape for which the spectral Picard group is known is $A=\ast$. In that case, the Picard group $\mathrm{Pic}_{\cSp}(\ast)$ is the Picard group of the stable homotopy category which is isomorphic to $\lZ$ with $\Sigma\lS$ as generator (\cite{HMS:Picard} or \cite[Thm.~2.2]{strickland:interpolation}). Every construction of a strong stable equivalence $A\sse A$ and its universal tilting bimodule $T_{A,A}$ yields an element in $\mathrm{Pic}_{\cSp}(A)$, and the hope is that at least for some shapes $A$ this leads to a calculation of the spectral Picard group. 

One nice feature of these Picard groups is their functoriality in $\V$. In fact, every cocontinuous, monoidal morphism $F\colon\V\to\W$ of monoidal derivators induces a morphism of bicategories $F\colon\cProf(\V)\to\cProf(\W)$ (as follows from the fact that cocontinuous morphisms preserve coends). In particular, for every $A\in\cCat$ we obtain a monoidal functor $\cProf(\V)(A,A)\to\cProf(\W)(A,A)$ and hence a group homomorphism $F\colon\mathrm{Pic}_\V(A)\to\mathrm{Pic}_\W(A)$. A particularly interesting case is given by the morphism \eqref{eq:smash-with-field}. Hence, associated to every commutative ring $k$ a group homomorphism
\[
k\otimes-\colon\mathrm{Pic}_{\cSp}(A)\to\mathrm{Pic}_{\D_k}(A).
\]

This functoriality of Picard groups is calculationally useful. For instance,in combination with calculations of derived Picard groups by Miyachi and Yekutieli \cite[Theorem~4.1]{miyachi-yekutieli} and an abstract fractionally Calabi--Yau property of Dynkin quivers of type $A$ \cite[Cor.~5.20]{gst:Dynkin-A}, this was used to show that for every such quiver $Q$ the homomorphism
\[
k\otimes-\colon\mathrm{Pic}_{\cSp}(Q)\to\mathrm{Pic}_{\D_k}(Q)
\]
is a split epimorphism \cite[Thm.~12.6]{gst:Dynkin-A}. Conjecturally, there is no kernel, and, jointly with Jan {\v S}{\v t}ov{\'\i}{\v c}ek, we intend to come back to this and related calculations elsewhere. 
\end{rmk}

%Final list of TODOs for ICRA notes:
%\begin{enumerate}
%\item spell-check
%\item check all refs, citations, and overfull boxes
%\item check citations to own papers by comparing with published versions!
%\end{enumerate}
%
%Additional list of TODOs for habilitation thesis:
%\begin{enumerate}
%\item add subsection in \S\ref{sec:ART} on abstract cubical homotopy theory 
%\item add subsection in \S\ref{sec:ART} on characterizations of stability and generalizations of it (relative version of abstract rep thy?)
%\item add subsection in \S\ref{sec:ART} on calculation of spectral Picard groups (depending on progress)
%\item add subsection in \S\ref{sec:ART} on global Serre dualities (depending on progress)
%\item Incorporate these subsections in main introduction
%\end{enumerate}

\bibliographystyle{alpha}
\bibliography{symm}

\newcommand{\etalchar}[1]{$^{#1}$}
\def\cprime{$'$}
\begin{thebibliography}{EKMM97}

\bibitem[ABG{\etalchar{+}}09]{ABGHR:units}
Matthew Ando, Andrew~J. Blumberg, David Gepner, Michael~J. Hopkins, and Charles
  Rezk.
\newblock Units of ring spectra and {T}hom spectra.
\newblock arXiv:0506589, 2009.

\bibitem[ABG10]{ABG:twists}
Matthew Ando, Andrew~J. Blumberg, and David Gepner.
\newblock Twists of {$K$}-theory and {TMF}.
\newblock In {\em Superstrings, geometry, topology, and {$C^\ast$}-algebras},
  volume~81 of {\em Proc. Sympos. Pure Math.}, pages 27--63. Amer. Math. Soc.,
  Providence, RI, 2010.

\bibitem[AHHK07]{angeleri-happel-krause:handbook}
Lidia Angeleri~H{\"u}gel, Dieter Happel, and Henning Krause, editors.
\newblock {\em Handbook of tilting theory}, volume 332 of {\em London
  Mathematical Society Lecture Note Series}.
\newblock Cambridge University Press, Cambridge, 2007.

\bibitem[And79]{anderson:axiomatic}
D.~W. Anderson.
\newblock Axiomatic homotopy theory.
\newblock In {\em Algebraic topology, {W}aterloo, 1978 ({P}roc. {C}onf.,
  {U}niv. {W}aterloo, {W}aterloo, {O}nt., 1978)}, volume 741 of {\em Lecture
  Notes in Math.}, pages 520--547. Springer, Berlin, 1979.

\bibitem[APR79]{apr:tilting}
Maurice Auslander, Mar{\'{\i}}a~In{\'e}s Platzeck, and Idun Reiten.
\newblock Coxeter functors without diagrams.
\newblock {\em Trans. Amer. Math. Soc.}, 250:1--46, 1979.

\bibitem[ASS06]{assem-simson-skowronski}
Ibrahim Assem, Daniel Simson, and Andrzej Skowro\'{n}ski.
\newblock {\em Elements of the representation theory of associative algebras.
  {V}ol. 1}, volume~65 of {\em London Mathematical Society Student Texts}.
\newblock Cambridge University Press, Cambridge, 2006.
\newblock Techniques of representation theory.

\bibitem[Bal10]{balmer:TTG}
Paul Balmer.
\newblock Tensor triangular geometry.
\newblock In {\em Proceedings of the {I}nternational {C}ongress of
  {M}athematicians. {V}olume {II}}, pages 85--112. Hindustan Book Agency, New
  Delhi, 2010.

\bibitem[Bal11]{balmer:separability}
Paul Balmer.
\newblock Separability and triangulated categories.
\newblock {\em Adv. Math.}, 226(5):4352--4372, 2011.

\bibitem[BB80]{brenner-butler:tilting}
Sheila Brenner and M.~C.~R. Butler.
\newblock Generalizations of the {B}ernstein--{G}el\cprime fand--{P}onomarev
  reflection functors.
\newblock In {\em Representation theory, {II} ({P}roc. {S}econd {I}nternat.
  {C}onf., {C}arleton {U}niv., {O}ttawa, {O}nt., 1979)}, volume 832 of {\em
  Lecture Notes in Math.}, pages 103--169. Springer, Berlin-New York, 1980.

\bibitem[BBD82]{beilinson:perverse}
Alexander Be{\u\i}linson, Joseph Bernstein, and Pierre Deligne.
\newblock Faisceaux pervers.
\newblock In {\em Analysis and topology on singular spaces, {I} ({L}uminy,
  1981)}, volume 100 of {\em Ast\'erisque}, pages 5--171. Soc. Math. France,
  Paris, 1982.

\bibitem[BCR97]{benson-rickard-carlson:thick-stmod}
D.~J. Benson, Jon~F. Carlson, and Jeremy Rickard.
\newblock Thick subcategories of the stable module category.
\newblock {\em Fund. Math.}, 153(1):59--80, 1997.

\bibitem[Bec14]{becker:models-singularity}
Hanno Becker.
\newblock Models for singularity categories.
\newblock {\em Adv. Math.}, 254:187--232, 2014.

\bibitem[Bec18]{beckert:thesis}
Falk Beckert.
\newblock {\em The bivariant parasimplicial $S_\bullet$-construction}.
\newblock PhD thesis, Bergische Universit{\"a}t Wuppertal, 2018.

\bibitem[Bek00]{beke:sheafifiable}
Tibor Beke.
\newblock Sheafifiable homotopy model categories.
\newblock {\em Math. Proc. Cambridge Philos. Soc.}, 129(3):447--475, 2000.

\bibitem[B{\'e}n67]{benabou:intro}
Jean B{\'e}nabou.
\newblock Introduction to bicategories.
\newblock In {\em Reports of the {M}idwest {C}ategory {S}eminar}, pages 1--77.
  Springer, Berlin, 1967.

\bibitem[Ber07]{bergner:scat}
Julia~E. Bergner.
\newblock A model category structure on the category of simplicial categories.
\newblock {\em Trans. Amer. Math. Soc.}, 359(5):2043--2058 (electronic), 2007.

\bibitem[Ber10]{bergner:survey}
Julia~E. Bergner.
\newblock A survey of {$(\infty,1)$}-categories.
\newblock In {\em Towards higher categories}, volume 152 of {\em IMA Vol. Math.
  Appl.}, pages 69--83. Springer, New York, 2010.

\bibitem[BF78]{bousfield-friedlander}
A.~K. Bousfield and E.~M. Friedlander.
\newblock Homotopy theory of {$\Gamma $}-spaces, spectra, and bisimplicial
  sets.
\newblock In {\em Geometric applications of homotopy theory ({P}roc. {C}onf.,
  {E}vanston, {I}ll., 1977), {II}}, volume 658 of {\em Lecture Notes in Math.},
  pages 80--130. Springer, Berlin, 1978.

\bibitem[BG99]{becker-gottlieb:duality}
James~C. Becker and Daniel~Henry Gottlieb.
\newblock A history of duality in algebraic topology.
\newblock In {\em History of topology}, pages 725--745. North-Holland,
  Amsterdam, 1999.

\bibitem[BG18]{bg:cubical}
Falk Beckert and Moritz Groth.
\newblock Abstract cubical homotopy theory.
\newblock \url{https://arxiv.org/abs/1803.06022}, 2018.
\newblock Preprint.

\bibitem[BGP73]{bernstein-gelfand-ponomarev:Coxeter}
I.~N. Bern{\v{s}}te{\u\i}n, I.~M. Gel{\cprime}fand, and V.~A. Ponomarev.
\newblock Coxeter functors, and {G}abriel's theorem.
\newblock {\em Uspehi Mat. Nauk}, 28(2(170)):19--33, 1973.

\bibitem[BIK11]{bik:stratification-finite-gp}
David~J. Benson, Srikanth~B. Iyengar, and Henning Krause.
\newblock Stratifying modular representations of finite groups.
\newblock {\em Ann. of Math. (2)}, 174(3):1643--1684, 2011.

\bibitem[BK72]{bousfield-kan:htpy-limits}
Aldridge~Knight Bousfield and Daniel~Marinus Kan.
\newblock {\em Homotopy limits, completions and localizations}.
\newblock Lecture Notes in Mathematics, Vol. 304. Springer-Verlag, Berlin,
  1972.

\bibitem[BK89]{bondal-kapranov:serre}
A.~I. Bondal and M.~M. Kapranov.
\newblock Representable functors, {S}erre functors, and reconstructions.
\newblock {\em Izv. Akad. Nauk SSSR Ser. Mat.}, 53(6):1183--1205, 1337, 1989.

\bibitem[BK90]{bondal-kapranov:framed}
A.~I. Bondal and M.~M. Kapranov.
\newblock Framed triangulated categories.
\newblock {\em Mat. Sb.}, 181(5):669--683, 1990.

\bibitem[BLM08]{BLM:Aoo}
Yu. Bespalov, V.~Lyubashenko, and O.~Manzyuk.
\newblock {\em Pretriangulated {$A_\infty$}-categories}, volume~76 of {\em
  {Prats\=\i\ \=Institutu Matematiki Nats\=\i onal\cprime no\"\i\ Akadem\=\i
  \"\i\ Nauk Ukra\"\i ni. Matematika ta \"\i \"\i\ Zastosuvannya} [Proceedings
  of Institute of Mathematics of NAS of Ukraine. Mathematics and its
  Applications]}.
\newblock Nats\=\i onal\cprime na Akadem\=\i ya Nauk Ukra\"\i ni \=Institut
  Matematiki, Kiev, 2008.

\bibitem[BO01]{bondal-orlov:reconstruction}
Alexei Bondal and Dmitri Orlov.
\newblock Reconstruction of a variety from the derived category and groups of
  autoequivalences.
\newblock {\em Compositio Math.}, 125(3):327--344, 2001.

\bibitem[Boa64]{boardman:thesis}
Michael Boardman.
\newblock {\em {S}table {H}omotopy {T}heory and {S}ome {A}pplications}.
\newblock PhD thesis, University of Cambridge, 1964.

\bibitem[Bor94a]{borceux:1}
Francis Borceux.
\newblock {\em Handbook of categorical algebra. 1}, volume~50 of {\em
  Encyclopedia of Mathematics and its Applications}.
\newblock Cambridge University Press, Cambridge, 1994.
\newblock Basic category theory.

\bibitem[Bor94b]{borceux:2}
Francis Borceux.
\newblock {\em Handbook of categorical algebra. 2}, volume~51 of {\em
  Encyclopedia of Mathematics and its Applications}.
\newblock Cambridge University Press, Cambridge, 1994.
\newblock Categories and structures.

\bibitem[BV73]{boardman-vogt}
Michael Boardman and Rainer Vogt.
\newblock {\em Homotopy invariant algebraic structures on topological spaces}.
\newblock Lecture Notes in Mathematics, Vol. 347. Springer-Verlag, Berlin,
  1973.

\bibitem[Cam13]{camerona:whirlwind}
Omar~Antolin Camerona.
\newblock {A} {W}hirlwind tour of the {W}orld of $(\infty,1)$-categories.
\newblock \url{http://arxiv.org/abs/1303.4669}, 2013.
\newblock preprint.

\bibitem[CD09]{cisinski-deglise:local-stable}
Denis-Charles Cisinski and Fr{\'e}d{\'e}ric D{\'e}glise.
\newblock Local and stable homological algebra in {G}rothendieck abelian
  categories.
\newblock {\em Homology, Homotopy Appl.}, 11(1):219--260, 2009.

\bibitem[Che11]{chen:Serre}
Xiao-Wu Chen.
\newblock Generalized {S}erre duality.
\newblock {\em J. Algebra}, 328:268--286, 2011.

\bibitem[Cis03]{cisinski:direct}
Denis-Charles Cisinski.
\newblock Images directes cohomologiques dans les cat\'egories de mod\`eles.
\newblock {\em Ann. Math. Blaise Pascal}, 10(2):195--244, 2003.

\bibitem[Cis04]{cisinski:loc-min}
Denis-Charles Cisinski.
\newblock Le localisateur fondamental minimal.
\newblock {\em Cah. Topol. G\'eom. Diff\'er. Cat\'eg.}, 45(2):109--140, 2004.

\bibitem[Cis06]{cisinski:presheaves}
Denis-Charles Cisinski.
\newblock Les pr\'efaisceaux comme mod\`eles des types d'homotopie.
\newblock {\em Ast\'erisque}, (308):xxiv+390, 2006.

\bibitem[Cis08]{cisinski:derived-kan}
Denis-Charles Cisinski.
\newblock Propri\'et\'es universelles et extensions de {K}an d\'eriv\'ees.
\newblock {\em Theory Appl. Categ.}, 20:No. 17, 605--649, 2008.

\bibitem[Cis10]{cisinski:derivable}
Denis-Charles Cisinski.
\newblock Cat\'egories d\'erivables.
\newblock {\em Bull. Soc. Math. France}, 138(3):317--393, 2010.

\bibitem[CS02]{chacholski-scherer:diagram}
Wojciech Chach{\'o}lski and J{\'e}r{\^o}me Scherer.
\newblock Homotopy theory of diagrams.
\newblock {\em Mem. Amer. Math. Soc.}, 155(736):x+90, 2002.

\bibitem[CT11]{cisinski-tabuada:non-connective}
Denis-Charles Cisinski and Gon{\c{c}}alo Tabuada.
\newblock Non-connective {$K$}-theory via universal invariants.
\newblock {\em Compos. Math.}, 147(4):1281--1320, 2011.

\bibitem[CT12]{cisinski-tabuada:non-commutative}
Denis-Charles Cisinski and Gon{\c{c}}alo Tabuada.
\newblock Symmetric monoidal structure on non-commutative motives.
\newblock {\em J. K-Theory}, 9(2):201--268, 2012.

\bibitem[DJW18]{dycker-jasso-walde:higher-AR}
Tobias Dyckerhoff, Gustavo Jasso, and Tashi Walde.
\newblock Simplicial structures in higher {A}uslander--{R}eiten theory.
\newblock \url{http://arxiv.org/pdf/1811.02461}, 2018.
\newblock preprint.

\bibitem[DJW19]{dycker-jasso-walde:BGP}
Tobias Dyckerhoff, Gustavo Jasso, and Tashi Walde.
\newblock Generalised {BGP} reflection functors via the {G}rothendieck
  construction.
\newblock \url{http://arxiv.org/pdf/1901.06993}, 2019.
\newblock preprint.

\bibitem[DP80]{dold-puppe:duality}
Albrecht Dold and Dieter Puppe.
\newblock Duality, trace, and transfer.
\newblock In {\em Proceedings of the International Conference on Geometric
  Topology (Warsaw, 1978)}, pages 81--102, Warsaw, 1980. PWN.

\bibitem[Dug01a]{dugger:combinatorial}
Daniel Dugger.
\newblock Combinatorial model categories have presentations.
\newblock {\em Adv. Math.}, 164(1):177--201, 2001.

\bibitem[Dug01b]{dugger:universal}
Daniel Dugger.
\newblock Universal homotopy theories.
\newblock {\em Adv. Math.}, 164(1):144--176, 2001.

\bibitem[EE05]{enochs-estrada:relative}
Edgar Enochs and Sergio Estrada.
\newblock Relative homological algebra in the category of quasi-coherent
  sheaves.
\newblock {\em Adv. Math.}, 194(2):284--295, 2005.

\bibitem[EH62]{eckmann-hilton:group-like-I}
B.~Eckmann and P.~J. Hilton.
\newblock Group-like structures in general categories. {I}. {M}ultiplications
  and comultiplications.
\newblock {\em Math. Ann.}, 145:227--255, 1961/1962.

\bibitem[EJ00]{enochs-jenda:relative}
Edgar~E. Enochs and Overtoun M.~G. Jenda.
\newblock {\em Relative homological algebra}, volume~30 of {\em de Gruyter
  Expositions in Mathematics}.
\newblock Walter de Gruyter \& Co., Berlin, 2000.

\bibitem[EKMM97]{ekmm:rings}
A.~D. Elmendorf, I.~Kriz, M.~A. Mandell, and J.~P. May.
\newblock {\em Rings, modules, and algebras in stable homotopy theory},
  volume~47 of {\em Mathematical Surveys and Monographs}.
\newblock American Mathematical Society, Providence, RI, 1997.
\newblock With an appendix by M. Cole.

\bibitem[Fer06]{ferrand:nonadd}
Daniel Ferrand.
\newblock On the non-additivity of traces in derived categories.
\newblock arXiv:0506589, 2006.

\bibitem[Fra96]{franke:adams}
Jens Franke.
\newblock Uniqueness theorems for certain triangulated categories with an
  {A}dams spectral sequence, 1996.
\newblock Preprint.

\bibitem[Fre09]{fresse:modules}
Benoit Fresse.
\newblock {\em Modules over operads and functors}, volume 1967 of {\em Lecture
  Notes in Mathematics}.
\newblock Springer-Verlag, Berlin, 2009.

\bibitem[Gab72]{gabriel:unzerlegbare}
Peter Gabriel.
\newblock Unzerlegbare {D}arstellungen. {I}.
\newblock {\em Manuscripta Math.}, 6:71--103; correction, ibid. 6 (1972), 309,
  1972.

\bibitem[GAdS14]{gallauer:traces}
Martin Gallauer Alves~de Souza.
\newblock Traces in monoidal derivators, and homotopy colimits.
\newblock {\em Adv. Math.}, 261:26--84, 2014.

\bibitem[Gar05]{garkusha:II}
Grigory Garkusha.
\newblock Systems of diagram categories and {K}-theory. {II}.
\newblock {\em Math. Z.}, 249(3):641--682, 2005.

\bibitem[Gar06]{garkusha:I}
G.~Garkusha.
\newblock Systems of diagram categories and {$K$}-theory. {I}.
\newblock {\em Algebra i Analiz}, 18(6):131--186, 2006.

\bibitem[GGN15]{ggn:infinite}
David Gepner, Moritz Groth, and Thomas Nikolaus.
\newblock Universality of multiplicative infinite delooping machines.
\newblock {\em Algebr. Geom. Topol.}, 15(6):3107--3153, 2015.

\bibitem[Gil11]{gillespie:exact}
James Gillespie.
\newblock Model structures on exact categories.
\newblock {\em J. Pure Appl. Algebra}, 215(12):2892--2902, 2011.

\bibitem[GM03]{gelfand-manin:homological}
Sergei~I. Gelfand and Yuri~I. Manin.
\newblock {\em Methods of homological algebra}.
\newblock Springer Monographs in Mathematics. Springer-Verlag, Berlin, second
  edition, 2003.

\bibitem[Goo92]{goodwillie:II}
Thomas~G. Goodwillie.
\newblock Calculus. {II}. {A}nalytic functors.
\newblock {\em $K$-Theory}, 5(4):295--332, 1991/92.

\bibitem[GPS14a]{gps:additivity}
Moritz Groth, Kate Ponto, and Michael Shulman.
\newblock The additivity of traces in monoidal derivators.
\newblock {\em {J}ournal of {K}-{T}heory}, 14:422--494, 2014.

\bibitem[GPS14b]{gps:mayer}
Moritz Groth, Kate Ponto, and Michael Shulman.
\newblock {M}ayer--{V}ietoris sequences in stable derivators.
\newblock {\em Homology, Homotopy Appl.}, 16:265--294, 2014.

\bibitem[Gro]{grothendieck:derivators}
Alexander Grothendieck.
\newblock Les d\'erivateurs.
\newblock \\\url{http://www.math.jussieu.fr/~maltsin/groth/Derivateurs.html}.
\newblock Manuscript.

\bibitem[Gro83]{grothendieck:stacks}
A.~Grothendieck.
\newblock Pursuing stacks.
\newblock manuscript, to appear in {D}ocuments {M}ath\'ematics, 1983.

\bibitem[Gro10]{groth:scinfinity}
Moritz Groth.
\newblock A short course on $\infty$-categories.
\newblock \url{http://arxiv.org/abs/1007.2925}, 2010.
\newblock Preprint. To appear in \emph{Handbook of homotopy theory} edited by
  Haynes Miller.

\bibitem[Gro13]{groth:ptstab}
Moritz Groth.
\newblock Derivators, pointed derivators, and stable derivators.
\newblock {\em Algebr. Geom. Topol.}, 13:313--374, 2013.

\bibitem[Gro16]{groth:revisit}
Moritz Groth.
\newblock Revisiting the canonicity of canonical triangulations.
\newblock arXiv:1602.04846, 2016.

\bibitem[Gro18]{groth:thy-of-der}
Moritz Groth.
\newblock The theory of derivators, 2018.
\newblock Book project in preparation. Draft versions of parts of it are
  available under \url{http://www.math.uni-bonn.de/~mgroth/monos.htmpl}.

\bibitem[G{\v S}14]{gst:basic}
Moritz Groth and Jan {\v S}{\v t}ov{\'\i}{\v c}ek.
\newblock Tilting theory via stable homotopy theory.
\newblock To appear in {\em Crelle's Journal}. Available at arXiv:1401.6451,
  2014.

\bibitem[G{\v S}15]{gst:acyclic}
Moritz Groth and Jan {\v S}{\v t}ov{\'\i}{\v c}ek.
\newblock Abstract tilting theory for quivers and related categories.
\newblock arXiv:1512.06267, 2015.

\bibitem[G{\v{S}}16a]{gst:Dynkin-A}
Moritz Groth and Jan {\v{S}}{\v{t}}ov{\'{\i}}{\v{c}}ek.
\newblock Abstract representation theory of {D}ynkin quivers of type {$A$}.
\newblock {\em Adv. Math.}, 293:856--941, 2016.

\bibitem[G{\v S}16b]{gst:tree}
Moritz Groth and Jan {\v S}{\v t}ov{\'\i}{\v c}ek.
\newblock Tilting theory for trees via stable homotopy theory.
\newblock {\em Journal of Pure and Applied Algebra}, 220(6):2324 -- 2363, 2016.

\bibitem[GS17]{gs:generalized}
Moritz Groth and Mike Shulman.
\newblock Characterizations of abstract stable homotopy theories.
\newblock available at arXiv:1704.08084, 2017.

\bibitem[GZ67]{gabriel-zisman:calculus}
Peter Gabriel and Michel Zisman.
\newblock {\em Calculus of fractions and homotopy theory}.
\newblock Ergebnisse der Mathematik und ihrer Grenzgebiete, Band 35.
  Springer-Verlag New York, Inc., New York, 1967.

\bibitem[Hap87]{happel:fd-algebra}
Dieter Happel.
\newblock On the derived category of a finite-dimensional algebra.
\newblock {\em Comment. Math. Helv.}, 62(3):339--389, 1987.

\bibitem[Hap88]{happel:triangulated}
Dieter Happel.
\newblock {\em Triangulated categories in the representation theory of
  finite-dimensional algebras}, volume 119 of {\em London Mathematical Society
  Lecture Note Series}.
\newblock Cambridge University Press, Cambridge, 1988.

\bibitem[Hel68]{heller:shc}
Alex Heller.
\newblock Stable homotopy categories.
\newblock {\em Bull. Amer. Math. Soc.}, 74:28--63, 1968.

\bibitem[Hel88]{heller:htpythies}
Alex Heller.
\newblock Homotopy theories.
\newblock {\em Mem. Amer. Math. Soc.}, 71(383):vi+78, 1988.

\bibitem[Hel97]{heller:stable}
Alex Heller.
\newblock Stable homotopy theories and stabilization.
\newblock {\em J. Pure Appl. Algebra}, 115(2):113--130, 1997.

\bibitem[Hin97]{hinich:homological}
Vladimir Hinich.
\newblock Homological algebra of homotopy algebras.
\newblock {\em Comm. Algebra}, 25(10):3291--3323, 1997.

\bibitem[Hir03]{hirschhorn:model}
Philip~Steven Hirschhorn.
\newblock {\em Model categories and their localizations}, volume~99 of {\em
  Mathematical Surveys and Monographs}.
\newblock American Mathematical Society, Providence, RI, 2003.

\bibitem[HJR10]{hjr:triangulated}
Thorsten Holm, Peter J{\o}rgensen, and Rapha{\"e}l Rouquier, editors.
\newblock {\em Triangulated categories}, volume 375 of {\em London Mathematical
  Society Lecture Note Series}.
\newblock Cambridge University Press, Cambridge, 2010.

\bibitem[HMS94]{HMS:Picard}
Michael~J. Hopkins, Mark Mahowald, and Hal Sadofsky.
\newblock Constructions of elements in {P}icard groups.
\newblock In {\em Topology and representation theory ({E}vanston, {IL}, 1992)},
  volume 158 of {\em Contemp. Math.}, pages 89--126. Amer. Math. Soc.,
  Providence, RI, 1994.

\bibitem[Hov99]{hovey:model}
Mark Hovey.
\newblock {\em Model categories}, volume~63 of {\em Mathematical Surveys and
  Monographs}.
\newblock American Mathematical Society, Providence, RI, 1999.

\bibitem[Hov01]{hovey:sheaves}
Mark Hovey.
\newblock Model category structures on chain complexes of sheaves.
\newblock {\em Trans. Amer. Math. Soc.}, 353(6):2441--2457 (electronic), 2001.

\bibitem[Hov02]{hovey:cotorsion}
Mark Hovey.
\newblock Cotorsion pairs, model category structures, and representation
  theory.
\newblock {\em Math. Z.}, 241(3):553--592, 2002.

\bibitem[HPS97]{HPS:axiomatic}
Mark Hovey, John~H. Palmieri, and Neil~P. Strickland.
\newblock Axiomatic stable homotopy theory.
\newblock {\em Mem. Amer. Math. Soc.}, 128(610):x+114, 1997.

\bibitem[HR82]{happel-ringel:tilted-algebras}
Dieter Happel and Claus~Michael Ringel.
\newblock Tilted algebras.
\newblock {\em Trans. Amer. Math. Soc.}, 274(2):399--443, 1982.

\bibitem[HS01]{hirschowitz-simpson}
A.~Hirschowitz and C.~Simpson.
\newblock Descente pour les {$n$}-champs.
\newblock \href{http://arxiv.org/abs/math/9807049}{arXiv:math/9807049v3
  [math.AG]}, 2001.

\bibitem[HSS00]{hss:symmetric}
Mark Hovey, Brooke Shipley, and Jeff Smith.
\newblock Symmetric spectra.
\newblock {\em J. Amer. Math. Soc.}, 13(1):149--208, 2000.

\bibitem[Huy06]{huybrechts:fourier}
D.~Huybrechts.
\newblock {\em Fourier--{M}ukai transforms in algebraic geometry}.
\newblock Oxford Mathematical Monographs. The Clarendon Press, Oxford
  University Press, Oxford, 2006.

\bibitem[Jar87]{jardine:presheaves}
J.~F. Jardine.
\newblock Simplicial presheaves.
\newblock {\em J. Pure Appl. Algebra}, 47(1):35--87, 1987.

\bibitem[Jar00]{jardine:motivic}
J.~F. Jardine.
\newblock Motivic symmetric spectra.
\newblock {\em Doc. Math.}, 5:445--553 (electronic), 2000.

\bibitem[Joy08]{joyal:barca}
Andr{\'e} Joyal.
\newblock The theory of quasi-categories and its applications.
\newblock Lectures at the CRM (Barcelona). Preprint, 2008.

\bibitem[JY18]{jin-yang}
Fangzhou Jin and Enlin Yang.
\newblock Kunneth formulas for motives and additivity of traces.
\newblock arXiv:1812.06441, 2018.

\bibitem[Kel91]{keller:epivalent}
Bernhard Keller.
\newblock Derived categories and universal problems.
\newblock {\em Comm. Algebra}, 19:699--747, 1991.

\bibitem[Kel94]{keller:deriving-dg}
Bernhard Keller.
\newblock Deriving {DG} categories.
\newblock {\em Ann. Sci. \'Ecole Norm. Sup. (4)}, 27(1):63--102, 1994.

\bibitem[Kel05a]{keller:orbit}
Bernhard Keller.
\newblock On triangulated orbit categories.
\newblock {\em Doc. Math.}, 10:551--581, 2005.

\bibitem[Kel05b]{kelly:enriched}
Gregory~Maxwell Kelly.
\newblock Basic concepts of enriched category theory.
\newblock {\em Repr. Theory Appl. Categ.}, pages vi+137 pp. (electronic), 2005.
\newblock Reprint of the 1982 original [Cambridge Univ. Press, Cambridge;
  MR0651714].

\bibitem[Kel07a]{keller:exact}
Bernhard Keller.
\newblock Appendice: {L}e d\'erivateur triangul\'e associ\'e \`a une
  cat\'egorie exacte.
\newblock In {\em Categories in algebra, geometry and mathematical physics},
  volume 431 of {\em Contemp. Math.}, pages 369--373. Amer. Math. Soc.,
  Providence, RI, 2007.

\bibitem[Kel07b]{keller:der-cat-and-tilt}
Bernhard Keller.
\newblock Derived categories and tilting.
\newblock In {\em Handbook of tilting theory}, volume 332 of {\em London Math.
  Soc. Lecture Note Ser.}, pages 49--104. Cambridge Univ. Press, Cambridge,
  2007.

\bibitem[Kel08]{keller:CY-triang}
Bernhard Keller.
\newblock Calabi--{Y}au triangulated categories.
\newblock In {\em Trends in representation theory of algebras and related
  topics}, EMS Ser. Congr. Rep., pages 467--489. Eur. Math. Soc., Z\"urich,
  2008.

\bibitem[KL06]{krause-le:AR}
Henning Krause and Jue Le.
\newblock The {A}uslander--{R}eiten formula for complexes of modules.
\newblock {\em Adv. Math.}, 207(1):133--148, 2006.

\bibitem[KM08]{kahn-maltsiniotis}
Bruno Kahn and Georges Maltsiniotis.
\newblock Structures de d\'erivabilit\'e.
\newblock {\em Adv. Math.}, 218(4):1286--1318, 2008.

\bibitem[KN02]{keller-neeman:D4}
Bernhard Keller and Amnon Neeman.
\newblock The connection between {M}ay's axioms for a triangulated tensor
  product and {H}appel's description of the derived category of the quiver
  {$D_4$}.
\newblock {\em Doc. Math.}, 7:535--560 (electronic), 2002.

\bibitem[Kra08]{krause:reflection}
Henning Krause.
\newblock Representations of quivers via reflection functors.
\newblock \url{http://arxiv.org/abs/math/0804.1428}, 2008.
\newblock Course notes.

\bibitem[KS74]{kelly-street:review}
G.~M. Kelly and Ross Street.
\newblock {\em Category Seminar: Proceedings Sydney Category Theory Seminar
  1972/1973}, chapter Review of the elements of 2-categories, pages 75--103.
\newblock Springer Berlin Heidelberg, Berlin, Heidelberg, 1974.

\bibitem[KS09]{kontsevich-soibelman:Aoo}
M.~Kontsevich and Y.~Soibelman.
\newblock Notes on {$A_\infty$}-algebras, {$A_\infty$}-categories and
  non-commutative geometry.
\newblock In {\em Homological mirror symmetry}, volume 757 of {\em Lecture
  Notes in Phys.}, pages 153--219. Springer, Berlin, 2009.

\bibitem[KS12]{keller-scherotzke:integral}
Bernhard Keller and Sarah Scherotzke.
\newblock The integral cluster category.
\newblock {\em Int. Math. Res. Not. IMRN}, (12):2867--2887, 2012.

\bibitem[Lac10]{lack:2-cat-companion}
Stephen Lack.
\newblock A 2-categories companion.
\newblock In {\em Towards higher categories}, volume 152 of {\em IMA Vol. Math.
  Appl.}, pages 105--191. Springer, New York, 2010.

\bibitem[Lad07a]{ladkani:posets}
Sefi Ladkani.
\newblock Universal derived equivalences of posets.
\newblock arXiv:0705.0946, 2007.

\bibitem[Lad07b]{ladkani:posets-cluster-tilting}
Sefi Ladkani.
\newblock Universal derived equivalences of posets of cluster tilting modules.
\newblock arXiv:0710.2860, 2007.

\bibitem[Lad07c]{ladkani:posets-tilting}
Sefi Ladkani.
\newblock Universal derived equivalences of posets of tilting modules.
\newblock arXiv:0708.1287, 2007.

\bibitem[Lad08]{ladkani:thesis}
Sefi Ladkani.
\newblock {\em Homological Properties of Finite Partially Ordered Sets}.
\newblock PhD thesis, The Hebrew University of Jerusalem, 2008.

\bibitem[Len10]{lenhardt:frames}
Fabian Lenhardt.
\newblock Stable frames in model categories.
\newblock \url{http://front.math.ucdavis.edu/1002.2837}, 2010.
\newblock Preprint.

\bibitem[Len17]{lenz:derivators}
Tobias Lenz.
\newblock Homotopy ({P}re-){D}erivators of {C}ofibration and
  {Q}uasi-{C}ategories.
\newblock arXiv:1712.07845, 2017.

\bibitem[LM08]{lyubashenko-mazyuk:Serre}
Volodymyr Lyubashenko and Oleksandr Manzyuk.
\newblock {$A_\infty$}-bimodules and {S}erre {$A_\infty$}-functors.
\newblock In {\em Geometry and dynamics of groups and spaces}, volume 265 of
  {\em Progr. Math.}, pages 565--645. Birkh\"auser, Basel, 2008.

\bibitem[LMSM86]{lms:equivariant}
L.~G. Lewis, Jr., J.~P. May, M.~Steinberger, and J.~E. McClure.
\newblock {\em Equivariant stable homotopy theory}, volume 1213 of {\em Lecture
  Notes in Mathematics}.
\newblock Springer-Verlag, Berlin, 1986.
\newblock With contributions by J. E. McClure.

\bibitem[Lur09]{HTT}
Jacob Lurie.
\newblock {\em Higher topos theory}, volume 170 of {\em Annals of Mathematics
  Studies}.
\newblock Princeton University Press, Princeton, NJ, 2009.

\bibitem[Lur14]{HA}
Jacob Lurie.
\newblock Higher algebra.
\newblock \url{http://www.math.harvard.edu/~lurie/}, 2014.
\newblock Preprint.

\bibitem[Mal01a]{maltsiniotis:seminar}
Georges Maltsiniotis.
\newblock Groupe de travail sur les d\'erivateurs.
\newblock \url{http://people.math.jussieu.fr/~maltsin/textes.html}, 2001.
\newblock Seminar in Paris.

\bibitem[Mal01b]{maltsiniotis:intro}
Georges Maltsiniotis.
\newblock Introduction \`a la th\'eorie des d\'erivateurs (d'apr\`es
  {G}rothendieck).
\newblock \url{http://people.math.jussieu.fr/~maltsin/textes.html}, 2001.
\newblock Preprint.

\bibitem[Mal05a]{maltsiniotis:higher}
Georges Maltsiniotis.
\newblock Cat\'egories triangul\'ees sup\'erieures.
\newblock \url{http://people.math.jussieu.fr/~maltsin/textes.html}, 2005.
\newblock Preprint.

\bibitem[Mal05b]{maltsiniotis:grothendieck}
Georges Maltsiniotis.
\newblock La th\'eorie de l'homotopie de {G}rothendieck.
\newblock {\em Ast\'erisque}, (301):vi+140, 2005.

\bibitem[Mal07]{maltsiniotis:k-theory}
Georges Maltsiniotis.
\newblock La {$K$}-th\'eorie d'un d\'erivateur triangul\'e.
\newblock In {\em Categories in algebra, geometry and mathematical physics},
  volume 431 of {\em Contemp. Math.}, pages 341--368. Amer. Math. Soc.,
  Providence, RI, 2007.

\bibitem[Mal12]{maltsiniotis:htpy-exact}
Georges Maltsiniotis.
\newblock Carr\'es exacts homotopiques et d\'erivateurs.
\newblock {\em Cah. Topol. G\'eom. Diff\'er. Cat\'eg.}, 53(1):3--63, 2012.

\bibitem[Mar83]{margolis:spectra}
Harvey~Robert Margolis.
\newblock {\em Spectra and the {S}teenrod algebra}, volume~29 of {\em
  North-Holland Mathematical Library}.
\newblock North-Holland Publishing Co., Amsterdam, 1983.
\newblock Modules over the Steenrod algebra and the stable homotopy category.

\bibitem[Mat76]{mather:pullbacks}
Michael Mather.
\newblock Pullbacks in homotopy theory.
\newblock {\em Canad. J. Math.}, 28(2):225--263, 1976.

\bibitem[May01]{may:additivity}
J.~P. May.
\newblock The additivity of traces in triangulated categories.
\newblock {\em Adv. Math.}, 163(1):34--73, 2001.

\bibitem[ML98]{maclane}
Saunders Mac~Lane.
\newblock {\em Categories for the working mathematician}, volume~5 of {\em
  Graduate Texts in Mathematics}.
\newblock Springer-Verlag, New York, second edition, 1998.

\bibitem[MM02]{mandell-may:equivariant}
M.~A. Mandell and J.~P. May.
\newblock Equivariant orthogonal spectra and {$S$}-modules.
\newblock {\em Mem. Amer. Math. Soc.}, 159(755):x+108, 2002.

\bibitem[MMSS01]{mmss:diagram}
M.~A. Mandell, J.~P. May, S.~Schwede, and B.~Shipley.
\newblock Model categories of diagram spectra.
\newblock {\em Proc. London Math. Soc. (3)}, 82(2):441--512, 2001.

\bibitem[MS06]{may-sigurdsson:parametrized}
J.~P. May and J.~Sigurdsson.
\newblock {\em Parametrized homotopy theory}, volume 132 of {\em Mathematical
  Surveys and Monographs}.
\newblock American Mathematical Society, Providence, RI, 2006.

\bibitem[MSS07]{mss:no-models}
Fernando Muro, Stefan Schwede, and Neil Strickland.
\newblock Triangulated categories without models.
\newblock {\em Invent. Math.}, 170(2):231--241, 2007.

\bibitem[Muk81]{mukai:duality}
Shigeru Mukai.
\newblock Duality between {$D(X)$}\ and {$D(\hat X)$}\ with its application to
  {P}icard sheaves.
\newblock {\em Nagoya Math. J.}, 81:153--175, 1981.

\bibitem[MV99]{morel-voevodsky:a1}
Fabien Morel and Vladimir Voevodsky.
\newblock {${\bf A}^1$}-homotopy theory of schemes.
\newblock {\em Inst. Hautes \'Etudes Sci. Publ. Math.}, (90):45--143 (2001),
  1999.

\bibitem[MV15]{munson-volic}
Brian~A. Munson and Ismar Voli{\'c}.
\newblock {\em Cubical homotopy theory}, volume~25 of {\em New Mathematical
  Monographs}.
\newblock Cambridge University Press, Cambridge, 2015.

\bibitem[MY01]{miyachi-yekutieli}
Jun-ichi Miyachi and Amnon Yekutieli.
\newblock Derived {P}icard groups of finite-dimensional hereditary algebras.
\newblock {\em Compositio Math.}, 129(3):341--368, 2001.

\bibitem[Nee01]{neeman:triangulated}
Amnon Neeman.
\newblock {\em Triangulated categories}, volume 148 of {\em Annals of
  Mathematics Studies}.
\newblock Princeton University Press, Princeton, NJ, 2001.

\bibitem[Pog17]{poguntke:higher-segal}
Thomas Poguntke.
\newblock Higher {S}egal structures in algebraic {$K$}-theory.
\newblock \url{https://arxiv.org/abs/1709.06510}, 2017.
\newblock Preprint.

\bibitem[PS14]{ps:linearity-fp}
Kate Ponto and Mike Shulman.
\newblock The linearity of fixed point invariants.
\newblock arXiv:1406.7861, 2014.

\bibitem[PS16]{ps:linearity}
Kate Ponto and Michael Shulman.
\newblock The linearity of traces in monoidal categories and bicategories.
\newblock {\em Theory Appl. Categ.}, 31:Paper No. 23, 594--689, 2016.

\bibitem[Pup58]{puppe:induzierten-I}
Dieter Puppe.
\newblock Homotopiemengen und ihre induzierten {A}bbildungen. {I}.
\newblock {\em Math. Z.}, 69:299--344, 1958.

\bibitem[Pup67]{puppe:stabil}
D.~Puppe.
\newblock Stabile {H}omotopietheorie. {I}.
\newblock {\em Math. Ann.}, 169:243--274, 1967.

\bibitem[Qui67]{quillen:ha}
Daniel~Gray Quillen.
\newblock {\em Homotopical algebra}.
\newblock Lecture Notes in Mathematics, No. 43. Springer-Verlag, Berlin, 1967.

\bibitem[Qui73]{quillen:k-theory}
Daniel~Gray Quillen.
\newblock Higher algebraic {$K$}-theory. {I}.
\newblock In {\em Algebraic {$K$}-theory, {I}: {H}igher {$K$}-theories ({P}roc.
  {C}onf., {B}attelle {M}emorial {I}nst., {S}eattle, {W}ash., 1972)}, pages
  85--147. Lecture Notes in Math., Vol. 341. Springer, Berlin, 1973.

\bibitem[RB06]{RB:ABC}
Andrei Radulescu-Banu.
\newblock Cofibrations in {H}omotopy {T}heory.
\newblock \url{http://arxiv.org/abs/math/0610009}, 2006.
\newblock Preprint.

\bibitem[Ren09]{renaudin}
Olivier Renaudin.
\newblock Plongement de certaines th\'eories homotopiques de {Q}uillen dans les
  d\'erivateurs.
\newblock {\em J. Pure Appl. Algebra}, 213(10):1916--1935, 2009.

\bibitem[Rez01]{rezk:model}
Charles Rezk.
\newblock A model for the homotopy theory of homotopy theory.
\newblock {\em Trans. Amer. Math. Soc.}, 353(3):973--1007 (electronic), 2001.

\bibitem[Ric91]{rickard:derived-fun}
Jeremy Rickard.
\newblock Derived equivalences as derived functors.
\newblock {\em J. London Math. Soc. (2)}, 43(1):37--48, 1991.

\bibitem[Ric97]{rickard:idemp}
Jeremy Rickard.
\newblock Idempotent modules in the stable category.
\newblock {\em J. London Math. Soc. (2)}, 56(1):149--170, 1997.

\bibitem[Rin84]{ringel:tame-alg}
Claus~Michael Ringel.
\newblock {\em Tame algebras and integral quadratic forms}, volume 1099 of {\em
  Lecture Notes in Mathematics}.
\newblock Springer-Verlag, Berlin, 1984.

\bibitem[RV]{riehl-verity:2-cat-qcats}
Emily Riehl and Dominic Verity.
\newblock The 2-category theory of quasi-categories.
\newblock {\em Adv. Math.}, 280:549--642.

\bibitem[RV17a]{riehl-verity:fibration}
Emily Riehl and Dominic Verity.
\newblock Fibrations and {Y}oneda's lemma in an {$\infty$}-cosmos.
\newblock {\em J. Pure Appl. Algebra}, 221(3):499--564, 2017.

\bibitem[RV17b]{riehl-verity:Kan-ext}
Emily Riehl and Dominic Verity.
\newblock Kan extensions and the calculus of modules for {$\infty$}-categories.
\newblock {\em Algebr. Geom. Topol.}, 17(1):189--271, 2017.

\bibitem[RVdB02]{reiten-bergh:serre}
I.~Reiten and M.~Van~den Bergh.
\newblock Noetherian hereditary abelian categories satisfying {S}erre duality.
\newblock {\em J. Amer. Math. Soc.}, 15(2):295--366, 2002.

\bibitem[Sch10]{schwede:alg-versus-top}
Stefan Schwede.
\newblock Algebraic versus topological triangulated categories.
\newblock In {\em Triangulated categories}, volume 375 of {\em London Math.
  Soc. Lecture Note Ser.}, pages 389--407. Cambridge Univ. Press, Cambridge,
  2010.

\bibitem[Sch13]{schwede:p-order}
Stefan Schwede.
\newblock The {$p$}-order of topological triangulated categories.
\newblock {\em J. Topol.}, 6(4):868--914, 2013.

\bibitem[Seg74]{segal:categories}
Graeme Segal.
\newblock Categories and cohomology theories.
\newblock {\em Topology}, 13:293--312, 1974.

\bibitem[Ser55]{serre:duality}
Jean-Pierre Serre.
\newblock Un th\'eor\`eme de dualit\'e.
\newblock {\em Comment. Math. Helv.}, 29:9--26, 1955.

\bibitem[Shi07]{shipley:spectra-dga}
Brooke Shipley.
\newblock {$H\Bbb Z$}-algebra spectra are differential graded algebras.
\newblock {\em Amer. J. Math.}, 129(2):351--379, 2007.

\bibitem[Shu08]{shulman:framed}
Michael Shulman.
\newblock Framed bicategories and monoidal fibrations.
\newblock {\em Theory Appl. Categ.}, 20(18):650--738 (electronic), 2008.
\newblock arXiv:0706.1286.

\bibitem[Sim12]{simpson:higher}
Carlos Simpson.
\newblock {\em Homotopy theory of higher categories}, volume~19 of {\em New
  Mathematical Monographs}.
\newblock Cambridge University Press, Cambridge, 2012.

\bibitem[SS00]{schwede-shipley:algebras}
Stefan Schwede and Brooke~E. Shipley.
\newblock Algebras and modules in monoidal model categories.
\newblock {\em Proc. London Math. Soc. (3)}, 80(2):491--511, 2000.

\bibitem[SS03]{schwede-shipley:morita}
Stefan Schwede and Brooke Shipley.
\newblock Stable model categories are categories of modules.
\newblock {\em Topology}, 42(1):103--153, 2003.

\bibitem[SS07]{simson-skowronski:2}
Daniel Simson and Andrzej Skowro\'{n}ski.
\newblock {\em Elements of the representation theory of associative algebras.
  {V}ol. 2}, volume~71 of {\em London Mathematical Society Student Texts}.
\newblock Cambridge University Press, Cambridge, 2007.
\newblock Tubes and concealed algebras of Euclidean type.

\bibitem[Ste13]{stevenson:support-action}
Greg Stevenson.
\newblock Support theory via actions of tensor triangulated categories.
\newblock {\em J. Reine Angew. Math.}, 681:219--254, 2013.

\bibitem[{\v{S}}{\v{t}}o14]{stovicek:exact-model}
Jan {\v{S}}{\v{t}}ov{\'{\i}}{\v{c}}ek.
\newblock Exact model categories, approximation theory, and cohomology of
  quasi-coherent sheaves.
\newblock In David~J. Benson, Henning Krause, and Andrzej Skowro{\'n}ski,
  editors, {\em Advances in Representation Theory of Algebras ({C}onf. {ICRA}
  {B}ielefeld, {G}ermany, 8-17 {A}ugust, 2012)}, EMS Series of Congress
  Reports, pages 297--367. EMS Publishing House, Z{\"u}rich, 2014.

\bibitem[Str92]{strickland:interpolation}
N.~P. Strickland.
\newblock On the {$p$}-adic interpolation of stable homotopy groups.
\newblock In {\em Adams {M}emorial {S}ymposium on {A}lgebraic {T}opology, 2
  ({M}anchester, 1990)}, volume 176 of {\em London Math. Soc. Lecture Note
  Ser.}, pages 45--54. Cambridge Univ. Press, Cambridge, 1992.

\bibitem[Tab08]{tabuada:universal-invariants}
Gon{\c{c}}alo Tabuada.
\newblock Higher {$K$}-theory via universal invariants.
\newblock {\em Duke Math. J.}, 145(1):121--206, 2008.

\bibitem[Tho80]{thomason:model-Cat}
R.~W. Thomason.
\newblock Cat as a closed model category.
\newblock {\em Cahiers Topologie G\'eom. Diff\'erentielle}, 21(3):305--324,
  1980.

\bibitem[Ver67]{verdier:thesis}
Jean-Louis Verdier.
\newblock {\em Des cat\'egories d\'eriv\'ees des cat\'egories ab\'eliennes}.
\newblock PhD thesis, Universit\'e de Paris, 1967.

\bibitem[Ver96]{verdier:derived}
Jean-Louis Verdier.
\newblock Des cat\'egories d\'eriv\'ees des cat\'egories ab\'eliennes.
\newblock {\em Ast\'erisque}, (239):xii+253 pp. (1997), 1996.
\newblock With a preface by Luc Illusie, Edited and with a note by Georges
  Maltsiniotis.

\bibitem[Voe98]{voevodsky:a1}
Vladimir Voevodsky.
\newblock {$\mathbf A^1$}-homotopy theory.
\newblock In {\em Proceedings of the {I}nternational {C}ongress of
  {M}athematicians, {V}ol. {I} ({B}erlin, 1998)}, number Extra Vol. I, pages
  579--604 (electronic), 1998.

\bibitem[Vog70]{vogt:boardman}
R.~Vogt.
\newblock {\em Boardman's stable homotopy category}.
\newblock Lecture Notes Series, No. 21. Matematisk Institut, Aarhus
  Universitet, Aarhus, 1970.

\bibitem[vR12]{roosmalen:CY}
Adam-Christiaan van Roosmalen.
\newblock Abelian hereditary fractionally {C}alabi--{Y}au categories.
\newblock {\em Int. Math. Res. Not. IMRN}, (12):2708--2750, 2012.

\bibitem[Wal85]{waldhausen:k-theory}
Friedhelm Waldhausen.
\newblock Algebraic {$K$}-theory of spaces.
\newblock In {\em Algebraic and geometric topology ({N}ew {B}runswick,
  {N}.{J}., 1983)}, volume 1126 of {\em Lecture Notes in Math.}, pages
  318--419. Springer, Berlin, 1985.

\bibitem[Wei94]{weibel:homological}
Charles~A. Weibel.
\newblock {\em An introduction to homological algebra}, volume~38 of {\em
  Cambridge Studies in Advanced Mathematics}.
\newblock Cambridge University Press, Cambridge, 1994.

\end{thebibliography}

\end{document}